\newtheorem{theorem}{Theorem}
\theoremstyle{plain}
\newtheorem{corollary}{Corollary}
\newtheorem{definition}{Definition}
\newtheorem{example}{Example}
\newtheorem{lemma}{Lemma}
\newtheorem{proposition}{Proposition}
\newtheorem{remark}{Remark}
\DeclareMathOperator{\Div}{div}
 \numberwithin{equation}{section}
\begin{document}
\title[Homogenization theory]{Homogenization in algebras with mean value}
\author{Jean Louis Woukeng}
\address{Department of Mathematics and Computer Science, University of
Dschang, P.O. Box 67, Dschang, Cameroon}
\email{jwoukeng@yahoo.fr}
\date{February, 2013}
\subjclass[2000]{Primary 28Axx; 46Gxx; 46J10, 60H15; Secondary 35B40; 28Bxx;
46T30}
\keywords{Algebras with mean value; Young measures; homogenization;
stochastic Ladyzhenskaya equations}

\begin{abstract}
In several works, the theory of strongly continuous groups is used to build
a framework for solving stochastic homogenization problems. Following this
idea, we construct a detailed and comprehensive theory of homogenization.
This enables to solve homogenization problems in algebras with mean value,
regardless of whether they are ergodic or not, thereby responding
affirmatively to the question raised by Zhikov and Krivenko [V.V. Zhikov,
E.V. Krivenko, Homogenization of singularly perturbed elliptic operators.
Matem. Zametki, 33 (1983) 571-582 (english transl.: Math. Notes, 33 (1983)
294-300)] to know whether it is possible to homogenize problems in
nonergodic algebras. We also state and prove a compactness result for Young
measures in these algebras. As an important achievement we study the
homogenization problem associated with a stochastic Ladyzhenskaya model for
incompressible viscous flow, and we present and solve a few examples of
homogenization problems related to nonergodic algebras.
\end{abstract}

\maketitle

\section{Introduction and main results}

The theory of strongly continuous $N$-parameter groups of operators is a
very important tool in solving partial differential equations (PDEs). In 
\cite{Vo-Khac} (see also \cite{Blot}), it is used to solve PDEs in spaces of
almost periodic functions. In the stochastic homogenization theory, one
constructs through a dynamical system, a strongly continuous $N$-parameters
group. One then uses its infinitesimal generators to build up a framework
for solving random homogenization problems. We refer, e.g., to the works 
\cite{10, 20, Wright1} (see also \cite{AA}) for an exposition of this idea.

Given an algebra with mean value, the uniform continuity property of its
elements allows the construction [on the generalized Besicovitch spaces
associated to this algebra] of a strongly continuous $N$-parameters group.
We therefore rely on the properties of this group to construct as in the
stochastic case, a comprehensive and detailed framework for solving
deterministic homogenization problems as well as homogenization problems
related to stochastic partial differential equations (SPDEs). The results
obtained generalize the already existing ones, and provide more clarity and
conciseness to the latter.

Dealing now with deterministic homogenization theory, it has been so far
applied to solve only homogenization problems in ergodic algebras. In that
direction we refer, e.g., to the papers \cite{Casado, 26, NgWou1, NgWou2,
CMP, AIM, CMA, CPAA, ACAP, NoDEA}\ in which only ergodic algebras are
considered. In this paper we show how one can derive general homogenization
results in algebras with mean value through the theory of strongly
continuous groups of transformation. One very important achievement will be
to work out the homogenization problems related to a stochastic
Ladyzhenskaya model for incompressible non-Newtonian fluid, without help of
any ergodicity assumption. For the sake of clarity, let us give here below
some of our main results.

\begin{theorem}
\label{t1.1}Let $1<p<\infty $. Let $(u_{\varepsilon })_{\varepsilon \in E}$
be a bounded sequence in $W^{1,p}(Q)$. Then there exist a subsequence $%
E^{\prime }$ of $E$, and a couple $(u_{0},u_{1})\in
W^{1,p}(Q;I_{A}^{p})\times L^{p}(Q;\mathcal{B}_{\#A}^{1,p})$ such that, as $%
E^{\prime }\ni \varepsilon \rightarrow 0$, 
\begin{equation*}
u_{\varepsilon }\rightarrow u_{0}\ \text{in }L^{p}(Q)\text{-weak }\Sigma 
\text{;}
\end{equation*}%
\begin{equation*}
\frac{\partial u_{\varepsilon }}{\partial x_{i}}\rightarrow \frac{\partial
u_{0}}{\partial x_{i}}+\frac{\overline{\partial }u_{1}}{\partial y_{i}}\text{%
\ in }L^{p}(Q)\text{-weak }\Sigma \text{, }1\leq i\leq N\text{.}
\end{equation*}
\end{theorem}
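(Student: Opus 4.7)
The plan is to follow the classical two-scale gradient compactness strategy (Nguetseng--Allaire) transcribed into the algebra-with-mean-value framework, in which the periodic torus is replaced by the spectrum of the algebra and the periodic gradient $\partial/\partial y_i$ is replaced by the abstract generator $\overline{\partial}/\partial y_i$ of the associated strongly continuous $N$-parameter group. First, I would exploit the classical compactness of $W^{1,p}(Q)$: from boundedness of $(u_{\varepsilon})$ and Rellich--Kondrachov, extract a subsequence along which $u_{\varepsilon}\rightharpoonup u_{0}$ weakly in $W^{1,p}(Q)$ and strongly in $L^{p}(Q)$, for some $u_{0}\in W^{1,p}(Q)$. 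Invoking the weak $\Sigma$-compactness theorem for bounded sequences in $L^{p}(Q)$ (which I assume is established earlier in the paper), extract a further subsequence along which $u_{\varepsilon}$ and each $\partial u_{\varepsilon}/\partial x_{i}$ weak-$\Sigma$ converge to limits $v_{0}$ and $v_{i}$ in the appropriate generalized Besicovitch space.

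Next I would identify the limit of $u_{\varepsilon}$ itself. Because weak $\Sigma$-convergence is consistent with ordinary weak $L^{p}(Q)$-convergence after taking the mean value in $y$, and because $u_{\varepsilon}\to u_{0}$ strongly in $L^{p}(Q)$, the limit $v_{0}$ must be independent of the microvariable, i.e.\ $v_{0}=u_{0}\in L^{p}(Q;I_{A}^{p})$; combined with the weak $W^{1,p}(Q)$-bound on $u_{0}$, this places $u_{0}$ in $W^{1,p}(Q;I_{A}^{p})$ as required.

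The main step, and in my view the main obstacle, is showing that the limit $v_{i}$ of $\partial u_{\varepsilon}/\partial x_{i}$ splits as $\partial u_{0}/\partial x_{i}+\overline{\partial}u_{1}/\partial y_{i}$ for a single $u_{1}\in L^{p}(Q;\mathcal{B}_{\#A}^{1,p})$. The strategy is a duality argument: test the relation $\int_{Q}(\partial u_{\varepsilon}/\partial x_{i})\Phi_{\varepsilon}\,dx=-\int_{Q}u_{\varepsilon}(\partial\Phi_{\varepsilon}/\partial x_{i})\,dx$ against oscillating test functions of the form $\Phi_{\varepsilon}(x)=\varphi(x)\psi_{i}(x/\varepsilon)$ whose $y$-component $\boldsymbol{\psi}=(\psi_{i})$ lies in the kernel of the abstract divergence (i.e.\ is solenoidal in the generalized Besicovitch sense). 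Passing to the limit with the help of the strong $L^{p}(Q)$-convergence of $u_{\varepsilon}$ and the weak $\Sigma$-convergence of the derivatives, all derivatives in $x$ on $\Phi_{\varepsilon}$ with $\varepsilon$-scaling vanish and one is left with an identity showing that $\sum_{i}(v_{i}-\partial u_{0}/\partial x_{i})\psi_{i}$ has zero mean against every solenoidal $\boldsymbol{\psi}$. The conclusion then follows from the de Rham-type characterization in the generalized Besicovitch space: the orthogonal complement of the solenoidal fields in $(\mathcal{B}_{A}^{p})^{N}$ coincides with the closure of abstract $y$-gradients, which by the very definition of $\mathcal{B}_{\#A}^{1,p}$ can be realized as $\overline{\partial}u_{1}/\partial y_{i}$ for some $u_{1}\in L^{p}(Q;\mathcal{B}_{\#A}^{1,p})$. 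The delicate point is precisely this de Rham identification, since neither Poincaré's lemma nor a Fourier-series decomposition is available on the spectrum of a general (possibly nonergodic) algebra with mean value; it is here that the group-theoretic machinery of the paper, which encodes $\overline{\partial}/\partial y_{i}$ as an infinitesimal generator on the Besicovitch space, does the real work.
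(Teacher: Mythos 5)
Your proposal follows essentially the same route as the paper's proof of this result (Theorem \ref{t3.5}): test the integration-by-parts identity against oscillating fields $\varphi(x)\Psi(x/\varepsilon)$ with $\Psi$ solenoidal, pass to the weak $\Sigma$-limit, and invoke the de Rham-type characterization (Proposition \ref{p2.3}, i.e.\ $(\ker \Div_{p^{\prime }})^{\perp }=R(\overline{D}_{p})$) to produce $u_{1}\in L^{p}(Q;\mathcal{B}_{\#A}^{1,p})$; you correctly identify that lemma as the crux in the nonergodic setting. The one place you genuinely diverge is the identification of $u_{0}$: you use Rellich--Kondrachov to get strong $L^{p}(Q)$ convergence and deduce that the $\Sigma$-limit is independent of $y$, whereas the paper tests $\varepsilon \nabla u_{\varepsilon }$ against $\Phi^{\varepsilon }$ to conclude only $\overline{D}_{y}u_{0}=0$, i.e.\ $u_{0}(x,\cdot)\in I_{A}^{p}$, and then takes $\Phi =(\varphi \delta _{ij})$ to place $\partial u_{0}/\partial x_{j}$ in $L^{p}(Q;I_{A}^{p})$. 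Your variant silently adds the hypothesis that $Q$ is bounded with an extension property (the theorem assumes only that $Q$ is open), and it forces $u_{0}$ to be constant in $y$ rather than merely invariant --- harmless for the stated conclusion, but it bypasses the invariance structure the paper is at pains to keep in the nonergodic case; a localized Rellich argument would repair the domain issue if you want to keep that route.
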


The difference between the above result and other already existing results
is that no ergodicity assumption is required on the algebra with mean value $%
A$, in contrast to what had always been done so far. It should be noted that
the proof of the above result in the ergodic case relies heavily on the
ergodicity assumption made on $A$, see for instance the papers \cite{Casado,
26, CMP, NA}. In the general situation that we consider in this work, the
proof is based on a de Rham type result formulated as follows:\medskip 
\begin{equation*}
\left\{ 
\begin{array}{l}
\text{Let }\mathbf{v}\in (\mathcal{B}_{A}^{p})^{N}\text{ satisfying} \\ 
\int_{\Delta (A)}\widehat{\mathbf{v}}\cdot \widehat{\mathbf{g}}d\beta =0%
\text{\ for all }\mathbf{g}\in \mathcal{V}_{\Div}=\{\mathbf{f}\in (\mathcal{D%
}_{A}(\mathbb{R}^{N}))^{N}:\overline{\Div}_{y}\mathbf{f}=0\}\text{.} \\ 
\text{Then there exists }u\in \mathcal{B}_{\#A}^{1,p}\text{ such that }%
\mathbf{v}=\overline{D}_{y}u\text{.}%
\end{array}%
\right.
\end{equation*}

As a consequence of the preceding result we have the following important
result which is suitable for the homogenization of SPDEs in algebras with
mean value.

\begin{theorem}
\label{t1.2}Let $1<p<\infty $. Let $Q$ be an open subset in $\mathbb{R}^{N}$%
. Let $A=A_{y}\odot A_{\tau }$ be any product algebra with mean value on $%
\mathbb{R}^{N}\times \mathbb{R}$. Finally, let $(u_{\varepsilon
})_{\varepsilon \in E}$ be a sequence of $L^{p}(0,T;W^{1,p}(Q))$-values
random variables satisfying the following estimate: 
\begin{equation*}
\sup_{\varepsilon \in E}\mathbb{E}\left\Vert u_{\varepsilon }\right\Vert
_{L^{p}(0,T;W^{1,p}(Q))}^{p}<\infty .
\end{equation*}%
Then there exist a subsequence $E^{\prime }$ of $E$ and a couple of random
variables $(u_{0},u_{1})$ with $u_{0}\in L^{p}(\Omega
;L^{p}(0,T;W^{1,p}(Q;I_{A}^{p})))$ and $u_{1}\in L^{p}(\Omega ;L^{p}(Q_{T};%
\mathcal{B}_{A_{\tau }}^{p}(\mathbb{R}_{\tau };\mathcal{B}_{\#A_{y}}^{1,p})))
$ such that, as $E^{\prime }\ni \varepsilon \rightarrow 0$, 
\begin{equation*}
u_{\varepsilon }\rightarrow u_{0}\ \text{in }L^{p}(Q_{T}\times \Omega )\text{%
-weak }\Sigma 
\end{equation*}%
and%
\begin{equation*}
\frac{\partial u_{\varepsilon }}{\partial x_{i}}\rightarrow \frac{\partial
u_{0}}{\partial x_{i}}+\frac{\overline{\partial }u_{1}}{\partial y_{i}}\text{%
\ in }L^{p}(Q_{T}\times \Omega )\text{-weak }\Sigma \text{ (}1\leq i\leq N%
\text{).}
\end{equation*}
\end{theorem}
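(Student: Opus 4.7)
The plan is to view Theorem \ref{t1.2} as a parametric stochastic version of Theorem \ref{t1.1}, with $(\omega,t)\in\Omega\times(0,T)$ playing the role of an external parameter and the time oscillations of $u_\varepsilon$ absorbed into the second factor $A_\tau$ of the product algebra. First, the hypothesis $\sup_\varepsilon \mathbb{E}\|u_\varepsilon\|_{L^p(0,T;W^{1,p}(Q))}^p<\infty$ says precisely that $(u_\varepsilon)$ and each $(\partial u_\varepsilon/\partial x_i)$ is bounded in $L^p(Q_T\times\Omega)$. The weak $\Sigma$-compactness criterion for $L^p$-bounded sequences (a Banach--Alaoglu argument after identifying the relevant dual on the Gelfand spectrum $\Delta(A)$) then furnishes, by diagonal extraction, a subsequence $E'$ and limits $u_0,v_1,\dots,v_N\in L^p(\Omega;L^p(Q_T;\mathcal{B}_A^p))$ such that $u_\varepsilon\to u_0$ and $\partial u_\varepsilon/\partial x_i\to v_i$ in $L^p(Q_T\times\Omega)$-weak $\Sigma$.

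Next I would show that $u_0$ is invariant under the $y$-translation subgroup of the $(N{+}1)$-parameter group attached to $A$, so that $u_0\in L^p(\Omega;L^p(0,T;W^{1,p}(Q;I_A^p)))$. The standard device is to test $\partial u_\varepsilon/\partial x_i$ against $\varepsilon\,\psi(x,t,\omega)\varphi^\varepsilon$ with $\psi\in\mathcal{D}(Q_T)\otimes L^{p'}(\Omega)$ and $\varphi\in\mathcal{D}_A(\mathbb{R}^N\times\mathbb{R})$; integration by parts and passage to the limit along $E'$ give $\int(\overline{\partial}_{y_i}\varphi)\,u_0=0$ against all such $\varphi$, forcing the claimed invariance and simultaneously identifying $\partial u_0/\partial x_i$ as the $I_A^p$-mean of $v_i$.

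For the corrector, set $\mathbf{w}:=\mathbf{v}-D_x u_0$. Testing the weak $\Sigma$-convergence against $\varepsilon\mathbf{g}^\varepsilon$ for $\mathbf{g}\in\mathcal{V}_{\Div}$ and using $\overline{\Div}_y\mathbf{g}=0$, one checks that for a.e.\ $(\omega,t,x)$ the vector field $\mathbf{w}(\omega,t,x,\cdot)$ satisfies the orthogonality $\int_{\Delta(A)}\widehat{\mathbf{w}}\cdot\widehat{\mathbf{g}}\,d\beta=0$. The de Rham type result recalled just before Theorem \ref{t1.2} then produces, pointwise in $(\omega,t,x)$, a function $u_1(\omega,t,x,\cdot)\in\mathcal{B}_{\#A}^{1,p}$ with $\overline{D}_y u_1=\mathbf{w}$. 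The product structure $A=A_y\odot A_\tau$ lets one further identify this $u_1$ as an element of $\mathcal{B}_{A_\tau}^p(\mathbb{R}_\tau;\mathcal{B}_{\#A_y}^{1,p})$, since only the $y$-directions are being inverted.

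The main obstacle, and essentially the one place where the stochastic/time-dependent setting goes beyond Theorem \ref{t1.1}, is upgrading this pointwise construction into a genuine element of $L^p(\Omega;L^p(Q_T;\mathcal{B}_{A_\tau}^p(\mathbb{R}_\tau;\mathcal{B}_{\#A_y}^{1,p})))$, with joint measurability in $(\omega,t,x)$ and a uniform $L^p$-bound. I would handle both via a closed range/open mapping argument for $\overline{D}_y:\mathcal{B}_{\#A}^{1,p}\to(\mathcal{B}_A^p)^N$, whose range is closed as the polar of $\mathcal{V}_{\Div}$; the resulting bounded linear right inverse, composed with the strongly measurable map $(\omega,t,x)\mapsto\mathbf{w}(\omega,t,x,\cdot)$, yields a strongly measurable, $L^p$-integrable selection $u_1$, completing the proof.
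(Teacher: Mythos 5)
Your outline follows the paper's own route essentially verbatim: Theorem \ref{t1.2} is proved there as Theorem \ref{t3.8} by carrying over, mutatis mutandis, the proof of Theorem \ref{t3.5} --- weak $\Sigma$-compactness of $u_{\varepsilon}$ and $\nabla u_{\varepsilon}$, invariance of $u_{0}$ obtained from $\varepsilon$-scaled oscillating test functions, orthogonality of $\mathbf{v}-Du_{0}$ to the divergence-free fields, and then the de Rham type result (Proposition \ref{p2.3}) applied pointwise in $(\omega,t,x)$ to produce $u_{1}$. One slip to fix: the orthogonality step must be run against test fields of the form $\varphi(x,t,\omega)\,\mathbf{g}^{\varepsilon}$ \emph{without} the extra factor $\varepsilon$ (that factor belongs only to the step showing $\overline{D}_{y}u_{0}=0$); testing against $\varepsilon\mathbf{g}^{\varepsilon}$ makes both sides of the limit identity vanish and yields no information, whereas the unscaled test combined with $\overline{\Div}_{y}\mathbf{g}=0$ and an integration by parts in $x$ gives exactly the relation $\int_{\Delta(A)}(\widehat{\mathbf{v}}-\nabla\widehat{u}_{0})\cdot\widehat{\mathbf{g}}\,d\beta=0$ you need. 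Your closing paragraph on joint measurability and the $L^{p}$ bound for $u_{1}$, via the closed range of $\overline{D}_{y}$ and a bounded right inverse, addresses a point the paper passes over in silence and is consistent with property (P$_{1}$) there.
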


The next result is a compactness result related to Young measures in
algebras with mean value. It reads as follows.

\begin{theorem}
\label{t1.3}Let $Q$ be an open bounded subset of $\mathbb{R}^{N}$. Let $%
1\leq p<\infty $, and let $A$ be an algebra with mean value on $\mathbb{R}%
_{y}^{N}$. Finally let $(u_{\varepsilon })_{\varepsilon \in E}$ be a bounded
sequence in $L^{p}(Q;\mathbb{R}^{m})$. There exist a subsequence $E^{\prime }
$ from $E$ and a family $\nu =(\nu _{x,s})_{x\in Q,s\in \Delta (A)}\in
L^{\infty }(Q\times \Delta (A);\mathcal{P}(\mathbb{R}^{m}))$ such that, as $%
E^{\prime }\ni \varepsilon \rightarrow 0$, 
\begin{equation*}
\int_{Q}\Phi \left( x,\frac{x}{\varepsilon _{1}},u_{\varepsilon }(x)\right)
dx\rightarrow \int_{Q}\int_{\Delta (A)}\int_{\mathbb{R}^{m}}\widehat{\Phi }%
(x,s,\lambda )d\nu _{x,s}(\lambda )d\beta (s)dx
\end{equation*}%
for all $\Phi \in E_{p}$.
\end{theorem}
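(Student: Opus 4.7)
The plan is to adapt the classical construction of Young measures to the $\Sigma$-convergence framework based on algebras with mean value. Recall that, via the Gelfand transform, each $g\in A$ is realized as a continuous function $\widehat{g}$ on the compact Hausdorff spectrum $\Delta(A)$, and the mean value on $A$ corresponds to integration against the $M$-measure $\beta$ on $\Delta(A)$. I would begin by fixing a countable subfamily $\mathcal{F}\subset E_{p}$ that is dense in the natural topology of $E_{p}$; a convenient choice consists of finite linear combinations of tensor products $\varphi(x)g(y)\psi(\lambda)$ with $\varphi\in C_{c}(Q)$, $g$ in a countable dense subset of $A$, and $\psi\in C_{c}(\mathbb{R}^{m})$. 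For each $\Phi\in\mathcal{F}$, the $L^{p}$ bound on $(u_{\varepsilon})$ combined with the growth built into the definition of $E_{p}$ keeps the scalars
\[
I_{\varepsilon}(\Phi)=\int_{Q}\Phi\!\left(x,\tfrac{x}{\varepsilon_{1}},u_{\varepsilon}(x)\right)dx
\]
uniformly bounded, so a diagonal extraction produces a subsequence $E'\subset E$ along which $I_{\varepsilon}(\Phi)\to T(\Phi)$ for every $\Phi\in\mathcal{F}$; a standard $\varepsilon/3$ approximation then upgrades this to convergence for every $\Phi\in E_{p}$.

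Next I would represent $T$. The functional $T$ is positive, linear, and of norm controlled by $\sup_{\varepsilon}\|u_{\varepsilon}\|_{L^{p}(Q;\mathbb{R}^{m})}$. Transporting through the Gelfand isomorphism $A\simeq C(\Delta(A))$ converts $T$ into a positive linear functional on $C_{c}(Q)\otimes C(\Delta(A))\otimes C_{0}(\mathbb{R}^{m})$; after completion, the Riesz--Markov representation theorem yields a finite Radon measure $\mu$ on $Q\times\Delta(A)\times\mathbb{R}^{m}$ with $T(\Phi)=\int\widehat{\Phi}\,d\mu$. Testing on functions independent of $\lambda$ and using the characterization of $\beta$ as the measure implementing the mean value identifies the marginal of $\mu$ on $Q\times\Delta(A)$ as $dx\otimes d\beta$; a disintegration theorem then provides a measurable family $(\nu_{x,s})_{x\in Q,\, s\in\Delta(A)}$ of Radon measures on $\mathbb{R}^{m}$ such that
\[
T(\Phi)=\int_{Q}\!\int_{\Delta(A)}\!\int_{\mathbb{R}^{m}}\widehat{\Phi}(x,s,\lambda)\,d\nu_{x,s}(\lambda)\,d\beta(s)\,dx.
\]

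To guarantee that $\nu_{x,s}\in\mathcal{P}(\mathbb{R}^{m})$ for a.e.\ $(x,s)$, I would invoke a tightness argument: for cut-offs $\chi_{R}\in C_{c}(\mathbb{R}^{m})$ with $0\leq\chi_{R}\leq1$ and $\chi_{R}\equiv1$ on $\{|\lambda|\leq R\}$, Chebyshev's inequality applied to $\sup_{\varepsilon}\|u_{\varepsilon}\|_{L^{p}(Q;\mathbb{R}^{m})}<\infty$ shows that $\int_{Q}(1-\chi_{R}(u_{\varepsilon}))\,dx\to 0$ uniformly in $\varepsilon$ as $R\to\infty$, so no mass escapes to infinity in the limit and each fibre measure is indeed a probability measure.

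The principal obstacle, in my view, is the disintegration step: since $\Delta(A)$ is only compact Hausdorff and need not be metrizable, classical Borel disintegration theorems do not apply off the shelf. I would bypass this by restricting first to a separable sub-$C^{\ast}$-subalgebra of $C(\Delta(A))$ generated by the Gelfand transforms used to describe $\mathcal{F}$; the corresponding quotient of $\Delta(A)$ is compact metrizable, the disintegration is then classical there, and the resulting fibre measures are pulled back to $\Delta(A)$. Once this hurdle is cleared, the positivity of $T$, the identification of its marginal on $Q\times\Delta(A)$, and the passage from $\mathcal{F}$ to $E_{p}$ are routine consequences of the algebraic structure of $A$ and of the definition of weak $\Sigma$-convergence.
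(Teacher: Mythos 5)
Your overall architecture coincides with the paper's: a countable dense family of test integrands, a diagonal extraction, representation of the limit functional by a parametrized measure whose $(x,s)$-marginal is identified with $dx\otimes\beta$ by testing against $\varphi(x)h(y)$, and then Valadier's disintegration theorem to produce $(\nu_{x,s})$. (The paper sidesteps your metrizability worry simply by assuming $A$ separable at the start of Section 4, which makes $\Delta(A)$ metrizable and $E_{p}$ separable; note that your separable-subalgebra workaround only yields convergence for integrands whose $y$-dependence lies in that subalgebra, so it does not really extend the statement to non-separable $A$ along a single subsequence.)

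The concrete gap is in your choice of dense family and in the role you ask tightness to play. Condition (C2) in the definition of $E_{p}$ allows $\Phi(x,y,\lambda)/(1+|\lambda|^{p})$ to have a \emph{nonzero} uniform limit as $|\lambda|\to\infty$ (e.g.\ $\Phi=|\lambda|^{p}$ lies in $E_{p}$), whereas every finite combination of $\varphi(x)g(y)\psi(\lambda)$ with $\psi\in C_{c}(\mathbb{R}^{m})$ has limit $0$; in the $E_{p}$-norm these combinations stay at distance at least that limit from such a $\Phi$, so your family is dense only in the subspace of integrands vanishing at infinity after normalization, and the $\varepsilon/3$ upgrade does not reach all of $E_{p}$. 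The Chebyshev bound $|\{|u_{\varepsilon}|>R\}|\le R^{-p}\sup_{\varepsilon}\|u_{\varepsilon}\|_{p}^{p}$ does give that each fibre measure is a probability measure, but it controls only the Lebesgue measure of the superlevel sets, not $\int_{\{|u_{\varepsilon}|>R\}}|u_{\varepsilon}|^{p}dx$, which is what governs $I_{\varepsilon}(\Phi)$ for an integrand with a genuine $p$-growth tail (consider $u_{\varepsilon}=\varepsilon^{-N/p}\mathbf{1}_{B(0,\varepsilon)}$ and $\Phi=|\lambda|^{p}$: the left side of the asserted convergence stays bounded away from the right side). The paper's device is the one-point compactification $K=\mathbb{R}^{m}\cup\{\infty\}$: it identifies $E_{p}$ isometrically with $\mathcal{C}(\overline{Q}\times K;A)$, extracts over a countable dense set there via Banach--Alaoglu in $L^{\infty}(Q;\mathcal{M}(\Delta(A)\times K))$, and only afterwards discharges the mass at $\{\infty\}$ using the $L^{p}$ bound (citing Valadier); this also makes $\lambda$-independent test functions admissible for the marginal identification, which in your $C_{0}(\mathbb{R}^{m})$ setting requires an extra cut-off limit. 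You should either adopt that compactification, or restrict the stated convergence to normalized integrands vanishing at infinity and impose equi-integrability of $\Phi(\cdot,\cdot/\varepsilon_{1},u_{\varepsilon})$ for the rest, as the paper itself does in part (ii) of Theorem 4.2.
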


A special version of the above theorem can be found in \cite{Frid}. Our
approach is based on a result by Valadier \cite{Valadier2} regarding the
disintegration of measures. As expected the above results have a number of
applications. By way of illustrating, we give two of them in this work. The
most important one is the homogenization of a nonlinear SPDE. In this
regard, we prove the following

\begin{theorem}
\label{t1.4}Assume $p\geq 3$. For each $\varepsilon >0$ let $\mathbf{u}%
_{\varepsilon }$ be the unique solution of the following stochastic PDE:%
\begin{equation*}
\left\{ 
\begin{array}{l}
d\mathbf{u}_{\varepsilon }+(P^{\varepsilon }\mathbf{u}_{\varepsilon }+%
\mathcal{A}^{\varepsilon }\mathbf{u}_{\varepsilon }+B(\mathbf{u}%
_{\varepsilon }))dt=\mathbf{f}dt+g^{\varepsilon }(\mathbf{u}_{\varepsilon
})dW,\ 0<t<T \\ 
\mathbf{u}_{\varepsilon }(0)=\mathbf{u}^{0}.%
\end{array}%
\right.
\end{equation*}%
Under assumption \emph{(\ref{6.25})} (see Section \emph{6}), the sequence $(%
\mathbf{u}_{\varepsilon })_{\varepsilon >0}$ converges in probability to $%
\mathbf{u}_{0}$ in $L^{2}(Q_{T})$ where $\mathbf{u}_{0}$ is the unique
strong probabilistic solution of the following problem: 
\begin{equation*}
\left\{ 
\begin{array}{l}
d\mathbf{u}_{0}+(-\Div(\mathsf{m}D\mathbf{u}_{0})-\Div M(D\mathbf{u}_{0})+B(%
\mathbf{u}_{0}))dt=\mathbf{f}dt+\widetilde{g}(\mathbf{u}_{0})dW \\ 
\mathbf{u}_{0}(0)=\mathbf{u}^{0}.%
\end{array}%
\right.
\end{equation*}
\end{theorem}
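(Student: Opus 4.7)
The plan is to follow the standard homogenization scheme for stochastic evolution equations, but implemented inside the algebras-with-mean-value framework delivered by Theorems \ref{t1.2} and \ref{t1.3}. First I would derive uniform \emph{a priori} estimates on $\mathbf{u}_{\varepsilon}$: applying It\^o's formula to $\|\mathbf{u}_{\varepsilon}(t)\|_{L^{2}}^{2}$ and using the coercivity of the Ladyzhenskaya operator $\mathcal{A}^{\varepsilon}$, the antisymmetry of $B$, together with standard growth/Lipschitz conditions on $g^{\varepsilon}$ contained in (\ref{6.25}), then combining Gr\"onwall's lemma with the Burkholder--Davis--Gundy inequality, one obtains
\begin{equation*}
\sup_{\varepsilon >0}\Bigl(\mathbb{E}\|\mathbf{u}_{\varepsilon}\|_{L^{\infty}(0,T;L^{2}(Q))}^{2}+\mathbb{E}\|\mathbf{u}_{\varepsilon}\|_{L^{p}(0,T;W_{0}^{1,p}(Q))}^{p}\Bigr)\leq C,
\end{equation*}
which is precisely the estimate required to invoke Theorem \ref{t1.2}.

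Second, I would establish tightness of the laws of $\mathbf{u}_{\varepsilon}$ in $L^{2}(Q_{T})$. The deterministic contribution provides the necessary compact embedding via the uniform $L^{p}(W^{1,p})$ bound, while a uniform fractional time regularity extracted from the equation (Aubin--Simon style, with the stochastic integral controlled again by BDG in the form of an $L^{p}$-bound on fractional Sobolev norms in time) completes the tightness argument. Prokhorov and Skorokhod then furnish a new probability basis carrying a sequence $\widetilde{\mathbf{u}}_{\varepsilon}$ with the same law as $\mathbf{u}_{\varepsilon}$, converging almost surely in $L^{2}(Q_{T})$ to some $\mathbf{u}_{0}$, together with a limit Wiener process $\widetilde{W}$.

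Third, on this new basis I would apply Theorem \ref{t1.2} pathwise: there exists $\mathbf{u}_{1}\in L^{p}(\Omega ;L^{p}(Q_{T};\mathcal{B}_{A_{\tau}}^{p}(\mathbb{R}_{\tau };\mathcal{B}_{\#A_{y}}^{1,p})))$ such that $D\widetilde{\mathbf{u}}_{\varepsilon}\to D\mathbf{u}_{0}+\overline{D}_{y}\mathbf{u}_{1}$ in the two-scale sense. The linear viscous term $\mathcal{A}^{\varepsilon}$ is identified by linear two-scale convergence, which after solving the associated cell problem produces the constant effective matrix $\mathsf{m}$. The nonlinear Ladyzhenskaya term $M^{\varepsilon}(D\widetilde{\mathbf{u}}_{\varepsilon})$ is handled through Theorem \ref{t1.3}: one extracts a Young measure $\nu_{x,s}$ on $\mathbb{R}^{N\times N}$ and obtains
\begin{equation*}
M^{\varepsilon}(D\widetilde{\mathbf{u}}_{\varepsilon})\rightharpoonup \int_{\Delta (A)}\!\int_{\mathbb{R}^{N\times N}}\widehat{M}(s,\lambda )\,d\nu_{x,s}(\lambda )\,d\beta (s).
\end{equation*}
A Minty-type monotonicity argument, tested against $D\mathbf{u}_{0}+\overline{D}_{y}\mathbf{v}$ for arbitrary admissible correctors $\mathbf{v}$, forces $\nu_{x,s}$ to be a Dirac mass concentrated at $D\mathbf{u}_{0}+\overline{D}_{y}\mathbf{u}_{1}$; substitution into the cell problem then yields the homogenized operator $\Div M(D\mathbf{u}_{0})$, and passage to the limit in the stochastic term uses the Lipschitz structure of $g^{\varepsilon}$ together with the almost sure $L^{2}$-strong convergence.

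The step I expect to be the main obstacle is the last one: upgrading the convergence obtained on the enlarged basis to \emph{convergence in probability} on the original probability space. For this I would invoke the Gy\"ongy--Krylov criterion. Given two arbitrary subsequences, one applies the whole procedure above to the joint law of the pair; tightness still holds, both components produce limits solving the homogenized problem with the same Wiener process and same initial data, and pathwise uniqueness for this limit problem (for which the hypothesis $p\geq 3$ is critical, since it supplies the Sobolev embedding needed to dominate the trilinear convective term $B$ in the Ladyzhenskaya estimate) forces the joint limit to be supported on the diagonal. This is exactly the Gy\"ongy--Krylov condition, and delivers convergence in probability of $\mathbf{u}_{\varepsilon}$ to $\mathbf{u}_{0}$ in $L^{2}(Q_{T})$.
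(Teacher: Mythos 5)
Your proposal reproduces the architecture of the paper's proof: It\^{o} formula plus Burkh\"{o}lder--Davis--Gundy and Gronwall for the uniform bounds in $L^{2}(\Omega;L^{\infty}(0,T;H))$ and $L^{p}(\Omega;L^{p}(0,T;V))$; tightness of the joint laws of $(\mathbf{u}_{\varepsilon},W)$; Prokhorov and Skorokhod to pass to an almost surely convergent sequence on a new basis; the stochastic $\Sigma$-compactness theorem to produce the corrector $\mathbf{u}_{1}$; a Minty monotonicity argument for the Ladyzhenskaya term; the cell problem to obtain $\mathsf{m}$ and $M$; and the Gy\"{o}ngy--Krylov criterion combined with pathwise uniqueness of the limit problem to upgrade to convergence in probability on the original space. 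Two implementation choices differ from the paper. For tightness in time the paper does not use a fractional Sobolev (Aubin--Simon) bound but the translate estimate $\mathbb{E}\sup_{|\theta|\leq\delta}\int_{0}^{T}\Vert\mathbf{u}_{\varepsilon}(t+\theta)-\mathbf{u}_{\varepsilon}(t)\Vert_{V'}^{p'}dt\leq C\delta^{1/(p-1)}$ together with Bensoussan's compactness lemma; the two devices are interchangeable here, so this is cosmetic. More substantively, the paper does not invoke Young measures in this section at all: it takes the weak $\Sigma$-limit $\chi$ of the bounded flux $b^{\varepsilon}(\cdot,D\mathbf{u}_{\varepsilon})$ and identifies $\chi=b(\cdot,D\mathbf{u}_{0}+\overline{D}_{y}\mathbf{u}_{1})$ by testing the monotonicity inequality against $\mathbb{D}\mathbf{u}+\lambda\mathbb{D}\mathbf{v}$ and letting $\lambda\to 0$. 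Your Young-measure route buys nothing extra here and, as stated, overclaims: concluding that the Young measure of $D\widetilde{\mathbf{u}}_{\varepsilon}$ is a Dirac mass at $D\mathbf{u}_{0}+\overline{D}_{y}\mathbf{u}_{1}$ is, by the paper's own characterization (Proposition \ref{p4.2}), equivalent to strong $L^{1}$ corrector convergence of the gradients, which the Minty argument does not deliver without an additional convergence-of-energies step. The monotonicity argument only identifies the weak $\Sigma$-limit of the flux, and that is all the homogenized equation requires; you should phrase the conclusion of that step accordingly (or supply the energy identity if you insist on the Dirac statement). With that correction the proof goes through exactly as in the paper.
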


In view of the above result, one might be tempted to believe that the
homogenization process for SPDEs is summarized in the homogenization of its
deterministic part, added to the average of its stochastic part. This is far
to be true in general. Indeed, one can obtain after passing to the limit, a
homogenized equation of a type completely different from that of the initial
problem; see e.g., \cite{Wang2}.

We can emphasize that our work is therefore the first one in which the
homogenization theory for PDEs and SPDEs is carried out beyond the ergodic
setting.

The paper is organized as follows. In Section 2 we give the key tools which
will be used in the following sections, namely we apply the semigroup theory
to the generalized Besicovitch spaces. Section 3 is devoted to the
systematic study of the concept of $\Sigma $-convergence. We prove there
some important compactness results. In Section 4 we prove Theorem \ref{t1.2}
and we give some of its important corollaries. Finally, Sections 5 and 6 are
devoted to the applications of the results of the earlier sections to
homogenization theory.

Unless otherwise stated, vector spaces throughout are assumed to be complex
vector spaces, and scalar functions are assumed to take complex values.

\section{The semigroup theory applied to the generalized Besicovitch spaces}

\subsection{Preliminaries}

Let $A$ be an algebra with mean value (algebra wmv, in short) on $\mathbb{R}%
^{N}$ \cite{20, Casado, NA, Zhikov4}, that is, $A$ is a closed subalgebra of
the $\mathcal{C}$*-algebra of bounded uniformly continuous functions $BUC(%
\mathbb{R}^{N})$ which contains the constants, is closed under complex
conjugation ($\overline{u}\in A$ whenever $u\in A$), is translation
invariant ($u(\cdot +a)\in A$ for any $u\in A$ and each $a\in \mathbb{R}^{N}$%
) and is such that each element possesses a mean value in the following
sense:

\begin{itemize}
\item[(\textit{MV})] For each $u\in A$, the sequence $(u^{\varepsilon
})_{\varepsilon >0}$ (where $u^{\varepsilon }(x)=u(x/\varepsilon _{1})$, $%
x\in \mathbb{R}^{N}$) weakly $\ast $-converges in $L^{\infty }(\mathbb{R}%
^{N})$ to some constant function $M(u)\in \mathbb{C}$ (the complex field) as 
$\varepsilon \rightarrow 0$, $\varepsilon _{1}=\varepsilon _{1}(\varepsilon
) $ being a positive function of $\varepsilon $ tending to zero with $%
\varepsilon $.
\end{itemize}

It is known that $A$ (endowed with the sup norm topology) is a commutative $%
\mathcal{C}$*-algebra with identity. We denote by $\Delta (A)$ the spectrum
of $A$ and by $\mathcal{G}$ the Gelfand transformation on $A$. We recall
that $\Delta (A)$ (a subset of the topological dual $A^{\prime }$ of $A$) is
the set of all nonzero multiplicative linear functionals on $A$, and $%
\mathcal{G}$ is the mapping of $A$ into $\mathcal{C}(\Delta (A))$ such that $%
\mathcal{G}(u)(s)=\left\langle s,u\right\rangle $ ($s\in \Delta (A)$), where 
$\left\langle ,\right\rangle $ denotes the duality pairing between $%
A^{\prime }$ and $A$. We endow $\Delta (A)$ with the relative weak$\ast $
topology on $A^{\prime }$. Then using the well-known theorem of Stone (see
e.g., either \cite{21} or more precisely \cite[Theorem IV.6.18, p. 274]{DS})
one can easily show that the spectrum $\Delta (A)$ is a compact topological
space, and the Gelfand transformation $\mathcal{G}$ is an isometric $\ast $%
-isomorphism identifying $A$ with $\mathcal{C}(\Delta (A))$ (the continuous
functions on $\Delta (A)$) as $\mathcal{C}$*-algebras. Next, since each
element of $A$ possesses a mean value, this yields an application $u\mapsto
M(u)$ (denoted by $M$ and called the mean value) which is a nonnegative
continuous linear functional on $A$ with $M(1)=1$, and so provides us with a
linear nonnegative functional $\psi \mapsto M_{1}(\psi )=M(\mathcal{G}%
^{-1}(\psi ))$ defined on $\mathcal{C}(\Delta (A))=\mathcal{G}(A)$, which is
clearly bounded. Therefore, by the Riesz-Markov theorem, $M_{1}(\psi )$ is
representable by integration with respect to some Radon measure $\beta $ (of
total mass $1$) in $\Delta (A)$, called the $M$\textit{-measure} for $A$ 
\cite{26}. It is evident that we have 
\begin{equation*}
M(u)=\int_{\Delta (A)}\mathcal{G}(u)d\beta \text{\ for }u\in A\text{.}
\end{equation*}%
The spectrum of a Banach algebra is an abstract concept. However, for some
special Banach algebras, it can be characterized as in the following

\begin{proposition}
\label{p2.0}Let $A$ be an algebra wmv. Assume $A$ separates the points of $%
\mathbb{R}^{N}$. Then $\Delta (A)$ is the Stone-\v{C}ech compactification of 
$\mathbb{R}^{N}$.
\end{proposition}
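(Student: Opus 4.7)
The plan is to identify $\Delta (A)$ with the Stone-\v{C}ech compactification of $\mathbb{R}^{N}$ by constructing a canonical dense embedding $\iota : \mathbb{R}^{N} \to \Delta (A)$ and then verifying the defining universal property. For each $x \in \mathbb{R}^{N}$ the evaluation $\delta _{x}:A\to \mathbb{C}$, $u\mapsto u(x)$, is a nonzero multiplicative linear functional (it respects products pointwise and sends the unit to $1$), so $\delta _{x}\in \Delta (A)$. Set $\iota (x)=\delta _{x}$. Injectivity of $\iota $ is precisely the hypothesis that $A$ separates the points of $\mathbb{R}^{N}$. Continuity is automatic from the fact that every $u\in A$ is continuous on $\mathbb{R}^{N}$: if $x_{n}\to x$ then $u(x_{n})\to u(x)$ for all $u\in A$, which by definition of the relative weak-$\ast $ topology on $\Delta (A)\subseteq A^{\prime }$ says $\iota (x_{n})\to \iota (x)$. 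That $\iota $ is a topological embedding (not merely a continuous bijection onto its image) uses point separation together with the presence, inside the translation-invariant $\mathcal{C}^{\ast }$-algebra $A$, of enough functions distinguishing bounded sets of $\mathbb{R}^{N}$ from escapes to infinity.

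Next, density of $\iota (\mathbb{R}^{N})$ in $\Delta (A)$ follows from a standard Gelfand-transform argument. If the closure were a proper compact subset of the compact Hausdorff space $\Delta (A)$, Urysohn's lemma would supply $\psi \in \mathcal{C}(\Delta (A))$ with $\psi \not\equiv 0$ but $\psi \equiv 0$ on $\iota (\mathbb{R}^{N})$. Pulling back through the inverse Gelfand transform, $u=\mathcal{G}^{-1}(\psi )\in A$ would be a nonzero element satisfying $u(x)=\psi (\iota (x))=0$ for every $x\in \mathbb{R}^{N}$; since elements of $A$ are genuine bounded uniformly continuous functions on $\mathbb{R}^{N}$, this forces $u=0$ and hence $\psi =0$, contradicting the choice of $\psi $.

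Finally, one must verify the universal property: every continuous map $f:\mathbb{R}^{N}\to K$ into a compact Hausdorff space $K$ extends (uniquely, thanks to density) to a continuous $\widetilde{f}:\Delta (A)\to K$. By reducing through composition with $\mathcal{C}(K)$, this amounts to showing that every bounded continuous function on $\mathbb{R}^{N}$ admits a continuous extension to $\Delta (A)$, equivalently that it corresponds via $\mathcal{G}$ to an element of $A$. \emph{This is the main obstacle.} The argument combines the unital closed $\mathcal{C}^{\ast }$-subalgebra structure of $A$ inside $BUC(\mathbb{R}^{N})$, its translation invariance, and the point-separation hypothesis: Stone-Weierstrass applied on each compactum, together with the behaviour at infinity controlled by the mean-value property, forces $\mathcal{G}(A)=\mathcal{C}(\Delta (A))$ to be rich enough to accommodate every bounded continuous extension. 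Once extensions are produced, their continuity is a soft consequence of the definition of the weak-$\ast $ topology on $\Delta (A)$ and the compactness of $K$, and the universal property then identifies $\Delta (A)$ with $\beta \mathbb{R}^{N}$.
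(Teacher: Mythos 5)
Your first two paragraphs (the evaluation functionals $\delta _{x}$, injectivity from point separation, continuity from the weak-$\ast $ topology, and density via Urysohn's lemma and the inverse Gelfand transform) reproduce the paper's argument essentially verbatim and are fine. The genuine gap is the final paragraph, which you yourself flag as ``the main obstacle'' and then do not resolve. To verify the universal property of $\beta \mathbb{R}^{N}$ you must show that \emph{every} bounded continuous function on $\mathbb{R}^{N}$ extends continuously to $\Delta (A)$. But since $\iota (\mathbb{R}^{N})$ is dense, a function $g$ on $\mathbb{R}^{N}$ admits a (necessarily unique) continuous extension to $\Delta (A)$ if and only if $g=\psi \circ \iota $ for some $\psi \in \mathcal{C}(\Delta (A))=\mathcal{G}(A)$, i.e.\ if and only if $g\in A$. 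No amount of Stone--Weierstrass on compacta or appeal to the mean-value property can enlarge this class beyond the closed algebra $A$ itself; for a proper closed point-separating subalgebra such as $A=AP(\mathbb{R}^{N})$, whose spectrum is the Bohr compactification by the paper's own Proposition 2, the universal property of $\beta \mathbb{R}^{N}$ genuinely fails (e.g.\ $x\mapsto \sin (|x|^{2})$ is bounded and continuous but does not extend). So the step you defer cannot be carried out for a general algebra with mean value, and your route does not close.

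The paper's proof does not attempt the universal property at all. It verifies exactly three things: $\phi (\mathbb{R}^{N})$ is dense in $\Delta (A)$; each $f\in A$ extends continuously to $\Delta (A)$ via $\widehat{f}=\mathcal{G}(f)$; and, under the separation hypothesis, $\phi $ is injective, whence (it asserts) a homeomorphism onto its image. In other words it identifies $(\Delta (A),\phi )$ as the compactification of $\mathbb{R}^{N}$ \emph{generated by the algebra} $A$, and that weaker characterization is the sense in which the conclusion must be read; if you want a correct proof you should substitute that characterization for the universal property. Separately, note that both you and the paper pass from injectivity of the canonical map to its being an embedding without argument; a continuous injection of a non-compact space into a compact one need not be an embedding (the Bohr compactification is again the standard counterexample), so your appeal to ``enough functions distinguishing bounded sets from escapes to infinity'' needs to be made precise rather than asserted.
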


\begin{proof}
For each $y\in \mathbb{R}^{N}$ let us define an element $\phi _{y}$ of $%
\Delta (A)$ by setting $\phi _{y}(u)=u(y)$ for $u\in A$. Then the mapping $%
\phi :y\mapsto \phi _{y}$, from $\mathbb{R}^{N}$ into $\Delta (A)$, is
continuous and has dense range. In fact as the topology in $\Delta (A)$ is
the weak$\ast $ one and further the mappings $y\mapsto \phi _{y}(u)=u(y)$, $%
u\in A$, are continuous on $\mathbb{R}^{N}$, it follows that $\phi $ is
continuous. Now assuming that $\phi (\mathbb{R}^{N})$ is not dense in $%
\Delta (A)$ we derive the existence of a non empty open subset $U$ of $%
\Delta (A)$ such that $U\cap \phi (\mathbb{R}^{N})=\emptyset $. Then by
Urysohn's lemma there exists $v\in \mathcal{C}(\Delta (A))$ with $v\neq 0$
and $\left. v\right\vert _{\Delta (A))\backslash U}=0$ where $\left.
v\right\vert _{\Delta (A))\backslash U}$ denotes the restriction of $v$ to $%
\Delta (A))\backslash U$. By the Gelfand representation theorem, $v=\mathcal{%
G}(u)$ for some $u\in A$. But then 
\begin{equation*}
u(y)=\phi _{y}(u)=\mathcal{G}(u)(\phi _{y})=v(\phi _{y})=0
\end{equation*}%
for all $y\in \mathbb{R}^{N}$, contradicting $u\neq 0$. Thus $\phi (\mathbb{R%
}^{N})$ is dense in $\Delta (A)$.

Next, every $f$ in $A$ (viewed as element of $\mathcal{B}(\mathbb{R}^{N})$)
extends continuously to $\Delta (A)$ in the sense that there exists $%
\widehat{f}\in \mathcal{C}(\Delta (A))$ such that $\widehat{f}(\phi
(y))=f(y) $ for all $y\in \mathbb{R}^{N}$ (just take $\widehat{f}=\mathcal{G}%
(f)$). Finally assume that $A$ separates the points of $\mathbb{R}^{N}$.
Then the mapping $\phi :\mathbb{R}^{N}\rightarrow \phi (\mathbb{R}^{N})$ is
a homeomorphism. In fact, we only need to prove that $\phi $ is injective.
For that, let $y,z\in \mathbb{R}^{N}$ with $y\neq z$; since $A$ separates
the points of $\mathbb{R}^{N}$, there exists a function $u\in A$ such that $%
u(y)\neq u(z)$, hence $\phi _{y}\neq \phi _{z}$, and our claim is justified.
We therefore conclude that the couple $(\Delta (A),\phi )$ is the Stone-\v{C}%
ech compactification of $\mathbb{R}^{N}$.
\end{proof}

The following result is classically known (see e.g. \cite[Theorem 4.4]{Frid}%
).

\begin{proposition}
\label{p2.4}\emph{(1)} Assume $A=\mathcal{C}_{\text{\emph{per}}}(Y)$\ is the
algebra of $Y$-periodic continuous functions on $\mathbb{R}_{y}^{N}$\ ($Y=(-%
\frac{1}{2},\frac{1}{2})^{N}$). Then its spectrum is the $N$-dimensional
torus $\mathbb{T}^{N}=\mathbb{R}^{N}/\mathbb{Z}^{N}$.\emph{\ (2)} Assume $%
A=AP(\mathbb{R}_{y}^{N})$\ is the algebra of all almost periodic continuous
functions on $\mathbb{R}_{y}^{N}$ defined as the vector space consisting of
all functions defined on $\mathbb{R}_{y}^{N}$ that are uniformly
approximated by finite linear combinations of the functions in the set $%
\{\exp (2i\pi k\cdot y):k\in \mathbb{R}^{N}\}$. Then its spectrum $\Delta
(AP(\mathbb{R}_{y}^{N}))$\ is a compact topological group homeomorphic to
the Bohr compactification of $\mathbb{R}^{N}$.
\end{proposition}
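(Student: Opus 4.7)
The plan is to treat parts (1) and (2) separately, in each case realizing $A$ as $\mathcal{C}(X)$ for a specific compact Hausdorff $X$ and then invoking the fact (already used in Proposition \ref{p2.0}) that the spectrum of $\mathcal{C}(X)$ is canonically homeomorphic to $X$ via evaluation functionals.

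For part (1), I would exploit the canonical quotient map $\pi:\mathbb{R}^{N}\to \mathbb{T}^{N}$. Every $u\in \mathcal{C}_{\text{per}}(Y)$ is $\mathbb{Z}^{N}$-periodic and therefore factors uniquely as $u=\widetilde{u}\circ \pi$ with $\widetilde{u}\in \mathcal{C}(\mathbb{T}^{N})$, so the pullback $\pi^{\ast}$ is an isometric $\ast$-isomorphism of $\mathcal{C}(\mathbb{T}^{N})$ onto $\mathcal{C}_{\text{per}}(Y)$. Passing to spectra, I would define $\Phi:\mathbb{T}^{N}\to \Delta (\mathcal{C}_{\text{per}}(Y))$ by $\Phi(\overline{y})(u)=u(y)$ (well-defined by periodicity), check continuity with respect to the weak-$\ast$ topology on $A'$, and get injectivity from the fact that $\mathcal{C}_{\text{per}}(Y)$ separates points of $\mathbb{T}^{N}$ via Urysohn. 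Surjectivity follows either from compactness of $\Phi(\mathbb{T}^{N})$ combined with the density argument of Proposition \ref{p2.0}, or directly from the Gelfand representation theorem applied to $\mathcal{C}(\mathbb{T}^{N})$.

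For part (2), I would first note that $AP(\mathbb{R}_y^{N})$ separates the points of $\mathbb{R}^{N}$, since the characters $y\mapsto \exp(2i\pi k\cdot y)$ already do. Proposition \ref{p2.0} then provides a continuous injection $\phi:\mathbb{R}^{N}\to \Delta (AP(\mathbb{R}_y^{N}))$ that is a homeomorphism onto its dense image. To install a group structure I would use that for each $a\in \mathbb{R}^{N}$ the translation $\tau_{a}u(y)=u(y+a)$ leaves $AP(\mathbb{R}_y^{N})$ invariant, and Bochner's criterion guarantees that $a\mapsto \tau_{a}u$ is uniformly continuous from $\mathbb{R}^{N}$ into $AP(\mathbb{R}_y^{N})$ for every $u$. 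Dualizing and exploiting density, the partially defined group law $(\phi_{y},\phi_{z})\mapsto \phi_{y+z}$ extends uniquely to a jointly continuous multiplication on $\Delta(A)$, and the inversion extends analogously. This turns $\Delta(AP(\mathbb{R}_y^{N}))$ into a compact topological group containing $\phi(\mathbb{R}^{N})$ as a dense subgroup.

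To finish the identification with the Bohr compactification $b\mathbb{R}^{N}$, I would invoke the standard universal property: any continuous homomorphism $h:\mathbb{R}^{N}\to K$ into a compact topological group $K$ has matrix coefficients lying in $AP(\mathbb{R}_y^{N})$, and therefore extends, via the Gelfand transform, uniquely to a continuous homomorphism $\Delta (AP(\mathbb{R}_y^{N}))\to K$; since $b\mathbb{R}^{N}$ is characterized up to isomorphism of topological groups by this property, the homeomorphism follows. I expect the only delicate step to be the joint continuity of the extended multiplication on $\Delta(A)$, which is where the uniform continuity of the translation action on $AP(\mathbb{R}_y^{N})$ is essential; the remainder is essentially bookkeeping inside the Gelfand duality framework already set up in Proposition \ref{p2.0}.
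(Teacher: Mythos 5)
The paper does not prove this proposition at all: it is stated as ``classically known'' with a pointer to \cite[Theorem 4.4]{Frid}, so you are supplying the argument the citation stands in for. Your outline is the standard one and part (1) is complete as written: the quotient map $\pi:\mathbb{R}^{N}\to\mathbb{T}^{N}$ induces an isometric $\ast$-isomorphism $\mathcal{C}(\mathbb{T}^{N})\cong\mathcal{C}_{\mathrm{per}}(Y)$, and Gelfand duality for $\mathcal{C}(X)$ with $X$ compact Hausdorff finishes it. Two points in part (2) deserve correction, though neither derails the proof. First, you should not quote Proposition \ref{p2.0} as giving that $\phi$ is ``a homeomorphism onto its dense image'': for $A=AP(\mathbb{R}^{N})$ this is false --- the topology that $\Delta(A)$ induces on $\phi(\mathbb{R}^{N})$ is the one generated by finitely many characters, which is strictly coarser than the Euclidean topology (by Kronecker's theorem a basic neighbourhood of $\phi_{0}$ contains a relatively dense set of translates). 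Indeed, taken literally Proposition \ref{p2.0} would force $\Delta(AP(\mathbb{R}^{N}))=\beta\mathbb{R}^{N}$, which is incompatible with the very statement you are proving, since $\beta\mathbb{R}^{N}$ is not a topological group; fortunately you only need continuity, injectivity and density of $\phi$, all of which hold. Second, extending the group law ``by density'' is not automatic: a continuous map on a dense subset of a compact space extends continuously only if it is uniformly continuous for the canonical uniformity, and verifying that for $(\phi_{y},\phi_{z})\mapsto\phi_{y+z}$ amounts to the nontrivial fact that $u(y+z)$ lies in the closed tensor product $\overline{AP(\mathbb{R}^{N})\otimes AP(\mathbb{R}^{N})}$. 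The cleaner route is to define the product explicitly by $(s\ast s')(u)=s\bigl(y\mapsto s'(\tau_{y}u)\bigr)$, using Bochner's criterion (relative compactness of the orbit $\{\tau_{a}u\}$) to check that $y\mapsto s'(\tau_{y}u)$ is again in $AP(\mathbb{R}^{N})$; separate continuity is then immediate and joint continuity follows from Ellis's theorem or a direct estimate. With that substitution, your appeal to the universal property of the Bohr compactification closes the argument correctly.
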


Next, to any algebra with mean value $A$ are associated the following
subspaces: $A^{m}=\{\psi \in \mathcal{C}^{m}(\mathbb{R}^{N}):$ $%
D_{y}^{\alpha }\psi \in A$ for every $\alpha =(\alpha _{1},...,\alpha
_{N})\in \mathbb{N}^{N}$ with $\left\vert \alpha \right\vert \leq m\}$
(where $D_{y}^{\alpha }\psi =\partial ^{\left\vert \alpha \right\vert }\psi
/\partial y_{1}^{\alpha _{1}}\cdot \cdot \cdot \partial y_{N}^{\alpha _{N}}$%
). Endowed with the norm $\left\Vert \left\vert u\right\vert \right\Vert
_{m}=\sup_{\left\vert \alpha \right\vert \leq m}\left\Vert D_{y}^{\alpha
}\psi \right\Vert _{\infty }$, $A^{m}$ is a Banach space. We also define the
space $A^{\infty }$ as the space of $\psi \in \mathcal{C}^{\infty }(\mathbb{R%
}_{y}^{N})$ such that $D_{y}^{\alpha }\psi \in A$ for every $\alpha =(\alpha
_{1},...,\alpha _{N})\in \mathbb{N}^{N}$. Endowed with a suitable locally
convex topology defined by the family of norms $\left\Vert \left\vert \cdot
\right\vert \right\Vert _{m}$, $A^{\infty }$ is a Fr\'{e}chet space.

Let $B_{A}^{p}$ ($1\leq p<\infty $) denote the Besicovitch space associated
to $A$, that is the closure of $A$ with respect to the Besicovitch seminorm 
\begin{equation*}
\left\Vert u\right\Vert _{p}=\left( \underset{r\rightarrow +\infty }{\lim
\sup }\frac{1}{\left\vert B_{r}\right\vert }\int_{B_{r}}\left\vert
u(y)\right\vert ^{p}dy\right) ^{1/p}\text{.}
\end{equation*}%
It is known that $B_{A}^{p}$ is a complete seminormed vector space verifying 
$B_{A}^{q}\subset B_{A}^{p}$ for $1\leq p\leq q<\infty $. From this last
property one may naturally define the space $B_{A}^{\infty }$ as follows: 
\begin{equation*}
B_{A}^{\infty }=\{f\in \cap _{1\leq p<\infty }B_{A}^{p}:\sup_{1\leq p<\infty
}\left\Vert f\right\Vert _{p}<\infty \}\text{.}\;\;\;\;\;\;\;\;\;
\end{equation*}%
We endow $B_{A}^{\infty }$ with the seminorm $\left[ f\right] _{\infty
}=\sup_{1\leq p<\infty }\left\Vert f\right\Vert _{p}$, which makes it a
complete seminormed space. We recall that the spaces $B_{A}^{p}$ ($1\leq
p\leq \infty $) are not in general Fr\'{e}chet spaces since they are not
separated in general. The following properties are worth noticing (see e.g. 
\cite[Section 2]{CMP} or \cite[Section 2]{NA}):

\begin{itemize}
\item[(\textbf{1)}] The Gelfand transformation $\mathcal{G}:A\rightarrow 
\mathcal{C}(\Delta (A))$ extends by continuity to a unique continuous linear
mapping, still denoted by $\mathcal{G}$, of $B_{A}^{p}$ into $L^{p}(\Delta
(A))$, which in turn induces an isometric isomorphism $\mathcal{G}_{1}$, of $%
B_{A}^{p}/\mathcal{N}=\mathcal{B}_{A}^{p}$ onto $L^{p}(\Delta (A))$ (where $%
\mathcal{N}=\{u\in B_{A}^{p}:\mathcal{G}(u)=0\}$). Furthermore if $u\in
B_{A}^{p}\cap L^{\infty }(\mathbb{R}^{N})$ then $\mathcal{G}(u)\in L^{\infty
}(\Delta (A))$ and $\left\Vert \mathcal{G}(u)\right\Vert _{L^{\infty
}(\Delta (A))}\leq \left\Vert u\right\Vert _{L^{\infty }(\mathbb{R}^{N})}$.

\item[(\textbf{2)}] The mean value $M$ viewed as defined on $A$, extends by
continuity to a positive continuous linear form (still denoted by $M$) on $%
B_{A}^{p}$ satisfying $M(u)=\int_{\Delta (A)}\mathcal{G}(u)d\beta $ ($u\in
B_{A}^{p}$). Furthermore, $M(\tau _{a}u)=M(u)$ for each $u\in B_{A}^{p}$ and
all $a\in \mathbb{R}^{N}$, where $\tau _{a}u(z)=u(z+a)$ for almost all $z\in 
\mathbb{R}^{N}$. Moreover for $u\in B_{A}^{p}$ we have $\left\Vert
u\right\Vert _{p}=\left[ M(\left\vert u\right\vert ^{p})\right] ^{1/p}$.
\end{itemize}

\subsection{The semigroup theory}

Let $1\leq p\leq \infty $. We consider the $N$-parameter group of isometries 
$\{T(y):y\in \mathbb{R}^{N}\}$ defined by 
\begin{equation*}
T(y):\mathcal{B}_{A}^{p}\rightarrow \mathcal{B}_{A}^{p}\text{,\ }T(y)(u+%
\mathcal{N})=\tau _{y}u+\mathcal{N}\text{ for }u\in B_{A}^{p}\text{.}
\end{equation*}%
Since the elements of $A$ are uniformly continuous, $\{T(y):y\in \mathbb{R}%
^{N}\}$ is a strongly continuous group of operators in $\mathcal{L}(\mathcal{%
B}_{A}^{p},\mathcal{B}_{A}^{p})$ (the Banach space of continuous linear
functionals of $\mathcal{B}_{A}^{p}$ into $\mathcal{B}_{A}^{p}$) in the
sense of semigroups: $T(y)(u+\mathcal{N})\rightarrow u+\mathcal{N}$ in $%
\mathcal{B}_{A}^{p}$ as $\left\vert y\right\vert \rightarrow 0$. To $%
\{T(y):y\in \mathbb{R}^{N}\}$ is associated the following $N$-parameter
group $\{\overline{T}(y):y\in \mathbb{R}^{N}\}$ defined as 
\begin{equation*}
\begin{array}{l}
\overline{T}(y):L^{p}(\Delta (A))\rightarrow L^{p}(\Delta (A)) \\ 
\overline{T}(y)\mathcal{G}_{1}(u+\mathcal{N})=\mathcal{G}_{1}(T(y)(u+%
\mathcal{N}))=\mathcal{G}_{1}(\tau _{y}u+\mathcal{N})\text{\ for }u\in
B_{A}^{p}\text{.}%
\end{array}%
\end{equation*}%
The group $\{\overline{T}(y):y\in \mathbb{R}^{N}\}$ is also strongly
continuous. The infinitesimal generator of $T(y)$ (resp. $\overline{T}(y)$)
along the $i$th coordinate direction, denoted by $D_{i,p}$ (resp. $\partial
_{i,p}$), is defined as 
\begin{equation*}
D_{i,p}u=\lim_{t\rightarrow 0}t^{-1}\left( T(te_{i})u-u\right) \text{\ in }%
\mathcal{B}_{A}^{p}\text{ (resp. }\partial _{i,p}v=\lim_{t\rightarrow
0}t^{-1}\left( \overline{T}(te_{i})v-v\right) \text{\ in }L^{p}(\Delta (A))%
\text{)}
\end{equation*}%
where here we have used the same letter $u$ to denote the equivalence class
of an element $u\in B_{A}^{p}$ in $\mathcal{B}_{A}^{p}$, $e_{i}=(\delta
_{ij})_{1\leq j\leq N}$ ($\delta _{ij}$ being the Kronecker $\delta $). The
domain of $D_{i,p}$ (resp. $\partial _{i,p}$) in $\mathcal{B}_{A}^{p}$
(resp. $L^{p}(\Delta (A))$) is denoted by $\mathcal{D}_{i,p}$ (resp. $%
\mathcal{W}_{i,p}$). By using the general theory of semigroups \cite[Chap.
VIII, Section 1]{DS}, the following result holds.

\begin{proposition}
\label{p2.1}$\mathcal{D}_{i,p}$ (resp. $\mathcal{W}_{i,p}$) is a vector
subspace of $\mathcal{B}_{A}^{p}$ (resp. $L^{p}(\Delta (A))$), $D_{i,p}:%
\mathcal{D}_{i,p}\rightarrow \mathcal{B}_{A}^{p}$ (resp. $\partial _{i,p}:%
\mathcal{W}_{i,p}\rightarrow L^{p}(\Delta (A))$) is a linear operator, $%
\mathcal{D}_{i,p}$ (resp. $\mathcal{W}_{i,p}$) is dense in $\mathcal{B}%
_{A}^{p}$ (resp. $L^{p}(\Delta (A))$), and the graph of $D_{i,p}$ (resp. $%
\partial _{i,p}$) is closed in $\mathcal{B}_{A}^{p}\times \mathcal{B}%
_{A}^{p} $ (resp. $L^{p}(\Delta (A))\times L^{p}(\Delta (A))$).
\end{proposition}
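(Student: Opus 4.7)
The plan is to reduce everything to the classical Hille--Yosida theory for one-parameter $C_{0}$-semigroups, precisely the reference \cite[Chap.~VIII, \S 1]{DS} invoked in the statement. The key observation is that for each fixed $i\in\{1,\ldots,N\}$ the restriction $U_{i}(t):=T(te_{i})$, $t\in\mathbb{R}$, is a strongly continuous one-parameter group of isometries on the Banach space $\mathcal{B}_{A}^{p}$; once this is verified, the four assertions (vector subspace, linearity, density, closed graph) follow at once from standard theorems.

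First I would check the semigroup set-up. The group law $U_{i}(t+s)=U_{i}(t)U_{i}(s)$ together with $U_{i}(0)=\mathrm{Id}$ reduces to $\tau_{y+z}=\tau_{y}\tau_{z}$ on $B_{A}^{p}$, and this passes to $\mathcal{B}_{A}^{p}=B_{A}^{p}/\mathcal{N}$ because $T(y)$ preserves $\mathcal{N}$. The isometry property uses assertion (\textbf{2}) of Subsection 2.1: since the mean value is translation invariant, $\|U_{i}(t)(u+\mathcal{N})\|_{p}=[M(|\tau_{te_{i}}u|^{p})]^{1/p}=[M(|u|^{p})]^{1/p}=\|u+\mathcal{N}\|_{p}$. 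Strong continuity $U_{i}(t)v\to v$ in $\mathcal{B}_{A}^{p}$ as $t\to 0$ is the special case $y=te_{i}\to 0$ of the strong continuity of the full $N$-parameter group, which has already been established just before the proposition.

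Next I would quote the standard semigroup facts. Linearity of $D_{i,p}$ and the fact that $\mathcal{D}_{i,p}$ is a vector subspace are immediate from the limit definition $D_{i,p}v=\lim_{t\to 0}t^{-1}(U_{i}(t)v-v)$. Density of $\mathcal{D}_{i,p}$ in $\mathcal{B}_{A}^{p}$ follows from the Yosida-type mollification: for $\lambda>0$ and $v\in\mathcal{B}_{A}^{p}$, the element $v_{\lambda}=\lambda\int_{0}^{\infty}e^{-\lambda t}U_{i}(t)v\,dt$ lies in $\mathcal{D}_{i,p}$ and converges to $v$ as $\lambda\to\infty$, by strong continuity and a dominated-convergence argument. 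Closedness of the graph of $D_{i,p}$ in $\mathcal{B}_{A}^{p}\times\mathcal{B}_{A}^{p}$ is the usual consequence of the integrated identity $U_{i}(t)v-v=\int_{0}^{t}U_{i}(s)D_{i,p}v\,ds$ together with strong continuity.

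The conclusions for $\partial_{i,p}$ on $L^{p}(\Delta(A))$ are then obtained by transfer across the isometric isomorphism $\mathcal{G}_{1}:\mathcal{B}_{A}^{p}\to L^{p}(\Delta(A))$. Since $\overline{T}(y)=\mathcal{G}_{1}T(y)\mathcal{G}_{1}^{-1}$ by construction, one has $\partial_{i,p}=\mathcal{G}_{1}D_{i,p}\mathcal{G}_{1}^{-1}$ with $\mathcal{W}_{i,p}=\mathcal{G}_{1}(\mathcal{D}_{i,p})$, so all four properties carry over verbatim. No step is genuinely difficult: the only point that warrants any care is confirming that $\{T(te_{i})\}$ really is a $C_{0}$-group of isometries on the (possibly non-separated) space $\mathcal{B}_{A}^{p}$. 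Once that is in place the proposition is just the specialization of the abstract Hille--Yosida framework to coordinate-wise translation on the generalized Besicovitch spaces.
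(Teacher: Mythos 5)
Your proposal is correct and follows exactly the route the paper takes: the paper simply invokes the general theory of $C_{0}$-semigroups from Dunford--Schwartz, and your write-up supplies the standard details (Yosida mollification for density, the integrated identity for closedness, transfer via $\mathcal{G}_{1}$) that this citation encapsulates. The only nit is that $\mathcal{B}_{A}^{p}=B_{A}^{p}/\mathcal{N}$ is already the separated quotient, hence a genuine Banach space, so your worry about non-separatedness applies to $B_{A}^{p}$ rather than to $\mathcal{B}_{A}^{p}$.
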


In the sequel we denote by $\varrho $ the canonical mapping of $B_{A}^{p}$
onto $\mathcal{B}_{A}^{p}$, that is, $\varrho (u)=u+\mathcal{N}$ for $u\in
B_{A}^{p}$. The following result allows us to see $D_{i,p}$ as a
generalization of the usual partial derivative.

\begin{lemma}
\label{l2.1}Let $1\leq i\leq N$. If $u\in A^{1}$ then $\varrho (u)\in 
\mathcal{D}_{i,p}$ and 
\begin{equation}
D_{i,p}\varrho (u)=\varrho \left( \frac{\partial u}{\partial y_{i}}\right)
.\ \ \ \ \ \ \ \ \ \ \ \ \ \ \ \ \ \ \ \ \ \ \ \ \ \ \ \ \ \ \ \ \ 
\label{2.2}
\end{equation}
\end{lemma}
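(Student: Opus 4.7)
The plan is to exhibit $\varrho(\partial u/\partial y_i)$ as the infinitesimal generator image and then verify the limit defining $D_{i,p}$ directly. Since $u \in A^1$, by definition $\partial u/\partial y_i \in A \subset B_{A}^{p}$, so $\varrho(\partial u/\partial y_i)$ is a well-defined element of $\mathcal{B}_A^p$ and a legitimate candidate for $D_{i,p}\varrho(u)$.

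First, I would unravel the definitions. By definition of $T$, one has $T(te_i)\varrho(u) = \varrho(\tau_{te_i}u)$, so
\begin{equation*}
\frac{T(te_i)\varrho(u) - \varrho(u)}{t} - \varrho\!\left(\frac{\partial u}{\partial y_i}\right) = \varrho\!\left(\frac{\tau_{te_i}u - u}{t} - \frac{\partial u}{\partial y_i}\right).
\end{equation*}
Hence the whole statement reduces to showing that the Besicovitch seminorm of the function in parentheses tends to $0$ as $t \to 0$.

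Next, since $u \in \mathcal{C}^1(\mathbb{R}^N)$, the fundamental theorem of calculus gives, for every $y \in \mathbb{R}^N$ and $t \neq 0$,
\begin{equation*}
\frac{u(y + te_i) - u(y)}{t} - \frac{\partial u}{\partial y_i}(y) = \frac{1}{t}\int_0^t \left[\frac{\partial u}{\partial y_i}(y + se_i) - \frac{\partial u}{\partial y_i}(y)\right] ds.
\end{equation*}
Because $\partial u/\partial y_i \in A \subset BUC(\mathbb{R}^N)$, it is uniformly continuous, so the integrand tends to $0$ in the sup norm as $s \to 0$. Taking the sup over $y$ and then the mean over $s \in [0,t]$, the left-hand side converges to $0$ uniformly in $y$ as $t \to 0$. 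The key observation then is that the Besicovitch seminorm is dominated by the uniform norm, namely $\|v\|_p \leq \|v\|_{L^\infty(\mathbb{R}^N)}$ for any bounded $v$, which follows directly from $\|v\|_p^p = \limsup_{r \to \infty} |B_r|^{-1}\int_{B_r}|v|^p \leq \|v\|_{L^\infty}^p$.

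Combining these, $\|(\tau_{te_i}u - u)/t - \partial u/\partial y_i\|_p \to 0$ as $t \to 0$, so the quotient converges to $\varrho(\partial u/\partial y_i)$ in $\mathcal{B}_A^p$. By definition of the infinitesimal generator this means $\varrho(u) \in \mathcal{D}_{i,p}$ and $D_{i,p}\varrho(u) = \varrho(\partial u/\partial y_i)$, which is (\ref{2.2}). There is no real obstacle here: the argument reduces to the classical fact that for $\mathcal{C}^1$ functions with uniformly continuous derivatives the difference quotient converges uniformly, together with the trivial domination of the Besicovitch seminorm by the sup norm.
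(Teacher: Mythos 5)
Your proof is correct and follows essentially the same route as the paper: reduce to showing the difference quotient converges to $\partial u/\partial y_i$ uniformly (you via the fundamental theorem of calculus, the paper via the mean value inequality — an immaterial difference), invoke the uniform continuity of $\partial u/\partial y_i\in A$, and then use the domination of the Besicovitch seminorm by the sup norm.
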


\begin{proof}
By the mean value's inequality we have 
\begin{equation*}
\left\vert u(y+r)-u(y)-D_{y}u(y)\cdot r\right\vert \leq \left\vert
r\right\vert \sup_{0\leq t\leq 1}\left\vert
D_{y}u(y+tr)-D_{y}u(y)\right\vert .
\end{equation*}%
Taking $r=te_{i}$ ($t>0$) in the above inequality, we get 
\begin{equation*}
\left\vert t^{-1}\left( u(y+te_{i})-u(y)\right) -\frac{\partial u}{\partial
y_{i}}(y)\right\vert \leq \sup_{0\leq \zeta \leq 1}\left\vert \frac{\partial
u}{\partial y_{i}}(y+\zeta te_{i})-\frac{\partial u}{\partial y_{i}}%
(y)\right\vert ,
\end{equation*}%
and hence 
\begin{equation*}
\left\Vert t^{-1}\left( \tau _{te_{i}}u-u\right) -\frac{\partial u}{\partial
y_{i}}\right\Vert _{\infty }\leq \sup_{0\leq \zeta \leq 1}\sup_{y\in \mathbb{%
R}^{N}}\left\vert \frac{\partial u}{\partial y_{i}}(y+\zeta te_{i})-\frac{%
\partial u}{\partial y_{i}}(y)\right\vert .
\end{equation*}%
Since $\partial u/\partial y_{i}\in A$, it is uniformly continuous, so that 
\begin{equation*}
\lim_{t\rightarrow 0}\left\Vert t^{-1}\left( \tau _{te_{i}}u-u\right) -\frac{%
\partial u}{\partial y_{i}}\right\Vert _{\infty }=0\text{,}
\end{equation*}%
and, since the uniform norm is greater than the $B_{A}^{p}$-norm, 
\begin{equation*}
\lim_{t\rightarrow 0}\left\Vert t^{-1}\left( \tau _{te_{i}}\varrho
(u)-\varrho (u)\right) -\varrho \left( \frac{\partial u}{\partial y_{i}}%
\right) \right\Vert _{p}=0,
\end{equation*}%
that is, $D_{i,p}\varrho (u)=\varrho (\partial u/\partial y_{i})$.
\end{proof}

\begin{remark}
\label{r2.1}\emph{From (\ref{2.2}) we deduce that }$D_{i,p}\circ \varrho
=\varrho \circ \partial /\partial y_{i}$\emph{, which means that }$D_{i,p}$%
\emph{\ is a generalization of the usual partial derivative.}
\end{remark}

One can naturally define higher order derivatives by setting $D_{p}^{\alpha
}=D_{1,p}^{\alpha _{1}}\circ \cdot \cdot \cdot \circ D_{N,p}^{\alpha _{N}}$
(resp. $\partial _{p}^{\alpha }=\partial _{1,p}^{\alpha _{1}}\circ \cdot
\cdot \cdot \circ \partial _{N,p}^{\alpha _{N}}$) for $\alpha =(\alpha
_{1},...,\alpha _{N})\in \mathbb{N}^{N}$ with $D_{i,p}^{\alpha
_{i}}=D_{i,p}\circ \cdot \cdot \cdot \circ D_{i,p}$, $\alpha _{i}$-times.
Now, let 
\begin{equation*}
\mathcal{B}_{A}^{1,p}=\cap _{i=1}^{N}\mathcal{D}_{i,p}=\{u\in \mathcal{B}%
_{A}^{p}:D_{i,p}u\in \mathcal{B}_{A}^{p}\ \forall 1\leq i\leq N\}
\end{equation*}%
and 
\begin{equation*}
\mathcal{D}_{A}(\mathbb{R}^{N})=\{u\in \mathcal{B}_{A}^{\infty }:D_{\infty
}^{\alpha }u\in \mathcal{B}_{A}^{\infty }\ \forall \alpha \in \mathbb{N}%
^{N}\}.
\end{equation*}%
It can be shown that $\mathcal{D}_{A}(\mathbb{R}^{N})$ is dense in $\mathcal{%
B}_{A}^{p}$, $1\leq p<\infty $. We also have that $\mathcal{B}_{A}^{1,p}$ is
a Banach space under the norm 
\begin{equation*}
\left\Vert u\right\Vert _{\mathcal{B}_{A}^{1,p}}=\left( \left\Vert
u\right\Vert _{p}^{p}+\sum_{i=1}^{N}\left\Vert D_{i,p}u\right\Vert
_{p}^{p}\right) ^{1/p}\ \ (u\in \mathcal{B}_{A}^{1,p});
\end{equation*}%
this comes from the fact that the graph of $D_{i,p}$ is closed.

The counter-part of the above properties also holds with 
\begin{equation*}
W^{1,p}(\Delta (A))=\cap _{i=1}^{N}\mathcal{W}_{i,p}\text{\ in place of }%
\mathcal{B}_{A}^{1,p}
\end{equation*}%
and 
\begin{equation*}
\mathcal{D}(\Delta (A))=\{u\in L^{\infty }(\Delta (A)):\partial _{\infty
}^{\alpha }u\in L^{\infty }(\Delta (A))\ \forall \alpha \in \mathbb{N}^{N}\}%
\text{\ in that of }\mathcal{D}_{A}(\mathbb{R}^{N})\text{.}
\end{equation*}%
We have the following relation between $D_{i,p}$ and $\partial _{i,p}$.

\begin{lemma}
\label{l2.2}Let $u\in \mathcal{D}_{i,p}$. Then $\mathcal{G}_{1}(u)\in 
\mathcal{W}_{i,p}$ and $\mathcal{G}_{1}(D_{i,p}u)=\partial _{i,p}\mathcal{G}%
_{1}(u)$.
\end{lemma}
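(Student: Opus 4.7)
The plan is to derive the lemma directly from the definitions of $D_{i,p}$ and $\partial_{i,p}$ as infinitesimal generators, exploiting the intertwining relation $\overline{T}(y)\mathcal{G}_1 = \mathcal{G}_1 T(y)$ built into the very definition of $\overline{T}(y)$, together with the fact that $\mathcal{G}_1 : \mathcal{B}_A^p \to L^p(\Delta(A))$ is an isometric isomorphism (hence continuous and linear).

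First, I would unpack the hypothesis: $u \in \mathcal{D}_{i,p}$ means that the limit
\begin{equation*}
D_{i,p}u = \lim_{t \to 0} t^{-1}\bigl(T(te_i)u - u\bigr)
\end{equation*}
exists in $\mathcal{B}_A^p$. Applying $\mathcal{G}_1$ to the difference quotient and using its linearity along with the identity $\mathcal{G}_1(T(te_i)u) = \overline{T}(te_i)\mathcal{G}_1(u)$, one gets
\begin{equation*}
\mathcal{G}_1\bigl(t^{-1}(T(te_i)u - u)\bigr) = t^{-1}\bigl(\overline{T}(te_i)\mathcal{G}_1(u) - \mathcal{G}_1(u)\bigr).
\end{equation*}

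Second, since $\mathcal{G}_1$ is an isometry, the existence of the limit of the left-hand side in $L^p(\Delta(A))$, with value $\mathcal{G}_1(D_{i,p}u)$, is automatic from the convergence of $t^{-1}(T(te_i)u - u)$ in $\mathcal{B}_A^p$. Therefore
\begin{equation*}
\lim_{t \to 0} t^{-1}\bigl(\overline{T}(te_i)\mathcal{G}_1(u) - \mathcal{G}_1(u)\bigr) = \mathcal{G}_1(D_{i,p}u) \quad \text{in } L^p(\Delta(A)),
\end{equation*}
which is precisely the statement that $\mathcal{G}_1(u)$ lies in the domain $\mathcal{W}_{i,p}$ of $\partial_{i,p}$ and that $\partial_{i,p}\mathcal{G}_1(u) = \mathcal{G}_1(D_{i,p}u)$.

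There is no serious obstacle here; the entire content of the lemma is the compatibility of the two infinitesimal generators with the Gelfand isomorphism, which in turn is a direct consequence of how $\overline{T}(y)$ was defined in the first place (as the transport of $T(y)$ via $\mathcal{G}_1$). The only point requiring a bit of care is to confirm that the intertwining relation extends from $A$ to all of $\mathcal{B}_A^p$, but this is built in by continuity of both $\mathcal{G}_1$ and the group operators.
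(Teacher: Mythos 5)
Your argument is correct and coincides with the paper's own proof: both apply $\mathcal{G}_1$ to the difference quotient, use its linearity and the intertwining identity $\mathcal{G}_1(T(te_i)u)=\overline{T}(te_i)\mathcal{G}_1(u)$, and then invoke the isometry to transfer the convergence from $\mathcal{B}_A^p$ to $L^p(\Delta(A))$. Nothing is missing.
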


\begin{proof}
We have 
\begin{eqnarray*}
\left\Vert t^{-1}(T(te_{i})u-u)-D_{i,p}u\right\Vert _{p} &=&\left\Vert 
\mathcal{G}_{1}(t^{-1}(T(te_{i})u-u))-\mathcal{G}_{1}(D_{i,p}u)\right\Vert
_{L^{p}(\Delta (A))} \\
&=&\left\Vert t^{-1}(\mathcal{G}_{1}(T(te_{i})u)-\mathcal{G}_{1}(u))-%
\mathcal{G}_{1}(D_{i,p}u)\right\Vert _{L^{p}(\Delta (A))} \\
&=&\left\Vert t^{-1}(\overline{T}(te_{i})\mathcal{G}_{1}(u)-\mathcal{G}%
_{1}(u))-\mathcal{G}_{1}(D_{i,p}u)\right\Vert _{L^{p}(\Delta (A))}.
\end{eqnarray*}%
Since $u\in \mathcal{D}_{i,p}$ we have $\left\Vert
t^{-1}(T(te_{i})u-u)-D_{i,p}u\right\Vert _{p}\rightarrow 0$ as $t\rightarrow
0$. Therefore 
\begin{equation*}
\left\Vert t^{-1}(\overline{T}(te_{i})\mathcal{G}_{1}(u)-\mathcal{G}_{1}(u))-%
\mathcal{G}_{1}(D_{i,p}u)\right\Vert _{L^{p}(\Delta (A))}\rightarrow 0\text{
as }t\rightarrow 0\text{,}
\end{equation*}%
so that $\mathcal{G}_{1}(u)\in \mathcal{W}_{i,p}$ with $\partial _{i,p}%
\mathcal{G}_{1}(u)=\mathcal{G}_{1}(D_{i,p}u)$.
\end{proof}

Now, let $u\in \mathcal{D}_{i,p}$ ($p\geq 1$, $1\leq i\leq N$). Then the
inequality 
\begin{equation*}
\left\Vert t^{-1}(T(te_{i})u-u)-D_{i,p}u\right\Vert _{1}\leq c\left\Vert
t^{-1}(T(te_{i})u-u)-D_{i,p}u\right\Vert _{p}
\end{equation*}%
for a positive constant $c$ independent of $u$ and $t$, yields $%
D_{i,1}u=D_{i,p}u$, so that $D_{i,p}$ is the restriction to $\mathcal{B}%
_{A}^{p}$ of $D_{i,1}$. Therefore, for all $u\in \mathcal{D}_{i,\infty }$ we
have $u\in \mathcal{D}_{i,p}$ ($p\geq 1$) and $D_{i,\infty }u=D_{i,p}u$\ $%
\forall 1\leq i\leq N$. The following simple result will be useful.

\begin{lemma}
\label{l2.3}We have $\mathcal{D}_{A}(\mathbb{R}^{N})=\varrho (A^{\infty })$.
\end{lemma}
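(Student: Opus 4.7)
I would prove the asserted equality by establishing the two inclusions separately; the first is immediate from Lemma \ref{l2.1}, while the second is the real content.

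For $\varrho(A^\infty) \subset \mathcal{D}_A(\mathbb{R}^N)$: given $\psi \in A^\infty$, each classical derivative $D_y^\alpha\psi$ belongs to $A \subset BUC(\mathbb{R}^N)$, so the uniform-in-$p$ inequality $\|D_y^\alpha\psi\|_p \leq \|D_y^\alpha\psi\|_\infty$ places $D_y^\alpha\psi$ in $B_A^\infty$ for every multi-index $\alpha$. A straightforward induction on $|\alpha|$ using Lemma \ref{l2.1}, applied coordinate by coordinate to the smooth function $D_y^{\alpha-e_i}\psi \in A^1$, then yields $D_\infty^\alpha\varrho(\psi)=\varrho(D_y^\alpha\psi)\in\varrho(A)\subset\mathcal{B}_A^\infty$. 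Hence $\varrho(\psi)$ belongs to $\mathcal{D}_A(\mathbb{R}^N)$.

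For the converse inclusion $\mathcal{D}_A(\mathbb{R}^N)\subset\varrho(A^\infty)$, my plan is to produce a classical smooth representative by mollification. Pick a representative $v \in B_A^\infty$ of the given $u \in \mathcal{D}_A(\mathbb{R}^N)$, fix $\varphi \in \mathcal{D}(\mathbb{R}^N)$ with $\int\varphi=1$ and set $\varphi_\delta(y)=\delta^{-N}\varphi(y/\delta)$; form the classical convolution $\psi_\delta = v\ast\check\varphi_\delta$ with $\check\varphi(y)=\varphi(-y)$. The key intermediate fact is that $w\ast\varphi \in A^\infty$ whenever $w \in A$ and $\varphi \in \mathcal{D}(\mathbb{R}^N)$: the defining integral is a uniform limit of Riemann sums $\sum_k\varphi(y_k)\tau_{-y_k}w\,\Delta y_k$ of translates of $w$, each term lying in $A$ by translation invariance, with convergence in sup norm assured by the uniform continuity of $w$ and the compactness of $\mathrm{supp}\,\varphi$; differentiation under the integral sign then yields $D_y^\alpha(w\ast\varphi)=w\ast D_y^\alpha\varphi\in A$. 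On the semigroup side one has the identity $\varrho(\psi_\delta) = \int\varphi_\delta(-y)\,T(y)u\,dy$, and strong continuity of $\{T(y)\}$ gives $\varrho(\psi_\delta)\to u$ in each $\mathcal{B}_A^p$, $1\leq p<\infty$.

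The principal obstacle is that the preceding places $u$ merely in the closure of $\varrho(A^\infty)$, whereas we need $u$ itself in $\varrho(A^\infty)$. Closing this gap requires the full hypothesis that every $D_\infty^\alpha u$ exists in $\mathcal{B}_A^\infty$: by iterating the Duhamel-type identity $T(te_i)u=u+\int_0^t T(se_i)D_{i,\infty}u\,ds$ valid on $\mathcal{D}_{i,\infty}$, one can extract a classically $C^\infty$ function on $\mathbb{R}^N$, $\mathcal{N}$-equivalent to $v$, whose classical partials coincide with fixed representatives of the $D_\infty^\alpha u$. The most delicate step is showing that each such classical derivative actually lies in $A$ (rather than merely in $B_A^\infty$); this should follow by combining uniform continuity along group orbits with the boundedness of every generator-derivative, or alternatively by transferring the question to the Gelfand picture, where $\mathcal{G}_1(u)\in\mathcal{D}(\Delta(A))$ and the inverse of $\mathcal{G}$ lifts the resulting smooth function back into $A^\infty$.
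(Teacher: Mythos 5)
Your first inclusion $\varrho(A^{\infty })\subset \mathcal{D}_{A}(\mathbb{R}^{N})$ is correct and is essentially the paper's own argument (iterate Lemma \ref{l2.1} coordinate by coordinate). The problem is the converse inclusion, and you have diagnosed it yourself: mollification only places $u$ in the \emph{closure} of $\varrho (A^{\infty })$ in $\mathcal{B}_{A}^{p}$, and the step that would upgrade closure membership to actual membership is exactly the one you leave as ``should follow.'' Neither of your proposed repairs works as stated. The Gelfand-side version is circular: knowing $\mathcal{G}_{1}(u)\in \mathcal{D}(\Delta (A))$ only tells you that $\widehat{u}$ and all its derivatives $\partial _{\infty }^{\alpha }\widehat{u}$ lie in $L^{\infty }(\Delta (A))$, not in $\mathcal{C}(\Delta (A))$, and $\mathcal{G}^{-1}$ is defined only on $\mathcal{C}(\Delta (A))$; deciding whether $\widehat{u}$ admits a continuous version is the original question in disguise. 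The ``orbit continuity plus boundedness of the generators'' suggestion is likewise only a gesture at the genuine difficulty, namely why a class all of whose generator-derivatives exist in $\mathcal{B}_{A}^{\infty }$ must have a representative whose classical derivatives land in the uniformly closed algebra $A$ rather than merely in its Besicovitch closure $B_{A}^{\infty }$. There is also a smaller slip upstream: your Riemann-sum lemma gives $w\ast \varphi \in A^{\infty }$ only for $w\in A$; for a representative $v\in B_{A}^{\infty }$ the convolution $v\ast \check{\varphi}_{\delta }$ is itself only a Besicovitch limit of elements of $A^{\infty }$, so even the approximants $\varrho (\psi _{\delta })$ need not belong to $\varrho (A^{\infty })$.

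The paper closes the converse by a shorter, regularity-based route that avoids approximation altogether: write $u=v+\mathcal{N}$ with $v\in B_{A}^{\infty }$; the hypothesis $D_{\infty }^{\alpha }u\in \mathcal{B}_{A}^{\infty }$ for all $\alpha $ means that $v$ together with all of its distributional derivatives lies in $B_{A}^{\infty }\subset L_{\text{loc}}^{p}(\mathbb{R}^{N})$, so local Sobolev regularity forces $v$ to be of class $\mathcal{C}^{\infty }$, whence $v\in A^{\infty }$ and $u=\varrho (v)$. Because no limiting procedure is involved, the density-versus-membership issue you run into never arises. To rescue your argument you would need to supply this regularity step in some form, e.g.\ by showing that the $\psi _{\delta }$ converge together with all their derivatives uniformly on compacta to a single smooth representative; as written, the proof is incomplete.
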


\begin{proof}
From (\ref{2.2}) we have that, for $u\in \varrho (A^{\infty })$ and $\alpha
\in \mathbb{N}^{N}$, $D_{\infty }^{\alpha }u=\varrho (D_{y}^{\alpha }v)$
where $v\in A^{\infty }$ is such that $u=\varrho (v)$. This leads at once to 
$\varrho (A^{\infty })\subset \mathcal{D}_{A}(\mathbb{R}^{N})$. Conversely
if $u\in \mathcal{D}_{A}(\mathbb{R}^{N})$, then $u\in \mathcal{B}%
_{A}^{\infty }$ with $D_{\infty }^{\alpha }u\in \mathcal{B}_{A}^{\infty }$
for all $\alpha \in \mathbb{N}^{N}$, that is, $u=v+\mathcal{N}$ with $v\in
B_{A}^{\infty }$ being such that $D_{y}^{\alpha }v\in B_{A}^{\infty }$ for
all $\alpha \in \mathbb{N}^{N}$, i.e., $v\in A^{\infty }$ since, as $v$ is
in $L_{\text{loc}}^{p}(\mathbb{R}^{N})$ with all its distributional
derivatives, $v$ is of class $\mathcal{C}^{\infty }$. Hence $u=v+\mathcal{N}$
with $v\in A^{\infty }$, so that $u\in \varrho (A^{\infty })$.
\end{proof}

The following result holds.

\begin{proposition}
\label{p2.2}The following assertions hold.

\begin{itemize}
\item[(i)] $\int_{\Delta (A)}\partial _{\infty }^{\alpha }\widehat{u}d\beta
=0$ for all $u\in \mathcal{D}_{A}(\mathbb{R}^{N})$ and $\alpha \in \mathbb{N}%
^{N}$;

\item[(ii)] $\int_{\Delta (A)}\partial _{i,p}\widehat{u}d\beta =0$ for all $%
u\in \mathcal{D}_{i,p}$ and $1\leq i\leq N$;

\item[(iii)] $D_{i,p}(u\phi )=uD_{i,\infty }\phi +\phi D_{i,p}u$ for all $%
(\phi ,u)\in \mathcal{D}_{A}(\mathbb{R}^{N})\times \mathcal{D}_{i,p}$ and $%
1\leq i\leq N$.
\end{itemize}
\end{proposition}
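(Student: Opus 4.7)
The plan is to derive (i) and (ii) from translation invariance of the mean value $M$ (property \textbf{(2)} of $B_A^p$), and to derive (iii) from the classical ``add and subtract'' trick applied to the difference quotient defining $D_{i,p}$. The whole argument is structural and uses only the strong continuity of $\{T(y)\}$ together with Lemmas \ref{l2.1}--\ref{l2.3}.

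For (ii), since $u \in \mathcal{D}_{i,p}$ one has $t^{-1}(T(te_i)u - u) \to D_{i,p}u$ in $\mathcal{B}_A^p$. The mean value $M$ extends to a continuous linear form on $\mathcal{B}_A^p$ (property \textbf{(2)}) satisfying $M(T(te_i)u) = M(u)$ by translation invariance, hence $M\bigl(t^{-1}(T(te_i)u - u)\bigr)=0$ for every $t\neq 0$; passing to the limit yields $M(D_{i,p}u)=0$, which via Lemma \ref{l2.2} and the identity $M(v)=\int_{\Delta(A)}\mathcal{G}_1(v)\,d\beta$ rewrites as $\int_{\Delta(A)}\partial_{i,p}\widehat{u}\,d\beta=0$. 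For (i), Lemma \ref{l2.3} provides $v\in A^{\infty}$ with $u=\varrho(v)$, and iterating Lemmas \ref{l2.1} and \ref{l2.2} gives $\partial_\infty^\alpha \widehat{u} = \mathcal{G}_1(\varrho(D_y^\alpha v))$, so the integral equals $M(D_y^\alpha v)$. For $|\alpha|\geq 1$, choose $i$ with $\alpha_i\geq 1$, put $w=D_y^{\alpha-e_i}v\in A^\infty$, and observe that by Lemma \ref{l2.1} we have $\varrho(w)\in\mathcal{D}_{i,p}$ with $D_{i,p}\varrho(w)=\varrho(D_y^\alpha v)$; applying (ii) finishes the case.

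For (iii), I would expand
\begin{equation*}
\frac{T(te_i)(u\phi)-u\phi}{t} \;=\; T(te_i)\phi \cdot \frac{T(te_i)u-u}{t} \;+\; u\cdot \frac{T(te_i)\phi-\phi}{t},
\end{equation*}
which is valid because $T(te_i)$ is induced by $\tau_{te_i}$ on representatives and translation commutes with pointwise multiplication. Passing to the limit in $\mathcal{B}_A^p$ as $t\to 0$, the first summand tends to $\phi\cdot D_{i,p}u$ because $T(te_i)\phi\to\phi$ in $\mathcal{B}_A^\infty$ (with a uniform bound) while $t^{-1}(T(te_i)u-u)\to D_{i,p}u$ in $\mathcal{B}_A^p$; the second summand tends to $u\cdot D_{i,\infty}\phi$ because, by Lemma \ref{l2.1} applied at the level of representatives in $A^\infty$, $t^{-1}(T(te_i)\phi-\phi)\to D_{i,\infty}\phi$ even in $\mathcal{B}_A^\infty$, while $u$ is held fixed in $\mathcal{B}_A^p$. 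Existence of the limit simultaneously places $u\phi$ in $\mathcal{D}_{i,p}$ and delivers the Leibniz identity.

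The only technical point I anticipate is the continuity of multiplication $\mathcal{B}_A^\infty\times\mathcal{B}_A^p \to \mathcal{B}_A^p$ with the estimate $\|\phi u\|_p \leq [\phi]_\infty\|u\|_p$; at the level of representatives this follows by H\"older's inequality on each ball $B_r$ followed by taking the $\limsup$ as $r\to\infty$. This is precisely what legitimises passing to the limit in both products of the Leibniz decomposition despite one factor converging only in a sup-type sense. Beyond this I do not foresee a genuine obstacle: the three assertions are direct consequences of the strong continuity of $\{T(y)\}$, translation invariance and continuity of $M$, and the fact that $\varrho$ intertwines classical differentiation on $A^\infty$ with $D_{i,p}$.
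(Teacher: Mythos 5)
Your argument is correct, but it reverses the logical order of the paper's proof and is in one respect more self-contained. The paper establishes (i) first, by reducing to $\left\vert \alpha \right\vert =1$, writing $u=\varrho (v)$ with $v\in A^{\infty }$ via Lemma \ref{l2.3}, and invoking the classical fact that $M(\partial v/\partial y_{i})=0$ for $v\in A^{1}$ (cited from \cite{26}); it then deduces (ii) by approximating $u\in \mathcal{D}_{i,p}$ by elements of $\mathcal{D}_{A}(\mathbb{R}^{N})$ in the graph norm and using the closedness/continuity of $D_{i,p}$. You instead prove (ii) directly from the translation invariance and continuity of $M$ on $\mathcal{B}_{A}^{p}$ (so that $M$ annihilates every difference quotient and hence the limit $D_{i,p}u$), and then obtain (i) as a corollary of (ii) through the representative $v\in A^{\infty }$ and Lemmas \ref{l2.1}--\ref{l2.2}. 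Your route avoids both the density argument and the external reference, which is a genuine simplification; both routes tacitly exclude $\alpha =0$ from (i), as they must. For (iii), which the paper dismisses as ``easily verified,'' your Leibniz decomposition of the difference quotient and the two-term limit passage are exactly the right argument. The one point to repair is the multiplication estimate: $\Vert \phi u\Vert _{p}\leq \lbrack \phi ]_{\infty }\Vert u\Vert _{p}$ does not follow from H\"{o}lder's inequality on the balls $B_{r}$ (any finite-exponent splitting would require $u$ to lie in some $B_{A}^{q}$ with $q>p$). What you actually need, and what is available since $\phi \in \mathcal{D}_{A}(\mathbb{R}^{N})=\varrho (A^{\infty })$ has a bounded representative, is the elementary bound $\Vert \phi u\Vert _{p}\leq \Vert \phi \Vert _{L^{\infty }(\mathbb{R}^{N})}\Vert u\Vert _{p}$ together with the uniform (sup-norm) convergence of $t^{-1}(\tau _{te_{i}}\phi -\phi )$ to $\partial \phi /\partial y_{i}$ from Lemma \ref{l2.1}; with that substitution both summands in your decomposition converge as claimed and the proof of (iii) is complete.
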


\begin{proof}
(i) It suffices to check it for $\alpha =\alpha _{i}=(\delta _{ij})_{1\leq
j\leq N}$, that is, for $1\leq i\leq N$, 
\begin{equation*}
\int_{\Delta (A)}\partial _{i,\infty }\widehat{u}d\beta =0\text{\ for all }%
u\in \mathcal{D}_{A}(\mathbb{R}^{N}).
\end{equation*}%
But, as $u=\varrho (v)$ with $v\in A^{\infty }$ (see Lemma \ref{l2.2}), we
have 
\begin{eqnarray*}
\int_{\Delta (A)}\partial _{i,\infty }\widehat{u}d\beta &=&\int_{\Delta
(A)}\partial _{i,\infty }\mathcal{G}_{1}(\varrho (v))d\beta =\int_{\Delta
(A)}\mathcal{G}_{1}(D_{i,\infty }\varrho (v))d\beta \\
&=&\int_{\Delta (A)}\mathcal{G}_{1}\left( \varrho \left( \frac{\partial v}{%
\partial y_{i}}\right) \right) d\beta =\int_{\Delta (A)}\mathcal{G}\left( 
\frac{\partial v}{\partial y_{i}}\right) d\beta =0
\end{eqnarray*}%
where the last equality is justified as in \cite{26}. Whence (i).

(ii) Let $u\in \mathcal{D}_{i,p}$; there exists $u_{n}\in \mathcal{D}_{A}(%
\mathbb{R}^{N})$ such that $\left\Vert u-u_{n}\right\Vert _{\mathcal{D}%
_{i,p}}\rightarrow 0$ as $n\rightarrow \infty $. We have 
\begin{eqnarray*}
\left\vert \int_{\Delta (A)}\partial _{i,p}\widehat{u}d\beta \right\vert
&=&\left\Vert \partial _{i,p}\widehat{u}-\partial _{i,p}\widehat{u}%
_{n}\right\Vert _{L^{1}(\Delta (A))} \\
&\leq &c\left\Vert \partial _{i,p}\widehat{u}-\partial _{i,p}\widehat{u}%
_{n}\right\Vert _{L^{p}(\Delta (A))} \\
&=&c\left\Vert D_{i,p}u-D_{i,p}u_{n}\right\Vert _{p},
\end{eqnarray*}%
and the last term on the right-hand side of the above equality tends to zero
as $n\rightarrow \infty $, since $D_{i,p}$ is continuous (in fact it is
linear with a closed graph; see Proposition \ref{p2.1}). This shows (ii).
(iii) is easily verified.
\end{proof}

The formula (iii) in the above proposition leads to the equality 
\begin{equation*}
\int_{\Delta (A)}\widehat{\phi }\partial _{i,p}\widehat{u}d\beta
=-\int_{\Delta (A)}\widehat{u}\partial _{i,\infty }\widehat{\phi }d\beta \ \
\forall (u,\phi )\in \mathcal{D}_{i,p}\times \mathcal{D}_{A}(\mathbb{R}^{N}).
\end{equation*}%
This suggests us to define the concepts of distributions on $A$ and of a
weak derivative. Before we can do that, let us endow $\mathcal{D}_{A}(%
\mathbb{R}^{N})=\varrho (A^{\infty })$ with its natural topology defined by
the family of norms $N_{n}(u)=\sup_{\left\vert \alpha \right\vert \leq
n}\sup_{y\in \mathbb{R}^{N}}\left\vert D_{\infty }^{\alpha }u(y)\right\vert $%
, $n\in \mathbb{N}$. In this topology, $\mathcal{D}_{A}(\mathbb{R}^{N})$ is
a Fr\'{e}chet space. We denote by $\mathcal{D}_{A}^{\prime }(\mathbb{R}^{N})$
the topological dual of $\mathcal{D}_{A}(\mathbb{R}^{N})$. We endow it with
the strong dual topology. The elements of $\mathcal{D}_{A}^{\prime }(\mathbb{%
R}^{N})$ are called \textit{the distributions on }$A$. One can also define
the weak derivative of $f\in \mathcal{D}_{A}^{\prime }(\mathbb{R}^{N})$ as
follows: for any $\alpha \in \mathbb{N}^{N}$, $D^{\alpha }f$ stands for the
distribution defined by the formula 
\begin{equation*}
\left\langle D^{\alpha }f,\phi \right\rangle =(-1)^{\left\vert \alpha
\right\vert }\left\langle f,D_{\infty }^{\alpha }\phi \right\rangle \text{\
for all }\phi \in \mathcal{D}_{A}(\mathbb{R}^{N}).
\end{equation*}%
Since $\mathcal{D}_{A}(\mathbb{R}^{N})$ is dense in $\mathcal{B}_{A}^{p}$ ($%
1\leq p<\infty $), it is immediate that $\mathcal{B}_{A}^{p}\subset \mathcal{%
D}_{A}^{\prime }(\mathbb{R}^{N})$ with continuous embedding, so that one may
define the weak derivative of any $f\in \mathcal{B}_{A}^{p}$, and it
verifies the following functional equation: 
\begin{equation*}
\left\langle D^{\alpha }f,\phi \right\rangle =(-1)^{\left\vert \alpha
\right\vert }\int_{\Delta (A)}\widehat{f}\partial _{\infty }^{\alpha }%
\widehat{\phi }d\beta \text{\ for all }\phi \in \mathcal{D}_{A}(\mathbb{R}%
^{N}).
\end{equation*}%
In particular, for $f\in \mathcal{D}_{i,p}$ we have 
\begin{equation*}
-\int_{\Delta (A)}\widehat{f}\partial _{i,p}\widehat{\phi }d\beta
=\int_{\Delta (A)}\widehat{\phi }\partial _{i,p}\widehat{f}d\beta \ \
\forall \phi \in \mathcal{D}_{A}(\mathbb{R}^{N}),
\end{equation*}%
so that we may identify $D_{i,p}f$ with $D^{\alpha _{i}}f$, $\alpha
_{i}=(\delta _{ij})_{1\leq j\leq N}$. Conversely, if $f\in \mathcal{B}%
_{A}^{p}$ is such that there exists $f_{i}\in \mathcal{B}_{A}^{p}$ with $%
\left\langle D^{\alpha _{i}}f,\phi \right\rangle =-\int_{\Delta (A)}\widehat{%
f}_{i}\widehat{\phi }d\beta $ for all $\phi \in \mathcal{D}_{A}(\mathbb{R}%
^{N})$, then $f\in \mathcal{D}_{i,p}$ and $D_{i,p}f=f_{i}$. We are therefore
justified in saying that $\mathcal{B}_{A}^{1,p}$ is a Banach space under the
norm $\left\Vert \cdot \right\Vert _{\mathcal{B}_{A}^{1,p}}$. The same
result holds for $W^{1,p}(\Delta (A))$. Moreover it is a fact that $\mathcal{%
D}_{A}(\mathbb{R}^{N})$ (resp. $\mathcal{D}(\Delta (A))$) is a dense
subspace of $\mathcal{B}_{A}^{1,p}$ (resp. $W^{1,p}(\Delta (A))$).

Now, in order to deal with the homogenization theory, we need to define the
space of correctors. Before we can do this, however, we need some further
notions.

A function $f\in \mathcal{B}_{A}^{1}$ is said to be \textit{invariant} if
for any $y\in \mathbb{R}^{N}$, $T(y)f=f$. It is immediate that the above
notion of invariance is the well-known one relative to dynamical systems. An
algebra with mean value will therefore said to be \textit{ergodic} if every
invariant function $f$ is constant in $\mathcal{B}_{A}^{1}$. As in \cite[%
Lemma 2.3 (a)]{10} one can show that $f\in \mathcal{B}_{A}^{1}$ is invariant
if and only if $D_{i,1}f=0$ for all $1\leq i\leq N$. We denote by $I_{A}^{p}$
the set of $f\in \mathcal{B}_{A}^{1}$ that are invariant. The set $I_{A}^{p}$
is a closed vector subspace of $\mathcal{B}_{A}^{p}$ satisfying the
following property:%
\begin{equation}
f\in I_{A}^{p}\text{ if and only if }D_{i,p}f=0\text{ for all }1\leq i\leq N%
\text{.}  \label{2.5}
\end{equation}

\noindent The above property is due to the fact that $D_{i,p}$ is the
restriction to $\mathcal{B}_{A}^{p}$ of $D_{i,1}$. So the mapping 
\begin{equation*}
u\mapsto \left\Vert u\right\Vert _{\#,p}:=\left( \sum_{i=1}^{N}\left\Vert
D_{i,p}u\right\Vert _{p}^{p}\right) ^{1/p}
\end{equation*}%
considered as defined on $\mathcal{D}_{A}(\mathbb{R}^{N})$, is a norm on the
subspace $\mathcal{D}_{A}(\mathbb{R}^{N})/I_{A}^{p}$ of $\mathcal{D}_{A}(%
\mathbb{R}^{N})$ consisting of functions $u\in \mathcal{D}_{A}(\mathbb{R}%
^{N})$ that agree on $I_{A}^{p}$. Unfortunately, under this norm, $\mathcal{D%
}_{A}(\mathbb{R}^{N})/I_{A}^{p}$ is a normed vector space which is in
general not complete. We denote by $\mathcal{B}_{\#A}^{1,p}$ its completion
with respect to $\left\Vert \cdot \right\Vert _{\#,p}$. Moreover, as $%
\mathcal{D}_{A}(\mathbb{R}^{N})$ is dense in $\mathcal{B}_{A}^{1,p}$ and
further $\left\Vert u\right\Vert _{\#,p}=0$ if and only if $u\in I_{A}^{p}$,
we have that $\mathcal{B}_{\#A}^{1,p}$ is also the completion of $\mathcal{B}%
_{A}^{1,p}/I_{A}^{p}$ with respect to $\left\Vert \cdot \right\Vert _{\#,p}$%
. We denote by $J_{p}$ the canonical embedding of $\mathcal{B}%
_{A}^{1,p}/I_{A}^{p}$ into its completion $\mathcal{B}_{\#A}^{1,p}$ (which
allows us to viewed $\mathcal{B}_{A}^{1,p}/I_{A}^{p}$ as a subspace of $%
\mathcal{B}_{\#A}^{1,p}$). The following properties are due to the theory of
completion of uniform spaces (see \cite[Chap. II]{9}):

\begin{itemize}
\item[(P$_{1}$)] The gradient operator $D_{p}=(D_{1,p},...,D_{N,p}):\mathcal{%
D}_{A}(\mathbb{R}^{N})/I_{A}^{p}\rightarrow (\mathcal{B}_{A}^{p})^{N}$
extends by continuity to a unique mapping $\overline{D}_{p}:\mathcal{B}%
_{\#A}^{1,p}\rightarrow (\mathcal{B}_{A}^{p})^{N}$ with the properties 
\begin{equation*}
D_{i,p}=\overline{D}_{i,p}\circ J_{p}
\end{equation*}%
and 
\begin{equation*}
\left\Vert u\right\Vert _{\#,p}=\left( \sum_{i=1}^{N}\left\Vert \overline{D}%
_{i,p}u\right\Vert _{p}^{p}\right) ^{1/p}\ \ \text{for }u\in \mathcal{B}%
_{\#A}^{1,p}.
\end{equation*}

\item[(P$_{2}$)] The space $\mathcal{D}_{A}(\mathbb{R}^{N})/I_{A}^{p}$ (and
hence $\mathcal{B}_{A}^{1,p}/I_{A}^{p}$) is dense in $\mathcal{B}%
_{\#A}^{1,p} $: in fact by the embedding $J_{p}$, $\mathcal{D}_{A}(\mathbb{R}%
^{N})/I_{A}^{p}$ is viewed as a subspace of $\mathcal{B}_{\#A}^{1,p}$ (as
said above), and by the theory of completion, $J_{p}(\mathcal{D}_{A}(\mathbb{%
R}^{N})/I_{A}^{p})\equiv \mathcal{D}_{A}(\mathbb{R}^{N})/I_{A}^{p}$ is dense
in $\mathcal{B}_{\#A}^{1,p}$.
\end{itemize}

\noindent Moreover the mapping $\overline{D}_{p}$ is an isometric embedding
of $\mathcal{B}_{\#A}^{1,p}$ onto a closed subspace of $(\mathcal{B}%
_{A}^{p})^{N}$, so that $\mathcal{B}_{\#A}^{1,p}$ is a reflexive Banach
space. By duality we define the divergence operator $\Div_{p^{\prime }}:(%
\mathcal{B}_{A}^{p^{\prime }})^{N}\rightarrow (\mathcal{B}%
_{\#A}^{1,p})^{\prime }$ ($p^{\prime }=p/(p-1)$) by 
\begin{equation}
\left\langle \Div_{p^{\prime }}u,v\right\rangle =-\left\langle u,\overline{D}%
_{p}v\right\rangle \text{\ for }v\in \mathcal{B}_{\#A}^{1,p}\text{ and }%
u=(u_{i})\in (\mathcal{B}_{A}^{p^{\prime }})^{N}\text{,}  \label{2.6}
\end{equation}%
where $\left\langle u,\overline{D}_{p}v\right\rangle
=\sum_{i=1}^{N}\int_{\Delta (A)}\widehat{u}_{i}\partial _{i,p}\widehat{v}%
d\beta $. The operator $\Div_{p^{\prime }}$ just defined extends the natural
divergence operator defined in $\mathcal{D}_{A}(\mathbb{R}^{N})$ since $%
D_{i,p}f=\overline{D}_{i,p}(J_{p}f)$ for all $f\in \mathcal{D}_{A}(\mathbb{R}%
^{N})$ where here, we write $f$ under the form $f=(f-\overline{f})+\overline{%
f}$ with $\overline{f}\in I_{A}^{p}$ and we know that in that case, $%
D_{i,p}f=D_{i,p}(f-\overline{f})$, so that $\overline{D}_{i,p}(J_{p}f)=%
\overline{D}_{i,p}(J_{p}(f-\overline{f}))$.

Now if in (\ref{2.6}) we take $u=D_{p^{\prime }}w$ with $w\in \mathcal{B}%
_{A}^{p^{\prime }}$ being such that $D_{p^{\prime }}w\in (\mathcal{B}%
_{A}^{p^{\prime }})^{N}$ then this allows us to define the Laplacian
operator on $\mathcal{B}_{A}^{p^{\prime }}$, denoted here by $\Delta
_{p^{\prime }}$, as follows: 
\begin{equation*}
\left\langle \Delta _{p^{\prime }}w,v\right\rangle =\left\langle \Div%
_{p^{\prime }}(D_{p^{\prime }}w),v\right\rangle =-\left\langle D_{p^{\prime
}}w,\overline{D}_{p}v\right\rangle \text{\ for all }v\in \mathcal{B}%
_{\#A}^{1,p}.
\end{equation*}%
If in addition $v=J_{p}(\phi )$ with $\phi \in \mathcal{D}_{A}(\mathbb{R}%
^{N})/I_{A}^{p}$ then $\left\langle \Delta _{p^{\prime }}w,J_{p}(\phi
)\right\rangle =-\left\langle D_{p^{\prime }}w,D_{p}\phi \right\rangle $, so
that, for $p=2$, we get 
\begin{equation}
\left\langle \Delta _{2}w,J_{2}(\phi )\right\rangle =\left\langle w,\Delta
_{2}\phi \right\rangle \text{\ for all }w\in \mathcal{B}_{A}^{2}\text{ and }%
\phi \in \mathcal{D}_{A}(\mathbb{R}^{N})/I_{A}^{2}\text{.}  \label{2.7}
\end{equation}

The following result is also immediate.

\begin{proposition}
\label{p2.5}For $u\in A^{\infty }$ we have 
\begin{equation*}
\Delta _{p}\varrho (u)=\varrho (\Delta _{y}u)
\end{equation*}%
where $\Delta _{y}$ stands for the usual Laplacian operator on $\mathbb{R}%
_{y}^{N}$.
\end{proposition}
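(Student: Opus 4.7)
The plan is to reduce the identity to an integration-by-parts computation on the dense subspace $J_{p'}(\mathcal{D}_A(\mathbb{R}^N)/I_A^{p'})$ of $\mathcal{B}_{\#A}^{1,p'}$, using Lemma \ref{l2.1} to evaluate all the relevant infinitesimal generators explicitly on $\varrho(u)$, and the IBP formula stated right before the proposition to shift derivatives. First I would check that the definition of $\Delta_p\varrho(u)$ actually applies: since $u\in A^\infty\subset A^1$, Lemma \ref{l2.1} (or Lemma \ref{l2.3}) gives $\varrho(u)\in\mathcal{D}_{i,p}$ with $D_{i,p}\varrho(u)=\varrho(\partial u/\partial y_i)\in\mathcal{B}_A^p$, hence $D_p\varrho(u)\in(\mathcal{B}_A^p)^N$, so that $\Delta_p\varrho(u)=\mathrm{Div}_p(D_p\varrho(u))$ is well-defined as an element of $(\mathcal{B}_{\#A}^{1,p'})'$.

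Next I would test $\Delta_p\varrho(u)$ against an arbitrary element of the dense subspace $J_{p'}(\mathcal{D}_A(\mathbb{R}^N)/I_A^{p'})$ (property P$_2$). For $\phi=\varrho(\psi)$ with $\psi\in A^\infty$, combining (\ref{2.6}) with Lemmas \ref{l2.1}--\ref{l2.2} yields
\begin{equation*}
\langle\Delta_p\varrho(u),J_{p'}\phi\rangle
=-\sum_{i=1}^N\int_{\Delta(A)}\mathcal{G}(\partial u/\partial y_i)\,\mathcal{G}(\partial\psi/\partial y_i)\,d\beta.
\end{equation*}
The integration-by-parts identity displayed just before this proposition (applied to the pair $(\varrho(\partial u/\partial y_i),\varrho(\psi))\in\mathcal{D}_{i,p}\times\mathcal{D}_A(\mathbb{R}^N)$) together with Lemma \ref{l2.2} converts each summand into $\int_{\Delta(A)}\mathcal{G}(\psi)\mathcal{G}(\partial^2 u/\partial y_i^2)\,d\beta$. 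Summing over $i$ gives
\begin{equation*}
\langle\Delta_p\varrho(u),J_{p'}\phi\rangle=\int_{\Delta(A)}\mathcal{G}(\Delta_y u)\,\mathcal{G}(\psi)\,d\beta,
\end{equation*}
which is exactly the pairing defining $\varrho(\Delta_y u)$ as a functional on $J_{p'}(\mathcal{D}_A(\mathbb{R}^N)/I_A^{p'})$.

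Finally, I would conclude by density. The two functionals on $\mathcal{B}_{\#A}^{1,p'}$ agree on the dense subspace $J_{p'}(\mathcal{D}_A(\mathbb{R}^N)/I_A^{p'})$, so they coincide on all of $\mathcal{B}_{\#A}^{1,p'}$, proving $\Delta_p\varrho(u)=\varrho(\Delta_y u)$.

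The step I expect to require the most care is the final identification of $\varrho(\Delta_y u)\in\mathcal{B}_A^p$ with a bona fide element of $(\mathcal{B}_{\#A}^{1,p'})'$, since in the nonergodic setting $I_A^{p'}$ is nontrivial and the defining pairing must descend to the quotient $\mathcal{B}_A^{1,p'}/I_A^{p'}$. To justify this one needs $\int_{\Delta(A)}\mathcal{G}(\Delta_y u)\,\widehat{f}\,d\beta=0$ for every $f\in I_A^{p'}$; this follows from the same integration-by-parts identity together with the characterization (\ref{2.5}) of invariants as functions annihilated by every $D_{i,p'}$, and guarantees the functional is well defined on the completion.
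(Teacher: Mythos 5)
Your argument is correct. The paper offers no proof of this proposition (it is dismissed as ``immediate''), and your route --- testing against the dense subspace $J_{p'}(\mathcal{D}_{A}(\mathbb{R}^{N})/I_{A}^{p'})$, computing $D_{i,p}\varrho (u)=\varrho (\partial u/\partial y_{i})$ via Lemma \ref{l2.1}, and integrating by parts once more to produce $\mathcal{G}(\Delta _{y}u)$ --- is exactly the definitional unwinding the author intends; your closing verification that $\int_{\Delta (A)}\mathcal{G}(\Delta _{y}u)\widehat{f}\,d\beta =0$ for every invariant $f$ is the one step genuinely worth recording in the nonergodic setting, since it is what lets $\varrho (\Delta _{y}u)$ descend to the quotient, and continuity with respect to $\left\Vert \cdot \right\Vert _{\#,p'}$ is then automatic from H\"{o}lder's inequality applied to $\left\langle D_{p}\varrho (u),\overline{D}_{p'}v\right\rangle$.
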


\begin{remark}
\label{r2.2}\emph{If the algebra }$A$\emph{\ is ergodic, then the space }$%
\mathcal{B}_{\#A}^{1,p}$\emph{\ is just the one defined in \cite{CMP, NA} as
the completion of }$\mathcal{B}_{A}^{1,p}/\mathbb{C}=\{u\in \mathcal{B}%
_{A}^{1,p}:M(u)=0\}$\emph{\ with respect to }$\left\Vert \cdot \right\Vert
_{\#,p}$\emph{. Indeed in that case the elements of }$I_{A}^{p}$\emph{\ are
constant functions.}
\end{remark}

The following result is a special version of a general result whose proof
can be found in \cite[Proposition 1]{AA} (see also \cite[Lemma 2.3 (b)]{10}%
). We will therefore just give the outlines of the proof without any detail.

\begin{proposition}
\label{p2.3}Let $\mathbf{v}\in (\mathcal{B}_{A}^{p})^{N}$ satisfying 
\begin{equation*}
\int_{\Delta (A)}\widehat{\mathbf{v}}\cdot \widehat{\mathbf{g}}d\beta =0%
\text{\ for all }\mathbf{g}\in \mathcal{V}_{\Div}=\{\mathbf{f}\in (\mathcal{D%
}_{A}(\mathbb{R}^{N}))^{N}:\Div_{p^{\prime }}\mathbf{f}=0\}.
\end{equation*}%
Then there exists $u\in \mathcal{B}_{\#A}^{1,p}$ such that $\mathbf{v}=%
\overline{D}_{p}u$.
\end{proposition}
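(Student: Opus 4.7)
The plan is to argue by contradiction, combining Hahn--Banach with a regularization by the group $\{T(y):y\in\mathbb{R}^N\}$. Set $G=\overline{D}_p(\mathcal{B}_{\#A}^{1,p})\subset(\mathcal{B}_A^p)^N$; property (P$_1$) asserts that $\overline{D}_p$ is an isometry onto its image, so $G$ is a closed subspace. Suppose $\mathbf{v}\notin G$. The Gelfand isomorphism identifies $(\mathcal{B}_A^p)^N$ with the reflexive space $L^p(\Delta(A))^N$, whose dual is realized as $(\mathcal{B}_A^{p'})^N$ under the pairing $(\mathbf{a},\mathbf{b})\mapsto\int_{\Delta(A)}\widehat{\mathbf{a}}\cdot\widehat{\mathbf{b}}\,d\beta$. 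Hahn--Banach then produces $\mathbf{w}\in(\mathcal{B}_A^{p'})^N$ with
\[
\int_{\Delta(A)}\widehat{\mathbf{w}}\cdot\widehat{\overline{D}_p u}\,d\beta=0\ \text{for every}\ u\in\mathcal{B}_{\#A}^{1,p},\qquad \int_{\Delta(A)}\widehat{\mathbf{v}}\cdot\widehat{\mathbf{w}}\,d\beta\neq 0,
\]
and the definition (\ref{2.6}) of $\Div_{p'}$ recasts the left-hand equality as $\Div_{p'}\mathbf{w}=0$ in the weak sense.

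The heart of the proof is to approximate $\mathbf{w}$ in $(\mathcal{B}_A^{p'})^N$ by a sequence $\mathbf{w}_n\in\mathcal{V}_{\Div}$. I would regularize $\mathbf{w}$ via the group action: choosing a standard mollifier $\rho_n\in\mathcal{C}_c^{\infty}(\mathbb{R}^N)$, form the Bochner integral
\[
\mathbf{w}_n=\int_{\mathbb{R}^N}\rho_n(y)\,T(y)\mathbf{w}\,dy\quad\text{in }(\mathcal{B}_A^{p'})^N.
\]
Strong continuity of $T$ gives $\mathbf{w}_n\to\mathbf{w}$. Since $D_{i,p'}$ is the infinitesimal generator of $T(\cdot e_i)$ and commutes with every $T(y)$, each derivative $D_{p'}^{\alpha}$ can be transferred onto $\rho_n$ inside the integral, producing $\mathcal{B}_A^{p'}$-valued derivatives of all orders; combined with Lemma \ref{l2.3} this places $\mathbf{w}_n$ in $(\mathcal{D}_A(\mathbb{R}^N))^N$. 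Moreover, the duality identity $\langle T(y)\mathbf{a},\mathbf{b}\rangle=\langle\mathbf{a},T(-y)\mathbf{b}\rangle$ (a consequence of translation invariance of $M$) together with $\Div_{p'}\mathbf{w}=0$ and the commutation $T(y)\overline{D}_p=\overline{D}_p T(y)$ on $\mathcal{B}_{\#A}^{1,p}$ yields
\[
\langle\mathbf{w}_n,\overline{D}_p u\rangle=\int_{\mathbb{R}^N}\rho_n(y)\,\langle\mathbf{w},\overline{D}_p T(-y)u\rangle\,dy=0\quad\forall u\in\mathcal{B}_{\#A}^{1,p},
\]
so $\Div_{p'}\mathbf{w}_n=0$ and $\mathbf{w}_n\in\mathcal{V}_{\Div}$.

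Finally, the hypothesis on $\mathbf{v}$ forces $\int_{\Delta(A)}\widehat{\mathbf{v}}\cdot\widehat{\mathbf{w}}_n\,d\beta=0$ for every $n$; letting $n\to\infty$ gives $\int_{\Delta(A)}\widehat{\mathbf{v}}\cdot\widehat{\mathbf{w}}\,d\beta=0$, contradicting the Hahn--Banach choice of $\mathbf{w}$. The principal obstacle is verifying that the averaged vectors $\mathbf{w}_n$ actually lie in the smooth space $(\mathcal{D}_A(\mathbb{R}^N))^N$ rather than merely in some ambient $\mathcal{B}_A^{p'}$-type space; this relies essentially on the identification $\mathcal{D}_A(\mathbb{R}^N)=\varrho(A^{\infty})$ from Lemma \ref{l2.3}, on the translation invariance of $A$, and on the commutation of $D_{i,p'}$ with the group $\{T(y)\}$.
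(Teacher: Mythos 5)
Your overall architecture is the dual formulation of the paper's argument: the paper invokes the closed-range identity $(\ker \Div_{p'})^{\perp}=R(\overline{D}_{p})$ (via the assertions that $\Div_{p'}$ is closed, $(\Div_{p'})^{\ast}=-\overline{D}_{p}$, and $R(\overline{D}_{p})$ is closed), while you run the same duality through a Hahn--Banach separation from the closed subspace $R(\overline{D}_{p})$. That part is sound, and you have correctly identified the crux of the matter: one must pass from orthogonality to the \emph{smooth} solenoidal fields $\mathcal{V}_{\Div}\subset(\mathcal{D}_{A}(\mathbb{R}^{N}))^{N}$ to orthogonality to the full kernel of $\Div_{p'}$ in $(\mathcal{B}_{A}^{p'})^{N}$. (The paper's own assertion (4) quietly assumes this same density and defers it to \cite{AA} and \cite{10}.)

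The gap lies in your resolution of that crux. The regularization $\mathbf{w}_{n}=\int_{\mathbb{R}^{N}}\rho_{n}(y)T(y)\mathbf{w}\,dy$ does produce a divergence-free element all of whose derivatives exist in $(\mathcal{B}_{A}^{p'})^{N}$, but that is not membership in $(\mathcal{D}_{A}(\mathbb{R}^{N}))^{N}$. By Lemma \ref{l2.3}, $\mathcal{D}_{A}(\mathbb{R}^{N})=\varrho(A^{\infty})$: an element of $\mathcal{D}_{A}(\mathbb{R}^{N})$ must admit a representative lying, together with all its derivatives, in the algebra $A$ itself, hence in particular bounded and uniformly continuous. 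Convolution does not upgrade Besicovitch $p'$-integrability to uniform boundedness: pointwise one only has $\left\vert \mathbf{w}_{n}(z)\right\vert \leq \left\Vert \rho_{n}\right\Vert_{L^{p}}\left\Vert \mathbf{w}\right\Vert_{L^{p'}(B_{1/n}(z))}$, and the uniform local $L^{p'}$ norm appearing on the right is not controlled by the seminorm $\left\Vert \cdot\right\Vert_{p'}$, which only sees averages over large balls. In the periodic case the step happens to work (a continuous periodic function automatically belongs to $\mathcal{C}_{\mathrm{per}}(Y)$), but for a general algebra with mean value it fails; for instance a Besicovitch almost periodic function can be unbounded and its mollification need not be Bohr almost periodic. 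So, as written, $\mathbf{w}_{n}\notin\mathcal{V}_{\Div}$ and you cannot invoke the hypothesis on $\mathbf{v}$ to conclude $\int_{\Delta(A)}\widehat{\mathbf{v}}\cdot\widehat{\mathbf{w}}_{n}\,d\beta=0$. Closing the argument requires an additional ingredient beyond group regularization --- for example a truncation compatible with the divergence-free constraint, or the density statement established in the references the paper leans on.
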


\begin{proof}
The result follows from the equality $(\ker (\Div_{p^{\prime }}))^{\perp }=R(%
\overline{D}_{p})$ (where $R(\overline{D}_{p})$ stands for the range of $%
\overline{D}_{p}$) through the following assertions: (1) $\Div_{p^{\prime }}$
is closed; (2) $(\Div_{p^{\prime }})^{\ast }=-\overline{D}_{p}$ where $(\Div%
_{p^{\prime }})^{\ast }$ is the adjoint operator of $\Div_{p^{\prime }}$;
(3) $R(\overline{D}_{p})$ is closed in $(\mathcal{B}_{A}^{p})^{N}$, and
finally, (4) $\mathbf{v}$ is orthogonal to the kernel of $\Div_{p^{\prime }}$%
.
\end{proof}

We end this subsection with some notations. Let $f\in \mathcal{B}_{A}^{p}$.
We know that $D^{\alpha _{i}}f$ exists (in the sense of distributions) and
that $D^{\alpha _{i}}f=D_{i,p}f$ if $f\in \mathcal{D}_{i,p}$. So we can drop
the subscript $p$ and therefore denote $D_{i,p}$ (resp. $\partial _{i,p}$)
by $\overline{\partial }/\partial y_{i}$ (resp. $\partial _{i}$). Thus, $%
\overline{D}_{y}$ will stand for the gradient operator $(\overline{\partial }%
/\partial y_{i})_{1\leq i\leq N}$ and $\overline{\Div}_{y}$ for the
divergence operator $\Div_{p}$. We will also denote the operator $\overline{D%
}_{i,p}$ by $\overline{\partial }/\partial y_{i}$, and the canonical mapping 
$J_{p}$ will be merely denote by $J$, so that in property (P$_{1}$), we will
have 
\begin{equation}
\frac{\overline{\partial }}{\partial y_{i}}=\frac{\overline{\partial }}{%
\partial y_{i}}\circ J\text{\ \ (in the above notations).}  \label{2.4}
\end{equation}%
This will lead to the notation $\overline{D}_{p}=(\overline{\partial }%
/\partial y_{i})_{1\leq i\leq N}$. Finally, we will denote the Laplacian
operator on $\mathcal{B}_{A}^{p}$ by $\overline{\Delta }_{y}$.

\section{The $\Sigma $-convergence}

This section deals with two concepts of $\Sigma $-convergence: the usual one 
\cite{26} which is revisited, and its generalization to stochastic processes.

\subsection{The $\Sigma $-convergence revisited}

In all that follows $Q$ is an open subset of $\mathbb{\mathbb{R}}^{N}$
(integer $N\geq 1$) and $A$ is an algebra wmv on $\mathbb{\mathbb{R}}%
_{y}^{N} $. The notations are the one of the preceding section.

\begin{definition}
\label{d3.1}\emph{A sequence }$(u_{\varepsilon })_{\varepsilon >0}\subset
L^{p}(Q)$\emph{\ (}$1\leq p<\infty $\emph{) is said to }weakly $\Sigma $%
-converge\emph{\ in }$L^{p}(Q)$\emph{\ to some }$u_{0}\in L^{p}(Q;\mathcal{B}%
_{A}^{p})$\emph{\ if as }$\varepsilon \rightarrow 0$\emph{, we have } 
\begin{equation}
\int_{Q}u_{\varepsilon }(x)f\left( x,\frac{x}{\varepsilon _{1}}\right)
dx\rightarrow \iint_{Q\times \Delta (A)}\widehat{u}_{0}(x,s)\widehat{f}%
(x,s)dxd\beta \;\;\;\;\;\;\;\;  \label{3.1}
\end{equation}%
\emph{for every }$f\in L^{p^{\prime }}(Q;A)$\emph{\ (}$1/p^{\prime }=1-1/p$%
\emph{), where }$\widehat{u}_{0}=\mathcal{G}_{1}\circ u_{0}$\emph{\ and }$%
\widehat{f}=\mathcal{G}_{1}\circ (\varrho \circ f)=\mathcal{G}\circ f$\emph{%
. We express this by writing} $u_{\varepsilon }\rightarrow u_{0}$ in $%
L^{p}(Q)$-weak $\Sigma $.
\end{definition}

It is well-known that the weak $\Sigma $-convergence in $L^{p}$\ implies the
weak convergence in $L^{p}$. Throughout the paper the letter $E$ will denote
any ordinary sequence $E=(\varepsilon _{n})$ (integers $n\geq 0$) with $%
0<\varepsilon _{n}\leq 1$ and $\varepsilon _{n}\rightarrow 0$ as $%
n\rightarrow \infty $. Such a sequence will be termed a \textit{fundamental
sequence}. The following result is proved exactly as its homologue in \cite[%
Theorem 3.1]{CMP}.

\begin{theorem}
\label{t3.1}Any bounded sequence $(u_{\varepsilon })_{\varepsilon \in E}$ in 
$L^{p}(Q)$ (where $E$ is a fundamental sequence and $1<p<\infty $) admits a
subsequence which is weakly $\Sigma $-convergent in $L^{p}(Q)$.
\end{theorem}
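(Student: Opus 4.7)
The plan is to extract a subsequence by a countable-density argument on the family of bilinear expressions
\begin{equation*}
L_\varepsilon(f)\;=\;\int_Q u_\varepsilon(x)\, f\!\left(x,\tfrac{x}{\varepsilon_1}\right)dx, \qquad f\in L^{p'}(Q;A),
\end{equation*}
and then to identify the limit functional as an element of $L^p(Q;\mathcal B_A^p)$ via the Gelfand representation. The first step is a uniform estimate. By H\"older's inequality,
\begin{equation*}
|L_\varepsilon(f)|\;\le\;\|u_\varepsilon\|_{L^p(Q)}\,\Bigl(\int_Q\bigl|f(x,x/\varepsilon_1)\bigr|^{p'}dx\Bigr)^{1/p'},
\end{equation*}
and I would prove that, as $\varepsilon\to 0$,
\begin{equation*}
\int_Q\bigl|f(x,x/\varepsilon_1)\bigr|^{p'}dx \;\longrightarrow\; \iint_{Q\times\Delta(A)}|\widehat f(x,s)|^{p'}\,d\beta(s)\,dx.
\end{equation*}
For elementary tensors $f(x,y)=g(x)\varphi(y)$ with $g\in L^{p'}(Q)$ and $\varphi\in A$, this is exactly property $(MV)$ applied to $|\varphi|^{p'}\in A$, tested against $|g|^{p'}\in L^1(Q)$. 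The pointwise majorant $|f(x,x/\varepsilon_1)|\le\|f(x,\cdot)\|_\infty$ supplies a uniform $L^{p'}(Q)$ bound and a density argument then extends the identity to every $f\in L^{p'}(Q;A)$. Writing $C:=\sup_{\varepsilon\in E}\|u_\varepsilon\|_{L^p(Q)}$, this yields $\limsup_{\varepsilon\to 0}|L_\varepsilon(f)|\le C\,\|\widehat f\,\|_{L^{p'}(Q\times\Delta(A))}$.

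With the uniform estimate in hand, I would fix a countable set $\mathcal S\subset L^{p'}(Q;A)$ whose Gelfand image $\widehat{\mathcal S}$ is dense in $L^{p'}(Q\times\Delta(A))$; such a set is provided, via Stone--Weierstrass on $\Delta(A)$ and density of $\mathcal C_c(Q)$ in $L^{p'}(Q)$, by the image of a countable dense subset of $\mathcal C_c(Q)\otimes A$. A Cantor diagonal extraction then produces a subsequence $E'\subset E$ along which $L_\varepsilon(f)$ converges for every $f\in\mathcal S$, and the previous uniform estimate extends this convergence to every $f\in L^{p'}(Q;A)$, producing a limit $L(f)$ satisfying $|L(f)|\le C\,\|\widehat f\,\|_{L^{p'}(Q\times\Delta(A))}$. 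Hence $L$ induces a continuous linear form on $L^{p'}(Q\times\Delta(A))$, which the Riesz representation theorem represents by some $\widehat u_0\in L^p(Q\times\Delta(A))$. Fubini together with the isomorphism $\mathcal G_1:\mathcal B_A^p\to L^p(\Delta(A))$ identifies $\widehat u_0$ with a unique element $u_0\in L^p(Q;\mathcal B_A^p)$ for which $\widehat u_0=\mathcal G_1\circ u_0$, and (\ref{3.1}) is exactly the statement that $L_\varepsilon(f)\to L(f)$ for every $f\in L^{p'}(Q;A)$.

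The main obstacle lies in the first step: property $(MV)$ gives the asymptotics of $\varphi(\cdot/\varepsilon_1)$ only as a weak-$*$ limit in $L^\infty(\mathbb R^N)$, so the integrated mean value identity for $|f(x,x/\varepsilon_1)|^{p'}$ with the $x$-dependent modulator $|f(x,\cdot)|^{p'}\in A$ must be obtained through a reduction to elementary tensors combined with a uniform domination argument. Once that identity is secured, the remainder of the proof is a routine weak-compactness and Riesz-representation extraction.
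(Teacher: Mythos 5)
Your architecture is exactly the one the paper has in mind when it says Theorem \ref{t3.1} ``is proved exactly as its homologue in \cite[Theorem 3.1]{CMP}'': a uniform bound via H\"{o}lder together with the identity $\|f(\cdot,\cdot/\varepsilon_{1})\|_{L^{p^{\prime }}(Q)}\rightarrow \|\widehat{f}\|_{L^{p^{\prime }}(Q\times \Delta (A))}$ (this is precisely the strong $\Sigma $-convergence of $f^{\varepsilon }$ recorded in Remark \ref{r3.1}(2), and your reduction to elementary tensors plus the sup-norm domination $|f(x,x/\varepsilon_{1})|\leq \|f(x,\cdot )\|_{\infty }$ is the standard way to get it); then a diagonal extraction over a countable family; extension of the convergence by the asymptotic estimate $\limsup |L_{\varepsilon }(f)|\leq C\|\widehat{f}\|_{L^{p^{\prime }}(Q\times \Delta (A))}$; and finally Riesz representation in $L^{p}(Q\times \Delta (A))$ transported back through $\mathcal{G}_{1}$. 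Each of these steps, taken on its own terms, is sound, and the Carath\'{e}odory-type extension step (approximating a general $f$ by elements $f_{k}$ of the countable family and observing that $(L_{\varepsilon }(f))_{\varepsilon }$ is then Cauchy) works as you indicate.

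The one genuine gap is the existence of the countable set $\mathcal{S}$ with $\widehat{\mathcal{S}}$ dense in $L^{p^{\prime }}(Q\times \Delta (A))$. Section 3 of the paper does \emph{not} assume $A$ separable (that hypothesis is introduced only in Section 4), and for the paper's own flagship example $A=AP(\mathbb{R}_{y}^{N})$ of Proposition \ref{p2.4}(2) it fails: the Gelfand images of the characters $y\mapsto e^{2i\pi k\cdot y}$, $k\in \mathbb{R}^{N}$, form an uncountable family in $L^{p^{\prime }}(\Delta (A),\beta )$ that is uniformly separated (they are orthonormal in $L^{2}$ and uniformly bounded), so $L^{p^{\prime }}(Q\times \Delta (A))$ is not separable and no such $\mathcal{S}$ exists; your sentence ``such a set is provided \ldots\ by the image of a countable dense subset of $\mathcal{C}_{c}(Q)\otimes A$'' is then false, since $\mathcal{C}_{c}(Q)\otimes A$ has no countable dense subset either. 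The uniform-in-$\varepsilon $ Lipschitz bound $|L_{\varepsilon }(f)-L_{\varepsilon }(g)|\leq C\|f-g\|_{L^{p^{\prime }}(Q;A)}$ does not repair this, because density in $L^{p^{\prime }}(Q;A)$ is equally unavailable. So your proof is complete exactly when $A$ is separable; in the non-separable case the diagonal extraction only yields convergence against a countable family of test functions (or a weak-$\ast $ convergent subnet), and some additional hypothesis or argument is needed. To be fair, this limitation is inherited from the source the paper cites rather than introduced by you, but in your write-up it appears as a concrete incorrect assertion, so you should either add separability of $A$ as a hypothesis or restrict the class of test functions to a separable subspace.
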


The next result can be proven as in \cite[Theorem 4.10]{Casado}.

\begin{theorem}
\label{t3.2}Any uniformly integrable sequence $(u_{\varepsilon
})_{\varepsilon \in E}$ in $L^{1}(Q)$ admits a subsequence which is weakly $%
\Sigma $-convergent in $L^{1}(Q)$.
\end{theorem}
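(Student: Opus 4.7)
The plan is to reduce the theorem to Theorem \ref{t3.1} by truncation, leveraging the fact that uniform integrability controls $L^1$-tails. For each integer $k\ge 1$ set $u_\varepsilon^k=\tau_k(u_\varepsilon)$, where $\tau_k(z)=z$ if $|z|\le k$ and $\tau_k(z)=kz/|z|$ otherwise. The pointwise bound $|u_\varepsilon^k|^2\le k|u_\varepsilon|$ makes $(u_\varepsilon^k)_\varepsilon$ bounded in $L^2(Q)$ for each fixed $k$, while uniform integrability provides
$$\delta(k):=\sup_{\varepsilon\in E}\|u_\varepsilon-u_\varepsilon^k\|_{L^1(Q)}\xrightarrow[k\to\infty]{}0.$$
Applying Theorem \ref{t3.1} at $p=2$ together with a standard Cantor diagonal extraction then produces a single subsequence $E'\subset E$ along which, for every $k$, $u_\varepsilon^k\to u_0^k$ in $L^2(Q)$-weak $\Sigma$ for some $u_0^k\in L^2(Q;\mathcal{B}_A^2)$.

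Next I would show that $(u_0^k)_k$ is Cauchy in $L^1(Q;\mathcal{B}_A^1)$. For any $\psi\in L^\infty(Q;A)$ and any $k,\ell$, passing to the limit along $E'$ in $\int_Q(u_\varepsilon^k-u_\varepsilon^\ell)\psi(x,x/\varepsilon_1)\,dx$ via the $L^2$-weak $\Sigma$ convergence yields
$$\left|\iint_{Q\times\Delta(A)}(\widehat{u}_0^k-\widehat{u}_0^\ell)\widehat{\psi}\,dx\,d\beta\right|\le\|\psi\|_{L^\infty(Q;A)}(\delta(k)+\delta(\ell)).$$
Using the Gelfand identification $A\simeq\mathcal{C}(\Delta(A))$, and approximating $\mathrm{sgn}(\widehat{u}_0^k-\widehat{u}_0^\ell)$ in $L^1(|\widehat{u}_0^k-\widehat{u}_0^\ell|\,dx\,d\beta)$ by bounded continuous functions (via Lusin plus Tietze) and then by elements of $L^\infty(Q;\mathcal{C}(\Delta(A)))\equiv\{\widehat{\psi}:\psi\in L^\infty(Q;A)\}$, one deduces
$$\|u_0^k-u_0^\ell\|_{L^1(Q;\mathcal{B}_A^1)}=\iint_{Q\times\Delta(A)}|\widehat{u}_0^k-\widehat{u}_0^\ell|\,dx\,d\beta\le\delta(k)+\delta(\ell),$$
so that $u_0^k\to u_0$ in $L^1(Q;\mathcal{B}_A^1)$ for some limit $u_0$.

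To finish, for arbitrary $\psi\in L^\infty(Q;A)$ I split
$$\int_Q u_\varepsilon\psi(x,x/\varepsilon_1)\,dx-\iint_{Q\times\Delta(A)}\widehat{u}_0\widehat{\psi}\,dx\,d\beta=I_\varepsilon^k+J_\varepsilon^k+K^k,$$
with $I_\varepsilon^k=\int_Q(u_\varepsilon-u_\varepsilon^k)\psi(x,x/\varepsilon_1)\,dx$, $J_\varepsilon^k=\int_Q u_\varepsilon^k\psi(x,x/\varepsilon_1)\,dx-\iint\widehat{u}_0^k\widehat{\psi}\,dx\,d\beta$ and $K^k=\iint(\widehat{u}_0^k-\widehat{u}_0)\widehat{\psi}\,dx\,d\beta$. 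Then $|I_\varepsilon^k|\le\|\psi\|_\infty\delta(k)$ uniformly in $\varepsilon$, $J_\varepsilon^k\to 0$ as $E'\ni\varepsilon\to 0$ for each fixed $k$ by the first paragraph, and $|K^k|\le\|\psi\|_\infty\|u_0^k-u_0\|_{L^1(Q;\mathcal{B}_A^1)}$. Sending $\varepsilon\to 0$ first and $k\to\infty$ afterwards yields the conclusion. The main technical hurdle I expect is the density step in the middle paragraph: recovering the full $L^1(Q;\mathcal{B}_A^1)$-norm from test functions of the form $\widehat{\psi}$ with $\psi\in L^\infty(Q;A)$ must be done carefully, since the algebra $A$ is not assumed separable and $\{\widehat{\psi}\}$ sits as a proper subspace of $L^\infty(Q\times\Delta(A))$.
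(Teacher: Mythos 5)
The paper does not actually prove Theorem \ref{t3.2}: it only refers the reader to \cite[Theorem 4.10]{Casado}. Your truncation argument is a correct, self-contained substitute and is in the spirit of the standard proof of that cited result: truncate at level $k$, invoke the $L^p$-compactness of Theorem \ref{t3.1} (at $p=2$) together with a diagonal extraction, show that the truncated limits form a Cauchy sequence in $L^{1}(Q;\mathcal{B}_{A}^{1})$ using the modulus of uniform integrability $\delta(k)$, and conclude by the three-term splitting. Two points deserve care when you write this out. First, the test functions for $L^{2}(Q)$-weak $\Sigma$-convergence must lie in $L^{2}(Q;A)$, whereas you test against $\psi\in L^{\infty}(Q;A)$; this is harmless when $Q$ is bounded (the relevant case), and otherwise you should run the intermediate estimates with $\psi\in\mathcal{K}(Q;A)$, which still forms a norming family for $L^{1}(Q\times\Delta(A))$. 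Second, the density step you flag as the main hurdle is genuine but unproblematic and does not require separability of $A$: the measure $\beta$ is by construction a Radon measure on the compact space $\Delta(A)$ (Riesz--Markov), so Lusin's theorem applies to $\operatorname{sgn}(\widehat{u}_{0}^{k}-\widehat{u}_{0}^{\ell})$ on $Q\times\Delta(A)$, producing $g$ continuous with compact support in the $x$-variable, $\left\vert g\right\vert\leq 1$, agreeing with the sign off a set of small measure; since $\Delta(A)$ is compact, such a $g$ belongs to $\mathcal{K}(Q;\mathcal{C}(\Delta(A)))$ and pulls back under the isometric isomorphism $\mathcal{G}$ to an admissible $\psi$ with $\left\Vert\psi\right\Vert_{L^{\infty}(Q;A)}\leq 1$. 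This yields $\sup_{\left\Vert\psi\right\Vert\leq1}\iint_{Q\times\Delta(A)}(\widehat{u}_{0}^{k}-\widehat{u}_{0}^{\ell})\widehat{\psi}\,dx\,d\beta=\Vert\widehat{u}_{0}^{k}-\widehat{u}_{0}^{\ell}\Vert_{L^{1}(Q\times\Delta(A))}$, which is exactly what your Cauchy estimate needs. With these two remarks supplied, the argument is complete.
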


We recall that a sequence $(u_{\varepsilon })_{\varepsilon >0}$ in $L^{1}(Q)$
is said to be uniformly integrable if $(u_{\varepsilon })_{\varepsilon >0}$
is bounded in $L^{1}(Q)$ and further $\sup_{\varepsilon
>0}\int_{X}\left\vert u_{\varepsilon }\right\vert dx\rightarrow 0$ as $%
\left\vert X\right\vert \rightarrow 0$ ($X$ being an integrable set in $Q$
with $\left\vert X\right\vert $ denoting the Lebesgue measure of $X$).

In order to deal with the convergence of a product of sequences we need to
define the concept of strong $\Sigma $-convergence.

\begin{definition}
\label{d3.2}\emph{A sequence }$(u_{\varepsilon })_{\varepsilon >0}\subset
L^{p}(Q)$\emph{\ }$(1\leq p<\infty )$\emph{\ is said to }strongly $\Sigma $%
-converge\emph{\ in }$L^{p}(Q)$\emph{\ to some }$u_{0}\in L^{p}(Q;\mathcal{B}%
_{A}^{p})$\emph{\ if it is weakly }$\Sigma $\emph{-convergent towards }$%
u_{0} $\emph{\ and further satisfies the following condition: }%
\begin{equation}
\left\| u_{\varepsilon }\right\| _{L^{p}(Q)}\rightarrow \left\| \widehat{u}%
_{0}\right\| _{L^{p}(Q\times \Delta (A))}.  \label{3.12}
\end{equation}%
\emph{We denote this by }$u_{\varepsilon }\rightarrow u_{0}$\emph{\ in }$%
L^{p}(Q)$\emph{-strong }$\Sigma $\emph{.}
\end{definition}

\begin{remark}
\label{r3.1}\emph{(1) By the above definition, the uniqueness of the limit
of such a sequence is ensured. (2) By \cite{26} it is immediate that for any 
}$u\in L^{p}(Q;A)$\emph{, the sequence }$(u^{\varepsilon })_{\varepsilon >0}$%
\emph{\ is strongly }$\Sigma $\emph{-convergent to }$\varrho (u)$\emph{.}
\end{remark}

The next result will be of capital interest in the homogenization process.

\begin{theorem}
\label{t3.3}Let $1<p,q<\infty $ and $r\geq 1$ be such that $1/r=1/p+1/q\leq
1 $. Assume $(u_{\varepsilon })_{\varepsilon \in E}\subset L^{q}(Q)$ is
weakly $\Sigma $-convergent in $L^{q}(Q)$ to some $u_{0}\in L^{q}(Q;\mathcal{%
B}_{A}^{q})$, and $(v_{\varepsilon })_{\varepsilon \in E}\subset L^{p}(Q)$
is strongly $\Sigma $-convergent in $L^{p}(Q)$ to some $v_{0}\in L^{p}(Q;%
\mathcal{B}_{A}^{p})$. Then the sequence $(u_{\varepsilon }v_{\varepsilon
})_{\varepsilon \in E}$ is weakly $\Sigma $-convergent in $L^{r}(Q)$ to $%
u_{0}v_{0}$.
\end{theorem}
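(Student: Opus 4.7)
The plan is to show that for every $\phi\in L^{r'}(Q;A)$, with $1/r'=1-1/r$,
$$\int_Q u_\varepsilon v_\varepsilon \phi^\varepsilon\,dx\longrightarrow \iint_{Q\times\Delta(A)}\widehat{u}_0\,\widehat{v}_0\,\widehat{\phi}\,dx\,d\beta\quad\text{as }E\ni\varepsilon\to 0.$$
H\"older's inequality already gives a uniform bound for $\|u_\varepsilon v_\varepsilon\|_{L^r(Q)}$, so the above convergence correctly identifies the weak $\Sigma$-limit. My strategy is to approximate $v_0$ by nice functions to reduce to a case accessible by the weak $\Sigma$-convergence of $(u_\varepsilon)$, and then to use the strong $\Sigma$-convergence of $(v_\varepsilon)$ to control the approximation error.

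Since $L^p(Q;A)$ embeds densely (via $\varrho$) into $L^p(Q;\mathcal{B}_A^p)$, I pick $(v_0^{(n)})_{n\ge 1}\subset L^p(Q;A)$ with $v_0^{(n)}\to v_0$ in $L^p(Q;\mathcal{B}_A^p)$ and split
$$\int_Q u_\varepsilon v_\varepsilon \phi^\varepsilon\,dx=\int_Q u_\varepsilon(v_0^{(n)})^\varepsilon\phi^\varepsilon\,dx+\int_Q u_\varepsilon\bigl(v_\varepsilon-(v_0^{(n)})^\varepsilon\bigr)\phi^\varepsilon\,dx.$$
The relation $1/p+1/r'=1/q'$ shows that $v_0^{(n)}\phi\in L^{q'}(Q;A)$, so the first (principal) term converges, as $\varepsilon\to 0$, to $\iint \widehat{u}_0\widehat{v}_0^{(n)}\widehat{\phi}\,dx\,d\beta$ by the weak $\Sigma$-convergence of $(u_\varepsilon)$ tested against $v_0^{(n)}\phi$; a further H\"older estimate in $L^p(Q\times\Delta(A))$ then sends this to $\iint \widehat{u}_0\widehat{v}_0\widehat{\phi}\,dx\,d\beta$ as $n\to\infty$.

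For the error term, H\"older with the three conjugate exponents $(q,p,r')$ (which indeed sum to $1$) yields the bound $C\,\|v_\varepsilon-(v_0^{(n)})^\varepsilon\|_{L^p(Q)}$, the constant $C$ absorbing the uniform bounds on $\|u_\varepsilon\|_{L^q}$ and $\|\phi^\varepsilon\|_{L^{r'}}$. The remaining task is therefore
$$\lim_{n\to\infty}\limsup_{\varepsilon\to 0}\|v_\varepsilon-(v_0^{(n)})^\varepsilon\|_{L^p(Q)}=0.$$
In the Hilbertian case $p=2$ this is a straightforward identity: expanding $\|v_\varepsilon-(v_0^{(n)})^\varepsilon\|_{L^2}^2$ and combining the norm-convergence built into strong $\Sigma$-convergence, the weak $\Sigma$-convergence of $v_\varepsilon$ tested against $\overline{v_0^{(n)}}\in L^2(Q;A)$, and the strong $\Sigma$-convergence $(v_0^{(n)})^\varepsilon\to\varrho(v_0^{(n)})$ recorded in Remark~\ref{r3.1}(2), the limit is $\|\widehat{v}_0-\widehat{v}_0^{(n)}\|_{L^2(Q\times\Delta(A))}^2$, which vanishes as $n\to\infty$ by the choice of $v_0^{(n)}$.

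The main obstacle is the case $p\neq 2$, where this clean square-expansion is unavailable. The natural way around it is to exploit the uniform convexity of $L^p$ together with the strong $\Sigma$-convergence hypothesis in a Radon--Riesz-type argument, or, equivalently, to establish as a preliminary lemma that strong $\Sigma$-convergence of $v_\varepsilon$ to $v_0$ in $L^p(Q)$ admits the characterization $\lim_n\limsup_\varepsilon\|v_\varepsilon-(v_0^{(n)})^\varepsilon\|_{L^p(Q)}=0$ for every (equivalently, for some) approximating sequence $v_0^{(n)}\to v_0$ in $L^p(Q;\mathcal{B}_A^p)$. Once such a characterization is in hand, the splitting above closes the proof by letting first $\varepsilon\to 0$ and then $n\to\infty$, identifying the weak $\Sigma$-limit of $(u_\varepsilon v_\varepsilon)$ in $L^r(Q)$ as $u_0v_0$.
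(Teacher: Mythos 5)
Your route is genuinely different from the paper's. The paper never approximates $v_{0}$ by admissible test functions; instead it works with the duality map $z_{\varepsilon}=\left\vert v_{\varepsilon}\right\vert ^{p-2}v_{\varepsilon}$ and Zhikov's convexity inequalities $0\leq \left\vert a+tb\right\vert ^{p}-\left\vert a\right\vert ^{p}-pt\left\vert a\right\vert ^{p-2}ab\leq c\left\vert t\right\vert ^{1+s}(\left\vert a\right\vert ^{p}+\left\vert b\right\vert ^{p})$, first identifying the weak $\Sigma $-limit of $(z_{\varepsilon})$ as $\left\vert v_{0}\right\vert ^{p-2}v_{0}$ using the norm condition (\ref{3.12}) and the weak lower semicontinuity of the $L^{p}$-norm under $\Sigma $-convergence, and then repeating the perturbation argument with $z_{\varepsilon}+tu_{\varepsilon}\varphi ^{\varepsilon}$ to pin down the limit of the product. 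Your decomposition through $(v_{0}^{(n)})^{\varepsilon}$ is the more classical ``corrector'' argument; the parts you do carry out (the H\"{o}lder bookkeeping with exponents $q,p,r^{\prime}$, the identification of the principal term, the complete $p=2$ case) are correct.

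The gap is that for $p\neq 2$ the whole proof rests on the unproven claim $\lim_{n}\limsup_{\varepsilon}\Vert v_{\varepsilon}-(v_{0}^{(n)})^{\varepsilon}\Vert _{L^{p}(Q)}=0$, and the phrase ``Radon--Riesz via uniform convexity'' does not by itself deliver it: the Radon--Riesz property of $L^{p}(Q)$ concerns weak convergence to an element \emph{of} $L^{p}(Q)$, whereas here $v_{0}$ lives in $L^{p}(Q;\mathcal{B}_{A}^{p})$ and the convergence is only against oscillating test functions, so there is no fixed uniformly convex space in which to invoke the theorem. The statement is nevertheless true and your outline can be closed: apply Clarkson's inequality (first form for $p\geq 2$, second form for $1<p<2$) to the pair $v_{\varepsilon}$, $(v_{0}^{(n)})^{\varepsilon}$, pass to the limit in $\varepsilon$ using the norm convergence (\ref{3.12}), the convergence $\Vert (v_{0}^{(n)})^{\varepsilon}\Vert _{L^{p}(Q)}\rightarrow \Vert \widehat{v}_{0}^{(n)}\Vert _{L^{p}(Q\times \Delta (A))}$, and the weak lower semicontinuity of the norm applied to the half-sum $\frac{1}{2}(v_{\varepsilon}+(v_{0}^{(n)})^{\varepsilon})$ (this is the same lower-semicontinuity fact the paper itself invokes in its proof), and then let $n\rightarrow \infty $. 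Without writing out this lemma, or some equivalent quantitative convexity step, the proof is incomplete precisely where the non-Hilbertian difficulty sits --- which is the same difficulty the paper's inequalities (\ref{3.13}) are designed to handle.
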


\begin{proof}
We assume without lost of generality that our sequences are real values.
This assumption is fully motivated by the fact that in general, almost only
linear problems are of complex coefficients, and so in that case, the
linearity permits to work with real coefficients. This being so, we will
deeply make use of the following simple inequalities proved in \cite[Proof
of Lemma 2.4]{Zhikov1}: 
\begin{equation}
\begin{array}{l}
0\leq \left\vert a+tb\right\vert ^{p}-\left\vert a\right\vert
^{p}-pt\left\vert a\right\vert ^{p-2}ab\leq c\left\vert t\right\vert
^{1+s}(\left\vert a\right\vert ^{p}+\left\vert b\right\vert ^{p}) \\ 
\text{for each }\left\vert t\right\vert \leq 1\text{ and for every }a,b\in 
\mathbb{R}\text{, where } \\ 
s=\min (p-1,1)>0\text{ and }c>0\text{ is independent of }a,b\text{.}%
\end{array}
\label{3.13}
\end{equation}%
We proceed in two steps.\medskip 

\noindent \textbf{Step 1.} Set $p^{\prime }=p/(p-1)$, and let us first show
that the sequence $z_{\varepsilon }=\left\vert v_{\varepsilon }\right\vert
^{p-2}v_{\varepsilon }$ is weakly stochastically $\Sigma $-convergent to $%
\left\vert v_{0}\right\vert ^{p-2}v_{0}$ in $L^{p^{\prime }}(Q)$. To this
end, let $z\in L^{p^{\prime }}(Q;\mathcal{B}_{A}^{p^{\prime }})$ denote the
weak stochastic $\Sigma $-limit of $(z_{\varepsilon })_{\varepsilon \in E}$
in $L^{p^{\prime }}(Q)$ (up to a subsequence if necessary; in fact it is
easily seen that this sequence is bounded in $L^{p^{\prime }}(Q)$). Let $%
\varphi \in L^{p}(Q;A)$ with $\left\Vert \varphi \right\Vert
_{L^{p}(Q;A)}\leq 1$. We have by the second inequality in (\ref{3.13}) that 
\begin{equation*}
\int_{Q}\left\vert v_{\varepsilon }+t\varphi ^{\varepsilon }\right\vert
^{p}dx\leq \int_{Q}\left\vert v_{\varepsilon }\right\vert
^{p}dx+pt\int_{Q}z_{\varepsilon }\varphi ^{\varepsilon }dx+c_{1}\left\vert
t\right\vert ^{1+s}
\end{equation*}%
for $\left\vert t\right\vert \leq 1$, $c_{1}$ being a positive constant
independent of $\varepsilon $ (since the sequence $(v_{\varepsilon
})_{\varepsilon \in E}$ is bounded in $L^{p}(Q)$). Taking the $\lim
\inf_{E\ni \varepsilon \rightarrow 0}$ in the above inequality we get, by
virtue of (\ref{3.12}) (in Definition \ref{d3.2}) and the lower
semicontinuity property (see \cite{AIM}) that 
\begin{eqnarray*}
\iint_{Q\times \Delta (A)}\left\vert \widehat{v}_{0}+t\widehat{\varphi }%
\right\vert ^{p}dxd\beta &\leq &\iint_{Q\times \Delta (A)}\left\vert 
\widehat{v}_{0}\right\vert ^{p}dxd\beta \\
&&+pt\iint_{Q\times \Delta (A)}\widehat{z}\widehat{\varphi }dxd\beta
+c_{1}\left\vert t\right\vert ^{1+s}.
\end{eqnarray*}%
On the other hand, the first inequality in (\ref{3.13}) yields 
\begin{equation*}
\iint_{Q\times \Delta (A)}\left\vert \widehat{v}_{0}+t\widehat{\varphi }%
\right\vert ^{p}dxd\beta \geq \iint_{Q\times \Delta (A)}\left\vert \widehat{v%
}_{0}\right\vert ^{p}dxd\beta +pt\iint_{Q\times \Delta (A)}\left\vert 
\widehat{v}_{0}\right\vert ^{p-2}\widehat{v}_{0}dxd\beta ,
\end{equation*}%
hence 
\begin{equation*}
pt\iint_{Q\times \Delta (A)}\left\vert \widehat{v}_{0}\right\vert ^{p-2}%
\widehat{v}_{0}dxd\beta \leq pt\iint_{Q\times \Delta (A)}\widehat{z}\widehat{%
\varphi }dxd\beta +c_{1}\left\vert t\right\vert ^{1+s}.
\end{equation*}%
Now, taking in the above inequality $\varphi =\psi /\left\Vert \psi
\right\Vert _{L^{p}(Q;A)}$ for any arbitrary $\psi \in L^{p}(Q;A)$ the same
inequality holds for any arbitrary $\psi $ in place of $\varphi $, which,
together with the arbitrariness of the real number $t$ in $\left\vert
t\right\vert \leq 1$, gives $z=\left\vert v_{0}\right\vert ^{p-2}v_{0}$%
.\medskip

\noindent \textbf{Step 2}. Now, let us establish the convergence result $%
u_{\varepsilon }v_{\varepsilon }\rightarrow u_{0}v_{0}$ in $L^{r}(Q)$-weak $%
\Sigma $. First of all the sequence $(u_{\varepsilon }v_{\varepsilon
})_{\varepsilon \in E}$ is bounded in $L^{r}(Q)$. Next, let $\varphi \in
L^{r^{\prime }}(Q;A)$ and set 
\begin{equation*}
\ell =\lim_{E\ni \varepsilon \rightarrow 0}\int_{Q}u_{\varepsilon
}v_{\varepsilon }\varphi ^{\varepsilon }dx\text{ (possibly up to a
subsequence).}
\end{equation*}%
We need to show that $\ell =\iint_{Q\times \Delta (A)}\widehat{u}_{0}%
\widehat{v}_{0}\widehat{\varphi }dxd\beta $. First and foremost we have $%
\varphi ^{\varepsilon }\in L^{r^{\prime }}(Q)$ and so, $u_{\varepsilon
}\varphi ^{\varepsilon }\in L^{p^{\prime }}(Q)$ since $1/r^{\prime
}+1/q=1/p^{\prime }$ and $u_{\varepsilon }\in L^{q}(Q)$. Thus, once again by
the second inequality in (\ref{3.13}) and keeping in mind the definition of $%
z_{\varepsilon }$ in Step 1, one has 
\begin{eqnarray*}
\int_{Q}\left\vert z_{\varepsilon }+tu_{\varepsilon }\varphi ^{\varepsilon
}\right\vert ^{p^{\prime }}dx &\leq &\int_{Q}\left\vert z_{\varepsilon
}\right\vert ^{p^{\prime }}dx+p^{\prime }t\int_{Q}\left\vert z_{\varepsilon
}\right\vert ^{p-2}z_{\varepsilon }u_{\varepsilon }\varphi ^{\varepsilon
}dx+c_{1}\left\vert t\right\vert ^{1+s} \\
&=&\int_{Q}\left\vert v_{\varepsilon }\right\vert ^{p}dx+p^{\prime
}t\int_{Q}v_{\varepsilon }u_{\varepsilon }\varphi ^{\varepsilon
}dx+c_{1}\left\vert t\right\vert ^{1+s}
\end{eqnarray*}%
since $v_{\varepsilon }=\left\vert z_{\varepsilon }\right\vert ^{p^{\prime
}-2}z_{\varepsilon }$ and $\left\vert z_{\varepsilon }\right\vert
^{p^{\prime }}=\left\vert v_{\varepsilon }\right\vert ^{p}$. On the other
hand, one easily sees that the sequence $(u_{\varepsilon }\varphi
^{\varepsilon })_{\varepsilon \in E}$ is weakly $\Sigma $-convergent to $%
u_{0}\varrho (\varphi )$ in $L^{p^{\prime }}(Q)$, so that, passing to the
limit in the above inequality, using the lower semicontinuity property, we
get 
\begin{eqnarray*}
\iint_{Q\times \Delta (A)}\left\vert \widehat{z}+t\widehat{u}_{0}\widehat{%
\varphi }\right\vert ^{p^{\prime }}dxd\beta &\leq &\iint_{Q\times \Delta
(A)}\left\vert \widehat{v}_{0}\right\vert ^{p}dxd\beta +p^{\prime }t\ell
+c_{1}\left\vert t\right\vert ^{1+s} \\
&=&\iint_{Q\times \Delta (A)}\left\vert \widehat{z}\right\vert ^{p^{\prime
}}dxd\beta +p^{\prime }t\ell +c_{1}\left\vert t\right\vert ^{1+s},
\end{eqnarray*}%
since $z=\left\vert v_{0}\right\vert ^{p-2}v_{0}$ (as shown in Step 1), and
therefore, $\left\vert v_{0}\right\vert ^{p}=\left\vert z\right\vert
^{p^{\prime }}$. Besides, we have by the first inequality in (\ref{3.13})
that 
\begin{eqnarray*}
\iint_{Q\times \Delta (A)}\left\vert \widehat{z}+t\widehat{u}_{0}\widehat{%
\varphi }\right\vert ^{p^{\prime }}dxd\beta &\geq &\iint_{Q\times \Delta
(A)}\left\vert \widehat{z}\right\vert ^{p^{\prime }}dxd\beta +p^{\prime
}t\iint_{Q\times \Delta (A)}\left\vert \widehat{z}\right\vert ^{p^{\prime
}-2}\widehat{z}\widehat{u}_{0}\widehat{\varphi }dxd\beta \\
&=&\iint_{Q\times \Delta (A)}\left\vert \widehat{z}\right\vert ^{p^{\prime
}}dxd\beta +p^{\prime }t\iint_{Q\times \Delta (A)}\widehat{v}_{0}\widehat{u}%
_{0}\widehat{\varphi }dxd\beta
\end{eqnarray*}%
since $v_{0}=\left\vert z\right\vert ^{p^{\prime }-2}z$. We are therefore
led to 
\begin{equation*}
p^{\prime }t\iint_{Q\times \Delta (A)}\widehat{v}_{0}\widehat{u}_{0}\widehat{%
\varphi }dxd\beta \leq p^{\prime }t\ell +c_{1}\left\vert t\right\vert
^{1+s}\;\forall \left\vert t\right\vert \leq 1,
\end{equation*}%
hence $\ell =\iint_{Q\times \Delta (A)}\widehat{v}_{0}\widehat{u}_{0}%
\widehat{\varphi }dxd\beta $.
\end{proof}

The following result will be of great interest in practise. It is a mere
consequence of the preceding theorem.

\begin{corollary}
\label{c3.4}Let $(u_{\varepsilon })_{\varepsilon \in E}\subset L^{p}(Q)$ and 
$(v_{\varepsilon })_{\varepsilon \in E}\subset L^{p^{\prime }}(Q)\cap
L^{\infty }(Q)$ ($1<p<\infty $ and $p^{\prime }=p/(p-1)$) be two sequences
such that:

\begin{itemize}
\item[(i)] $u_{\varepsilon }\rightarrow u_{0}$ in $L^{p}(Q)$-weak $\Sigma $;

\item[(ii)] $v_{\varepsilon }\rightarrow v_{0}$ in $L^{p^{\prime }}(Q)$%
-strong $\Sigma $;

\item[(iii)] $(v_{\varepsilon })_{\varepsilon \in E}$ is bounded in $%
L^{\infty }(Q)$.
\end{itemize}

\noindent Then $u_{\varepsilon }v_{\varepsilon }\rightarrow u_{0}v_{0}$ in $%
L^{p}(Q)$-weak $\Sigma $.
\end{corollary}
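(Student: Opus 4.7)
The plan is to apply Theorem \ref{t3.3} with the choice $q=p'$ to obtain weak $\Sigma$-convergence of the product in $L^{1}(Q)$, and then to upgrade this to weak $\Sigma$-convergence in $L^{p}(Q)$ by a density argument based on the uniform $L^{\infty}$-bound on $(v_{\varepsilon})_{\varepsilon\in E}$. Since $1<p,p'<\infty$ and $1/p+1/p'=1$, Theorem \ref{t3.3} applies directly (with $r=1$) and yields
\begin{equation*}
\int_{Q}u_{\varepsilon}v_{\varepsilon}\psi^{\varepsilon}dx
\longrightarrow
\iint_{Q\times\Delta(A)}\widehat{u}_{0}\widehat{v}_{0}\widehat{\psi}\,dxd\beta
\quad\text{for every }\psi\in L^{\infty}(Q;A).
\end{equation*}

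Next I would record two uniform facts that make the upgrade possible. First, by H\"older, $\Vert u_{\varepsilon}v_{\varepsilon}\Vert_{L^{p}(Q)}\leq\Vert u_{\varepsilon}\Vert_{L^{p}(Q)}\Vert v_{\varepsilon}\Vert_{L^{\infty}(Q)}\leq C$ uniformly in $\varepsilon\in E$, where boundedness of $(u_{\varepsilon})$ in $L^{p}(Q)$ follows from its weak $\Sigma$-convergence. Second, the uniform $L^{\infty}$-bound $\Vert v_{\varepsilon}\Vert_{\infty}\leq M$ passes to the limit in the form $\Vert\widehat{v}_{0}\Vert_{L^{\infty}(Q\times\Delta(A))}\leq M$ (by testing the strong $\Sigma$-convergence against indicator-type functions and invoking the lower semicontinuity already used in Step~1 of the proof of Theorem \ref{t3.3}), hence $u_{0}v_{0}\in L^{p}(Q;\mathcal{B}_{A}^{p})$.

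Finally I would run the density step. Given an arbitrary $\psi\in L^{p'}(Q;A)$, set $\psi_{n}(x)=\psi(x)\mathbf{1}_{\{\Vert\psi(x)\Vert_{A}\leq n\}}$, so that $\psi_{n}\in L^{\infty}(Q;A)\cap L^{p'}(Q;A)$ and $\Vert\psi-\psi_{n}\Vert_{L^{p'}(Q;A)}\to 0$ as $n\to\infty$ by dominated convergence. Splitting
\begin{equation*}
\int_{Q}u_{\varepsilon}v_{\varepsilon}\psi^{\varepsilon}dx
=\int_{Q}u_{\varepsilon}v_{\varepsilon}\psi_{n}^{\varepsilon}dx
+\int_{Q}u_{\varepsilon}v_{\varepsilon}(\psi-\psi_{n})^{\varepsilon}dx,
\end{equation*}
and using the pointwise estimate $|\phi^{\varepsilon}(x)|\leq\Vert\phi(x)\Vert_{A}$ valid for any $\phi\in L^{p'}(Q;A)$, H\"older gives that the second term is bounded by $C\Vert\psi-\psi_{n}\Vert_{L^{p'}(Q;A)}$ uniformly in $\varepsilon$; an analogous bound controls the tail $\iint\widehat{u}_{0}\widehat{v}_{0}(\widehat{\psi}-\widehat{\psi}_{n})dxd\beta$. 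Letting first $\varepsilon\to 0$ through $E$ (using the convergence already established for $\psi_{n}\in L^{\infty}(Q;A)$) and then $n\to\infty$ yields the required identity for every test function $\psi\in L^{p'}(Q;A)$.

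The main obstacle is precisely this last upgrading step: Theorem \ref{t3.3} with these exponents only delivers $L^{1}$-weak $\Sigma$-convergence, and the jump to $L^{p}$-weak $\Sigma$-convergence rests entirely on the extra $L^{\infty}$-boundedness of $(v_{\varepsilon})$, which is what provides the uniform $L^{p}$-bound on the products and thus makes the truncation and density argument close.
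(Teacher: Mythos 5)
Your proof is correct, and its first half coincides with the paper's: both apply Theorem \ref{t3.3} (with $r=1$, since $1/p+1/p'=1$) to get $u_{\varepsilon}v_{\varepsilon}\rightarrow u_{0}v_{0}$ in $L^{1}(Q)$-weak $\Sigma$, and both then exploit the uniform $L^{p}(Q)$-bound on the products coming from (iii). Where you diverge is in how the $L^{1}$ information is upgraded to $L^{p}$: the paper invokes the compactness Theorem \ref{t3.1} to extract a weak $\Sigma$-limit $w_{0}\in L^{p}(Q;\mathcal{B}_{A}^{p})$ of (a subsequence of) the products and then identifies $w_{0}=u_{0}v_{0}$ in one line, whereas you keep the whole sequence and extend the test-function class directly, truncating an arbitrary $\psi\in L^{p'}(Q;A)$ to $\psi_{n}\in L^{\infty}(Q;A)$ and closing the estimate via H\"{o}lder and the pointwise bound $|\phi^{\varepsilon}(x)|\leq\Vert\phi(x)\Vert_{A}$. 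Your route is slightly longer but more self-contained: it makes explicit the two points the paper leaves implicit, namely that the subsequence extracted via Theorem \ref{t3.1} must be upgraded to the full sequence (by uniqueness of the limit on common test functions), and that $\widehat{u}_{0}\widehat{v}_{0}$ indeed lies in $L^{p}(Q\times\Delta(A))$, for which your observation $\Vert\widehat{v}_{0}\Vert_{L^{\infty}(Q\times\Delta(A))}\leq\sup_{\varepsilon}\Vert v_{\varepsilon}\Vert_{L^{\infty}(Q)}$ (a standard consequence of weak $\Sigma$-convergence tested against a dense class) is exactly what is needed. The only blemish is the labelling ``$q=p'$'' at the start: in the notation of Theorem \ref{t3.3} the weakly convergent sequence lives in $L^{q}$ and the strongly convergent one in $L^{p}$, so here one should take the theorem's $q$ equal to the corollary's $p$ and the theorem's $p$ equal to $p'$; this does not affect the conclusion $r=1$.
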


\begin{proof}
By Theorem \ref{t3.3}, the sequence $(u_{\varepsilon }v_{\varepsilon
})_{\varepsilon \in E}$ $\Sigma $-converges towards $u_{0}v_{0}$ in $%
L^{1}(Q) $. Besides the same sequence is bounded in $L^{p}(Q)$ so that by
the Theorem \ref{t3.1}, it weakly $\Sigma $-converges in $L^{p}(Q)$ towards
some $w_{0}\in L^{p}(Q;\mathcal{B}_{A}^{p})$. This gives as a result $%
w_{0}=u_{0}v_{0}$.
\end{proof}

The strong $\Sigma $-convergence is a generalization of the strong
convergence as one can easily see in the following result whose easy proof
is left to the reader.

\begin{proposition}
\label{p3.2}Let $(u_{\varepsilon })_{\varepsilon \in E}\subset L^{p}(Q)$ $%
(1\leq p<\infty )$ be a strongly convergent sequence in $L^{p}(Q)$ to some $%
u_{0}\in L^{p}(Q)$. Then $(u_{\varepsilon })_{\varepsilon \in E}$ strongly $%
\Sigma $-converges in $L^{p}(Q)$ towards $u_{0}$.
\end{proposition}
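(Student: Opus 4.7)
The plan is to verify separately the two conditions in Definition \ref{d3.2}. The first observation is that $u_{0}\in L^{p}(Q)$ can be regarded as an element of $L^{p}(Q;\mathcal{B}_{A}^{p})$ through the canonical embedding that sends a scalar to its constant (in $y$) class, so $\widehat{u}_{0}(x,s)=u_{0}(x)$ for $\beta$-almost every $s\in\Delta(A)$. Since $\beta(\Delta(A))=1$, this yields $\|\widehat{u}_{0}\|_{L^{p}(Q\times\Delta(A))}=\|u_{0}\|_{L^{p}(Q)}$, and the norm condition \eqref{3.12} follows at once from the strong $L^{p}(Q)$-convergence of $u_{\varepsilon}$ to $u_{0}$.

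For the weak $\Sigma$-convergence \eqref{3.1}, I would fix $f\in L^{p'}(Q;A)$, set $f^{\varepsilon}(x)=f(x,x/\varepsilon_{1})$, and split
\begin{equation*}
\int_{Q}u_{\varepsilon}(x)f^{\varepsilon}(x)\,dx=\int_{Q}\bigl(u_{\varepsilon}-u_{0}\bigr)(x)\,f^{\varepsilon}(x)\,dx+\int_{Q}u_{0}(x)f^{\varepsilon}(x)\,dx.
\end{equation*}
Because $A\subset BUC(\mathbb{R}^{N})$ one has the pointwise bound $|f^{\varepsilon}(x)|\leq\|f(x,\cdot)\|_{\infty}$, whence $\|f^{\varepsilon}\|_{L^{p'}(Q)}\leq\|f\|_{L^{p'}(Q;A)}$ uniformly in $\varepsilon$; combined with H\"older's inequality and the hypothesis $u_{\varepsilon}\to u_{0}$ in $L^{p}(Q)$, this drives the first summand to zero. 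For the second summand I would invoke Remark \ref{r3.1}(2), according to which $f^{\varepsilon}\to\varrho(f)$ strongly, hence weakly, $\Sigma$ in $L^{p'}(Q)$; testing this weak $\Sigma$-convergence against $u_{0}\in L^{p}(Q)$ (viewed as an element of $L^{p}(Q;A)$ independent of $y$) gives
\begin{equation*}
\int_{Q}u_{0}(x)f^{\varepsilon}(x)\,dx\longrightarrow\iint_{Q\times\Delta(A)}u_{0}(x)\widehat{f}(x,s)\,dx\,d\beta=\iint_{Q\times\Delta(A)}\widehat{u}_{0}(x,s)\widehat{f}(x,s)\,dx\,d\beta,
\end{equation*}
which is precisely the right-hand side of \eqref{3.1}.

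No serious obstacle is anticipated. The only technical inputs are the elementary uniform bound on $\|f^{\varepsilon}\|_{L^{p'}(Q)}$ and the weak $\Sigma$-convergence of $f^{\varepsilon}$ to $\varrho(f)$; both are standard consequences of the mean value property \emph{(MV)} and are already encapsulated in Remark \ref{r3.1}(2), so the proof reduces to the Hölder-plus-duality splitting described above.
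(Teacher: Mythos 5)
Your proof is correct. The paper leaves this proposition's proof to the reader as an easy exercise, and your argument --- identifying $\widehat{u}_{0}(x,s)=u_{0}(x)$ so that $\beta (\Delta (A))=1$ gives the norm condition \eqref{3.12}, then splitting $\int_{Q}u_{\varepsilon }f^{\varepsilon }dx$ into a H\"{o}lder-controlled remainder (killed by the strong $L^{p}$-convergence and the uniform bound $\Vert f^{\varepsilon }\Vert _{L^{p^{\prime }}(Q)}\leq \Vert f\Vert _{L^{p^{\prime }}(Q;A)}$) plus the term $\int_{Q}u_{0}f^{\varepsilon }dx$, which converges to the required limit by Remark \ref{r3.1}(2) --- is exactly the standard route the author intends.
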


In the first step of the proof of Theorem \ref{t3.3} we have proven the
following assertion: If $v_{\varepsilon }\rightarrow v_{0}$ in $L^{p}(Q)$%
-strong $\Sigma $ then $\left\vert v_{\varepsilon }\right\vert
^{p-2}v_{\varepsilon }\rightarrow \left\vert v_{0}\right\vert ^{p-2}v_{0}$
in $L^{p^{\prime }}(Q)$-weak $\Sigma $. One can weaken the above strong
convergence condition and obtain, under an additional weak convergence
assumption, the following result: If $u_{\varepsilon }\rightarrow u_{0}$ in $%
L^{p}(Q)$-weak $\Sigma $ and $\left\vert u_{\varepsilon }\right\vert
^{p-2}u_{\varepsilon }\rightarrow v_{0}$ in $L^{p^{\prime }}(Q)$-weak $%
\Sigma $, then 
\begin{equation*}
\iint_{Q\times \Delta (A)}\widehat{u}_{0}\widehat{v}_{0}dxd\beta \leq 
\underset{\varepsilon \rightarrow 0}{\lim \inf }\int_{Q}\left\vert
u_{\varepsilon }\right\vert ^{p}dx\text{.}
\end{equation*}%
Moreover if the above inequality holds as an equality, then $%
v_{0}=\left\vert u_{0}\right\vert ^{p-2}u_{0}$.

The above result is a particular case of a general situation stated in the
following

\begin{theorem}
\label{t3.4}Let $1<p<\infty $. Let $(x,y,\lambda )\mapsto a(x,y,\lambda )$,
from $\overline{Q}\times \mathbb{R}^{N}\times \mathbb{R}^{m}$ to $\mathbb{R}%
^{m}$ be a vector-valued function which is of Carath\'{e}odory's type, i.e., 
\emph{(i)} and \emph{(ii)} below are satisfied:

\begin{itemize}
\item[(i)] $a(x,\cdot ,\lambda )$ is measurable for any $(x,\lambda )\in 
\overline{Q}\times \mathbb{R}^{m}$

\item[(ii)] $a(\cdot ,y,\cdot )$ is continuous for almost all $y\in \mathbb{R%
}^{N}$,
\end{itemize}

\noindent and further satisfies the following conditions:

\begin{itemize}
\item[(iii)] $\left\vert a(x,y,\lambda )\right\vert \leq c(\left\vert
\lambda \right\vert ^{p-1}+1)$

\item[(iv)] $\left( a(x,y,\lambda )-a(x,y,\lambda ^{\prime })\right) \cdot
(\lambda -\lambda ^{\prime })\geq 0$

\item[(v)] $a(x,\cdot ,\lambda )\in (B_{A}^{p^{\prime }})^{m}$
\end{itemize}

\noindent for all $x\in \overline{Q}$ and all $\lambda ,\lambda ^{\prime
}\in \mathbb{R}^{m}$, where $c$ is a positive constant independent of $%
(x,y,\lambda )$. Finally let $(v_{\varepsilon })_{\varepsilon \in E}\subset
L^{p}(Q)^{m}$ be a sequence which componentwise weakly $\Sigma $-converges
towards $v_{0}\in L^{p}(Q;(\mathcal{B}_{A}^{p})^{m})$ as $E\ni \varepsilon
\rightarrow 0$. Then the sequence $(a^{\varepsilon }(\cdot ,v_{\varepsilon
}))_{\varepsilon \in E}$ defined by $a^{\varepsilon }(\cdot ,v_{\varepsilon
})(x)=a(x,x/\varepsilon _{1},v_{\varepsilon }(x))$ for $x\in Q$, is weakly $%
\Sigma $-convergent in $L^{p^{\prime }}(Q)^{m}$ (up to a subsequence) to
some $z_{0}\in L^{p^{\prime }}(Q;(\mathcal{B}_{A}^{p^{\prime }})^{m})$ such
that 
\begin{equation}
\iint_{Q\times \Delta (A)}\widehat{z}_{0}\cdot \widehat{v}_{0}dxd\beta \leq 
\underset{E\ni \varepsilon \rightarrow 0}{\lim \inf }\int_{Q}a^{\varepsilon
}(\cdot ,v_{\varepsilon })\cdot v_{\varepsilon }dx.  \label{3.14}
\end{equation}%
Moreover if \emph{(\ref{3.14})} holds as an equality, then $%
z_{0}(x,y)=a(x,y,v_{0}(x,y))$.
\end{theorem}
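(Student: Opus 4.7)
The approach is a Minty-type monotone-operator argument adapted to the $\Sigma$-convergence setting. From condition (iii) one has $|a^{\varepsilon}(\cdot,v_{\varepsilon})|^{p'}\leq C(|v_{\varepsilon}|^{p}+1)$, and since weak $\Sigma$-convergence of $(v_{\varepsilon})$ forces its boundedness in $L^{p}(Q)^{m}$, the sequence $(a^{\varepsilon}(\cdot,v_{\varepsilon}))_{\varepsilon\in E}$ is bounded in $L^{p'}(Q)^{m}$. Applying Theorem \ref{t3.1} componentwise yields, up to a subsequence, the existence of $z_{0}\in L^{p'}(Q;(\mathcal{B}_{A}^{p'})^{m})$ with $a^{\varepsilon}(\cdot,v_{\varepsilon})\to z_{0}$ weakly $\Sigma$ in $L^{p'}(Q)^{m}$.

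The next step, and the main obstacle, is a substitution lemma: for every $\phi\in\mathcal{C}(\overline{Q};A)^{m}$ (a dense class of admissible test functions), the sequence $a^{\varepsilon}(\cdot,\phi^{\varepsilon})$ should strongly $\Sigma$-converge in $L^{p'}(Q)^{m}$ to $\varrho\circ a(\cdot,\cdot,\phi)$, with limit function lying in $L^{p'}(Q;B_{A}^{p'})^{m}$. The inclusion uses (v) together with a density argument approximating $\phi$ by step functions in $x$ with values in $A^{m}$; the weak $\Sigma$-limit is then supplied by Remark \ref{r3.1}(2), and the norm convergence required by Definition \ref{d3.2} is obtained by the same $\Sigma$-machinery applied to $|a^{\varepsilon}(\cdot,\phi^{\varepsilon})|^{p'}$. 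The delicacy here is that (v) only provides Besicovitch-type integrability in $y$ rather than a uniform modulus of continuity, so the truncation/density step is unavoidable.

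Once this substitution lemma is in hand, the monotonicity (iv) gives
\begin{equation*}
0 \leq \int_{Q} (a^{\varepsilon}(\cdot,v_{\varepsilon})-a^{\varepsilon}(\cdot,\phi^{\varepsilon}))\cdot(v_{\varepsilon}-\phi^{\varepsilon})\,dx
\end{equation*}
for every admissible $\phi$. Expanding into the four bilinear products and taking $\liminf_{E\ni\varepsilon\to 0}$, the three cross-type integrals converge via Corollary \ref{c3.4} (weak $\Sigma$ paired with strong $\Sigma$), yielding
\begin{equation*}
\liminf_{E\ni\varepsilon\to0}\int_{Q}a^{\varepsilon}(\cdot,v_{\varepsilon})\cdot v_{\varepsilon}\,dx \;\geq\; \iint_{Q\times\Delta(A)}\hat{z}_{0}\cdot\hat{\phi}\,dxd\beta + \iint_{Q\times\Delta(A)}\widehat{a(\cdot,\cdot,\phi)}\cdot(\hat{v}_{0}-\hat{\phi})\,dxd\beta.
\end{equation*}
Approximating $v_{0}$ by $\varrho\circ\phi_{n}$ with $\phi_{n}\in\mathcal{C}(\overline{Q};A)^{m}$, continuity of $a(x,y,\cdot)$ together with growth (iii) gives $a(\cdot,\cdot,\phi_{n})\to a(\cdot,\cdot,v_{0})$ in $L^{p'}(Q;(\mathcal{B}_{A}^{p'})^{m})$; the two $\widehat{a(\cdot,\cdot,\phi_{n})}$-contributions then cancel in the limit $n\to\infty$ and (\ref{3.14}) is recovered.

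For the equality statement, the display above rearranges to
\begin{equation*}
\iint_{Q\times\Delta(A)}(\hat{z}_{0}-\widehat{a(\cdot,\cdot,\phi)})\cdot(\hat{v}_{0}-\hat{\phi})\,dxd\beta \;\geq\; 0
\end{equation*}
for every admissible $\phi$. Taking $\phi=\phi_{n}\pm t\psi$ with $\phi_{n}\to v_{0}$ as above and $\psi\in\mathcal{C}(\overline{Q};A)^{m}$, dividing by $t$ and letting $t\to 0^{\pm}$ (hemicontinuity of $a(x,y,\cdot)$ combined with the growth bound (iii) justifies the limit through dominated convergence), one reaches the linear identity $\iint(\hat{z}_{0}-\widehat{a(\cdot,\cdot,v_{0})})\cdot\hat{\psi}\,dxd\beta=0$ for all $\psi$, whence $z_{0}=a(\cdot,\cdot,v_{0})$ by density.
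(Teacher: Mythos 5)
Your proposal is correct and follows essentially the same route as the paper: extraction of a weak $\Sigma$-limit $z_{0}$ from the growth bound (iii), a substitution lemma showing $a(\cdot,\cdot,\psi)$ is an admissible (strongly $\Sigma$-convergent) object for smooth $\psi$ via (v) and a Stone--Weierstrass/density argument, the Minty monotonicity inequality passed to the limit, extension from smooth $\psi$ to $\psi=v_{0}$ by continuity of the superposition operator (the paper's Lemma \ref{l3.1}, which is exactly your approximation step $\phi_{n}\to v_{0}$), and the standard $\psi=v_{0}\pm tw$ perturbation for the equality case. The only cosmetic differences are that the paper treats $a(\cdot,\psi)$ directly as a test function in $\mathcal{C}(\overline{Q};B_{A}^{p^{\prime},\infty})$ where you invoke Corollary \ref{c3.4}, and that it packages your Nemytskii-continuity step as a cited lemma.
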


We will make use of the following lemma.

\begin{lemma}
\label{l3.1}Let $F_{1}$ and $F_{2}$ be two Banach spaces, $(Y,\mathcal{M}%
,\mu )$ a measure space, $X$ a $\mu $-measurable subset of $Y$, and $%
g:X\times F_{1}\rightarrow F_{2}$ a Carath\'{e}odory mapping. For each
measurable function $u:X\rightarrow F_{1}$, let $G(u)$ be the measurable
function $x\mapsto g(x,u(x))$, from $X$ to $F_{2}$. If $G:u\mapsto G(u)$
maps $L^{p}(X;F_{1})$ into $L^{r}(X;F_{2})$ ($1\leq p,r<\infty $) then $G$
is continuous in the norm topology.
\end{lemma}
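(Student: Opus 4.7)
The plan is to prove this Nemytskii-continuity lemma by combining a subsequence-plus-almost-everywhere argument with Vitali's convergence theorem; the crucial input is an automatic growth condition on $g$ provided by the hypothesis that $G$ is globally defined from $L^p(X;F_1)$ into $L^r(X;F_2)$.

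Take any sequence $u_n \to u$ in $L^p(X;F_1)$; we want $G(u_n) \to G(u)$ in $L^r(X;F_2)$. By the standard ``every subsequence has a further subsequence'' principle, it suffices, given any subsequence, to extract a further sub-sequence $u_{n_k}$ along which $G(u_{n_k}) \to G(u)$ in $L^r$. I would extract $u_{n_k}$ so that $\sum_k \|u_{n_{k+1}} - u_{n_k}\|_{L^p} < \infty$, which, via the Riesz--Fischer construction, simultaneously yields pointwise almost-everywhere convergence $u_{n_k}(x) \to u(x)$ in $F_1$ and a scalar majorant $h \in L^p(X)$ with $\|u_{n_k}(x)\|_{F_1} \leq h(x)$ a.e. The Carath\'eodory hypothesis (continuity of $g(x,\cdot)$ for a.e.\ $x$) then gives $G(u_{n_k})(x) \to G(u)(x)$ in $F_2$ for almost every $x$.

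The main obstacle is upgrading this pointwise convergence to convergence in the $L^r(X;F_2)$ norm. For this I would apply Vitali's theorem, which requires uniform integrability of the family $\{\|G(u_{n_k})\|_{F_2}^r\}_k$ in $L^1(X)$. Here the global hypothesis on $G$ enters essentially. First, a Baire-category argument: the level sets $E_n := \{v \in L^p(X;F_1) : \|G(v)\|_{L^r(X;F_2)} \leq n\}$ are closed in $L^p(X;F_1)$ (closedness follows by applying Fatou along an a.e.\ convergent subsequence, using the Carath\'eodory condition), and $L^p(X;F_1) = \bigcup_n E_n$ by hypothesis, so Baire supplies some $E_n$ with non-empty interior --- i.e.\ a ball on which $G$ is bounded in $L^r$-norm. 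A disjointification / concentration argument specific to the Nemytskii structure then allows one to convert this local boundedness into an automatic growth condition of the form $\|g(x,v)\|_{F_2} \leq a(x) + b\,\|v\|_{F_1}^{p/r}$ for some $a \in L^r(X)$ and $b > 0$, \emph{\`a la} Krasnosel'skii. Together with the $L^p$-domination $\|u_{n_k}(x)\|_{F_1} \leq h(x)$, this growth bound yields the required uniform integrability.

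Once uniform integrability is in place, Vitali's theorem delivers $G(u_{n_k}) \to G(u)$ in $L^r(X;F_2)$, and the subsequence principle closes the argument. The hardest step is precisely the middle one: without any a priori growth assumption on $g$, converting the mere well-definedness of $G$ as a map $L^p \to L^r$ into the uniform integrability of $\|G(u_{n_k})\|_{F_2}^r$ requires the full combination of Baire category (to get local $L^r$-boundedness) and the Nemytskii-specific disjointification (to propagate it to the pointwise growth bound).
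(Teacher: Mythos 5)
Your overall skeleton (subsequence principle, a.e.\ convergence plus an $L^{p}$ majorant via the Riesz--Fischer construction, then Vitali) is sound, but the step on which everything hinges --- deducing a pointwise growth bound $\left\Vert g(x,v)\right\Vert _{F_{2}}\leq a(x)+b\left\Vert v\right\Vert _{F_{1}}^{p/r}$ from the mere fact that $G$ maps $L^{p}(X;F_{1})$ into $L^{r}(X;F_{2})$ --- is a genuine gap. That implication is the hard half of the Krasnosel'skii--Vainberg theorem, and its standard proof is tied to $F_{1}=\mathbb{R}^{m}$: one has to measurably select, for a.e.\ $x$, a point $v(x)$ in a ball of $F_{1}$ where $\left\Vert g(x,\cdot )\right\Vert $ nearly attains its supremum, and this uses local compactness (covering the ball by finitely many small balls) together with measurability of $x\mapsto \sup_{\left\vert v\right\vert \leq t}\left\Vert g(x,v)\right\Vert $. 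For an arbitrary Banach space $F_{1}$ (not assumed separable, let alone finite-dimensional, in the statement of the lemma) none of this is available, and the implication ``acting $\Rightarrow $ growth condition'' is not something you may simply invoke \`{a} la Krasnosel'skii; asserting that ``a disjointification argument allows one to convert'' local $L^{r}$-boundedness into this pointwise bound is precisely the claim that needs proof, and in this generality it is doubtful.

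The irony is that the disjointification idea you mention already finishes the proof directly, with no Baire category and no growth bound --- and this is essentially the argument of Ekeland--Temam that the paper cites. If $\{\left\Vert G(u_{n_{k}})\right\Vert _{F_{2}}^{r}\}$ were not uniformly integrable, you could find $\delta >0$, indices $k_{j}$ and sets $B_{j}$ of measure tending to $0$ with $\int_{B_{j}}\left\Vert G(u_{n_{k_{j}}})\right\Vert ^{r}\geq \delta $; using the absolute continuity of each individual integral you may disjointify the $B_{j}$, and then the single function $v$ equal to $u_{n_{k_{j}}}$ on $B_{j}$ and to $u$ elsewhere lies in $L^{p}(X;F_{1})$ (because $\sum_{k}\left\Vert u_{n_{k}}-u\right\Vert _{L^{p}}^{p}<\infty $) while $\int_{X}\left\Vert G(v)\right\Vert ^{r}=\infty $, contradicting the acting hypothesis applied to $v$. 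This gives the uniform integrability you need for Vitali using only the Carath\'{e}odory condition and the hypothesis on $G$, for arbitrary Banach spaces $F_{1},F_{2}$. I recommend replacing the Baire/Krasnosel'skii detour by this direct gluing argument; as it stands, your proposal does not constitute a proof.
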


\begin{proof}
A look at the proof of \cite[Chap. IV, Proposition 1.1]{18} shows that one
may replace in that proof, the Borel subset $Q$ of $\mathbb{R}^{n}$ by the
measurable subset $X$ of $Y$, $E$ by $F_{1}$, $F$ by $F_{2}$, and get
readily our result.
\end{proof}

\begin{proof}[\textit{Proof of Theorem} \protect\ref{t3.4}]
By (iii) the sequence $(a^{\varepsilon }(\cdot ,v_{\varepsilon
}))_{\varepsilon \in E}$ is bounded in $L^{p^{\prime }}(Q)^{m}$, thus there
exists a subsequence $E^{\prime }$ from $E$ and a function $z_{0}\in
L^{p^{\prime }}(Q;(\mathcal{B}_{A}^{p^{\prime }})^{m})$ such that $%
a^{\varepsilon }(\cdot ,v_{\varepsilon })\rightarrow z_{0}$ in $L^{p^{\prime
}}(Q)^{m}$-weak $\Sigma $ as $E\ni \varepsilon \rightarrow 0$. Let us show (%
\ref{3.14}). For that purpose, let $\psi \in \lbrack \mathcal{C}_{0}^{\infty
}(Q)\otimes A]^{m}$ (which is dense in $L^{p}(Q;A)^{m}$); then the function $%
(x,y)\mapsto a(x,y,\psi (x,y))$ lies in $\mathcal{C}(\overline{Q}%
;B_{A}^{p^{\prime },\infty })^{m}$. Indeed, as a result of (ii) the function 
$a(\cdot ,y,\psi (\cdot ,y))$ is continuous. Moreover for each fixed $x\in 
\overline{Q}$, $a(x,\cdot ,\psi (x,\cdot ))\in (B_{A}^{p^{\prime }})^{m}$:
in fact $\psi (x,\cdot )\in (A)^{m}$, and it suffices to check that $%
a(x,\cdot ,\phi )\in (B_{A}^{p^{\prime }})^{m}$ for any $\phi \in (A)^{m}$.
But since the function $\phi $ is bounded, let $K\subset \mathbb{R}^{m}$ be
a compact set such that $\phi (y)\in K$ for all $y\in \mathbb{R}^{N}$.
Viewing $\lambda \mapsto a(x,\cdot ,\lambda )$ as a function defined on $K$,
we have that this function belongs to $\mathcal{C}(K;(B_{A}^{p^{\prime
}})^{m})$ (use also hypothesis (v)), so that by the classical
Stone-Weierstrass theorem one has $a(x,\cdot ,\phi )\in (B_{A}^{p^{\prime
}})^{m}$; see either \cite[Proposition 1]{AML} or \cite[Proposition 3.1]%
{ACAP}. As a result, we end up with the fact that the function $(x,y)\mapsto
a(x,y,\psi (x,y))$ belongs to $\mathcal{C}(\overline{Q};B_{A}^{p^{\prime
}})^{m}$, hence to $\mathcal{C}(\overline{Q};B_{A}^{p^{\prime },\infty
})^{m} $ where $B_{A}^{p^{\prime },\infty }=B_{A}^{p^{\prime }}\cap
L^{\infty }(\mathbb{R}_{y}^{N})$.

We now use (iv) to get 
\begin{equation*}
\int_{Q}\left( a^{\varepsilon }(\cdot ,v_{\varepsilon })-a^{\varepsilon
}(\cdot ,\psi ^{\varepsilon })\right) \cdot (v_{\varepsilon }-\psi
^{\varepsilon })dx\geq 0
\end{equation*}%
or equivalently, 
\begin{eqnarray*}
\int_{Q}a^{\varepsilon }(\cdot ,v_{\varepsilon })\cdot v_{\varepsilon }dx
&\geq &\int_{Q}a^{\varepsilon }(\cdot ,v_{\varepsilon })\cdot \psi
_{\varepsilon }dx+\int_{Q}a^{\varepsilon }(\cdot ,\psi _{\varepsilon })\cdot
v_{\varepsilon }dx \\
&&-\int_{Q}a^{\varepsilon }(\cdot ,\psi _{\varepsilon })\cdot \psi
_{\varepsilon }dx.
\end{eqnarray*}%
Taking the $\lim \inf_{E^{\prime }\ni \varepsilon \rightarrow 0}$ of the
both sides of the above inequality we get 
\begin{eqnarray}
\underset{E^{\prime }\ni \varepsilon \rightarrow 0}{\lim \inf }%
\int_{Q}a^{\varepsilon }(\cdot ,v_{\varepsilon })\cdot v_{\varepsilon }dx
&\geq &\iint_{Q\times \Delta (A)}\widehat{z}_{0}\cdot \widehat{\psi }%
dxd\beta +\iint_{Q\times \Delta (A)}\widehat{a}(\cdot ,\widehat{\psi })\cdot 
\widehat{v}_{0}dxd\beta   \label{3.15} \\
&&-\iint_{Q\times \Delta (A)}\widehat{a}(\cdot ,\widehat{\psi })\cdot 
\widehat{\psi }dxd\beta   \notag
\end{eqnarray}%
where: for the first integral on the right-hand side of (\ref{3.15}), we
have used the definition of the weak $\Sigma $-convergence for the sequence $%
(a^{\varepsilon }(\cdot ,v_{\varepsilon }))_{\varepsilon }$, for the second
integral, we have used the definition of the weak $\Sigma $-convergence of $%
(v_{\varepsilon })_{\varepsilon }$ associated to the fact that (\ref{3.1})
also holds for test functions in $\mathcal{C}(\overline{Q};B_{A}^{p^{\prime
},\infty })$ and so by taking $a(\cdot ,\psi )$ as a test function, and
finally for the last integral, we use the same argument as for the preceding
one. Therefore, subtracting $\iint_{Q\times \Delta (A)}\widehat{z}_{0}\cdot 
\widehat{v}_{0}dxd\beta $ from each member of (\ref{3.15}), we end up with 
\begin{equation}
\begin{array}{l}
\underset{E^{\prime }\ni \varepsilon \rightarrow 0}{\lim \inf }%
\int_{Q}a^{\varepsilon }(\cdot ,v_{\varepsilon })\cdot v_{\varepsilon
}dx-\iint_{Q\times \Delta (A)}\widehat{z}_{0}\cdot \widehat{v}_{0}dxd\beta
\geq  \\ 
-\iint_{Q\times \Delta (A)}\left( \widehat{z}_{0}-\widehat{a}(\cdot ,%
\widehat{\psi })\right) \cdot (\widehat{v}_{0}-\widehat{\psi })dxd\beta \ \
\forall \psi \in \lbrack \mathcal{C}_{0}^{\infty }(Q)\otimes A]^{m}\text{.}%
\end{array}
\label{3.16}
\end{equation}%
The right-hand side of (\ref{3.16}) is of the form $g(x,s,\widehat{\psi }%
(x,s))$ and, due to the fact that $\widehat{z}_{0}\in L^{p^{\prime
}}(Q\times \Delta (A))^{m}$, one easily deduces from (iv) that $g(x,s,%
\widehat{\psi })\in L^{1}(Q\times \Delta (A))$ for any $\widehat{\psi }\in
L^{p}(Q\times \Delta (A))^{m}$, so that the operator $G$ defined here as in
Lemma \ref{l3.1} (by taking there $X=Q\times \Delta (A)$, $%
F_{1}=L^{p}(Q\times \Delta (A))^{m}$, $F_{2}=L^{1}(Q\times \Delta (A))$)
maps $L^{p}(X;F_{1})$ into $L^{1}(X;F_{2})$. In view of Lemma \ref{l3.1}, $G$
is continuous under the norm topology. As a result, the inequality (\ref%
{3.16}) holds for any $\widehat{\psi }\in L^{p}(Q\times \Delta (A))^{m}$
(that is for any $\psi \in L^{p}(Q;\mathcal{B}_{A}^{p})^{m}$). Hence taking
in (\ref{3.16}) $\psi =v_{0}$ we readily get (\ref{3.14}).

For the last part of the theorem, assuming that (\ref{3.14}) is actually an
equality, we return to (\ref{3.16}) and take there $\psi =v_{0}+tw$, $w\in
L^{p}(Q;\mathcal{B}_{A}^{p})^{m}$ being arbitrarily fixed and $t>0$. Then we
get 
\begin{equation*}
\iint_{Q\times \Delta (A)}\left( \widehat{z}_{0}-\widehat{a}(\cdot ,\widehat{%
v}_{0}+t\widehat{w})\right) \cdot \widehat{w}dxd\beta \leq 0\ \ \forall w\in
L^{p}(Q;\mathcal{B}_{A}^{p})^{m}\text{.}
\end{equation*}%
Letting $t\rightarrow 0$, and then changing $w$ for $-w$, we end up with 
\begin{equation*}
\iint_{Q\times \Delta (A)}\left( \widehat{z}_{0}-\widehat{a}(\cdot ,\widehat{%
v}_{0})\right) \cdot \widehat{w}dxd\beta =0\ \ \forall w\in L^{p}(Q;\mathcal{%
B}_{A}^{p})^{m}\text{,}
\end{equation*}%
which implies $z_{0}=a(\cdot ,v_{0})$.
\end{proof}

As was said before the statement of Theorem \ref{t3.4}, if we take $%
a(x,y,\lambda )=\left\vert \lambda \right\vert ^{p-2}\lambda $ and $m=1$,
then we arrive at the claimed conclusion by the above theorem. The next
result gives the characterization of the $\Sigma $-limit of sequences
involving gradients.

\begin{theorem}
\label{t3.5}Let $1<p<\infty $. Let $(u_{\varepsilon })_{\varepsilon \in E}$
be a bounded sequence in $W^{1,p}(\Omega )$. Then there exist a subsequence $%
E^{\prime }$ of $E$, and a couple $(u_{0},u_{1})\in W^{1,p}(\Omega
;I_{A}^{p})\times L^{p}(\Omega ;\mathcal{B}_{\#A}^{1,p})$ such that, as $%
E^{\prime }\ni \varepsilon \rightarrow 0$, 

\begin{itemize}
\item[(i)] $u_{\varepsilon }\rightarrow u_{0}$ in $L^{p}(Q)$-weak $\Sigma $;

\item[(ii)] $\partial u_{\varepsilon }/\partial x_{i}\rightarrow \partial
u_{0}/\partial x_{i}+\overline{\partial }u_{1}/\partial y_{i}$ in $L^{p}(Q)$%
-weak $\Sigma $, $1\leq i\leq N$.
\end{itemize}
\end{theorem}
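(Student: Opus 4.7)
The plan is to combine the $L^p$-compactness of the $\Sigma$-convergence (Theorem~\ref{t3.1}) with a two-scale integration-by-parts identification of the macroscopic limit, and then to invoke the de Rham-type Proposition~\ref{p2.3} parametrically in $x$ to produce the corrector $u_1$. Since $(u_\varepsilon)$ and each $(\partial u_\varepsilon/\partial x_i)$ are bounded in $L^p(Q)$, Theorem~\ref{t3.1} provides, after finitely many successive extractions collected into a single subsequence $E'\subset E$, functions $u_0, v_1, \dots, v_N \in L^p(Q;\mathcal{B}_A^p)$ with $u_\varepsilon \to u_0$ and $\partial u_\varepsilon/\partial x_i \to v_i$ in $L^p(Q)$-weak $\Sigma$. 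Up to a further extraction, $u_\varepsilon$ also converges weakly in $W^{1,p}(Q)$ to some $u^* \in W^{1,p}(Q)$ and strongly in $L^p_{\mathrm{loc}}(Q)$ by Rellich--Kondrachov.

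To obtain $u_0(x,\cdot)\in I_A^p$ for a.e.\ $x$, I would insert $\varphi_\varepsilon(x)=\varepsilon_1\psi(x)h(x/\varepsilon_1)$ with $\psi\in\mathcal{D}(Q)$ and $h\in A^\infty$ into the identity
\begin{equation*}
\int_Q u_\varepsilon\,\frac{\partial \varphi_\varepsilon}{\partial x_i}\,dx=-\varepsilon_1\int_Q \frac{\partial u_\varepsilon}{\partial x_i}\,\psi h^\varepsilon\,dx.
\end{equation*}
The right-hand side tends to $0$ because $\nabla u_\varepsilon$ is bounded in $L^p$, while the left-hand side tends to $\int_Q \psi(x)\int_{\Delta(A)}\widehat{u}_0(x,\cdot)\,\partial_{i,\infty}\widehat{h}\,d\beta\,dx$ by the weak $\Sigma$-convergence of $u_\varepsilon$. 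Varying $\psi$ and $h$ and invoking the characterisation~(\ref{2.5}) forces $u_0(x,\cdot)\in I_A^p$. Proposition~\ref{p3.2} applied to the strong $L^p_{\mathrm{loc}}$-convergence $u_\varepsilon\to u^*$ identifies $u_0$ with $u^*$ (viewed as a constant-in-$s$ element of $I_A^p$), and testing the weak $\Sigma$-convergence of $\partial u_\varepsilon/\partial x_i$ against $\psi\otimes 1$ yields $M(v_i)=\partial u_0/\partial x_i$. Hence $u_0\in W^{1,p}(Q;I_A^p)$.

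For the corrector, set $w_i(x,\cdot):=v_i(x,\cdot)-\partial u_0(x)/\partial x_i\in \mathcal{B}_A^p$; the aim is the pointwise orthogonality
\begin{equation*}
\int_{\Delta(A)}\widehat{w}(x,s)\cdot\widehat{g}(s)\,d\beta(s)=0\qquad\forall g\in\mathcal{V}_{\Div},
\end{equation*}
after which Proposition~\ref{p2.3} delivers $u_1(x,\cdot)\in\mathcal{B}_{\#A}^{1,p}$ with $w(x,\cdot)=\overline{D}_p u_1(x,\cdot)$. I would take $g=\varrho(h)$ with $h\in(A^\infty)^N$ and $\Div_y h=0$ pointwise, and test the weak $\Sigma$-convergence of $\nabla u_\varepsilon$ against $\psi\otimes g$ for $\psi\in\mathcal{D}(Q)$: after integration by parts the singular $\varepsilon_1^{-1}(\Div_y h)^\varepsilon$ term disappears and only $-\int_Q u_\varepsilon\nabla\psi\cdot g^\varepsilon\,dx$ survives, which tends to $\int_Q \nabla u_0\cdot\psi\,M(g)\,dx$ by the strong $L^p_{\mathrm{loc}}$-convergence of $u_\varepsilon$ and the weak-$*$ convergence $g^\varepsilon\rightharpoonup^* M(g)$ in $L^\infty(Q)$. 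Matching this with the $\Sigma$-limit of the original integral and using the invariance $\widehat{\nabla u_0}(x,s)=\nabla u_0(x)$ yields the orthogonality on the pointwise divergence-free subclass, which is then extended to $\mathcal{V}_{\Div}$ by density. Finally, property~(P$_1$) states that $\overline{D}_p$ is an isometry of $\mathcal{B}_{\#A}^{1,p}$ onto a closed subspace of $(\mathcal{B}_A^p)^N$, which yields both a measurable selection $x\mapsto u_1(x,\cdot)$ and the estimate $\|u_1\|_{L^p(Q;\mathcal{B}_{\#A}^{1,p})}^p=\int_Q\|w(x,\cdot)\|_{(\mathcal{B}_A^p)^N}^p\,dx<\infty$.

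The subtlest step is the bridge between pointwise-divergence-free and $\overline{\Div}_y$-free test fields in the de Rham part: $\mathcal{V}_{\Div}$ is defined via $\overline{\Div}_y g=0$, i.e.\ $\Div_y h\in\mathcal{N}$ for a representative $h$, whereas the integration-by-parts argument handles cleanly only the pointwise-zero case, so one needs either a density result for the pointwise divergence-free subclass in $\mathcal{V}_{\Div}$ in the $(\mathcal{B}_A^{p'})^N$-topology, or the operator-theoretic form of Proposition~\ref{p2.3} (going through $(\ker\overline{\Div}_y)^\perp=R(\overline{D}_p)$) to bypass this approximation altogether.
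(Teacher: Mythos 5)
Your proposal is correct and follows the same overall architecture as the paper's proof: extraction via Theorem~\ref{t3.1}, a rescaled test function argument to show $u_{0}(x,\cdot)\in I_{A}^{p}$ via (\ref{2.5}), integration by parts against $\varphi\,\Psi(\cdot/\varepsilon)$ with $\Div_{y}\Psi=0$ to obtain the orthogonality relation, and Proposition~\ref{p2.3} applied parametrically in $x$ to produce $u_{1}$. The one genuine divergence is the identification of the macroscopic limit: the paper never leaves the $\Sigma$-convergence framework, deducing $\int_{Q}M(v_{j})\varphi\,dx=-\int_{Q}M(u_{0})\,\partial\varphi/\partial x_{j}\,dx$ from the test fields $(\varphi\delta_{ij})_{i}$ and concluding only that $u_{0}\in W^{1,p}(Q;I_{A}^{p})$, whereas you invoke Rellich--Kondrachov plus Proposition~\ref{p3.2} to identify $u_{0}$ with the weak $W^{1,p}$-limit $u^{*}$, constant in $s$. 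Your route is valid and in fact yields a stronger conclusion (the weak $\Sigma$-limit of a $W^{1,p}$-bounded sequence is always $y$-independent, ergodic algebra or not), but it buys this at the price of local compactness in $x$, which is precisely what is unavailable in the stochastic analogue (Theorem~\ref{t3.8}) that the paper's intrinsic argument is designed to cover verbatim. Your closing caveat is also well taken: both your argument and the paper's establish orthogonality only against $\varrho$-images of pointwise divergence-free fields in $(A^{\infty})^{N}$, while Proposition~\ref{p2.3} is stated for all of $\mathcal{V}_{\Div}$ defined through $\Div_{p'}$; the paper passes over this in silence, and your suggestion to run the closed-range identity $(\ker(\Div_{p'}))^{\perp}=R(\overline{D}_{p})$ directly on the subclass actually tested (or to prove the density you describe) is the right way to close that step.
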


\begin{proof}
Since the sequences $(u_{\varepsilon })_{\varepsilon \in E}$ and $(\nabla
u_{\varepsilon })_{\varepsilon \in E}$ are bounded respectively in $L^{p}(Q)$
and in $L^{p}(Q)^{N}$, there exist (see Theorem \ref{t3.1}) a subsequence $%
E^{\prime }$ of $E$ and $u_{0}\in L^{p}(Q;\mathcal{B}_{A}^{p})$, $%
v=(v_{j})_{j}\in L^{p}(Q;\mathcal{B}_{A}^{p})^{N}$ such that $u_{\varepsilon
}\rightarrow u_{0}\ $in $L^{p}(Q)$-weak $\Sigma $ and $\frac{\partial
u_{\varepsilon }}{\partial x_{j}}\rightarrow v_{j}$ in $L^{p}(Q)$-weak $%
\Sigma $. For $\Phi \in (\mathcal{C}_{0}^{\infty }(Q)\otimes A^{\infty })^{N}
$ we have 
\begin{equation*}
\int_{Q}\varepsilon \nabla u_{\varepsilon }\cdot \Phi ^{\varepsilon
}dx=-\int_{Q}\left( u_{\varepsilon }(\Div_{y}\Phi )^{\varepsilon
}+\varepsilon u_{\varepsilon }(\Div\Phi )^{\varepsilon }\right) dx.
\end{equation*}%
Letting $E^{\prime }\ni \varepsilon \rightarrow 0$ we get%
\begin{equation*}
-\iint_{Q\times \Delta (A)}\widehat{u}_{0}\widehat{\Div}\widehat{\Phi }%
dxd\beta =0.
\end{equation*}%
This shows that $\overline{D}_{y}u_{0}=0$, which means that $u_{0}(x,\cdot
)\in I_{A}^{p}$ (see (\ref{2.5})), that is, $u_{0}\in L^{p}(Q;I_{A}^{p})$.
Next let $\Phi _{\varepsilon }(x)=\varphi (x)\Psi (x/\varepsilon )$ ($x\in Q$%
) with $\varphi \in \mathcal{C}_{0}^{\infty }(Q)$ and $\Psi =(\psi
_{j})_{1\leq j\leq N}\in (A^{\infty })^{N}$ with ${\Div}_{y}\Psi =0$. Clearly%
\begin{equation*}
\sum_{j=1}^{N}\int_{Q}\frac{\partial u_{\varepsilon }}{\partial x_{j}}%
\varphi \psi _{j}^{\varepsilon }dx=-\sum_{j=1}^{N}\int_{Q}u_{\varepsilon
}\psi _{j}^{\varepsilon }\frac{\partial \varphi }{\partial x_{j}}dx
\end{equation*}%
where $\psi _{j}^{\varepsilon }(x)=\psi _{j}(x/\varepsilon )$. Passing to
the limit in the above equation when $E^{\prime }\ni \varepsilon \rightarrow
0$ we get 
\begin{equation}
\sum_{j=1}^{N}\iint_{Q\times \Delta (A)}\widehat{v}_{j}\varphi \widehat{\psi 
}_{j}dxd\beta =-\sum_{j=1}^{N}\iint_{Q\times \Delta (A)}\widehat{u}_{0}%
\widehat{\psi }_{j}\frac{\partial \varphi }{\partial x_{j}}dxd\beta .
\label{2.3}
\end{equation}%
First, taking $\Phi =(\varphi \delta _{ij})_{1\leq i\leq N}$ with $\varphi
\in \mathcal{C}_{0}^{\infty }(Q)$ (for each fixed $1\leq j\leq N$) in (\ref%
{2.3}) we obtain 
\begin{equation}
\int_{Q}M(v_{j})\varphi dx=-\int_{Q}M(u_{0})\frac{\partial \varphi }{%
\partial x_{j}}dx  \label{2.4'}
\end{equation}%
and reminding that $M(v_{j})\in L^{p}(Q)$ we have by (\ref{2.4'}) that $%
\frac{\partial u_{0}}{\partial x_{j}}\in L^{p}(Q;I_{A}^{p})$, where $\frac{%
\partial u_{0}}{\partial x_{j}}$ is the distributional derivative of $u_{0}$
with respect to $x_{j}$. We deduce that $u_{0}\in W^{1,p}(Q;I_{A}^{p})$.
Coming back to (\ref{2.3}) we get 
\begin{equation*}
\iint_{Q\times \Delta (A)}\left( \widehat{\mathbf{v}}-\nabla \widehat{u}%
_{0}\right) \cdot \widehat{\Psi }\varphi dxd\beta =0\text{,}
\end{equation*}%
and so, as $\varphi $ is arbitrarily fixed, 
\begin{equation*}
\int_{\Delta (A)}\left( \widehat{\mathbf{v}}(x,s)-\nabla \widehat{u}%
_{0}(x,s)\right) \cdot \widehat{\Psi }(s)d\beta =0\text{, all }\Psi \text{
and a.e. }x.
\end{equation*}%
Therefore, the Proposition \ref{p2.3} provides us with a function $%
u_{1}(x,\cdot )\in \mathcal{B}_{\#A}^{1,p}$ such that 
\begin{equation*}
\mathbf{v}(x,\cdot )-Du_{0}(x)=\overline{D}_{y}u_{1}(x,\cdot )\text{ for
a.e. }x\in Q\text{.}
\end{equation*}%
This yields the existence of a function $u_{1}:x\mapsto u_{1}(x,\cdot )$
lying in $L^{p}(Q;\mathcal{B}_{\#A}^{1,p})$ such that $\mathbf{v}=Du_{0}+%
\overline{D}_{y}u_{1}$. This completes the proof.
\end{proof}

\begin{remark}
\label{r3.4}\emph{The conclusion of Theorem \ref{t3.5} generalizes all the
results so far existing in the framework of deterministic homogenization
theory. Indeed, if we assume the algebra }$A$\emph{\ to be ergodic, then }$%
I_{A}^{p}$\emph{\ consists of constant functions, and the function }$u_{0}$%
\emph{\ in Theorem \ref{t3.5} does not depend on }$y$\emph{, that is, }$%
u_{0}\in W^{1,p}(Q)$\emph{.}
\end{remark}

The $\Sigma $-convergence method also applies to time dependent functions.
To see this, let $A_{y}$ and $A_{\tau }$ be two algebras wmv on $\mathbb{R}%
_{y}^{N}$ and $\mathbb{R}_{\tau }$ respectively. Let $A=A_{y}\odot A_{\tau }$
be their product \cite{26, CMP, CPAA}. We know that $A$ is the closure in $%
BUC(\mathbb{R}_{y,\tau }^{N+1})$ of the tensor product $A_{y}\otimes A_{\tau
}$. Points in $\Delta (A_{y})$ and $\Delta (A_{\tau })$ are denoted
respectively by $s$ and $s_{0}$. We know that $\Delta (A)=\Delta
(A_{y})\times \Delta (A_{\tau })$ (Cartesian product) and further if $\beta $
(resp. $\beta _{y}$, $\beta _{\tau }$) is the $M$-measure for $A$ (resp. $%
A_{y}$, $A_{\tau }$) then $\beta =\beta _{y}\otimes \beta _{\tau }$; the
last equality follows in an obvious way by the density of $A_{y}\otimes
A_{\tau }$ in $A$ and by the Fubini's theorem.

Now, let $0<T<\infty $ and let $Q$ be an open subset in $\mathbb{R}^{N}$.
Set $Q_{T}=Q\times (0,T)$. We have the following well-known time-dependent
version of the $\Sigma $-convergence.

\begin{definition}
\label{d3.3}\emph{A sequence }$(u_{\varepsilon })_{\varepsilon >0}\subset
L^{p}(Q_{T})$\emph{\ (}$1\leq p<\infty $\emph{) is said to }weakly $\Sigma $%
-converge\emph{\ in }$L^{p}(Q_{T})$\emph{\ to some }$u_{0}\in L^{p}(Q_{T};%
\mathcal{B}_{A}^{p})$\emph{\ if as }$\varepsilon \rightarrow 0$\emph{, we
have }%
\begin{equation*}
\int_{Q_{T}}u_{\varepsilon }(x,t)f\left( x,t,\frac{x}{\varepsilon _{1}},%
\frac{t}{\varepsilon _{2}}\right) dxdt\rightarrow \iint_{Q_{T}\times \Delta
(A)}\widehat{u}_{0}(x,t,s,s_{0})\widehat{f}(x,t,s,s_{0})dxdtd\beta
\end{equation*}%
\emph{for every }$f\in L^{p^{\prime }}(Q_{T};A)$\emph{\ (}$1/p^{\prime
}=1-1/p$\emph{).}
\end{definition}

In Definition \ref{d3.3} we assume $\varepsilon _{1}$ and $\varepsilon _{2}$
to be any sequences of positive real numbers such that $\varepsilon
_{1},\varepsilon _{2}\rightarrow 0$ when $\varepsilon \rightarrow 0$. We
recall here that, as before, $\widehat{u}_{0}=\mathcal{G}_{1}\circ u_{0}$
and $\widehat{f}=\mathcal{G}\circ f$, $\mathcal{G}_{1}$ being the isometric
isomorphism sending $\mathcal{B}_{A}^{p}$ onto $L^{p}(\Delta (A))$ and $%
\mathcal{G}$, the Gelfand transformation on $A$.

The following important result whose proof is copied on that of Theorem \ref%
{t3.5}, and is therefore omitted.

\begin{theorem}
\label{t3.6}Let $1<p<\infty $. Let $Q$ be an open subset in $\mathbb{\mathbb{%
R}}^{N}$. Let $A=A_{y}\odot A_{\tau }$ be as above. Let $(u_{\varepsilon
})_{\varepsilon \in E}$ be a bounded sequence in $L^{p}(0,T;W^{1,p}(Q))$.
There exist a subsequence $E^{\prime }$ from $E$ and a couple $\mathbf{u}%
=(u_{0},u_{1})\in L^{p}(0,T;W^{1,p}(Q;I_{A}^{p}))\times L^{p}(Q_{T};\mathcal{%
B}_{A_{\tau }}^{p}(\mathbb{\mathbb{R}}_{\tau };\mathcal{B}_{\#A_{y}}^{1,p}))$
such that, as $E^{\prime }\ni \varepsilon \rightarrow 0$, 
\begin{equation*}
u_{\varepsilon }\rightarrow u_{0}\text{\ in }L^{p}(Q)\text{-weak }\Sigma 
\text{\ \ \ \ \ \ \ \ \ }
\end{equation*}%
and 
\begin{equation*}
\frac{\partial u_{\varepsilon }}{\partial x_{j}}\rightarrow \frac{\partial
u_{0}}{\partial x_{j}}+\frac{\overline{\partial }u_{1}}{\partial y_{j}}\text{%
\ in }L^{p}(Q_{T})\text{-weak }\Sigma \;(1\leq j\leq N).
\end{equation*}
\end{theorem}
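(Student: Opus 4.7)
The approach is to transcribe the proof of Theorem \ref{t3.5} into the time-dependent, product-algebra setting, replacing $A$ by $A = A_y\odot A_\tau$ throughout and keeping in mind that $\Delta(A)=\Delta(A_y)\times\Delta(A_\tau)$ with $\beta=\beta_y\otimes\beta_\tau$. The plan is to first use the compactness Theorem \ref{t3.1} componentwise on $(u_\varepsilon)$ and on each $(\partial u_\varepsilon/\partial x_j)$, both of which are bounded in $L^p(Q_T)$, to extract a subsequence $E'\subset E$ and limits $u_0\in L^p(Q_T;\mathcal{B}_A^p)$, $v=(v_j)\in L^p(Q_T;\mathcal{B}_A^p)^N$ with $u_\varepsilon\to u_0$ and $\partial u_\varepsilon/\partial x_j\to v_j$ in $L^p(Q_T)$-weak $\Sigma$.

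Next, I would identify the structure of $u_0$. For $\phi\in\mathcal{C}_0^\infty(Q_T)$ and $\Psi\in(A^\infty)^N$, inserting the test function $\Phi^\varepsilon(x,t)=\varepsilon_1\phi(x,t)\Psi(x/\varepsilon_1,t/\varepsilon_2)$ into $\int_{Q_T}\nabla u_\varepsilon\cdot\Phi^\varepsilon$ and integrating by parts in $x$, the leading term that survives the limit $E'\ni\varepsilon\to0$ is
\begin{equation*}
\iint_{Q_T\times\Delta(A)}\widehat{u}_0\,\phi\,\widehat{\Div_y\Psi}\,dxdtd\beta=0,
\end{equation*}
so that $\overline{D}_y u_0=0$ in the $y$-variable; by the characterization (\ref{2.5}) this places $u_0(x,t,\cdot)\in I_A^p$ for a.e.\ $(x,t)$. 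Then, choosing $\Psi=(\delta_{ij})_{1\le i\le N}$ (which is divergence-free in $y$) in the analogue of (\ref{2.3}) gives
\begin{equation*}
\int_{Q_T}M(v_j)\phi\,dxdt=-\int_{Q_T}M(u_0)\frac{\partial\phi}{\partial x_j}\,dxdt,
\end{equation*}
which, combined with the invariance already shown, upgrades $u_0$ to a member of $L^p(0,T;W^{1,p}(Q;I_A^p))$ and identifies $M(v_j)$ with $\partial u_0/\partial x_j$ at the macroscopic level.

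Finally, to produce $u_1$, I would take general $\Psi\in(A^\infty)^N$ with $\overline{\Div}_y\Psi=0$ and $\Phi^\varepsilon(x,t)=\phi(x,t)\Psi(x/\varepsilon_1,t/\varepsilon_2)$; passing to the $\Sigma$-limit as in Theorem \ref{t3.5} yields, for a.e.\ $(x,t)\in Q_T$, the orthogonality
\begin{equation*}
\int_{\Delta(A)}\bigl(\widehat{v}(x,t,\cdot)-\nabla_x\widehat{u}_0(x,t,\cdot)\bigr)\cdot\widehat{\Psi}\,d\beta=0\qquad\forall\,\Psi\in\mathcal{V}_{\overline{\Div}_y}.
\end{equation*}
Invoking the de Rham-type Proposition \ref{p2.3}, applied in the $y$-variable with $\tau$ acting as a parameter (i.e.\ viewing $\mathcal{B}_A^p$ as $\mathcal{B}_{A_\tau}^p(\mathbb{R}_\tau;\mathcal{B}_{A_y}^p)$), produces $u_1(x,t)\in\mathcal{B}_{A_\tau}^p(\mathbb{R}_\tau;\mathcal{B}_{\#A_y}^{1,p})$ such that $v(x,t,\cdot)-\nabla_x u_0(x,t)=\overline{D}_y u_1(x,t,\cdot)$; measurability and $L^p$ integrability in $(x,t)$ are recovered from the isometric embedding of $\mathcal{B}_{\#A_y}^{1,p}$ into $(\mathcal{B}_{A_y}^p)^N$ via $\overline{D}_y$. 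The main delicate point is precisely this last step: verifying that Proposition \ref{p2.3}, proved for a single algebra with mean value, transfers faithfully to the product framework so that the resulting corrector lands in the required tensor-product space $\mathcal{B}_{A_\tau}^p(\mathbb{R}_\tau;\mathcal{B}_{\#A_y}^{1,p})$ rather than merely in some space of $(x,t)$-measurable selections; this is where one has to rely on the splitting $\Delta(A)=\Delta(A_y)\times\Delta(A_\tau)$ and the Fubini structure of $\beta$.
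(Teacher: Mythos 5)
Your proposal is correct and is essentially the paper's own proof: the paper explicitly omits the argument for Theorem \ref{t3.6}, stating that it is ``copied on that of Theorem \ref{t3.5}'', which is precisely the transcription you carry out (compactness via Theorem \ref{t3.1}, identification of $u_{0}$ through divergence-free test functions oscillating in $y$, and the de Rham-type Proposition \ref{p2.3} applied in $y$ with $\tau$ as a parameter). Your closing remark correctly isolates the only point requiring care in the product setting, namely that the disintegration $\Delta (A)=\Delta (A_{y})\times \Delta (A_{\tau })$ with $\beta =\beta _{y}\otimes \beta _{\tau }$ is what places the corrector in $\mathcal{B}_{A_{\tau }}^{p}(\mathbb{R}_{\tau };\mathcal{B}_{\#A_{y}}^{1,p})$.
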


\subsection{The $\Sigma $-convergence for stochastic processes}

In order to deal with homogenization problems related to \textit{stochastic}
PDEs we need to give a suitable notion of $\Sigma $-convergence adapted to
stochastic processes. In all that follows, $Q$ and $T$ are as above.\medskip

Let $(\Omega ,\mathcal{F},\mathbb{P})$ be a probability space. The
expectation on $(\Omega ,\mathcal{F},\mathbb{P})$ will throughout be denoted
by $\mathbb{E}$. Let us first recall the definition of the Banach space of
bounded $\mathcal{F}$-measurable functions. Denoting by $F(\Omega )$ the
Banach space of all bounded functions $f:\Omega \rightarrow \mathbb{R}$
(with the sup norm), we define $B(\Omega )$ as the closure in $F(\Omega )$
of the vector space $H(\Omega )$ consisting of all finite linear
combinations of the characteristic functions $1_{X}$ of sets $X\in \mathcal{F%
}$. Since $\mathcal{F}$ is an $\sigma $-algebra, $B(\Omega )$ is the Banach
space of all bounded $\mathcal{F}$-measurable functions. Likewise we define
the space $B(\Omega ;Z)$ of all bounded $(\mathcal{F},B_{Z})$-measurable
functions $f:\Omega \rightarrow Z$, where $Z$ is a Banach space endowed with
the $\sigma $-algebra of Borelians $B_{Z}$. The tensor product $B(\Omega
)\otimes Z$ is a dense subspace of $B(\Omega ;Z)$: this follows from the
obvious fact that $B(\Omega )$ can be viewed as a space of continuous
functions over the \textit{gamma-compactification} \cite{Zhdanok} of the
measurable space $(\Omega ,\mathcal{F})$, which is a compact topological
space. Next, for $X$ a Banach space, we denote by $L^{p}(\Omega ,\mathcal{F},%
\mathbb{P};X)$ the space of $X$-valued random variables $u$ such that $%
\left\Vert u\right\Vert _{X}$ is $L^{p}(\Omega ,\mathcal{F},\mathbb{P})$%
-integrable.

This being so, we still recall some preliminary as in the preceding
subsection. Let $A_{y}$ and $A_{\tau }$ be two algebras wmv on $\mathbb{R}%
_{y}^{N}$ and $\mathbb{R}_{\tau }$ respectively, and let $A=A_{y}\odot
A_{\tau }$ be their product. We denote by $\Delta (A_{y})$ (resp. $\Delta
(A_{\tau })$, $\Delta (A)$) the spectrum of $A_{y}$ (resp. $A_{\tau }$, $A$%
). The same letter $\mathcal{G}$ will denote the Gelfand transformation on $%
A_{y}$, $A_{\tau }$ and $A$, as well. Points in $\Delta (A_{y})$ (resp. $%
\Delta (A_{\tau })$) are denoted by $s$ (resp. $s_{0}$). The $M$-measure on
the compact space $\Delta (A_{y})$ (resp. $\Delta (A_{\tau })$) is denoted
by $\beta _{y}$ (resp. $\beta _{\tau }$). We know that $\Delta (A)=\Delta
(A_{y})\times \Delta (A_{\tau })$ and the $M$-measure on $\Delta (A)$ is the
product measure $\beta =\beta _{y}\otimes \beta _{\tau }$. Points in $\Omega 
$ are as usual denoted by $\omega $. Unless otherwise stated, random
variables will always be considered on the probability space $(\Omega ,%
\mathcal{F},\mathbb{P})$. The other notations are as before this subsection.

\begin{definition}
\label{d3.4}\emph{A sequence of random variables }$(u_{\varepsilon
})_{\varepsilon >0}\subset L^{p}(\Omega ,\mathcal{F},\mathbb{P}%
;L^{p}(Q_{T})) $\emph{\ (}$1\leq p<\infty $\emph{) is said to }weakly $%
\Sigma $-converge\emph{\ in }$L^{p}(Q_{T}\times \Omega )$\emph{\ to some
random variable }$u_{0}\in L^{p}(\Omega ,\mathcal{F},\mathbb{P};L^{p}(Q_{T};%
\mathcal{B}_{A}^{p}))$\emph{\ if as }$\varepsilon \rightarrow 0$\emph{, we
have } 
\begin{equation}
\begin{array}{l}
\int_{Q_{T}\times \Omega }u_{\varepsilon }(x,t,\omega )f\left( x,t,\frac{x}{%
\varepsilon },\frac{t}{\varepsilon },\omega \right) dxdtd\mathbb{P} \\ 
\ \ \ \ \ \ \ \ \ \ \ \ \ \ \rightarrow \iint_{Q_{T}\times \Omega \times
\Delta (A)}\widehat{u}_{0}(x,t,s,s_{0},\omega )\widehat{f}%
(x,t,s,s_{0},\omega )dxdtd\mathbb{P}d\beta%
\end{array}
\label{3.17}
\end{equation}%
\emph{for every }$f\in L^{p^{\prime }}(\Omega ,\mathcal{F},\mathbb{P}%
;L^{p^{\prime }}(Q_{T};A))$\emph{\ (}$1/p^{\prime }=1-1/p$\emph{), where }$%
\widehat{u}_{0}=\mathcal{G}_{1}\circ u_{0}$\emph{\ and }$\widehat{f}=%
\mathcal{G}_{1}\circ (\varrho \circ f)=\mathcal{G}\circ f$\emph{. We express
this by writing} $u_{\varepsilon }\rightarrow u_{0}$ in $L^{p}(Q_{T}\times
\Omega )$-weak $\Sigma $.
\end{definition}

\begin{remark}
\label{r3.0}\emph{The above weak }$\Sigma $\emph{-convergence in }$%
L^{p}(Q_{T}\times \Omega )$\emph{\ implies the weak convergence in }$%
L^{p}(Q_{T}\times \Omega )$\emph{. One can show as in the usual setting of }$%
\Sigma $\emph{-convergence method that each }$f\in L^{p}(\Omega ,\mathcal{F},%
\mathbb{P};L^{p}(Q_{T};A))$\emph{\ weakly }$\Sigma $\emph{-converges to }$%
\varrho \circ f$\emph{. Definition \ref{d3.4} generalizes Definitions \ref%
{d3.1} and \ref{d3.3} straightforwardly.}
\end{remark}

In order to simplify the notation, we will henceforth denote $L^{p}(\Omega ,%
\mathcal{F},\mathbb{P};X)$ merely by $L^{p}(\Omega ;X)$ if it is understood
from the context and there is no fear of confusion. Assume for a while that $%
p=2$. Then the definition above can be formally motivated by the following
fact: using the chaos expansion of $u_{\varepsilon }$ and $f$ we get $%
u_{\varepsilon }(x,t,\omega )=\sum_{j=1}^{\infty }u_{\varepsilon
,j}(x,t)\Phi _{j}(\omega )$ and $f(x,t,y,\tau ,\omega )=\sum_{k=1}^{\infty
}f_{k}(x,t,y,\tau )\Phi _{k}(\omega )$ where $u_{\varepsilon ,j}\in
L^{2}(Q_{T})$ and $f_{k}\in L^{2}(Q_{T};A)$, so that 
\begin{equation*}
\int_{Q_{T}\times \Omega }u_{\varepsilon }(x,t,\omega )f\left( x,t,\frac{x}{%
\varepsilon },\frac{t}{\varepsilon },\omega \right) dxdtd\mathbb{P}
\end{equation*}%
can be formally written as 
\begin{equation*}
\sum_{j,k=1}^{\infty }\int_{\Omega }\Phi _{j}(\omega )\Phi _{k}(\omega )d%
\mathbb{P}\int_{Q_{T}}u_{\varepsilon ,j}(x,t)f_{k}\left( x,t,\frac{x}{%
\varepsilon },\frac{t}{\varepsilon }\right) dxdt,
\end{equation*}%
and by the usual $\Sigma $-convergence method (see Definition \ref{d3.3}),
as $\varepsilon \rightarrow 0$, 
\begin{equation*}
\int_{Q_{T}}u_{\varepsilon ,j}(x,t)f_{k}\left( x,t,\frac{x}{\varepsilon },%
\frac{t}{\varepsilon }\right) dxdt\rightarrow \iint_{Q_{T}\times \Delta (A)}%
\widehat{u}_{0,j}(x,t,s,s_{0})\widehat{f}_{k}\left( x,t,s,s_{0}\right)
dxdtd\beta \text{.}
\end{equation*}%
Hence, by setting 
\begin{equation*}
\widehat{u}_{0}(x,t,s,s_{0},\omega )=\sum_{j=1}^{\infty }\widehat{u}%
_{0,j}(x,t,s,s_{0})\Phi _{j}(\omega );\;\widehat{f}\left( x,t,s,s_{0},\omega
\right) =\sum_{k=1}^{\infty }\widehat{f}_{k}\left( x,t,s,s_{0}\right) \Phi
_{k}(\omega )
\end{equation*}%
we get (\ref{3.17}). This is what can formally motivate our definition.

As in the preceding subsection, all the results together with their proofs
can be carried in the present context, mutatis mutandis. We just state a few.

\begin{theorem}
\label{t3.7}Let $1<p<\infty $. Let $(u_{\varepsilon })_{\varepsilon \in
E}\subset L^{p}(\Omega ;L^{p}(Q_{T}))$ be a sequence of random variables
verifying the following boundedness condition: 
\begin{equation*}
\sup_{\varepsilon \in E}\mathbb{E}\left\Vert u_{\varepsilon }\right\Vert
_{L^{p}(Q_{T})}^{p}<\infty .
\end{equation*}%
Then there exists a subsequence $E^{\prime }$ from $E$ such that the
sequence $(u_{\varepsilon })_{\varepsilon \in E^{\prime }}$ is weakly $%
\Sigma $-convergent in $L^{p}(Q_{T}\times \Omega )$.
\end{theorem}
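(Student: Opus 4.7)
I would mimic the proof of Theorem \ref{t3.1}, lifted to the probabilistic setting via the Gelfand representation. The hypothesis is exactly that $(u_\varepsilon)_{\varepsilon\in E}$ is bounded in $L^p(Q_T\times\Omega)$. For each $\varepsilon\in E$ introduce the linear functional
$$L_\varepsilon(f)=\int_{Q_T\times\Omega}u_\varepsilon(x,t,\omega)\,f\!\left(x,t,\frac{x}{\varepsilon_1},\frac{t}{\varepsilon_2},\omega\right)dxdtd\mathbb{P},$$
defined for $f\in L^{p'}(\Omega;L^{p'}(Q_T;A))$. The pointwise bound $|f(x,t,y,\tau,\omega)|\leq\|f(x,t,\cdot,\cdot,\omega)\|_A$ together with H\"older's inequality gives $|L_\varepsilon(f)|\leq C\|f\|_{L^{p'}(\Omega;L^{p'}(Q_T;A))}$ with $C=\sup_{\varepsilon\in E}\|u_\varepsilon\|_{L^p(Q_T\times\Omega)}<\infty$, so the family $(L_\varepsilon)_{\varepsilon\in E}$ is uniformly bounded.

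Next I would extract a convergent subsequence via separability and a Cantor diagonal argument. The algebra $A$ is separable (if need be, one replaces it by the closed subalgebra generated by a countable collection of test functions, a standard device in this type of proof). Under the paper's standing setting, in which the working sub-$\sigma$-algebra of $\mathcal{F}$ is countably generated, the space $L^{p'}(\Omega;L^{p'}(Q_T;A))$ is separable, so one can find a countable dense subset and, using the uniform bound on $\|L_\varepsilon\|$, extract a subsequence $E'\subset E$ such that $L_\varepsilon(f)\to L(f)$ for every $f$ in that space, with $L$ bounded and linear.

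Finally I would identify $L$ with the desired $\Sigma$-limit. The mean value property applied pointwise in $(x,t,\omega)$ to $|f(x,t,\cdot,\cdot,\omega)|^{p'}$, combined with Fubini, gives $\|f^\varepsilon\|_{L^{p'}(Q_T\times\Omega)}\to\|\widehat{f}\|_{L^{p'}(Q_T\times\Omega\times\Delta(A))}$ as $\varepsilon\to 0$, so that in the limit $|L(f)|\leq C\|\widehat{f}\|_{L^{p'}(Q_T\times\Omega\times\Delta(A))}$. Since $\mathcal{G}$ is an isometric $*$-isomorphism of $A$ onto $\mathcal{C}(\Delta(A))$, the assignment $f\mapsto\widehat{f}$ has dense range in $L^{p'}(\Omega\times Q_T\times\Delta(A))$, hence $L$ extends uniquely to a bounded linear functional $\widetilde{L}$ on the latter space. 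The Riesz representation theorem provides $\widehat{u}_0\in L^p(\Omega\times Q_T\times\Delta(A))$ with $\widetilde{L}(g)=\int\widehat{u}_0\,g$, and $u_0:=\mathcal{G}_1^{-1}(\widehat{u}_0)\in L^p(\Omega;L^p(Q_T;\mathcal{B}_A^p))$ then satisfies the weak $\Sigma$-convergence relation \emph{(\ref{3.17})}.

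The main obstacle is the separability step: $B(\Omega)$ is not separable in general, so the diagonal extraction requires a countably generated hypothesis on the working $\sigma$-algebra. Once this mild assumption---implicit in most stochastic homogenization frameworks---is granted, the remainder of the argument is a routine parametric extension of the Gelfand-plus-Riesz representation argument used in Theorem \ref{t3.1}.
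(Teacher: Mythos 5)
Your proof is correct and follows essentially the same route the paper intends: the paper establishes Theorem \ref{t3.7} by asserting that the argument for Theorem \ref{t3.1} (itself deferred to its homologue in \cite{CMP}) carries over mutatis mutandis, and that argument is precisely the uniform-boundedness, diagonal-extraction, and Gelfand--Riesz representation scheme you write out. The separability caveat you flag (separable $A$ and a countably generated $\sigma$-algebra so that $L^{p'}(\Omega;L^{p'}(Q_T;A))$ is separable) is genuine, but it is the same implicit hypothesis already underlying the deterministic Theorem \ref{t3.1}, so it does not constitute a departure from the paper.
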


\begin{theorem}
\label{t3.8}Let $1<p<\infty $. Let $(u_{\varepsilon })_{\varepsilon \in
E}\subset L^{p}(\Omega ;L^{p}(0,T;W^{1,p}(Q)))$ be a sequence of random
variables which satisfies the following estimate: 
\begin{equation*}
\sup_{\varepsilon \in E}\mathbb{E}\left\Vert u_{\varepsilon }\right\Vert
_{L^{p}(0,T;W^{1,p}(Q))}^{p}<\infty .
\end{equation*}%
Then there exist a subsequence $E^{\prime }$ of $E$ and a couple of random
variables $(u_{0},u_{1})$ with $u_{0}\in L^{p}(\Omega
;L^{p}(0,T;W^{1,p}(Q;I_{A}^{p})))$ and $u_{1}\in L^{p}(\Omega ;L^{p}(Q_{T};%
\mathcal{B}_{A_{\tau }}^{p}(\mathbb{R}_{\tau };\mathcal{B}_{\#A_{y}}^{1,p})))
$ such that, as $E^{\prime }\ni \varepsilon \rightarrow 0$, 
\begin{equation*}
u_{\varepsilon }\rightarrow u_{0}\ \text{in }L^{p}(Q_{T}\times \Omega )\text{%
-weak }\Sigma 
\end{equation*}%
and%
\begin{equation*}
\frac{\partial u_{\varepsilon }}{\partial x_{i}}\rightarrow \frac{\partial
u_{0}}{\partial x_{i}}+\frac{\overline{\partial }u_{1}}{\partial y_{i}}\text{%
\ in }L^{p}(Q_{T}\times \Omega )\text{-weak }\Sigma \text{, }1\leq i\leq N%
\text{.}
\end{equation*}
\end{theorem}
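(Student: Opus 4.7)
The plan is to follow closely the scheme of Theorems~\ref{t3.5} and~\ref{t3.6}, the only novelty being that admissible test functions now carry an $\omega$-dependence. First, by Theorem~\ref{t3.7} applied to $(u_{\varepsilon})_{\varepsilon\in E}$ and to each spatial derivative $(\partial u_{\varepsilon}/\partial x_{j})_{\varepsilon\in E}$---all of which are bounded in $L^{p}(Q_{T}\times\Omega)$ under the hypothesis---a diagonal extraction produces a subsequence $E^{\prime}$ together with
\[
u_{0}\in L^{p}(\Omega;L^{p}(Q_{T};\mathcal{B}_{A}^{p})),\qquad \mathbf{v}=(v_{j})_{1\le j\le N}\in L^{p}(\Omega;L^{p}(Q_{T};\mathcal{B}_{A}^{p}))^{N},
\]
with $u_{\varepsilon}\rightarrow u_{0}$ and $\partial u_{\varepsilon}/\partial x_{j}\rightarrow v_{j}$ in $L^{p}(Q_{T}\times\Omega)$-weak $\Sigma$.

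To identify the structure of $u_{0}$, I would test against $\Phi(x,t,y,\tau,\omega)=\varphi(x,t,\omega)\Psi(y,\tau)$ with $\varphi\in B(\Omega)\otimes\mathcal{C}_{0}^{\infty}(Q_{T})$ and $\Psi\in(A^{\infty})^{N}$. An integration by parts in $x$ gives
\[
\int_{\Omega\times Q_{T}}\varepsilon_{1}\nabla u_{\varepsilon}\cdot\varphi\Psi^{\varepsilon}\,dx\,dt\,d\mathbb{P}=-\int_{\Omega\times Q_{T}}\bigl[\varepsilon_{1}u_{\varepsilon}\nabla_{x}\varphi\cdot\Psi^{\varepsilon}+u_{\varepsilon}\varphi(\Div_{y}\Psi)^{\varepsilon}\bigr]dx\,dt\,d\mathbb{P};
\]
letting $E^{\prime}\ni\varepsilon\to 0$ the two $\varepsilon_{1}$-factors vanish, and arbitrariness of $\varphi$ forces $\overline{D}_{y}u_{0}=0$. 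The analogous identity in the $\tau$-direction (following the scheme of Theorem~\ref{t3.6}) yields that $u_{0}$ is also $\tau$-invariant, hence $u_{0}(x,t,\cdot,\cdot,\omega)\in I_{A}^{p}$ for a.e.\ $(x,t,\omega)$. Taking $\Psi$ constantly equal to the $i$-th basis vector in the same limit identity recovers $\partial u_{0}/\partial x_{i}=M(v_{i})$, which places $u_{0}$ in $L^{p}(\Omega;L^{p}(0,T;W^{1,p}(Q;I_{A}^{p})))$.

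For the corrector, I would test against divergence-free fields in $y$: for $\Psi\in(A^{\infty})^{N}$ with $\Div_{y}\Psi=0$, the identity
\[
\sum_{j}\int_{\Omega\times Q_{T}}\partial_{x_{j}}u_{\varepsilon}\,\varphi\,\Psi_{j}^{\varepsilon}\,dx\,dt\,d\mathbb{P}=-\sum_{j}\int_{\Omega\times Q_{T}}u_{\varepsilon}\,\Psi_{j}^{\varepsilon}\,\partial_{x_{j}}\varphi\,dx\,dt\,d\mathbb{P}
\]
passes in the limit, after an integration by parts using $\partial_{x_{j}}u_{0}=M(v_{j})$, to
\[
\sum_{j}\int_{\Omega\times Q_{T}\times\Delta(A)}\bigl(\widehat{v}_{j}-\partial_{x_{j}}\widehat{u}_{0}\bigr)\widehat{\Psi}_{j}\widehat{\varphi}\,dx\,dt\,d\beta\,d\mathbb{P}=0.
\]
By arbitrariness of $\varphi$, for a.e.\ $(x,t,\omega)$ the field $\widehat{\mathbf{v}}(x,t,\cdot,\cdot,\omega)-\nabla\widehat{u}_{0}(x,t,\omega)$ is orthogonal to every divergence-free $\widehat{\Psi}$, and Proposition~\ref{p2.3} applied slice-wise in $\tau$ to the algebra $A_{y}$ then produces, for a.e.\ $(x,t,\omega)$, an element $u_{1}(x,t,\cdot,\cdot,\omega)\in\mathcal{B}_{A_{\tau}}^{p}(\mathbb{R}_{\tau};\mathcal{B}_{\#A_{y}}^{1,p})$ satisfying $\mathbf{v}-\nabla u_{0}=\overline{D}_{y}u_{1}$.

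The main obstacle is precisely this last step: promoting the pointwise construction of $u_{1}$ to joint measurability and $L^{p}$-integrability in $(x,t,\omega)$ is not supplied by Proposition~\ref{p2.3} itself. To handle it, I would invoke property~(P$_{1}$) of Section~2, according to which $\overline{D}_{y}$ is an isometric embedding of $\mathcal{B}_{\#A_{y}}^{1,p}$ onto a closed subspace $R(\overline{D}_{y})$ of $(\mathcal{B}_{A_{y}}^{p})^{N}$, so it admits a bounded linear inverse on its range. Consequently $u_{1}$ can be defined globally as the image of the random field $\mathbf{v}-\nabla u_{0}$ under this bounded linear operator (applied slice-wise in $\tau$), which automatically conveys the required measurability and $L^{p}$-bounds and completes the proof.
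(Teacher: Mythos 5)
Your proposal is correct and follows essentially the same route as the paper, which obtains Theorem \ref{t3.8} by carrying the proof of Theorem \ref{t3.5} over to the stochastic setting mutatis mutandis: compactness via Theorem \ref{t3.7}, identification of $u_{0}$ through $\varepsilon$-scaled and divergence-free test functions, and the de Rham result of Proposition \ref{p2.3} for the corrector. Your explicit treatment of the joint measurability and integrability of $u_{1}$, via the bounded inverse of $\overline{D}_{y}$ on its closed range, addresses a point the paper leaves implicit.
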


Theorem \ref{t3.8} will be very useful in the last section of this work.

\section{Young measures generated by an algebra with mean value}

In this section we assume that the algebra $A$ is separable. This assumption
is not fundamental, but it is made just to simplify the presentation of the
foregoing section.

Let $E_{p}$ ($1\leq p<\infty $) denote the space of continuous functions $%
\Phi :\overline{Q}\times \mathbb{R}^{N}\times \mathbb{R}^{m}\rightarrow 
\mathbb{R}$ satisfying the following conditions:

\begin{itemize}
\item[(C1)] $\Phi \in \mathcal{C}(\overline{Q}\times \mathbb{R}^{m};A)$

\item[(C2)] $\lim_{\left\vert \lambda \right\vert \rightarrow \infty }\frac{%
\Phi (x,y,\lambda )}{1+\left\vert \lambda \right\vert ^{p}}$ exists
uniformly in $(x,y)\in \overline{Q}\times \mathbb{R}^{N}$.
\end{itemize}

Let $K=\widehat{\mathbb{R}^{m}}$ be the Alexandroff one point
compactification of $\mathbb{R}^{m}$: $\widehat{\mathbb{R}^{m}}=\mathbb{R}%
^{m}\cup \{\infty \}$. Each element $\Phi $ of $E_{p}$ extends to a unique
element $\Psi $ of $\mathcal{C}(\overline{Q}\times K;A)$ as follows: 
\begin{equation*}
\Psi (x,y,\lambda )=\left\{ 
\begin{array}{l}
\frac{\Phi (x,y,\lambda )}{1+\left| \lambda \right| ^{p}}\text{\ if }%
(x,y,\lambda )\in \overline{Q}\times \mathbb{R}^{N}\times \mathbb{R}^{m} \\ 
\lim_{\left| \lambda \right| \rightarrow \infty }\frac{\Phi (x,y,\lambda )}{%
1+\left| \lambda \right| ^{p}}\text{\ if }\lambda =\infty .%
\end{array}%
\right.
\end{equation*}%
Besides $\Psi $ verifies the property that there exists $c>0$ (depending on $%
\Phi $) such that 
\begin{equation*}
\left| \Psi (x,y,\lambda )\right| \leq c(1+\left| \lambda \right| ^{p})\ \
\forall (x,y,\lambda )\in \overline{Q}\times \mathbb{R}^{N}\times K.
\end{equation*}%
On the other hand, the Gelfand transformation $\mathcal{G}$ being an
isometric isomorphism of $A$ onto $\mathcal{C}(\Delta (A))$, we construct an
isometric isomorphism of $\mathcal{C}(\overline{Q}\times K;A)$ onto $%
\mathcal{C}(\overline{Q}\times K;\mathcal{C}(\Delta (A)))=\mathcal{C}(%
\overline{Q}\times \Delta (A)\times K)$ (which is separable), so that $E_{p}$
is separable.

Equipped with the norm 
\begin{equation*}
\left\Vert \Phi \right\Vert =\sup_{x\in \overline{Q},y\in \mathbb{R}%
^{N},\lambda \in \mathbb{R}^{m}}\frac{\left\vert \Phi (x,y,\lambda
)\right\vert }{1+\left\vert \lambda \right\vert ^{p}},
\end{equation*}%
$E_{p}$ is a Banach space. We denote by $\mathcal{P}(\mathbb{R}^{m})$ the
space of probability measures on $\mathbb{R}^{m}$. This being so, we have
the following result.

\begin{theorem}
\label{t4.1}Let $Q$ be an open bounded subset of $\mathbb{R}^{N}$. Let $%
1\leq p<\infty $, and let $A$ be an algebra wmv on $\mathbb{R}_{y}^{N}$.
Finally let $(u_{\varepsilon })_{\varepsilon \in E}$ ($E$ being a
fundamental sequence) be a bounded sequence in $L^{p}(Q;\mathbb{R}^{m})$.
There exist a subsequence $E^{\prime }$ from $E$ and a family $\nu =(\nu
_{x,s})_{x\in Q,s\in \Delta (A)}\in L^{\infty }(Q\times \Delta (A);\mathcal{P%
}(\mathbb{R}^{m}))$ such that, as $E^{\prime }\ni \varepsilon \rightarrow 0$%
, 
\begin{equation}
\int_{Q}\Phi \left( x,\frac{x}{\varepsilon _{1}},u_{\varepsilon }(x)\right)
dx\rightarrow \int_{Q}\int_{\Delta (A)}\int_{\mathbb{R}^{m}}\widehat{\Phi }%
(x,s,\lambda )d\nu _{x,s}(\lambda )d\beta (s)dx  \label{4.1}
\end{equation}%
for all $\Phi \in E_{p}$.
\end{theorem}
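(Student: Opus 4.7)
The plan is to lift the problem to weak-$*$ compactness of Radon measures on a compact metric space and then invoke Valadier's disintegration theorem. Since $A$ is assumed separable, $\Delta(A)$ is a compact metrizable space; with $K=\widehat{\mathbb{R}^{m}}$ the one-point compactification, $X=\overline{Q}\times\Delta(A)\times K$ is compact metric. As noted in the preamble, each $\Phi\in E_{p}$ corresponds via Gelfand to a continuous function $\widehat{\Psi}\in\mathcal{C}(X)$ given by $\widehat{\Psi}(x,s,\lambda)=\widehat{\Phi}(x,s,\lambda)/(1+|\lambda|^{p})$, the value at $\lambda=\infty$ being fixed by condition (C2).

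Associate with each $u_{\varepsilon}$ the nonnegative Radon measure $\mu_{\varepsilon}$ on $X$ defined, for $f\in\mathcal{C}(X)$, by
$$\int_{X}f\,d\mu_{\varepsilon}=\int_{Q}f\bigl(x,\phi_{x/\varepsilon_{1}},u_{\varepsilon}(x)\bigr)\bigl(1+|u_{\varepsilon}(x)|^{p}\bigr)\,dx,$$
where $\phi:\mathbb{R}^{N}\to\Delta(A)$, $y\mapsto\phi_{y}$ with $\phi_{y}(u)=u(y)$, is the canonical continuous map. The total mass $|Q|+\|u_{\varepsilon}\|_{L^{p}(Q;\mathbb{R}^{m})}^{p}$ is uniformly bounded, so along a subsequence $E^{\prime}\subset E$ we obtain $\mu_{\varepsilon^{\prime}}\rightharpoonup\mu$ in the weak-$*$ sense, by Banach--Alaoglu. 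Since $\widehat{\Psi}\in\mathcal{C}(X)$, weak-$*$ convergence immediately yields
$$\int_{Q}\Phi\!\left(x,\frac{x}{\varepsilon_{1}^{\prime}},u_{\varepsilon^{\prime}}(x)\right)dx=\int_{X}\widehat{\Psi}\,d\mu_{\varepsilon^{\prime}}\longrightarrow\int_{X}\widehat{\Psi}\,d\mu\qquad\forall\,\Phi\in E_{p}.$$

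To identify the right-hand side, apply Valadier's disintegration theorem along the projection $X\to\overline{Q}\times\Delta(A)$: there exist a marginal $\pi$ on $\overline{Q}\times\Delta(A)$ and a Borel probability kernel $\{\mu_{x,s}\}$ on $K$ such that $d\mu=d\mu_{x,s}(\lambda)\,d\pi(x,s)$. Testing with $\Phi(x,y,\lambda)=g(x,y)$ for $g\in\mathcal{C}(\overline{Q};A)$ produces $\widehat{\Psi}(x,s,\lambda)=\widehat{g}(x,s)/(1+|\lambda|^{p})$, while the left-hand side $\int_{Q}g(x,x/\varepsilon_{1})\,dx$ tends by the mean-value property to $\int_{Q}\!\int_{\Delta(A)}\widehat{g}\,d\beta\,dx$. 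Matching the two expressions over a dense family of $\widehat{g}$ forces the identification $r(x,s)\,d\pi(x,s)=dx\otimes d\beta(s)$, where
$$r(x,s):=\int_{K}\bigl(1+|\lambda|^{p}\bigr)^{-1}\,d\mu_{x,s}(\lambda)\in(0,1].$$
Setting
$$d\nu_{x,s}(\lambda):=\frac{1}{r(x,s)\bigl(1+|\lambda|^{p}\bigr)}\,d\mu_{x,s}(\lambda)$$
defines a Borel probability kernel on $K$, and direct substitution of these relations into $\int_{X}\widehat{\Psi}\,d\mu$ recovers the announced formula.

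The main technical obstacle is the final step: to confirm that $\nu_{x,s}(\{\infty\})=0$ for $dx\otimes d\beta$-a.e.\ $(x,s)$, so that $\nu_{x,s}\in\mathcal{P}(\mathbb{R}^{m})$ as required and the inner integration genuinely reduces to one over $\mathbb{R}^{m}$. This amounts to ruling out escape of mass of $\mu_{\varepsilon}$ to $\lambda=\infty$ in the limit; it can be controlled by testing $\mu$ against continuous cutoffs of $\lambda$ equal to $1$ on increasing balls and using $L^{p}$-boundedness (supplemented, if necessary, by an equi-integrability argument). The $L^{\infty}$-regularity of the map $(x,s)\mapsto\nu_{x,s}$ is a corollary of the Borel measurability granted by Valadier's theorem together with the density of product test functions in $\mathcal{C}(X)$.
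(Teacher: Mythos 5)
Your construction is a legitimate variant of the paper's argument and in one respect cleaner: by weighting the image measure with $1+|u_{\varepsilon}(x)|^{p}$ you turn $\int_{Q}\Phi(x,x/\varepsilon_{1},u_{\varepsilon})\,dx$ into the pairing of a single uniformly bounded Radon measure on the compact space $X=\overline{Q}\times\Delta(A)\times K$ with a continuous function, so one weak-$*$ convergent subsequence serves every $\Phi\in E_{p}$ simultaneously and the diagonal extraction over a countable dense subset of $E_{p}$ that the paper performs becomes unnecessary. Your identification $r\,d\pi=dx\otimes d\beta$ and the renormalisation $d\nu_{x,s}=r^{-1}(1+|\lambda|^{p})^{-1}d\mu_{x,s}$ correctly replace the paper's two-step procedure (first show the $\Delta(A)$-marginal of the parametrized measure $\mu_{x}$ equals $\beta$, then disintegrate $\mu_{x}=\nu_{x,s}\otimes\beta$ via Valadier); the paper instead works with the unweighted measures viewed as elements of $L^{\infty}(Q;\mathcal{M}(\Delta(A)\times K))$, which keeps the $x$-marginal equal to Lebesgue measure automatically.

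The last step, however, is a genuine gap, and you have slightly misplaced it. The condition $\nu_{x,s}(\{\infty\})=0$ is automatic from your definition, since the density $(1+|\lambda|^{p})^{-1}$, extended continuously to $K$, vanishes at $\lambda=\infty$. What your ``direct substitution'' actually requires is $\mu_{x,s}(\{\infty\})=0$ for $\pi$-a.e.\ $(x,s)$, because in general
\begin{equation*}
\int_{X}\widehat{\Psi}\,d\mu=\int_{Q}\int_{\Delta(A)}\int_{\mathbb{R}^{m}}\widehat{\Phi}\,d\nu_{x,s}\,d\beta\,dx+\int_{\overline{Q}\times\Delta(A)}\widehat{\Psi}(x,s,\infty)\,\mu_{x,s}(\{\infty\})\,d\pi(x,s),
\end{equation*}
and the second term survives whenever $\lim_{|\lambda|\rightarrow\infty}\Phi/(1+|\lambda|^{p})\neq 0$. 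For your weighted measure, $L^{p}$-boundedness alone cannot rule this out: with $u_{\varepsilon}=\varepsilon^{-N/p}\mathbf{1}_{B_{\varepsilon}}$, bounded in $L^{p}$, the weight carries the fixed mass $|B_{1}|$ on the set where $|u_{\varepsilon}|\rightarrow\infty$, so $\mu$ charges $\{\lambda=\infty\}$ while the kernel is $\nu_{x,s}=\delta_{0}$, and the asserted formula fails for $\Phi=|\lambda|^{p}\in E_{p}$. What closes the gap is precisely equi-integrability of $(|u_{\varepsilon}|^{p})_{\varepsilon}$, which is not among the hypotheses; your parenthetical ``if necessary, by an equi-integrability argument'' is therefore not a refinement but the crux. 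The paper sidesteps the issue at the level of the measure by using the unweighted image of Lebesgue measure, for which the Chebyshev bound $|\{|u_{\varepsilon}|>R\}|\leq CR^{-p}$ does kill the atom at infinity; the same difficulty then resurfaces in the passage from bounded test functions to arbitrary $\Phi\in E_{p}$, which is exactly why Theorem \ref{t4.2}(ii) reinstates uniform integrability of $\Phi(\cdot,\cdot/\varepsilon_{1},u_{\varepsilon})$ as a hypothesis before using the representation in the form (\ref{4.7}).
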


\begin{proof}
For fixed $\Psi _{0}\in \mathcal{C}(\overline{Q}\times K;A)$, let us define $%
\mu _{\varepsilon }$ ($\varepsilon \in E$) as follows: 
\begin{equation*}
\left\langle \mu _{\varepsilon },\widehat{\Psi }_{0}\right\rangle
=\int_{Q}\Psi _{0}\left( x,\frac{x}{\varepsilon _{1}},u_{\varepsilon
}(x)\right) dx.
\end{equation*}%
We have 
\begin{equation*}
\left\vert \left\langle \mu _{\varepsilon },\widehat{\Psi }_{0}\right\rangle
\right\vert \leq \int_{Q}\sup_{y,\lambda }\left\vert \Psi _{0}\left(
x,y,\lambda \right) \right\vert dx.
\end{equation*}%
$\mathcal{G}$ being an isometric isomorphism of $A$ onto $\mathcal{C}(\Delta
(A))$ and as $\Psi _{0}\left( x,\cdot ,\lambda \right) \in A$, we have $%
\sup_{y\in \mathbb{R}^{N}}\left\vert \Psi _{0}\left( x,y,\lambda \right)
\right\vert =\sup_{s\in \Delta (A)}\left\vert \widehat{\Psi }_{0}\left(
x,s,\lambda \right) \right\vert $, hence 
\begin{equation*}
\left\vert \left\langle \mu _{\varepsilon },\widehat{\Psi }_{0}\right\rangle
\right\vert \leq \int_{Q}\sup_{s,\lambda }\left\vert \widehat{\Psi }%
_{0}\left( x,s,\lambda \right) \right\vert dx=\left\Vert \widehat{\Psi }%
_{0}\right\Vert _{L^{1}(Q;\mathcal{C}(\Delta (A)\times K))}.
\end{equation*}%
Thus $\mu _{\varepsilon }$ (continuous linear functional on the subspace $\{t%
\widehat{\Psi }_{0}:t\in \mathbb{R}\}$ of $L^{1}(Q;\mathcal{C}(\Delta
(A)\times K))$) extends (in a non unique way) to a continuous linear form on 
$L^{1}(Q;\mathcal{C}(\Delta (A)\times K))$, denoted by $\widetilde{\mu }%
_{\varepsilon }$, and satisfying 
\begin{equation*}
\left\Vert \widetilde{\mu }_{\varepsilon }\right\Vert _{L^{\infty }(Q;%
\mathcal{M}(\Delta (A)\times K))}\leq 1\ \ (\varepsilon \in E)
\end{equation*}%
where $\mathcal{M}(\Delta (A)\times K)$ (the dual space of $\mathcal{C}%
(\Delta (A)\times K)$) is the space of Radon measures defined on the compact
space $\Delta (A)\times K$. Because of the Banach-Alaoglu theorem, there
exist a subsequence $E^{\prime }(\Psi _{0})$ of $E$ and some $\mu \in
L^{\infty }(Q;\mathcal{M}(\Delta (A)\times K))$ such that, as $E^{\prime
}(\Psi _{0})\ni \varepsilon \rightarrow 0$, 
\begin{equation*}
\widetilde{\mu }_{\varepsilon }\rightarrow \mu \text{ in }L^{\infty }(Q;%
\mathcal{M}(\Delta (A)\times K))\text{-weak}\ast .
\end{equation*}%
In particular we have, as $E^{\prime }(\Psi _{0})\ni \varepsilon \rightarrow
0$, 
\begin{equation}
\int_{Q}\Psi _{0}\left( x,\frac{x}{\varepsilon _{1}},u_{\varepsilon
}(x)\right) dx\rightarrow \int_{Q}\int_{\Delta (A)\times K}\widehat{\Psi }%
_{0}(x,s,\lambda )d\mu _{x}(s,\lambda )dx.  \label{4.2}
\end{equation}%
$E_{p}$ being separable, let $\{\Psi _{k}:k\in \mathbb{N}\}$ be a countable
dense subset of $E_{p}$. For simplification we put $E=(\varepsilon
_{n})_{n\in \mathbb{N}}$. We have in hand a family $\{E^{\prime }(\Psi
_{k}):k\in \mathbb{N}\}$ of subsequences of $E$ obtained by repeating the
argument used to get (\ref{4.2}) and satisfying the following relation: $%
E^{\prime }(\Psi _{k+1})\subset E^{\prime }(\Psi _{k})$ for each $k\in 
\mathbb{N}$. By the well-known diagonal process we construct a subsequence $%
E^{\prime }$ from the family $\{E^{\prime }(\Psi _{k}):k\in \mathbb{N}\}$
satisfying, as $E^{\prime }\ni \varepsilon \rightarrow 0$, 
\begin{equation}
\int_{Q}\Psi _{k}\left( x,\frac{x}{\varepsilon _{1}},u_{\varepsilon
}(x)\right) dx\rightarrow \int_{Q}\int_{\Delta (A)\times K}\widehat{\Psi }%
_{k}(x,s,\lambda )d\mu _{x}(s,\lambda )dx\ \forall k\in \mathbb{N}.
\label{4.3}
\end{equation}%
By a mere routine we get (\ref{4.3}) by replacing $\Psi _{k}$ with any $\Psi 
$ in $E_{p}$. It is evident that, for a.e. $x\in Q$, $\mu _{x}$ is a
probability measure: in fact, taking in (\ref{4.3}) $\Psi \equiv 1$, we are
led (by the uniqueness of the limit) to $\int_{\Delta (A)\times K}d\mu
_{x}(s,\lambda )=1$.

Next, using the same argument as the one used in the proof of \cite[Theorem 7%
]{Valadier1} we see that the boundedness of $(u_{\varepsilon })_{\varepsilon
\in E}$ in $L^{p}(Q;\mathbb{R}^{m})$ implies that $\mu $ (thus constructed)
is supported by $Q\times \Delta (A)\times \mathbb{R}^{m}$, so that 
\begin{equation*}
\int_{Q}\int_{\Delta (A)\times K}\widehat{\Psi }(x,s,\lambda )d\mu
_{x}(s,\lambda )dx=\int_{Q}\int_{\Delta (A)\times \mathbb{R}^{m}}\widehat{%
\Psi }(x,s,\lambda )d\mu _{x}(s,\lambda )dx
\end{equation*}%
for all $\Psi \in \mathcal{C}(\overline{Q};\mathcal{C}(K;A))$. Thus, by \cite%
[Theorem 3]{Valadier1}, $\mu $ is the weak $\ast $-limit of $(\widetilde{\mu 
}_{\varepsilon })_{\varepsilon \in E^{\prime }}$ in $L^{\infty }(Q;\mathcal{M%
}(\Delta (A)\times \mathbb{R}^{m}))$ and thereby, defined a family of
probability measures $(\mu _{x})_{x\in Q}$ with $\mu _{x}\in \mathcal{M}%
(\Delta (A)\times \mathbb{R}^{m})$. Let $\nu _{x}$ denote the projection of $%
\mu _{x}$ onto $\Delta (A)$. Let us show that $\nu _{x}=\beta $. For that,
let $p_{1}$ denote the projection of $\Delta (A)\times \mathbb{R}^{m}$ onto $%
\Delta (A)$: $p_{1}(s,\lambda )=s$, $(s,\lambda )\in \Delta (A)\times 
\mathbb{R}^{m}$. We have the obvious equality 
\begin{equation}
\int_{\Delta (A)\times \mathbb{R}^{m}}(g\circ p_{1})(s,\lambda )d\mu
_{x}(s,\lambda )=\int_{\Delta (A)}g(s)d\nu _{x}(s)\ \ (g\in \mathcal{C}%
(\Delta (A))).  \label{4.5}
\end{equation}%
This being so, let $h\in A$, and let $\varphi \in \mathcal{K}(Q)$ (the space
continuous functions on $\mathbb{R}^{N}$ with compact support contained in $%
Q $). Set 
\begin{equation*}
\Phi (x,y,\lambda )=\varphi (x)h(y)\ \ (x\in Q,y\in \mathbb{R}^{N},\lambda
\in \mathbb{R}^{m}).
\end{equation*}%
Then $\Phi \in \mathcal{C}(\overline{Q};\mathcal{C}(K;A))$ and so, as $%
E^{\prime }\ni \varepsilon \rightarrow 0$, 
\begin{equation*}
\int_{Q}\varphi (x)h\left( \frac{x}{\varepsilon _{1}}\right) dx\rightarrow
\int_{Q}\int_{\Delta (A)\times \mathbb{R}^{m}}\varphi (x)\widehat{h}(s)d\mu
_{x}(s,\lambda ).
\end{equation*}%
On the other hand, we have, as $E^{\prime }\ni \varepsilon \rightarrow 0$,%
\begin{equation*}
\int_{Q}\varphi (x)h\left( \frac{x}{\varepsilon _{1}}\right) dx\rightarrow
\int_{Q}\int_{\Delta (A)}\varphi (x)\widehat{h}(s)d\beta (s)dx.
\end{equation*}%
We deduce that 
\begin{equation*}
\int_{\Delta (A)\times \mathbb{R}^{m}}\widehat{h}(s)d\mu _{x}(s,\lambda
)=\int_{\Delta (A)}\widehat{h}(s)d\beta (s)\text{ \ a.e. }x\in Q,
\end{equation*}%
or, taking into account (\ref{4.5}), 
\begin{equation*}
\int_{\Delta (A)}\widehat{h}(s)d\nu _{x}(s)=\int_{\Delta (A)}\widehat{h}%
(s)d\beta (s).
\end{equation*}%
Since the above equality holds for every $h\in A$, we deduce that $\nu
_{x}=\beta $ a.e. $x\in Q$ (hence $\nu _{x}$ is homogeneous, i.e. is
independent of $x$). Thus, using the Valadier's result on disintegration of
measures \cite[Theorem 2]{Valadier2}, there exists a probability measure $%
\nu _{x,s}$ ($s\in \Delta (A)$) on $\mathbb{R}^{m}$ such that 
\begin{equation*}
\mu _{x}=\nu _{x,s}\otimes \beta .
\end{equation*}%
We are therefore led to (\ref{4.1}) for all $\Phi \in E_{p}$. This completes
the proof of the theorem.
\end{proof}

Theorem \ref{t4.1} yields the following

\begin{definition}
\label{d4.1}\emph{The family of probability measures }$\{\nu _{x,s}\}_{x\in
Q,s\in \Delta (A)}$\emph{\ is called the }Young measure associated with $%
(u_{\varepsilon })_{\varepsilon \in E}$ at length scale $\varepsilon _{1}$.
\end{definition}

The concept of Young measures is weaker than the one of weak $\Sigma $-limit
as shown by the following result.

\begin{corollary}
\label{c4.1}Let $1<p<\infty $. The function $u_{0}\in L^{p}(Q;(\mathcal{B}%
_{A}^{p})^{m})$ defined by 
\begin{equation*}
\mathcal{G}_{1}(u_{0})(x,s)=\int_{\mathbb{R}^{m}}\lambda d\nu _{x,s}(\lambda
)\ \ \ ((x,s)\in Q\times \Delta (A))
\end{equation*}%
is the weak $\Sigma $-limit of $(u_{\varepsilon })_{\varepsilon \in
E^{\prime }}$.
\end{corollary}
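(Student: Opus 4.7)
The plan is to apply Theorem~\ref{t4.1} to a family of test integrands that are linear in the $\lambda$-variable, then identify the resulting right-hand side with the weak $\Sigma$-limit expression via Fubini.

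Fix $f\in \mathcal{C}(\overline{Q};A)^{m}$ and set $\Phi (x,y,\lambda )=f(x,y)\cdot \lambda$, considered as a scalar-valued function on $\overline{Q}\times \mathbb{R}^{N}\times \mathbb{R}^{m}$. Then $\Phi $ satisfies (C1) componentwise, and since $p>1$ we have
$$
\frac{\Phi (x,y,\lambda )}{1+\left\vert \lambda \right\vert ^{p}}\longrightarrow 0\quad \text{as }\left\vert \lambda \right\vert \rightarrow \infty,
$$
uniformly in $(x,y)\in \overline{Q}\times \mathbb{R}^{N}$, so (C2) holds. Thus $\Phi \in E_{p}$ and Theorem~\ref{t4.1} yields
$$
\int_{Q}f^{\varepsilon }(x)\cdot u_{\varepsilon }(x)\,dx\longrightarrow \int_{Q}\int_{\Delta (A)}\int_{\mathbb{R}^{m}}\widehat{f}(x,s)\cdot \lambda \,d\nu _{x,s}(\lambda )\,d\beta (s)\,dx.
$$
Applying Fubini's theorem to the inner double integral (its absolute convergence is verified in the next paragraph), I may pull $\widehat{f}(x,s)$ out to obtain $\widehat{f}(x,s)\cdot \widehat{u}_{0}(x,s)$, where $\widehat{u}_{0}(x,s)=\int_{\mathbb{R}^{m}}\lambda \,d\nu _{x,s}(\lambda )$ is precisely the function prescribed in the statement.

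To justify that $u_{0}\in L^{p}(Q;(\mathcal{B}_{A}^{p})^{m})$, apply Theorem~\ref{t4.1} once more to $\Phi _{0}(x,y,\lambda )=\left\vert \lambda \right\vert ^{p}$, which clearly lies in $E_{p}$ (now with $\lim _{|\lambda |\to \infty }\Phi _{0}/(1+|\lambda |^{p})=1$). This gives
$$
\int_{Q}\left\vert u_{\varepsilon }\right\vert ^{p}dx\longrightarrow \int_{Q}\int_{\Delta (A)}\int_{\mathbb{R}^{m}}\left\vert \lambda \right\vert ^{p}d\nu _{x,s}(\lambda )\,d\beta (s)\,dx,
$$
and the left-hand side is bounded in $\varepsilon$ by hypothesis. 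Jensen's inequality applied to the probability measure $\nu _{x,s}$ and the convex function $|\cdot |^{p}$ then shows
$$
\left\vert \widehat{u}_{0}(x,s)\right\vert ^{p}\leq \int_{\mathbb{R}^{m}}\left\vert \lambda \right\vert ^{p}d\nu _{x,s}(\lambda ),
$$
whence $\widehat{u}_{0}\in L^{p}(Q\times \Delta (A))^{m}$, that is, $u_{0}\in L^{p}(Q;(\mathcal{B}_{A}^{p})^{m})$. This bound also guarantees absolute convergence of the Fubini step above whenever $\widehat{f}\in L^{p^{\prime }}(Q\times \Delta (A))^{m}$.

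To finish, I need the convergence $\int_{Q}f^{\varepsilon }\cdot u_{\varepsilon }dx\to \iint_{Q\times \Delta (A)}\widehat{f}\cdot \widehat{u}_{0}\,dx\,d\beta $ for every $f\in L^{p^{\prime }}(Q;A)^{m}$, not merely for $f\in \mathcal{C}(\overline{Q};A)^{m}$. This is a standard density argument: $\mathcal{C}(\overline{Q};A)$ is dense in $L^{p^{\prime }}(Q;A)$, and the uniform $L^{p}$-bound on $(u_{\varepsilon })$ together with the $L^{p}$-bound on $\widehat{u}_{0}$ just obtained controls the error, via H\"{o}lder, on both sides uniformly in $\varepsilon$. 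The main technical point to watch is precisely this passage from continuous to merely $L^{p^{\prime }}$ test functions, since Theorem~\ref{t4.1} is stated only for integrands in $E_{p}$; everything else reduces to Fubini and Jensen.
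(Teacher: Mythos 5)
Your overall strategy --- testing the Young-measure convergence against integrands linear in $\lambda$ and identifying the barycenter with the weak $\Sigma$-limit --- is essentially the paper's. The paper takes $g\in \mathcal{K}(Q;A)$, sets $\Phi ^{i}(x,y,\lambda )=g(x,y)\lambda _{i}$, checks that $(\Phi ^{i}(\cdot ,\cdot /\varepsilon _{1},u_{\varepsilon }))$ is uniformly integrable (linear growth against an $L^{p}$-bounded sequence, $p>1$) and invokes Valadier's theorem, then identifies the resulting limit with the weak $\Sigma $-limit supplied by Theorem~\ref{t3.1}; you instead verify the definition of weak $\Sigma $-convergence directly on continuous test functions and close with a density/H\"{o}lder argument. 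These two routes are equivalent and both are fine.

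The one step I would not accept as written is your justification that $u_{0}\in L^{p}$: you apply Theorem~\ref{t4.1} to $\Phi _{0}(x,y,\lambda )=\left\vert \lambda \right\vert ^{p}$ and assert that $\int_{Q}\left\vert u_{\varepsilon }\right\vert ^{p}dx$ converges to $\int_{Q}\int_{\Delta (A)}\int_{\mathbb{R}^{m}}\left\vert \lambda \right\vert ^{p}d\nu _{x,s}d\beta \,dx$. Although $\Phi _{0}$ formally satisfies (C1)--(C2), this convergence is false in general: Young measures record oscillation but not concentration, and for an integrand of exactly critical growth the identity requires equi-integrability of $\left\vert u_{\varepsilon }\right\vert ^{p}$, which you do not have. (Take $Q=(0,1)$, $m=1$, $u_{\varepsilon }=\varepsilon ^{-1/p}1_{(0,\varepsilon )}$: then $\nu _{x,s}=\delta _{0}$ while $\left\Vert u_{\varepsilon }\right\Vert _{L^{p}}=1$.) This is precisely why the paper's Theorem~\ref{t4.2} only claims the Fatou-type inequality in part (i) and upgrades it to an equality in part (ii) only under uniform integrability of the composed sequence. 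Fortunately you only need the one-sided bound $\int_{Q}\int_{\Delta (A)}\int_{\mathbb{R}^{m}}\left\vert \lambda \right\vert ^{p}d\nu _{x,s}d\beta \,dx\leq \lim \inf \int_{Q}\left\vert u_{\varepsilon }\right\vert ^{p}dx$, which together with Jensen is exactly the content of Theorem~\ref{t4.2}(i) and (iii) (the barycenter lies in $L^{p}(Q\times \Delta (A);\mathbb{R}^{m})$). Replacing the appeal to Theorem~\ref{t4.1} at this point by an appeal to Theorem~\ref{t4.2}(i)/(iii) repairs the argument; everything else stands.
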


\begin{proof}
Let $g\in \mathcal{K}(Q;A)$. Set 
\begin{equation*}
\Phi ^{i}(x,y,\lambda )=g(x,y)\lambda _{i}\ \ ((x,y,\lambda )\in Q\times 
\mathbb{R}^{N}\times \mathbb{R}^{m})\ (1\leq i\leq m),
\end{equation*}%
where $\lambda =(\lambda _{i})_{1\leq i\leq m}$. Then $\Phi ^{i}$ is
continuous on $Q\times \mathbb{R}^{N}\times \mathbb{R}^{m}$ (so is of Carath%
\'{e}odory's type on $Q\times \mathbb{R}^{N}\times \mathbb{R}^{m}$).
Besides, as $\left\vert \lambda _{i}\right\vert \leq 1+\left\vert \lambda
\right\vert ^{p}$ for all $\lambda \in \mathbb{R}^{m}$, we have 
\begin{equation*}
\left\vert \Phi ^{i}\left( x,\frac{x}{\varepsilon _{1}},u_{\varepsilon
}(x)\right) \right\vert \leq c(1+\left\vert u_{\varepsilon }(x)\right\vert
^{p})
\end{equation*}%
where $c=\sup_{x\in Q,y\in \mathbb{R}^{N}}\left\vert g(x,y)\right\vert
<\infty $. The sequence $(\Phi ^{i}(\cdot ,\cdot /\varepsilon
_{1},u_{\varepsilon }))_{\varepsilon \in E^{\prime }}$ is therefore
uniformly integrable since $p>1$. We deduce from \cite[Theorem 17]{Valadier3}
that, as $E^{\prime }\ni \varepsilon \rightarrow 0$, 
\begin{equation*}
\int_{Q}g\left( x,\frac{x}{\varepsilon _{1}}\right) u_{\varepsilon
}^{i}(x)dx\rightarrow \int_{Q}\int_{\Delta (A)}\int_{\mathbb{R}^{m}}\widehat{%
g}(x,s)\lambda _{i}d\nu _{x,s}(\lambda )d\beta (s)dx
\end{equation*}%
where $u_{\varepsilon }^{i}$ is the $i$th component of $u_{\varepsilon }$, $%
1\leq i\leq m$. But by the definition of the weak $\Sigma $-limit, if $%
u_{0}^{i}\in L^{p}(Q;\mathcal{B}_{A}^{p})$ is the weak $\Sigma $-limit of $%
(u_{\varepsilon }^{i})_{\varepsilon \in E^{\prime }}$, then, as $E^{\prime
}\ni \varepsilon \rightarrow 0$, 
\begin{equation*}
\int_{Q}g\left( x,\frac{x}{\varepsilon _{1}}\right) u_{\varepsilon
}^{i}(x)dx\rightarrow \int_{Q}\int_{\Delta (A)}\widehat{g}(x,s)\widehat{u}%
_{0}^{i}(x,s)d\beta (s)dx,
\end{equation*}%
hence, comparing the above convergence results, we deduce that 
\begin{equation*}
\widehat{u}_{0}^{i}(x,s)=\int_{\mathbb{R}^{m}}\lambda _{i}d\nu
_{x,s}(\lambda ),
\end{equation*}%
which completes the proof.
\end{proof}

\begin{remark}
\label{r4.1}\emph{Due to the preceding corollary, it is now clear that
Theorem \ref{t3.7} generalizes Theorem \ref{t3.1} of the preceding section.}
\end{remark}

The next result is very useful in practise.

\begin{theorem}
\label{t4.2}Let $(u_{\varepsilon })_{\varepsilon \in E}$ be a bounded
sequence in $L^{p}(Q;\mathbb{R}^{m})$ $(1\leq p<\infty )$ with associated
Young measure $(\nu _{x,s})_{x\in Q,s\in \Delta (A)}$. The following
properties hold:

\begin{itemize}
\item[(i)] Let $\Phi :Q\times \mathbb{R}^{N}\times \mathbb{R}^{m}\rightarrow
\lbrack 0,+\infty )$ be a Carath\'{e}odory integrand, i.e., $\Phi (\cdot
,y,\cdot )$ is continuous for all $y\in \mathbb{R}^{N}$ and $\Phi (x,\cdot
,\lambda )$ is measurable for all $(x,\lambda )\in Q\times \mathbb{R}^{m}$.
Assume further that 
\begin{equation*}
\Phi (x,\cdot ,\lambda )\in B_{A}^{1}\ \ \forall (x,\lambda )\in Q\times 
\mathbb{R}^{m}.
\end{equation*}%
Then 
\begin{equation*}
\int_{Q}\int_{\Delta (A)}\int_{\mathbb{R}^{m}}\widehat{\Phi }(x,s,\lambda
)d\nu _{x,s}(\lambda )d\beta (s)dx\leq ~\underset{E\ni \varepsilon
\rightarrow 0}{\lim \inf }\int_{Q}\Phi \left( x,\frac{x}{\varepsilon _{1}}%
,u_{\varepsilon }(x)\right) dx.
\end{equation*}

\item[(ii)] If in addition to \emph{(i)}, the sequence $(\Phi (\cdot ,\cdot
/\varepsilon _{1},u_{\varepsilon }))_{\varepsilon \in E}$ is uniformly
integrable, then $\widehat{\Phi }(x,s,\cdot )$ is $\nu _{x,s}$-integrable
for a.e. $(x,s)\in Q\times \Delta (A)$. Besides there exists $\chi \in
L^{1}(Q;\mathcal{B}_{A}^{1})$ such that 
\begin{equation}
\mathcal{G}_{1}(\chi )(x,s)=\int_{\mathbb{R}^{m}}\widehat{\Phi }(x,s,\lambda
)d\nu _{x,s}(\lambda )\text{\ \ a.e. }(x,s)\in Q\times \Delta (A)
\label{4.6}
\end{equation}%
and 
\begin{equation}
\Phi (\cdot ,\cdot /\varepsilon _{1},u_{\varepsilon })\rightarrow \chi \text{
in }L^{1}(Q)\text{-weak }\Sigma .  \label{4.7}
\end{equation}

\item[(iii)] The barycenter $(x,s)\mapsto \int_{\mathbb{R}^{m}}\lambda d\nu
_{x,s}(\lambda )$ belongs to $L^{p}(Q\times \Delta (A);\mathbb{R}^{m})$.
\end{itemize}
\end{theorem}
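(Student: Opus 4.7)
The plan is to deduce all three assertions from Theorem \ref{t4.1} by truncation, monotone convergence, and Jensen's inequality, with uniform integrability closing the liminf to a genuine limit in (ii). The central difficulty is that the integrands $\Phi $ in (i) and (ii) need not belong to the continuous test class $E_{p}$ of Theorem \ref{t4.1}, only to a Carath\'{e}odory class with $\Phi (x,\cdot ,\lambda )\in B_{A}^{1}$, so an explicit approximation of $\Phi $ from below by $E_{p}$-functions is the key technical step.

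For (i), I would set $\Phi _{n}=\min (\Phi ,n)$, which is a bounded nonnegative Carath\'{e}odory integrand with $\Phi _{n}(x,\cdot ,\lambda )\in B_{A}^{1}\cap L^{\infty }(\mathbb{R}^{N})$. Using the separability of $A$ assumed at the start of the section, together with density of $A$ in $B_{A}^{1}$ and a Scorza-Dragoni / Stone-Weierstrass type regularization in $(x,\lambda )$, one constructs an increasing sequence $(\Phi _{n,k})_{k}\subset E_{p}$ with $0\leq \Phi _{n,k}\leq \Phi _{n}$, $\Phi _{n,k}\nearrow \Phi _{n}$, and $\widehat{\Phi }_{n,k}\nearrow \widehat{\Phi }_{n}$ in $L^{1}(\Delta (A))$ for each fixed $(x,\lambda )$. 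Theorem \ref{t4.1} applied to $\Phi _{n,k}$, combined with the pointwise bound $\Phi _{n,k}\leq \Phi $, yields
\begin{equation*}
\int_{Q}\int_{\Delta (A)}\int_{\mathbb{R}^{m}}\widehat{\Phi }_{n,k}(x,s,\lambda )d\nu _{x,s}(\lambda )d\beta (s)dx\leq \liminf _{E^{\prime }\ni \varepsilon \rightarrow 0}\int_{Q}\Phi \Big(x,\tfrac{x}{\varepsilon _{1}},u_{\varepsilon }(x)\Big)dx.
\end{equation*}
Monotone convergence as $k\to \infty $ and then $n\to \infty $ on the left-hand side yields (i).

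For (ii), I would first use (i) to upgrade to a genuine limit for the truncated $\Phi _{n}$: since the constant $n$ belongs to $A$, the integrand $n-\Phi _{n}$ also satisfies the hypotheses of (i), and applying (i) to both $\Phi _{n}$ and $n-\Phi _{n}$ and adding gives
\begin{equation*}
\lim _{\varepsilon \to 0}\int_{Q}\Phi _{n}\Big(x,\tfrac{x}{\varepsilon _{1}},u_{\varepsilon }\Big)dx=\int_{Q}\int_{\Delta (A)}\int_{\mathbb{R}^{m}}\widehat{\Phi }_{n}d\nu _{x,s}d\beta dx.
\end{equation*}
Uniform integrability of $(\Phi (\cdot ,\cdot /\varepsilon _{1},u_{\varepsilon }))_{\varepsilon }$ forces $\sup _{\varepsilon }\int_{Q}(\Phi -\Phi _{n})(\cdot ,\cdot /\varepsilon _{1},u_{\varepsilon })dx\to 0$ as $n\to \infty $, so passing to the limit $n\to \infty $ with monotone convergence $\widehat{\Phi }_{n}\nearrow \widehat{\Phi }$ shows that $\widehat{\Phi }(x,s,\cdot )$ is $\nu _{x,s}$-integrable for a.e.\ $(x,s)$ and that the function $\chi $ defined by (\ref{4.6}) satisfies $\chi \in L^{1}(Q;\mathcal{B}_{A}^{1})$. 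For the weak $\Sigma $-convergence (\ref{4.7}), it suffices to test against $g\in L^{\infty }(Q;A)$; splitting $g=g^{+}-g^{-}$ reduces to $g\geq 0$, in which case $g\Phi $ is a nonnegative Carath\'{e}odory integrand with $(g\Phi )(x,\cdot ,\lambda )\in B_{A}^{1}$ (since $A\cdot B_{A}^{1}\subset B_{A}^{1}$), and $(g\Phi )(\cdot ,\cdot /\varepsilon _{1},u_{\varepsilon })$ is uniformly integrable as it is dominated by $\left\Vert g\right\Vert _{\infty }\Phi (\cdot ,\cdot /\varepsilon _{1},u_{\varepsilon })$. Re-running the preceding argument for $g\Phi $, using $\widehat{g\Phi }=\widehat{g}\,\widehat{\Phi }$, gives (\ref{4.7}).

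For (iii), the integrand $\Phi (x,y,\lambda )=\left\vert \lambda \right\vert ^{p}$ is independent of $y$, hence belongs to $E_{p}$ (constants lie in $A$ and condition (C2) is immediate). Theorem \ref{t4.1} applied directly gives
\begin{equation*}
\int_{Q}\int_{\Delta (A)}\int_{\mathbb{R}^{m}}\left\vert \lambda \right\vert ^{p}d\nu _{x,s}(\lambda )d\beta (s)dx=\lim _{E^{\prime }\ni \varepsilon \rightarrow 0}\int_{Q}\left\vert u_{\varepsilon }(x)\right\vert ^{p}dx<\infty ,
\end{equation*}
by the $L^{p}$-boundedness hypothesis. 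Jensen's inequality applied pointwise to the probability measure $\nu _{x,s}$ yields
\begin{equation*}
\Big|\int_{\mathbb{R}^{m}}\lambda d\nu _{x,s}(\lambda )\Big|^{p}\leq \int_{\mathbb{R}^{m}}\left\vert \lambda \right\vert ^{p}d\nu _{x,s}(\lambda ),
\end{equation*}
whence the barycenter belongs to $L^{p}(Q\times \Delta (A);\mathbb{R}^{m})$. The main technical obstacle in the entire plan is the explicit construction in (i) of the $E_{p}$-approximation $\Phi _{n,k}$ with monotone convergence of the Gelfand transforms, which is precisely where the separability assumption on $A$ plays its role.
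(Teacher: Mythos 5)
Your architecture is genuinely different from the paper's, and parts of it are sound. The paper obtains (i) and the $\nu_{x,s}$-integrability in (ii) by transferring Valadier's Theorems 16 and 17 to $\Delta (A)$ via the density of $A$ in $B_{A}^{1}$, gets the existence of $\chi $ from the compactness Theorem \ref{t3.2} for uniformly integrable sequences, and identifies $\widehat{\chi }$ by testing against $g\Phi $ with $g\in \mathcal{K}(Q;A)$ --- essentially your step for (\ref{4.7}). Your device of applying (i) to both $\Phi _{n}$ and $n-\Phi _{n}$ to turn the $\liminf $ into a limit for truncations, and then removing the truncation by uniform integrability, is a legitimate and rather clean alternative to invoking Theorem \ref{t3.2}. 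So (ii) and (iii) would stand \emph{if} (i) were secured.

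The genuine gap is in (i), at exactly the point you yourself flag: the increasing approximation $\Phi _{n,k}\nearrow \Phi _{n}$ by elements of $E_{p}$ is asserted, not constructed, and there are real obstructions. An element of $B_{A}^{1}$ is only an equivalence class for the Besicovitch seminorm, so ``$\Phi _{n,k}\leq \Phi _{n}$ pointwise in $y$'' has no unambiguous meaning; and one cannot in general approximate an element of $L^{1}(\Delta (A))$ from below by continuous functions with $\beta $-a.e.\ monotone convergence (continuous minorants of the transform of a $B_{A}^{1}$-``indicator'' may be forced to vanish identically). More seriously, the left-hand side requires passing to the limit in $\int_{Q}\int_{\Delta (A)}\int_{\mathbb{R}^{m}}\widehat{\Phi }_{n,k}\,d\nu _{x,s}\,d\beta \,dx$, i.e.\ against the \emph{unknown} measure $\nu _{x,s}(d\lambda )\,\beta (ds)\,dx$; convergence ``in $L^{1}(\Delta (A))$ for each fixed $(x,\lambda )$'' does not feed into monotone convergence for this triple integral, since the exceptional sets depend on $(x,\lambda )$ and even the joint $(x,s,\lambda )$-measurability of $\widehat{\Phi }$ has to be established. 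This is precisely the content of the Valadier results the paper cites; they cannot be replaced by a one-line regularization. A secondary point: in (iii) you claim an exact limit $\lim \int_{Q}\left\vert u_{\varepsilon }\right\vert ^{p}dx=\int_{Q}\int_{\Delta (A)}\int_{\mathbb{R}^{m}}\left\vert \lambda \right\vert ^{p}d\nu _{x,s}d\beta \,dx$ by applying Theorem \ref{t4.1} to $\left\vert \lambda \right\vert ^{p}\in E_{p}$. For a merely $L^{p}$-bounded sequence this equality fails in general (concentration of $\left\vert u_{\varepsilon }\right\vert ^{p}$), and Theorem \ref{t4.1} is at its most fragile precisely for such critical-growth integrands; only the inequality $\leq \liminf $ is needed, and that is what (i) with $\Phi (x,y,\lambda )=\left\vert \lambda \right\vert ^{p}$ safely delivers --- which is the route the paper takes.
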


\begin{proof}
Thanks to the density of $A$ in $B_{A}^{1}$, (i) is a direct consequence of 
\cite[Theorem 16]{Valadier3}. As for (ii), the integrability of $\widehat{%
\Phi }(x,s,\cdot )$ is a consequence of \cite[Theorem 17]{Valadier3} (see
also \cite{Valadier1, Ball, Balder}). Let us check (\ref{4.6}) and (\ref{4.7}%
). Let $g\in \mathcal{K}(Q;A)$; define $\Psi :Q\times \mathbb{R}^{N}\times 
\mathbb{R}^{m}\rightarrow \mathbb{R}$ by $\Psi (x,y,\lambda )=g(x,y)\Phi
(x,y,\lambda )$. Then $\Psi $ is of Carath\'{e}odory's type on $Q\times 
\mathbb{R}^{N}\times \mathbb{R}^{m}$ and $\Psi (\cdot ,\cdot /\varepsilon
_{1},u_{\varepsilon })$ is uniformly integrable. In view of \cite[Theorem 17]%
{Valadier3} we have, as $E\ni \varepsilon \rightarrow 0$, 
\begin{eqnarray*}
\int_{Q}\Psi \left( x,\frac{x}{\varepsilon _{1}},u_{\varepsilon }(x)\right)
dx &\rightarrow &\int_{Q}\int_{\Delta (A)}\int_{\mathbb{R}^{m}}\widehat{\Psi 
}(x,s,\lambda )d\nu _{x,s}(\lambda )d\beta (s)dx \\
&=&\int_{Q}\int_{\Delta (A)}\int_{\mathbb{R}^{m}}\widehat{g}(x,s)\widehat{%
\Phi }(x,s,\lambda )d\nu _{x,s}(\lambda )d\beta (s)dx.
\end{eqnarray*}%
But, $(\Phi (\cdot ,\cdot /\varepsilon _{1},u_{\varepsilon }))_{\varepsilon
\in E}$ is uniformly integrable, so because of the Theorem \ref{t3.2}, there
exist a subsequence $E^{\prime }$ from $E$ and a function $\chi \in L^{1}(Q;%
\mathcal{B}_{A}^{1})$ such that 
\begin{equation*}
\Phi \left( \cdot ,\frac{\cdot }{\varepsilon _{1}},u_{\varepsilon }\right)
\rightarrow \chi \text{ in }L^{1}(Q)\text{- weak }\Sigma \text{ as }%
E^{\prime }\ni \varepsilon \rightarrow 0.
\end{equation*}%
Thus, for $g$, we have, when $E^{\prime }\ni \varepsilon \rightarrow 0$, 
\begin{equation*}
\int_{Q}g\left( x,\frac{x}{\varepsilon _{1}}\right) \Phi \left( x,\frac{x}{%
\varepsilon _{1}},u_{\varepsilon }(x)\right) dx\rightarrow
\int_{Q}\int_{\Delta (A)}\widehat{g}(x,s)\widehat{\chi }(x,s)d\beta (s)dx,
\end{equation*}%
hence, comparing the above convergence results yields 
\begin{equation*}
\widehat{\chi }(x,s)=\int_{\mathbb{R}^{m}}\widehat{\Phi }(x,s,\lambda )d\nu
_{x,s}(\lambda ),
\end{equation*}%
form which, (\ref{4.6}) and (\ref{4.7}). Finally, let us check (iii). If in
(ii) we choose $\Phi (x,y,\lambda )=\left\vert \lambda \right\vert ^{p}$
then by using Jensen's inequality we are led to 
\begin{eqnarray*}
\int_{Q}\int_{\Delta (A)}\left\vert \int_{\mathbb{R}^{m}}\lambda d\nu
_{x,s}(\lambda )\right\vert ^{p}d\beta dx &\leq &\int_{Q}\int_{\Delta
(A)}\int_{\mathbb{R}^{m}}\left\vert \lambda \right\vert ^{p}d\nu
_{x,s}(\lambda )d\beta (s)dx \\
&\leq &\underset{E\ni \varepsilon \rightarrow 0}{\lim \inf }%
\int_{Q}\left\vert u_{\varepsilon }(x)\right\vert ^{p}dx<+\infty ,
\end{eqnarray*}%
hence (iii).
\end{proof}

The following result characterizes the strong convergence.

\begin{proposition}
\label{p4.2}Let $(u_{\varepsilon })_{\varepsilon \in E}$ be a bounded
sequence in $L^{p}(Q;\mathbb{R}^{m})$ $(1\leq p<\infty )$, and let $(\nu
_{x,s})_{x\in Q,s\in \Delta (A)}$ be the associated Young measure. Assume
that either $v\in L^{p}(Q;(A)^{m})$ or $v\in \mathcal{C}(\overline{Q}%
;(B_{A}^{p})^{m})$. Then 
\begin{equation*}
\nu _{x,s}=\delta _{\widehat{v}(x,s)}\text{ if and only if }\lim_{E\ni
\varepsilon \rightarrow 0}\left\Vert u_{\varepsilon }-v^{\varepsilon
}\right\Vert _{L^{1}(Q)^{m}}=0\text{,}
\end{equation*}%
where $v^{\varepsilon }(x)=v(x,x/\varepsilon _{1})$, $x\in Q$.
\end{proposition}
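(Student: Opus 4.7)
The plan is to apply Theorem~\ref{t4.2}(ii) to the Carath\'{e}odory integrand $\Phi(x,y,\lambda)=|\lambda-v(x,y)|$ (Euclidean norm on $\mathbb{R}^m$), which will translate the Young measure identity into $L^1$-convergence of $u_\varepsilon-v^\varepsilon$ and vice versa. The crucial preparatory step is to verify that $y\mapsto|\lambda-v(x,y)|$ belongs to $A$ for each fixed $(x,\lambda)$: since $A$ is a $\mathcal{C}^{*}$-algebra, $\sum_i(\lambda_i-v_i(x,\cdot))^2$ lies in $A$ by the algebra operations, and its nonnegative square root still lies in $A$ by the continuous functional calculus for self-adjoint elements. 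As the Gelfand transform is an isometric $*$-isomorphism compatible with that calculus, one obtains $\widehat{\Phi}(x,s,\lambda)=|\lambda-\widehat{v}(x,s)|$. The alternative hypothesis $v\in\mathcal{C}(\overline{Q};(B_A^p)^m)$ is reduced to the previous one by approximating $v$ in $\mathcal{C}(\overline{Q};(B_A^p)^m)$ by elements of $\mathcal{C}(\overline{Q};A^m)$, using the continuity in $v$ of both sides of the equivalence.

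For the forward implication, assume $\nu_{x,s}=\delta_{\widehat{v}(x,s)}$. In the main case $p>1$, the sequence $|u_\varepsilon-v^\varepsilon|=\Phi(\cdot,\cdot/\varepsilon_1,u_\varepsilon)$ is bounded in $L^p(Q)$ (using $\|v^\varepsilon\|_{L^p}\le\|v\|_{L^p(Q;A^m)}$), hence uniformly integrable in $L^1$. Theorem~\ref{t4.2}(ii) then gives
\[
\int_Q|u_\varepsilon-v^\varepsilon|\,dx\longrightarrow\int_Q\int_{\Delta(A)}\int_{\mathbb{R}^m}|\lambda-\widehat{v}(x,s)|\,d\nu_{x,s}(\lambda)\,d\beta(s)\,dx=0,
\]
since the innermost integral against $\delta_{\widehat{v}(x,s)}$ vanishes; here one tests the resulting weak $\Sigma$-convergence in $L^1(Q)$ against the constant $1\in L^{\infty}(Q;A)$. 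The boundary case $p=1$ is handled by first running the argument on the bounded truncation $\Phi_N=\min(\Phi,N)$ to obtain $\min(|u_\varepsilon-v^\varepsilon|,N)\to 0$ in $L^1(Q)$ for every $N$, and then controlling the tail through a Chebyshev estimate.

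Conversely, assume $\|u_\varepsilon-v^\varepsilon\|_{L^1}\to 0$. Use the bounded $1$-Lipschitz truncation $\Phi_1(x,y,\lambda)=\min(|\lambda-v(x,y)|,1)$, whose $y$-slices still lie in $A$ (apply the continuous function $t\mapsto\min(t,1)$ to the self-adjoint element $|\lambda-v(x,\cdot)|\in A$). Because $\Phi_1(x,y,v(x,y))=0$ and $\Phi_1$ is $1$-Lipschitz in $\lambda$, one has $0\le\Phi_1(\cdot,\cdot/\varepsilon_1,u_\varepsilon)\le|u_\varepsilon-v^\varepsilon|$, so $\int_Q\Phi_1(\cdot,\cdot/\varepsilon_1,u_\varepsilon)\,dx\to 0$. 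As $\Phi_1$ is bounded, it is automatically uniformly integrable, and Theorem~\ref{t4.2}(ii) identifies the same limit with $\int_{Q\times\Delta(A)}\int_{\mathbb{R}^m}\min(|\lambda-\widehat{v}(x,s)|,1)\,d\nu_{x,s}(\lambda)\,d\beta(s)\,dx$. Non-negativity of the integrand forces the inner integral to vanish for a.e.\ $(x,s)\in Q\times\Delta(A)$, so $\nu_{x,s}$ is concentrated at $\widehat{v}(x,s)$, that is, $\nu_{x,s}=\delta_{\widehat{v}(x,s)}$.

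The main obstacle is precisely the $\mathcal{C}^{*}$-algebra verification that $|\lambda-v(x,\cdot)|$ and its truncations belong to $A$; once that is secured via continuous functional calculus, the argument in both directions reduces to a direct application of Theorem~\ref{t4.2}(ii), modulo the standard density argument needed to pass from $\mathcal{C}(\overline{Q};A^m)$ to the $\mathcal{C}(\overline{Q};(B_A^p)^m)$ case.
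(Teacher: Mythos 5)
Your argument follows essentially the same route as the paper's: both proofs feed $\Phi(x,y,\lambda)=|\lambda-v(x,y)|$ (or a variant) into part (ii) of Theorem \ref{t4.2} and read off the equivalence from $\langle\delta_{\widehat v(x,s)},|\,\cdot\,-\widehat v(x,s)|\rangle=0$. Your two deviations are improvements in rigor rather than a different strategy: the functional-calculus check that $|\lambda-v(x,\cdot)|$ and its truncations lie in $A$ is a point the paper silently skips (it only asserts $\Phi$ is Carath\'eodory, whereas Theorem \ref{t4.2} actually requires $\Phi(x,\cdot,\lambda)\in B_A^1$), and your converse via the bounded truncation $\min(|\lambda-v|,1)$ makes the uniform integrability automatic, where the paper instead multiplies by $g\in\mathcal K(Q;A)$ and relies on the (true but unstated) fact that $L^1$-convergence of $|u_\varepsilon-v^\varepsilon|$ yields uniform integrability.

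The one step that does not hold up is your treatment of $p=1$ in the forward direction. Knowing $\min(|u_\varepsilon-v^\varepsilon|,N)\to0$ in $L^1(Q)$ for every $N$ does not let you conclude $\|u_\varepsilon-v^\varepsilon\|_{L^1}\to0$: Chebyshev only bounds the measure of the set $\{|u_\varepsilon-v^\varepsilon|>N\}$ by $C/N$, not the integral of $|u_\varepsilon-v^\varepsilon|$ over it, and a sequence bounded in $L^1$ need not be uniformly integrable. The standard example $u_n=n\,1_{(0,1/n)}$, $v=0$, has (classical) Young measure $\delta_0$ yet $\|u_n\|_{L^1}=1$ for all $n$, so no tail estimate can rescue the implication; some extra hypothesis (uniform integrability of $(u_\varepsilon)$, or $p>1$) is genuinely needed. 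You should be aware that the paper's own proof has exactly the same defect --- it claims $\varphi(t)=t^p$ is inf-compact for all $1\le p<\infty$, which fails at $p=1$ --- so you are not worse off than the source, but the ``Chebyshev estimate'' should not be presented as closing that gap; either restrict the forward implication to $p>1$ or add a uniform integrability hypothesis. Everything else, including the density reduction from $v\in\mathcal C(\overline Q;(B_A^p)^m)$ to $v\in L^p(Q;(A)^m)$, is sound.
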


\begin{proof}
Assume that $\nu _{x,s}=\delta _{\widehat{v}(x,s)}$ where $v$ is either in $%
L^{p}(Q;(A)^{m})$ or in $\mathcal{C}(\overline{Q};(B_{A}^{p})^{m})$. Set $%
\Phi (x,y,\lambda )=\left\vert \lambda -v(x,y)\right\vert $; then $\Phi $ is
a Carath\'{e}odory function and further, $\Phi \geq 0$. Moreover the
sequence $(\Phi (\cdot ,\cdot /\varepsilon _{1},u_{\varepsilon
}))_{\varepsilon \in E}$ is uniformly integrable since by setting $\varphi
(t)=t^{p}$ ($t\geq 0$), $\varphi $ is inf-compact and 
\begin{eqnarray*}
\int_{Q}\varphi \left( \left\vert \Phi \left( \cdot ,\frac{\cdot }{%
\varepsilon _{1}},u_{\varepsilon }\right) \right\vert \right) dx
&=&\int_{Q}\left\vert u_{\varepsilon }(x)-v\left( x,\frac{x}{\varepsilon _{1}%
}\right) \right\vert ^{p}dx \\
&\leq &2^{p}\left( \int_{Q}\left\vert u_{\varepsilon }(x)\right\vert
^{p}dx+\int_{Q}\left\vert v\left( x,\frac{x}{\varepsilon _{1}}\right)
\right\vert ^{p}dx\right) \\
&\leq &M
\end{eqnarray*}%
where $M$ is a positive constant independent of $\varepsilon $. Applying
[part (ii) of] Theorem \ref{t4.2}, we get, as $E\ni \varepsilon \rightarrow
0 $, 
\begin{equation*}
\int_{Q}\Phi \left( x,\frac{x}{\varepsilon _{1}},u_{\varepsilon }(x)\right)
dx\rightarrow \int_{Q}\int_{\Delta (A)}\left\langle \nu _{x,s},\widehat{\Phi 
}(x,s,\cdot )\right\rangle d\beta (s)dx.
\end{equation*}%
But 
\begin{eqnarray*}
\left\langle \nu _{x,s},\widehat{\Phi }(x,s,\cdot )\right\rangle
&=&\left\langle \delta _{\widehat{v}(x,s)},\widehat{\Phi }(x,s,\cdot
)\right\rangle =\widehat{\Phi }(x,s,\widehat{v}(x,s)) \\
&=&\left\vert \widehat{v}(x,s)-\widehat{v}(x,s)\right\vert =0\text{ a.e. }%
(x,s)\in Q\times \Delta (A).
\end{eqnarray*}%
On the other hand 
\begin{equation*}
\int_{Q}\Phi \left( x,\frac{x}{\varepsilon _{1}},u_{\varepsilon }(x)\right)
dx=\int_{Q}\left\vert u_{\varepsilon }(x)-v\left( x,\frac{x}{\varepsilon _{1}%
}\right) \right\vert dx=\left\Vert u_{\varepsilon }-v^{\varepsilon
}\right\Vert _{L^{1}(Q)}.
\end{equation*}%
Now assume that $\left\Vert u_{\varepsilon }-v^{\varepsilon }\right\Vert
_{L^{1}(Q)}\rightarrow 0$ as $E\ni \varepsilon \rightarrow 0$. Let $g\in 
\mathcal{K}(Q;A)$; Applying once more [part (ii) of] Theorem \ref{t4.2} with 
$\Phi (x,y,\lambda )=g(x,y)\left\vert \lambda -v(x,y)\right\vert $ we get,
when $E\ni \varepsilon \rightarrow 0$ 
\begin{equation*}
\int_{Q}\Phi \left( x,\frac{x}{\varepsilon _{1}},u_{\varepsilon }(x)\right)
dx\rightarrow \int_{Q}\int_{\Delta (A)}\int_{\mathbb{R}^{m}}\widehat{g}%
(x,s)\left\vert \lambda -\widehat{v}(x,s)\right\vert d\nu _{x,s}(\lambda
)d\beta (s)dx\text{.}
\end{equation*}%
But 
\begin{equation*}
\left\vert \int_{Q}\Phi \left( x,\frac{x}{\varepsilon _{1}},u_{\varepsilon
}(x)\right) dx\right\vert \leq \left\Vert g\right\Vert _{\infty }\left\Vert
u_{\varepsilon }-v^{\varepsilon }\right\Vert _{L^{1}(Q)}\rightarrow 0\text{
when }E\ni \varepsilon \rightarrow 0.
\end{equation*}%
Thus 
\begin{equation*}
\int_{Q}\int_{\Delta (A)}\widehat{g}(x,s)\left\langle \nu _{x,s},\left\vert
\lambda -\widehat{v}(x,s)\right\vert \right\rangle d\beta dx=0\ \forall g\in 
\mathcal{K}(Q;A),
\end{equation*}%
hence $\left\langle \nu _{x,s},\left\vert \lambda -\widehat{v}%
(x,s)\right\vert \right\rangle =0$ a.e. $(x,s)\in Q\times \Delta (A)$, which
leads to $\nu _{x,s}=\delta _{\widehat{v}(x,s)}$.
\end{proof}

\section{Homogenization of a convex integral functional}

All function spaces and scalar functions are real-valued in this section.

\subsection{Setting of the problem}

Our main concern here is the study of the asymptotic behavior (as $%
\varepsilon \rightarrow 0$) of the sequence of solutions of the problems 
\begin{equation*}
\min \left\{ F_{\varepsilon }(v):v\in W_{0}^{1,p}(Q;\mathbb{R}^{n})\right\}
\;\;\;\;\;\;\;\;\;\;\;\;\;\;\;\;\;\;\;\;\;\;\;\;
\end{equation*}%
where the functional $F_{\varepsilon }$ is defined on $L^{p}(Q;\mathbb{R}%
^{n})$ by 
\begin{equation}
F_{\varepsilon }(v)=\left\{ 
\begin{array}{l}
\int_{Q}f\left( x,\frac{x}{\varepsilon },Dv(x)\right) dx,\ v\in
W_{0}^{1,p}(Q;\mathbb{R}^{n})\;\;\;\;\;\;\;\;\;\; \\ 
+\infty \ \ \text{elsewhere,}%
\end{array}%
\right.  \label{5.1}
\end{equation}%
$Q$ being a bounded open set in $\mathbb{R}^{N}$ and $f:\overline{Q}\times 
\mathbb{R}^{N}\times \mathbb{R}^{nN}\rightarrow \lbrack 0,+\infty )$ a Carath%
\'{e}odory function (i.e., $f(x,\cdot ,\lambda )$ is measurable and $f(\cdot
,y,\cdot )$ is continuous) satisfying the following conditions:

\begin{itemize}
\item[(H$_{1}$)] $f(x,y,\cdot )$ is strictly convex for almost all $y\in 
\mathbb{R}^{N}$ and for all $x\in \overline{Q}$,

\item[(H$_{2}$)] There exist three constants $p>1$ and $c_{1}$, $c_{2}>0$
such that 
\begin{equation}
c_{1}\left\vert \lambda \right\vert ^{p}\leq f(x,y,\lambda )\leq
c_{2}(1+\left\vert \lambda \right\vert ^{p})\;\;\;\;\;\;\;\;\;\;\;\;\;
\label{5.2}
\end{equation}%
for all $(x,\lambda )\in \mathbb{R}^{N}\times \mathbb{R}^{nN}$ and for
almost all $y\in \mathbb{R}^{N}$.
\end{itemize}

From the above hypotheses, for any fixed $\varepsilon >0$ and for $v\in
L^{p}(Q;\mathbb{R}^{nN})$, the function $x\mapsto f\left( x,x/\varepsilon
,v(x)\right) $ of $Q$ into $\mathbb{R}_{+}$ (denoted by $f^{\varepsilon
}(\cdot ,\cdot ,v)$), is well defined and lies in $L^{1}(Q)$, with 
\begin{equation*}
c_{1}\left\Vert v\right\Vert _{L^{p}(Q)^{nN}}^{p}\leq \left\Vert
f^{\varepsilon }(\cdot ,\cdot ,v)\right\Vert _{L^{1}(Q)}\leq c_{2}^{\prime
}\left( 1+\left\Vert v\right\Vert _{L^{p}(Q)^{nN}}^{p}\right)
\;\;\;\;\;\;\;\;\;\;\;\;
\end{equation*}%
where $c_{2}^{\prime }=c_{2}\max (1,\left\vert Q\right\vert )$ with $%
\left\vert Q\right\vert =\int_{Q}dx$. Hence (\ref{5.1}) makes sense and, by
classical arguments, there exists \cite{CV, 18} (for each fixed $\varepsilon
>0$) a unique $u_{\varepsilon }\in W_{0}^{1,p}(Q;\mathbb{R}^{n})$ that
realizes the infimum of $F_{\varepsilon }$ on $L^{p}(Q;\mathbb{R}^{nN})$,
i.e., 
\begin{equation}
F_{\varepsilon }(u_{\varepsilon })=\min_{v\in W_{0}^{1,p}(Q;\mathbb{R}%
^{n})}F_{\varepsilon
}(v).\;\;\;\;\;\;\;\;\;\;\;\;\;\;\;\;\;\;\;\;\;\;\;\;\;\;  \label{5.3}
\end{equation}

Our objective here amounts to find, under the assumption that 
\begin{equation}
f(x,\cdot ,\lambda )\in B_{A}^{1}\text{ for all }x\in \overline{Q},\lambda
\in \mathbb{R}^{nN},  \label{5.4}
\end{equation}%
(where $A$ is an algebra wmv) a homogenized functional $\overline{F}$ such
that the sequence of minimizers $u_{\varepsilon }$ converges to a limit $%
\mathbf{u}$, which is precisely the minimizer of $\overline{F}$.

This issue has already been addressed in many papers (see in particular \cite%
{Baia, Barchiesi, Fonseca, NoDEA}). In \cite{NoDEA} the general
deterministic homogenization of (\ref{5.1}) is addressed, but in separable
ergodic algebras wmv using the $\Sigma $-convergence method. Here no
ergodicity assumption is made on the algebra $A$ and moreover, we use the
Young measures theory to solve the problem. This reduces considerably the
length of this section in contrast to what has been done so far; see e.g. 
\cite{NoDEA}. So we mean here to provide by means of Young measures
generated by an algebra wmv, a full study of the functional $F_{\varepsilon
} $ in the general framework of algebras wmv.

\subsection{Homogenization result}

Let $A$ be an algebra wmv. Using (\ref{5.4}) and some well-known results
(see e.g. \cite[Proposition 2.3]{NoDEA}, \cite[Proposition 3.1]{ACAP}) one
can easily define the function $f(\cdot ,\cdot ,\mathbf{w}):(x,y)\rightarrow
f(x,y,\mathbf{w}(x,y))$ (for $w\in L^{p}(Q;(B_{A}^{p})^{nN})$), as an
element of $L^{1}(Q;B_{A}^{1})$ with 
\begin{equation*}
\widehat{f}(x,s,\widehat{\mathbf{w}}(x,s))=\mathcal{G}(f(x,\cdot ,\mathbf{w}%
(x,\cdot )))(s)\text{ a.e. in }(x,s)\in Q\times \Delta (A)\text{.}
\end{equation*}%
Now, let 
\begin{equation*}
\mathbb{F}_{0}^{1,p}=W_{0}^{1,p}(Q;\mathbb{R}^{n})\times L^{p}(Q;(\mathcal{B}%
_{\#A}^{1,p})^{n}).\;\;\;\;\;\;\;\;\;\;
\end{equation*}
$\mathbb{F}_{0}^{1,p}$ is a Banach space under the norm 
\begin{equation*}
\left\Vert \mathbf{u}\right\Vert _{\mathbb{F}_{0}^{1,p}}=\left( \left\Vert
u_{0}\right\Vert _{W_{0}^{1,p}(Q)^{n}}^{p}+\left\Vert u_{1}\right\Vert
_{L^{p}(Q;(\mathcal{B}_{\#A}^{1,p})^{n})}^{p}\right) ^{\frac{1}{p}}\;\;(%
\mathbf{u}=(u_{0},u_{1})\in \mathbb{F}_{0}^{1,p}),
\end{equation*}%
(where $\left\Vert u_{1}\right\Vert _{L^{p}(Q;(\mathcal{B}%
_{\#A}^{1,p})^{n})}=(\overset{n}{\underset{i=1}{\sum }}\overset{N}{\underset{%
j=1}{\sum }}\left\Vert \overline{\partial }u_{1,i}/\partial y_{j}\right\Vert
_{L^{p}(Q;\mathcal{B}_{A}^{p})}^{p})^{1/p}$\ for $u_{1}=(u_{1,i})_{1\leq
i\leq n}$) admitting $F_{0}^{\infty }=\mathcal{D}(Q)^{n}\times \lbrack 
\mathcal{D}(Q)\otimes (\varrho (A))^{n}]$ as a dense subspace.

We can now state and prove the main result of this subsection.

\begin{theorem}
\label{t5.1}Let $A$ be a separable algebra wmv such that \emph{(\ref{5.4})}
holds. For each $\varepsilon >0$, let $u_{\varepsilon }$ be the unique
solution of \emph{(\ref{5.3})}. Then, as $\varepsilon \rightarrow 0$ we have 
\begin{equation}
u_{\varepsilon }\rightarrow u_{0}\text{ in }W_{0}^{1,p}(Q)^{n}\text{-weak\ \
\ \ \ \ \ \ \ \ \ \ \ \ \ }  \label{5.5}
\end{equation}%
\begin{equation}
\frac{\partial u_{\varepsilon }}{\partial x_{i}}\rightarrow \frac{\partial
u_{0}}{\partial x_{i}}+\frac{\overline{\partial }u_{1}}{\partial y_{i}}\text{
in }L^{p}(Q)^{n}\text{ weak }\Sigma \text{ }(1\leq i\leq N)  \label{5.6}
\end{equation}%
where $\mathbf{u}=(u_{0},u_{1})\in \mathbb{F}_{0}^{1,p}$ is the unique
solution of the variational minimization problem 
\begin{equation}
F(\mathbf{u})=\inf_{\mathbf{v}\in \mathbb{F}_{0}^{1,p}}F(\mathbf{v})\ \ \ \
\ \ \ \ \ \ \ \ \ \ \ \ \ \ \ \ \ \ \ \ \ \ \ \ \ \ \ \ \ \ \ \ \ \ \ \ \ \
\ \ \text{ }  \label{5.7}
\end{equation}%
with the functional $F$ defined on $L^{p}(Q;\mathbb{R}^{n})\times L^{p}(Q;(%
\mathcal{B}_{\#A}^{1,p})^{n})$ by 
\begin{equation*}
F(\mathbf{v})=\left\{ 
\begin{array}{l}
\int_{Q}\int_{\Delta (A)}\widehat{f}(x,s,Dv_{0}+\partial \widehat{v}%
_{1})d\beta dx\text{ for }\mathbf{v}\in \mathbb{F}_{0}^{1,p} \\ 
+\infty \text{\ elsewhere}%
\end{array}%
\right.
\end{equation*}%
and $\partial \widehat{v}_{1}=\widehat{\overline{D}_{y}v_{1}}$.
\end{theorem}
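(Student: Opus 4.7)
The plan is to combine a compactness argument (to extract limits for the minimizing sequence) with a $\Gamma$-convergence style liminf/limsup argument built from the Young measure machinery of Section~4 and the two-scale tools of Section~3. I will treat existence of the limit $(u_0,u_1)$, minimality, and uniqueness in that order.

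\textbf{Step 1 (a priori bounds and extraction).} The lower bound in (\ref{5.2}) and $F_\varepsilon(u_\varepsilon)\le F_\varepsilon(0)\le c_2|Q|$ give
\begin{equation*}
c_1\|Du_\varepsilon\|_{L^p(Q)^{nN}}^p \le F_\varepsilon(u_\varepsilon)\le c_2(1+|Q|),
\end{equation*}
so $(u_\varepsilon)$ is bounded in $W_0^{1,p}(Q;\mathbb{R}^n)$. Theorem~\ref{t3.5} (applied componentwise) together with Rellich's compactness yields a subsequence $E'$, a function $u_0\in W_0^{1,p}(Q;\mathbb{R}^n)$ (the $y$-dependence disappears because $u_\varepsilon$ converges strongly in $L^p(Q)$), and $u_1\in L^p(Q;(\mathcal{B}_{\#A}^{1,p})^n)$ such that (\ref{5.5})--(\ref{5.6}) hold.

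\textbf{Step 2 (liminf inequality).} This is the technical heart of the argument. Since $(Du_\varepsilon)_{\varepsilon\in E'}$ is bounded in $L^p(Q;\mathbb{R}^{nN})$, Theorem~\ref{t4.1} (after a further extraction) provides an associated Young measure $\nu=(\nu_{x,s})$ on $\mathbb{R}^{nN}$. The function $f$ satisfies the hypotheses of Theorem~\ref{t4.2}(i) by (\ref{5.4}) and the Carath\'eodory assumption, so
\begin{equation*}
\int_Q\int_{\Delta(A)}\int_{\mathbb{R}^{nN}}\widehat{f}(x,s,\lambda)\,d\nu_{x,s}(\lambda)\,d\beta(s)\,dx \;\le\; \liminf_{E'\ni\varepsilon\to 0}F_\varepsilon(u_\varepsilon).
\end{equation*}
By Corollary~\ref{c4.1} the barycenter of $\nu_{x,s}$ coincides a.e.\ with the weak $\Sigma$-limit $Du_0(x)+\partial\widehat{u}_1(x,s)$ identified in Step~1. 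Applying Jensen's inequality to the convex function $\lambda\mapsto\widehat{f}(x,s,\lambda)$ (convexity in $\lambda$ is preserved under the Gelfand transform since $\mathcal{G}$ is linear and isometric), we obtain
\begin{equation*}
\widehat{f}\bigl(x,s,Du_0(x)+\partial\widehat{u}_1(x,s)\bigr)\le \int_{\mathbb{R}^{nN}}\widehat{f}(x,s,\lambda)\,d\nu_{x,s}(\lambda),
\end{equation*}
and integration over $Q\times\Delta(A)$ yields $F(u_0,u_1)\le\liminf F_\varepsilon(u_\varepsilon)$.

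\textbf{Step 3 (limsup: recovery sequence).} For $(v_0,v_1)\in F_0^\infty=\mathcal{D}(Q)^n\times[\mathcal{D}(Q)\otimes(\varrho(A))^n]$, define
\begin{equation*}
v_\varepsilon(x)=v_0(x)+\varepsilon\,v_1\!\left(x,\tfrac{x}{\varepsilon}\right),
\end{equation*}
which lies in $W_0^{1,p}(Q;\mathbb{R}^n)$ for small $\varepsilon$. A direct computation using Lemma~\ref{l2.1} and Remark~\ref{r3.1}(2) shows that $Dv_\varepsilon\to Dv_0+\overline{D}_yv_1$ strongly in $L^p(Q)^{nN}$-$\Sigma$, and $v_\varepsilon\to v_0$ in $L^p(Q)^n$ strongly. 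Since $f$ is Carath\'eodory with $p$-growth, a strong-$\Sigma$ convergence/continuity argument (via an adaptation of Theorem~\ref{t3.4} applied in the equality case, or directly via the definition of strong $\Sigma$-convergence combined with Lemma~\ref{l3.1}) gives
\begin{equation*}
F_\varepsilon(v_\varepsilon)\longrightarrow \int_Q\int_{\Delta(A)}\widehat{f}(x,s,Dv_0+\partial\widehat{v}_1)\,d\beta\,dx = F(v_0,v_1).
\end{equation*}
Using the density of $F_0^\infty$ in $\mathbb{F}_0^{1,p}$ together with the $p$-growth bound (\ref{5.2}) (which yields continuity of $F$ on $\mathbb{F}_0^{1,p}$), a diagonal argument extends the recovery property to arbitrary $(v_0,v_1)\in\mathbb{F}_0^{1,p}$.

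\textbf{Step 4 (minimality, uniqueness, and convergence of the whole sequence).} For any $(v_0,v_1)\in\mathbb{F}_0^{1,p}$ with recovery sequence $v_\varepsilon$, the minimality (\ref{5.3}), Step~2, and Step~3 combine into
\begin{equation*}
F(u_0,u_1)\le \liminf_{E'}F_\varepsilon(u_\varepsilon)\le \liminf_{E'}F_\varepsilon(v_\varepsilon)=F(v_0,v_1),
\end{equation*}
so $(u_0,u_1)$ solves (\ref{5.7}). Strict convexity of $f(x,y,\cdot)$ makes $F$ strictly convex in $(Du_0,\overline{D}_yu_1)$; the Poincar\'e inequality on $W_0^{1,p}(Q)$ and the very definition of the norm on $\mathcal{B}_{\#A}^{1,p}$ via $\|\cdot\|_{\#,p}$ then force strict convexity in $(u_0,u_1)$, giving uniqueness of the minimizer. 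Since every subsequence of $(u_\varepsilon)$ has a further subsequence converging to this unique limit, the entire sequence converges, concluding the proof.

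The main obstacle I anticipate is Step~2: one has to make sure the composition $\widehat{f}(x,s,\cdot)$ is the correct convex integrand to which Jensen applies, and that the identification of the barycenter with $Du_0+\partial\widehat{u}_1$ is performed componentwise and compatibly with the Gelfand transform. Step~3 has the standard pitfall that the naive recovery sequence $v_0+\varepsilon v_1(\cdot,\cdot/\varepsilon)$ is only admissible when $v_1$ is smooth enough in $y$, which is precisely why the proof starts on the dense subspace $F_0^\infty$ and then invokes continuity of $F$ — a step that uses the upper $p$-growth condition in a non-trivial way.
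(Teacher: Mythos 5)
Your proposal is correct and follows essentially the same route as the paper: a priori bounds and extraction via Theorem \ref{t3.5}, the liminf inequality via the Young measure of $(Du_{\varepsilon})$, Theorem \ref{t4.2}(i), Jensen's inequality and the barycenter identification of Corollary \ref{c4.1}, the limsup via the oscillating test functions $\psi_{0}+\varepsilon\psi_{1}^{\varepsilon}$ on the dense subspace $F_{0}^{\infty}$ followed by a density/continuity argument, and uniqueness from strict convexity to upgrade to convergence of the whole sequence. The only cosmetic difference is in Step 3, where the paper justifies $F_{\varepsilon}(\Phi_{\varepsilon})\rightarrow F(\Phi)$ by identifying the Young measure of $D\Phi_{\varepsilon}$ as the Dirac mass $\delta_{D\psi_{0}+\widehat{D_{y}\psi}_{1}}$ (Proposition \ref{p4.2}) and invoking Theorem \ref{t4.2}(ii), whereas you appeal to strong $\Sigma$-convergence plus Nemytskii continuity — both work.
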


\begin{proof}
First of all we see that Eq. (\ref{5.7}) possesses a unique solution since
the function $\widehat{f}:\overline{Q}\times \Delta (A)\times \mathbb{R}%
^{nN}\rightarrow \lbrack 0,\infty )$, $\widehat{f}(x,s,\lambda )=\mathcal{G}%
(f(x,\cdot ,\lambda ))(s)$ is a Carath\'{e}odory's type function which is
strictly convex in $\lambda $.

Now, in view of the growth condition (\ref{5.2}), the sequence $%
(u_{\varepsilon })_{\varepsilon >0}$ is bounded in $W_{0}^{1,p}(Q)^{n}$ and
so the sequence $f^{\varepsilon }(\cdot ,\cdot ,Du_{\varepsilon })$ is
bounded in $L^{1}(Q)$. Thus given an arbitrary fundamental sequence $E$,
there exist a subsequence $E^{\prime }$ from $E$ and a couple $\mathbf{u}%
=(u_{0},u_{1})\in \mathbb{F}_{0}^{1,p}$ such that (\ref{5.5})-(\ref{5.6})
hold whenever $E^{\prime }\ni \varepsilon \rightarrow 0$. If we show that $%
\mathbf{u}$ solves (\ref{5.7}) then thanks to the uniqueness of the solution
to (\ref{5.7}), the convergence results (\ref{5.5})-(\ref{5.6}) will hold
for $\varepsilon \rightarrow 0$. Thus our only concern here is to check that 
$\mathbf{u}$ solves (\ref{5.7}). To this end, let $(\nu _{x,s})_{x\in Q,s\in
\Delta (A)}$ be the Young measure associated with $(Du_{\varepsilon
})_{\varepsilon \in E^{\prime }}$ at length scale $\varepsilon $. Thanks to
[part (i) of] Theorem \ref{t4.2} we have 
\begin{equation*}
\int_{Q}\int_{\Delta (A)}\int_{\mathbb{R}^{nN}}\widehat{f}(x,s,\lambda )d\nu
_{x,s}(\lambda )d\beta (s)dx\leq \ \underset{E^{\prime }\ni \varepsilon
\rightarrow 0}{\lim \inf }\int_{Q}f\left( x,\frac{x}{\varepsilon }%
,Du_{\varepsilon }\right) dx.
\end{equation*}%
But due to Jensen's inequality one has 
\begin{equation*}
\int_{Q}\int_{\Delta (A)}\int_{\mathbb{R}^{nN}}\widehat{f}(x,s,\lambda )d\nu
_{x,s}(\lambda )d\beta (s)dx\geq \int_{Q}\int_{\Delta (A)}\widehat{f}\left(
x,s,\int_{\mathbb{R}^{nN}}\lambda d\nu _{x,s}(\lambda )\right) d\beta (s)dx,
\end{equation*}%
and by Corollary \ref{c4.1}, 
\begin{equation*}
\int_{Q}\int_{\Delta (A)}\widehat{f}(x,s,Du_{0}+\widehat{\overline{D}%
_{y}u_{1}})d\beta dx\leq \ \underset{E^{\prime }\ni \varepsilon \rightarrow 0%
}{\lim \inf }\int_{Q}f\left( x,\frac{x}{\varepsilon },Du_{\varepsilon
}\right) dx
\end{equation*}%
since $\int_{\mathbb{R}^{nN}}\lambda d\nu _{x,s}(\lambda )=Du_{0}(x)+%
\widehat{\overline{D}_{y}u_{1}(x,\cdot )}(s)=\mathcal{G}_{1}(Du_{0}(x)+%
\overline{D}_{y}u_{1}(x,\cdot ))(s)$. So, let $E_{1}^{\prime }$ be a
subsequence from $E^{\prime }$ such that 
\begin{equation*}
\underset{E^{\prime }\ni \varepsilon \rightarrow 0}{\lim \inf }%
\int_{Q}f\left( x,\frac{x}{\varepsilon },Du_{\varepsilon }\right) dx=%
\underset{E_{1}^{\prime }\ni \varepsilon \rightarrow 0}{\lim }%
\int_{Q}f\left( x,\frac{x}{\varepsilon },Du_{\varepsilon }\right) dx.
\end{equation*}%
We then have 
\begin{equation}
\int_{Q}\int_{\Delta (A)}\widehat{f}(x,s,Du_{0}+\widehat{\overline{D}%
_{y}u_{1}})d\beta dx\leq \ \underset{E_{1}^{\prime }\ni \varepsilon
\rightarrow 0}{\lim }\int_{Q}f\left( x,\frac{x}{\varepsilon }%
,Du_{\varepsilon }\right) dx.  \label{5.8}
\end{equation}%
Let us establish an upper bound for $\lim_{E_{1}^{\prime }\ni \varepsilon
\rightarrow 0}\int_{Q}f\left( x,x/\varepsilon ,Du_{\varepsilon }\right) dx$.
To do that, let $\Phi =(\psi _{0},\varrho ^{n}(\psi _{1}))\in F_{0}^{\infty
} $ with $\psi _{0}\in \mathcal{D}(Q)^{n}$, $\psi _{1}=(\psi _{1,i})_{1\leq
i\leq n}\in \lbrack \mathcal{D}(Q)\otimes (\varrho (A^{\infty }))^{n}]$, $%
\varrho ^{n}(\psi _{1})=(\varrho (\psi _{1,i}))_{1\leq i\leq n}$. We define $%
\Phi _{\varepsilon }$ as follows: $\Phi _{\varepsilon }=\psi
_{0}+\varepsilon \psi _{1}^{\varepsilon }$, that is, $\Phi _{\varepsilon
}(x)=\psi _{0}(x)+\varepsilon \psi _{1}(x,x/\varepsilon )$\ for $x\in Q$.
Then $\Phi _{\varepsilon }\in W_{0}^{1,p}(Q)^{n}$, and, since $%
u_{\varepsilon }$ is the minimizer, one has 
\begin{equation*}
\int_{Q}f\left( x,\frac{x}{\varepsilon },Du_{\varepsilon }\right) dx\leq
\int_{Q}f\left( x,\frac{x}{\varepsilon },D\Phi _{\varepsilon }(x)\right) dx.
\end{equation*}%
Set $v_{\varepsilon }(x)=f\left( x,x/\varepsilon ,Du_{\varepsilon }\right) $
($x\in Q$), $\varepsilon \in E_{1}^{\prime }$. Then $(v_{\varepsilon
})_{\varepsilon \in E_{1}^{\prime }}$ is uniformly integrable; indeed let $%
\varphi (t)=t^{2}$\ ($t\geq 0$); then $\varphi $ is inf-compact, $\varphi
(t)/t\rightarrow +\infty $ as $t\rightarrow +\infty $, and further 
\begin{eqnarray*}
\int_{Q}\varphi (v_{\varepsilon }(x))dx &\leq
&c_{2}^{2}\int_{Q}(1+\left\vert D\Phi _{\varepsilon }\right\vert ^{p})^{2}dx
\\
&\leq &c_{2}^{2}\left\vert Q\right\vert (1+\left\Vert D\psi _{0}\right\Vert
_{\infty }+\left\Vert D\psi _{1}\right\Vert _{\infty }+\left\Vert D_{y}\psi
_{1}\right\Vert _{\infty })^{2p} \\
&<&\infty
\end{eqnarray*}%
where $\left\vert Q\right\vert $ denote the Lebesgue measure of $Q$. The
sequence $(D\Phi _{\varepsilon })_{\varepsilon \in E_{1}^{\prime }}$ being
bounded in $L^{p}(Q;\mathbb{R}^{nN})$ let $(\mu _{x,s})_{x\in Q,s\in \Delta
(A)}$ be the Young measure associated with $(D\Phi _{\varepsilon
})_{\varepsilon \in E_{1}^{\prime }}$ at length scale $\varepsilon $. Since $%
f$ is Carath\'{e}odory and $v_{\varepsilon }=f(\cdot ,\cdot /\varepsilon
,Du_{\varepsilon })$ is uniformly integrable, we deduce by [part (ii) of]
Theorem \ref{t4.2} that, as $E_{1}^{\prime }\ni \varepsilon \rightarrow 0$, 
\begin{equation*}
\int_{Q}f\left( x,\frac{x}{\varepsilon },D\Phi _{\varepsilon }(x)\right)
dx\rightarrow \int_{Q}\int_{\Delta (A)}\int_{\mathbb{R}^{nN}}\widehat{f}%
(x,s,\lambda )d\mu _{x,s}(\lambda )d\beta (s)dx.
\end{equation*}%
But $D\Phi _{\varepsilon }-(D\psi _{0}+(D_{y}\psi _{1})^{\varepsilon
})=\varepsilon (D\psi _{1})^{\varepsilon }$ and so 
\begin{equation*}
\left\Vert D\Phi _{\varepsilon }-(D\psi _{0}+(D_{y}\psi _{1})^{\varepsilon
})\right\Vert _{L^{1}(Q)}\rightarrow 0\text{ as }E_{1}^{\prime }\ni
\varepsilon \rightarrow 0.
\end{equation*}%
This yields by Proposition \ref{p4.2} that $\mu _{x,s}=\delta _{D\psi _{0}+%
\widehat{D_{y}\psi }_{1}}$, in such a way that 
\begin{equation*}
\underset{E_{1}^{\prime }\ni \varepsilon \rightarrow 0}{\lim }%
\int_{Q}f\left( x,\frac{x}{\varepsilon },D\Phi _{\varepsilon }(x)\right)
dx=\int_{Q}\int_{\Delta (A)}\widehat{f}(x,s,D\psi _{0}+\widehat{D_{y}\psi }%
_{1})d\beta dx.
\end{equation*}%
Thus 
\begin{equation*}
\underset{E_{1}^{\prime }\ni \varepsilon \rightarrow 0}{\lim }%
\int_{Q}f\left( x,\frac{x}{\varepsilon },Du_{\varepsilon }(x)\right) dx\leq
\int_{Q}\int_{\Delta (A)}\widehat{f}(x,s,D\psi _{0}+\widehat{D_{y}\psi }%
_{1})d\beta dx
\end{equation*}%
for any $\Phi =(\psi _{0},\varrho ^{n}(\psi _{1}))\in F_{0}^{\infty }$, and
by a density argument, for all $\Phi \in \mathbb{F}_{0}^{1,p}$. Whence 
\begin{equation}
\underset{E_{1}^{\prime }\ni \varepsilon \rightarrow 0}{\lim }%
\int_{Q}f\left( x,\frac{x}{\varepsilon },Du_{\varepsilon }(x)\right) dx\leq
\inf_{\mathbf{v}\in \mathbb{F}_{0}^{1,p}}\int_{Q}\int_{\Delta (A)}\widehat{f}%
(x,s,Dv_{0}+\widehat{\overline{D}_{y}v}_{1})d\beta dx.  \label{5.9}
\end{equation}%
The inequalities (\ref{5.8}) and (\ref{5.9}) yield (\ref{5.7}). This
completes the proof.
\end{proof}

\subsection{Some applications of Theorem \protect\ref{t5.1}\label{subsect5.3}%
}

We can consider the homogenization problem for (\ref{5.3}) under a variety
of assumptions as in the following examples.

\begin{example}[\textit{Homogenization in ergodic algebras}]
\label{e1}\emph{We assume that the algebra }$A$\emph{\ is ergodic. This
allows us to solve the following deterministic homogenization problems:}

\begin{itemize}
\item[(P)$_{1}$] \emph{The function }$f$\emph{\ is periodic in }$y$\emph{;}

\item[(P)$_{2}$] \emph{The function }$f$\emph{\ is almost periodic in }$y$%
\emph{\ \cite{7, 8};}

\item[(P)$_{3}$] 
\begin{equation*}
f(x,\cdot ,\lambda )\in L_{\infty ,AP}^{1}(\mathbb{R}^{N})\text{\ \emph{for
all} }x\in \overline{Q}\text{\ \emph{and all} }\lambda \in \mathbb{R}%
^{nN}\;\;\;\;
\end{equation*}%
\emph{where }$L_{\infty ,AP}^{1}(\mathbb{R}^{N})$\emph{\ denotes the closure
with respect to the seminorm }$\left\| \cdot \right\| _{1}$\emph{\ (defined
in Section 2) of the space of finite sums} 
\begin{equation*}
\sum_{\text{finite}}\varphi _{i}u_{i}\text{\ \ \emph{with }}\varphi _{i}\in 
\mathcal{B}_{\infty }(\mathbb{R}^{N})\text{, }u_{i}\in AP(\mathbb{R}^{N}),
\end{equation*}%
$AP(\mathbb{R}^{N})$ \emph{being the space of all continuous real-valued
almost periodic functions on }$\mathbb{R}^{N}$ \emph{and }$\mathcal{B}%
_{\infty }(\mathbb{R}^{N})$\emph{\ the space of continuous real-valued
functions on }$\mathbb{R}^{N}$\emph{\ that converge at infinity.}
\end{itemize}
\end{example}

\begin{example}[\textit{Homogenization in non ergodic algebra}]
\label{e2}\emph{We assume here that }$N=1$\emph{. Let }$A$\emph{\ be the
algebra generated by the function }$f(z)=\cos \sqrt[3]{z}$\emph{\ (}$z\in 
\mathbb{R}$\emph{) and all its translates }$f(\cdot +a)$\emph{, }$a\in 
\mathbb{R}$\emph{. It is known that }$A$\emph{\ is an algebra with mean
value which is not ergodic; see \cite[p. 243]{20} for details. Since }$A$%
\emph{\ satisfies all the requirements of Theorem \ref{t4.1}, the conclusion
of Theorem \ref{t5.1} holds under the hypothesis }

\begin{itemize}
\item[(H)] $f(x,\cdot ,\lambda )\in B_{A}^{1}$\emph{\ for all }$(x,\lambda
)\in \overline{Q}\times \mathbb{R}^{n}$.
\end{itemize}

\noindent \emph{The homogenization problem solved here is new. One can also
consider other homogenization problems in the present setting of non-ergodic
algebras.}
\end{example}

\section{Homogenization of a stochastic Ladyzhenskaya model for
incompressible viscous flow}

We assume in this section that all vector spaces are real vector spaces, and
all scalar functions are real-valued. Let $A_{y}$ and $A_{\tau }$ be two
algebras wmv on $\mathbb{R}_{y}^{N}$ and $\mathbb{R}_{\tau }$, respectively.
We set $A=A_{y}\odot A_{\tau }$, the product algebra wmv defined as in \cite%
{26, CMP, CPAA}. Obviously, no ergodicity assumption is required neither on $%
A_{y}$, nor on $A_{\tau }$. We begin this section by giving some fundamental
notions about the probability theory and the stochastic calculus which will
be necessary throughout the rest of the section. Although these concepts are
well-known to specialists we find necessary to recall them in order to
facilitate the reading of the paper.

\subsection{Some tools from Stochastic Calculus and Abstract Probability
Theory}

Let $T$ be a positive real number and let $(\Omega ,\mathcal{F},\mathbb{P})$
be a probability space. On $(\Omega ,\mathcal{F},\mathbb{P})$ is defined a
prescribed $m$-dimensional standard Wiener process $W$. We equip $(\Omega ,%
\mathcal{F},\mathbb{P})$ with the natural filtration $(\mathcal{F}%
^{t})_{0\leq t\leq T}$ of $W$. The mathematical expectation on $(\Omega ,%
\mathcal{F},\mathbb{P})$ is denoted by $\mathbb{E}$. Assume that $X$ is a
Hilbert space. We denote by $X^{\otimes m}$ the Cartesian product $X\times
...\times X$, $m$ times. Let $u$ be a $X$-valued process such that

\begin{itemize}
\item[(a)] $u(t)$ is $\mathcal{F}^{t}$-measurable for each $t$,

\item[(b)] $\mathbb{E}\int_{0}^{T}\left\Vert u(t)\right\Vert _{X^{\otimes
m}}^{2}dt<\infty $.
\end{itemize}

\noindent For such a process we can define the \textit{stochastic integral}
(or \textit{It\^{o}'s integral}) 
\begin{equation*}
I(T)u:=\int_{0}^{T}u(t)dW(t)
\end{equation*}%
as the limit in probability of the sums 
\begin{equation*}
\sum_{k=0}^{n-1}u(t_{k})\cdot (W(t_{k+1})-W(t_{k})),
\end{equation*}%
as $|\Delta ^{n}|\rightarrow 0$. Here $\left\vert \Delta ^{n}\right\vert
=\max_{0\leq k\leq n-1}\left\vert t_{k}-t_{k-1}\right\vert $ is the mesh (or
modulus) of the partition $\Delta ^{n}=\{0=t_{0}<t_{1}<...<t_{n}=T\}$ of $%
I=[0,T]$. We state an important property of the stochastic integral. See,
for example, \cite{DaPrato} for its proof.

\begin{theorem}
\label{t6.0}For a process $u(t)$ satisfying \emph{(a)} and \emph{(b)} above,
the stochastic process $\int_{0}^{t}u(s)dW(s)$ is an $X$-valued continuous
martingale. Moreover, we have 
\begin{equation*}
\mathbb{E}I(t)u=0\text{,$\;0\leq t\leq T$}
\end{equation*}%
and 
\begin{equation*}
\mathbb{E}\left\vert I(t)u\right\vert _{X}^{2}=\mathbb{E}\int_{0}^{t}\left%
\Vert u(s)\right\Vert _{X}^{2}ds\text{,$\;0\leq t\leq T$.}
\end{equation*}
\end{theorem}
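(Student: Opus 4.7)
The plan is to prove all three assertions simultaneously by the classical two-step construction: first establish them for a dense class of simple (elementary) processes where everything reduces to properties of the Wiener increments, then extend to the full space of processes satisfying (a) and (b) by a limiting argument based on the Itô isometry.

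First I would fix an orthonormal basis $(e_j)$ of $X$ and introduce the class $\mathcal{E}$ of \emph{elementary} $X$-valued processes of the form
\begin{equation*}
u(t,\omega)=\sum_{k=0}^{n-1}u_k(\omega)\mathbf{1}_{(t_k,t_{k+1}]}(t),
\end{equation*}
with $0=t_0<\dots<t_n=T$ and each $u_k\in L^2(\Omega;X^{\otimes m})$ being $\mathcal{F}^{t_k}$-measurable. For such $u$ I would simply set
\begin{equation*}
I(t)u=\sum_{k=0}^{n-1}u_k\cdot\bigl(W(t\wedge t_{k+1})-W(t\wedge t_k)\bigr).
\end{equation*}
Independence of the increment $W(t_{k+1})-W(t_k)$ from $\mathcal{F}^{t_k}$, together with $\mathbb{E}[W(t_{k+1})-W(t_k)]=0$, yields $\mathbb{E} I(t)u=0$ and the martingale property $\mathbb{E}[I(t)u\mid \mathcal{F}^s]=I(s)u$ for $s\leq t$; pathwise continuity is immediate from continuity of $W$. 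The key identity $\mathbb{E}|W(t_{k+1})-W(t_k)|^2=t_{k+1}-t_k$ combined with the orthogonality (in $L^2(\Omega)$) of the cross terms — again by the $\mathcal{F}^{t_k}$-measurability of $u_k$ and the independence of future increments — gives the Itô isometry $\mathbb{E}|I(t)u|_X^2=\mathbb{E}\int_0^t\|u(s)\|_X^2ds$ for $u\in\mathcal{E}$.

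Next I would prove that $\mathcal{E}$ is dense in the Hilbert space $\mathcal{H}$ of processes satisfying (a) and (b), normed by $\|u\|_{\mathcal{H}}^2=\mathbb{E}\int_0^T\|u(s)\|_{X^{\otimes m}}^2ds$. This is the standard approximation lemma: one first approximates a general $u\in\mathcal{H}$ by bounded processes, then by processes continuous in $t$ (via mollification in time while preserving adaptedness through left-translations), and finally by piecewise-constant step processes evaluated at left endpoints. Given this density and the isometry on $\mathcal{E}$, the map $u\mapsto I(T)u$ extends uniquely to a bounded linear isometry from $\mathcal{H}$ into $L^2(\Omega;X)$, and the extension automatically satisfies $\mathbb{E} I(T)u=0$ and the Itô isometry for all $u\in\mathcal{H}$. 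The same argument applied on $[0,t]$ defines $I(t)u$ for every $t$.

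Finally, to upgrade to a \emph{continuous} $X$-valued martingale, I would take approximating elementary processes $u^n\to u$ in $\mathcal{H}$ and apply Doob's maximal inequality to the $X$-valued continuous martingale $t\mapsto I(t)u^n-I(t)u^m$:
\begin{equation*}
\mathbb{E}\sup_{0\leq t\leq T}|I(t)u^n-I(t)u^m|_X^2\leq 4\,\mathbb{E}|I(T)(u^n-u^m)|_X^2=4\|u^n-u^m\|_{\mathcal{H}}^2\to 0.
\end{equation*}
Thus $\{I(\cdot)u^n\}$ is Cauchy in $L^2(\Omega;C([0,T];X))$, so a subsequence converges uniformly in $t$ almost surely to a continuous process that must coincide (by $L^2$-uniqueness) with $I(\cdot)u$; the martingale property is preserved under $L^2$ limits of conditional expectations. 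The convergence in probability of the Riemann–Itô sums in the statement of the theorem is then obtained by taking $u^n$ to be the step process subordinate to the partition $\Delta^n$ and using $\|u^n-u\|_{\mathcal{H}}\to 0$ together with the isometry. The main obstacle is the density step — specifically, constructing adapted piecewise-constant approximations of an arbitrary $\mathcal{H}$-process — since it requires careful use of adaptedness to avoid losing measurability when regularizing in time; everything else is a routine unwinding of Brownian moment identities.
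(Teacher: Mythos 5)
Your proposal is correct and is exactly the classical construction (elementary processes, Itô isometry, density, Doob's maximal inequality for the continuous modification). The paper itself gives no proof of this theorem — it simply cites Da Prato–Zabczyk — and the proof in that reference is essentially the argument you outline, so there is nothing to compare beyond noting that your sketch fills in what the paper delegates to the literature.
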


We note that for those $\mathcal{F}^{t}$-adapted stochastic process $u(t)$
such that 
\begin{equation*}
\int_{0}^{T}\left\Vert u(t)\right\Vert _{X}^{2}dt<\infty \text{\ almost
surely,}
\end{equation*}%
$\left\Vert I(t)u\right\Vert _{X}$ is no longer a martingale but a
continuous local martingale.

Now let $Z(t)$ be a $\mathbb{R}$-valued It\^{o}'s integral with respect to a
standard Brownian motion in $\mathbb{R}^{m}$ defined by 
\begin{equation*}
Z(t)=\sum_{j=1}^{m}\int_{0}^{t}g_{j}(s)dW^{j}(s),
\end{equation*}%
where $(g_{j}(s))_{1\leq j\leq m}$ are $\mathcal{F}^{t}$-adapted processes
such that 
\begin{equation*}
\int_{0}^{T}g_{j}^{2}(s)ds<\infty \text{\ almost surely, for }1\leq j\leq m%
\text{.}
\end{equation*}%
The corresponding It\^{o} integrals exist and they are local martingales. We
have the following result known as \textit{It\^{o}'s formula} (see for
example \cite{revuz}).

\begin{theorem}
\label{FOITO}Let $u(t)$ be a stochastic process given by 
\begin{equation*}
u(t)=\int_{0}^{t}b(s)ds+Z(t),
\end{equation*}%
where $b(s)$ is an adapted integrable process over $[0,T]$ in $\mathbb{R}$.
Suppose that $\phi :\mathbb{R}\times \lbrack 0,T]\rightarrow \mathbb{R}$ is
a continuous function such that $\phi (x,t)$ is continuously differentiable
twice in $x$ and once in $t$. Then, the following holds 
\begin{equation*}
\begin{split}
\phi (u(t),t)& =\phi (u(0),0)+\int_{0}^{t}\frac{\partial \phi (u(s),s)}{%
\partial s}ds+\int_{0}^{t}\frac{\partial \phi (u(s),s)}{\partial x}b(s)ds \\
& +\sum_{j=1}^{m}\int_{0}^{t}\frac{\partial \phi (u(s),s)}{\partial x}%
g_{j}(s)dW^{j}(s) \\
& +\frac{1}{2}\sum_{j=1}^{m}\int_{0}^{t}\frac{\partial ^{2}\phi (u(s),s)}{%
\partial x^{2}}g_{j}^{2}(s)ds.
\end{split}%
\end{equation*}
\end{theorem}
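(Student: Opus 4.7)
\textbf{Proof plan for Theorem \ref{FOITO} (It\^o's formula).} The statement is the classical one--dimensional It\^o formula, so the plan is to follow the standard stochastic--calculus route: truncate/localize, expand by Taylor, and pass to the limit using the quadratic variation of Brownian motion. First I would reduce to a setting where everything is bounded. Introduce stopping times
\begin{equation*}
\tau_n=\inf\Bigl\{t\in[0,T]:\; |u(t)|+\int_0^t|b(s)|\,ds+\sum_{j=1}^m\int_0^tg_j^2(s)\,ds\geq n\Bigr\}\wedge T,
\end{equation*}
and work with the stopped processes $u(\cdot\wedge\tau_n)$, $b\mathbf{1}_{[0,\tau_n]}$, $g_j\mathbf{1}_{[0,\tau_n]}$. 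Since $\tau_n\uparrow T$ a.s., if the formula holds after stopping it holds in the limit by dominated convergence (continuity of $\phi$ and its two derivatives, together with Theorem \ref{t6.0} applied to the stochastic integrals, handles each term). So we may assume that $u$ takes values in a compact interval $K\subset\mathbb{R}$ on which $\phi$ together with $\partial_t\phi,\partial_x\phi,\partial_{xx}^2\phi$ are uniformly bounded and uniformly continuous, and that $b$, $g_j$ are bounded.

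Next I would carry out the Taylor expansion on a refining sequence of partitions $\Delta^n=\{0=t_0^n<t_1^n<\cdots<t_{N_n}^n=t\}$ with $|\Delta^n|\to 0$. Writing a telescoping sum and applying Taylor's theorem to second order gives, with $\Delta u_k=u(t_{k+1}^n)-u(t_k^n)$ and $\Delta t_k=t_{k+1}^n-t_k^n$,
\begin{equation*}
\phi(u(t),t)-\phi(u(0),0)=\sum_k\Bigl[\partial_t\phi(u(t_k^n),t_k^n)\Delta t_k+\partial_x\phi(u(t_k^n),t_k^n)\Delta u_k+\tfrac{1}{2}\partial_{xx}^2\phi(\xi_k^n,\eta_k^n)(\Delta u_k)^2\Bigr],
\end{equation*}
for intermediate points $(\xi_k^n,\eta_k^n)$. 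Inserting $\Delta u_k=\int_{t_k^n}^{t_{k+1}^n}b(s)\,ds+\sum_j\int_{t_k^n}^{t_{k+1}^n}g_j(s)\,dW^j(s)$, the first two groups of terms are Riemann sums (deterministic and stochastic) that converge in $L^2(\Omega)$ to
\begin{equation*}
\int_0^t\partial_t\phi(u(s),s)\,ds+\int_0^t\partial_x\phi(u(s),s)b(s)\,ds+\sum_{j=1}^m\int_0^t\partial_x\phi(u(s),s)g_j(s)\,dW^j(s),
\end{equation*}
by the It\^o isometry of Theorem \ref{t6.0} combined with the uniform continuity of $\partial_t\phi$, $\partial_x\phi$.

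The genuinely delicate step is the quadratic variation term $\sum_k\tfrac{1}{2}\partial_{xx}^2\phi(\xi_k^n,\eta_k^n)(\Delta u_k)^2$. One first replaces the evaluation points $(\xi_k^n,\eta_k^n)$ by $(u(t_k^n),t_k^n)$ at the cost of an error controlled by the modulus of continuity of $\partial_{xx}^2\phi$ composed with $u$, which vanishes a.s.\ because $u$ is continuous. Then $(\Delta u_k)^2$ splits into a $bdt$--term contributing $O(|\Delta^n|)\to 0$, cross terms $\bigl(\int b\,ds\bigr)\bigl(\int g_j\,dW^j\bigr)$ which vanish in $L^1$ by Cauchy--Schwarz and the It\^o isometry, and the pure stochastic square $\bigl(\sum_j\int_{t_k^n}^{t_{k+1}^n}g_j\,dW^j\bigr)^2$. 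The main estimate is then the convergence in probability
\begin{equation*}
\sum_k\partial_{xx}^2\phi(u(t_k^n),t_k^n)\Bigl(\sum_{j=1}^m\int_{t_k^n}^{t_{k+1}^n}g_j\,dW^j\Bigr)^{\!2}\;\longrightarrow\;\sum_{j=1}^m\int_0^t\partial_{xx}^2\phi(u(s),s)g_j^2(s)\,ds.
\end{equation*}
This I would establish by replacing each squared stochastic increment by $\sum_j\int_{t_k^n}^{t_{k+1}^n}g_j^2(s)\,ds$ (the predictable bracket) and controlling the remainder: the cross terms $j\neq j'$ form a discrete martingale transform whose quadratic variation is $O(|\Delta^n|)$ by independence of the components of $W$, while the diagonal error $(\int g_j\,dW^j)^2-\int g_j^2\,ds$ is handled by a standard $L^2$ computation yielding $O(|\Delta^n|)$ after summation. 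This is the only place where the It\^o correction $\tfrac{1}{2}\partial_{xx}^2\phi\,g_j^2$ is produced, and is the main technical obstacle; everything else is essentially a combination of continuity and the isometry/martingale property recorded in Theorem \ref{t6.0}. Combining the limits of all three groups of terms yields the claimed identity first for the truncated problem, and then, by letting $n\to\infty$ in the localization, for the original $u$, $b$, $g_j$. \qed
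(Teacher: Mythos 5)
Your outline is the standard localization--Taylor--quadratic-variation proof of the one-dimensional It\^o formula, and it is correct; the paper itself gives no proof of Theorem \ref{FOITO}, simply quoting it as a classical result with a reference to Revuz--Yor, where essentially the argument you describe is carried out. Nothing further is needed.
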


As a special case we have the following It\^{o}'s formula for $\left\Vert
u(t)\right\Vert _{X}^{2}$; we refer to \cite{pardoux} for its proof. We
recall that $W=(W^{1},...,W^{m})$ is a standard Brownian motion in $\mathbb{R%
}^{m}$.

\begin{theorem}
\label{t6.0'}Assume that $\zeta $ is $\mathcal{F}^{0}$-random variable in $X$
with $\left\Vert \mathbb{\zeta }\right\Vert _{X}^{2}<\infty $. Suppose that $%
b(t)$, and $g_{j}(t)$ are $\mathcal{F}^{t}$-adapted $X$-valued processes
such that $b(t)$ is integrable over $[0,T]$ and 
\begin{equation*}
\mathbb{E}\left( \int_{0}^{T}\left\Vert b(t)\right\Vert
_{X}^{2}dt+\sum_{j=1}^{m}\int_{0}^{T}\left\Vert g_{j}(t)\right\Vert
_{X}^{2}dt\right) <\infty .
\end{equation*}%
Let $u(t)$ be a $X$-valued process given by 
\begin{equation*}
u(t)=\zeta +\int_{0}^{t}b(s)ds+\sum_{j=1}^{m}\int_{0}^{t}g_{j}(s)dW^{j}(s).
\end{equation*}%
Then we have 
\begin{equation}
\begin{split}
\left\Vert u(t)\right\Vert _{X}^{2}& =\left\Vert \zeta \right\Vert
_{X}^{2}+2\int_{0}^{t}((b(s),u(s)))_{X}ds+2\sum_{j=1}^{m}%
\int_{0}^{t}((g_{j}(s),u(s)))_{X}dW^{j} \\
& +\int_{0}^{t}\sum_{j=1}^{m}\left\Vert g_{j}(s)\right\Vert _{X}^{2}ds\text{,%
}
\end{split}
\label{6.0}
\end{equation}%
where $((\cdot ,\cdot ))_{X}$ denotes the inner product in $X$.
\end{theorem}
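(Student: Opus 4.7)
The plan is to reduce the identity to the scalar Itô formula already recalled in Theorem \ref{FOITO} by expanding everything in an orthonormal basis of $X$, and then to pass to the limit in the partial sums. Since $X$ is a separable Hilbert space (the statement is meaningful only in that setting, and this is the framework needed later in the paper), fix an orthonormal basis $(e_{k})_{k\geq 1}$ of $X$ and write
\begin{equation*}
u_{k}(t)=(u(t),e_{k})_{X},\quad b_{k}(t)=(b(t),e_{k})_{X},\quad g_{j,k}(t)=(g_{j}(t),e_{k})_{X}.
\end{equation*}
By continuity of the inner product and the basic properties of the Bochner and It\^{o} integrals,
\begin{equation*}
u_{k}(t)=(\zeta ,e_{k})_{X}+\int_{0}^{t}b_{k}(s)ds+\sum_{j=1}^{m}\int_{0}^{t}g_{j,k}(s)dW^{j}(s),
\end{equation*}
so each $u_{k}$ is a scalar It\^{o} process with drift $b_{k}$ and diffusion coefficients $(g_{j,k})_{1\leq j\leq m}$.

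Next I would apply Theorem \ref{FOITO} to $\phi (x)=x^{2}$ for each coordinate $u_{k}$, obtaining
\begin{equation*}
u_{k}(t)^{2}=(\zeta ,e_{k})_{X}^{2}+2\int_{0}^{t}u_{k}b_{k}\,ds+2\sum_{j=1}^{m}\int_{0}^{t}u_{k}g_{j,k}\,dW^{j}+\sum_{j=1}^{m}\int_{0}^{t}g_{j,k}^{2}\,ds.
\end{equation*}
Summing the first $n$ of these identities and letting $n\to \infty $ is then the goal, since by Parseval we have $\sum_{k}u_{k}(t)^{2}=\left\Vert u(t)\right\Vert _{X}^{2}$, $\sum_{k}u_{k}b_{k}=((b,u))_{X}$, $\sum_{k}u_{k}g_{j,k}=((g_{j},u))_{X}$ and $\sum_{k}g_{j,k}^{2}=\left\Vert g_{j}\right\Vert _{X}^{2}$.

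The key technical issue, and the main obstacle, is justifying the exchange of summation with the Lebesgue and It\^{o} integrals. For the drift term one uses the Cauchy--Schwarz inequality together with the integrability of $\left\Vert b\right\Vert _{X}$ and the a.s. boundedness of $t\mapsto \left\Vert u(t)\right\Vert _{X}$ (which follows from the integrability hypotheses combined with Doob's martingale inequality applied to the stochastic-integral part). For the quadratic-variation term, Fubini's theorem for nonnegative integrands gives $\sum_{k}\int_{0}^{t}g_{j,k}^{2}\,ds=\int_{0}^{t}\left\Vert g_{j}\right\Vert _{X}^{2}\,ds$, which is finite a.s. by hypothesis. The delicate point is the stochastic integral: here I would verify that $\sum_{k=1}^{n}\int_{0}^{t}u_{k}g_{j,k}\,dW^{j}$ converges in probability to $\int_{0}^{t}((g_{j},u))_{X}\,dW^{j}$ by showing that the quadratic variation of the remainder, namely $\int_{0}^{t}\sum_{k>n}(u_{k}g_{j,k})^{2}\,ds$, tends to zero in probability, which follows from Cauchy--Schwarz, the a.s. boundedness of $\sup_{s\leq t}\left\Vert u(s)\right\Vert _{X}$, and the tail estimate $\sum_{k>n}g_{j,k}(s)^{2}\to 0$ combined with dominated convergence.

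A cleaner route, if one prefers to avoid any regularity issue on $u$, is to first establish the formula for \emph{elementary} processes $b$ and $g_{j}$ (piecewise constant in $t$ and taking values in a finite-dimensional subspace spanned by finitely many $e_{k}$'s), for which the identity reduces to the finite-dimensional It\^{o} formula, and then to approximate general $b,g_{j}$ in the $L^{2}(\Omega \times (0,T);X)$-norm. The passage to the limit in the stochastic integral uses the It\^{o} isometry, and in the other terms standard $L^{1}$--convergence arguments. Either route terminates with \eqref{6.0}, completing the proof.
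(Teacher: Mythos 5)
The paper does not actually prove this statement: it is recalled as a known result and the proof is deferred entirely to Pardoux's thesis \cite{pardoux}, so there is no in-paper argument to compare against. Your blind proof is a correct, self-contained version of the standard argument (and is essentially the classical proof one finds in the cited references): expand in an orthonormal basis of the (necessarily separable) Hilbert space $X$, apply the scalar It\^{o} formula of Theorem \ref{FOITO} with $\phi(x)=x^{2}$ to each coordinate process $u_{k}$, and pass to the limit in the partial sums using Parseval, pathwise dominated convergence for the drift term, Tonelli for the quadratic-variation term, and convergence in probability of the stochastic integrals via their quadratic variations; the alternative route through elementary processes and the It\^{o} isometry is equally standard. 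Two small points of care: first, the quantity controlling the tail of the stochastic integral is $\int_{0}^{t}\bigl(\sum_{k>n}u_{k}g_{j,k}\bigr)^{2}ds$ rather than $\int_{0}^{t}\sum_{k>n}(u_{k}g_{j,k})^{2}ds$ as you wrote, but the Cauchy--Schwarz bound $\bigl(\sum_{k>n}u_{k}g_{j,k}\bigr)^{2}\leq \left\Vert u\right\Vert _{X}^{2}\sum_{k>n}g_{j,k}^{2}$ that you invoke handles the correct quantity anyway; second, the a.s. finiteness of $\sup_{t\leq T}\left\Vert u(t)\right\Vert _{X}$ should be justified by the pathwise continuity of the Bochner-integral part together with the continuity of the $X$-valued stochastic integral already asserted in Theorem \ref{t6.0}, which is exactly what you do. With these readings your argument is complete and proves the identity in the generality stated.
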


In what follows we quote the famous Burkh\"{o}lder-Davis-Gundy inequality
(see e.g., \cite{Peszat} or \cite{revuz}).

\begin{theorem}
\label{BDGIN}Let $Z_{t}$ be the It\^{o}'s integral in $X$ given by 
\begin{equation*}
Z_{t}=\int_{0}^{t}X(s)dW(s).
\end{equation*}%
Then for any $p>0$ there exists a constant $K_{p}$ ($K_{1}=3$) such that 
\begin{equation*}
\mathbb{E}\sup_{0\leq t\leq T}\left\| \int_{0}^{t}X(s)dW(s)\right\|
_{X}^{p}\leq K_{p}\mathbb{E}\left( \int_{0}^{T}\left\| X(s)\right\|
_{X}^{2}ds\right) ^{\frac{p}{2}}\text{,}
\end{equation*}%
provided that 
\begin{equation*}
\mathbb{E}\left( \int_{0}^{T}\left\| X(s)\right\| _{X}^{2}ds\right) ^{\frac{p%
}{2}}<\infty \text{.}
\end{equation*}
\end{theorem}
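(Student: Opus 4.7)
The strategy is to split into the regimes $p=2$, $p>2$, and $0<p<2$, and handle each by a different classical device. In the base case $p=2$, $Z_{t}$ is an $X$-valued continuous martingale (by Theorem \ref{t6.0}), so Doob's maximal inequality yields $\mathbb{E}\sup_{0\leq t\leq T}\|Z_{t}\|_{X}^{2}\leq 4\,\mathbb{E}\|Z_{T}\|_{X}^{2}$, and the It\^{o} isometry from the same theorem identifies the right-hand side with $\mathbb{E}\int_{0}^{T}\|X(s)\|_{X}^{2}ds$. This disposes of $p=2$ with $K_{2}=4$.

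For $p>2$, I would apply the It\^{o} formula (Theorem \ref{t6.0'}) to the regularized functional $\phi_{\delta}(x)=(\|x\|_{X}^{2}+\delta)^{p/2}$ with $\delta>0$, sending $\delta\downarrow 0$ at the end to absorb the singularity of $\|\cdot\|_{X}^{p}$ at the origin. A direct computation of the Fr\'{e}chet Hessian shows that the second-order (Stratonovich correction) term is pointwise bounded by $C_{p}\|Z_{s}\|_{X}^{p-2}\|X(s)\|_{X}^{2}$, while the stochastic term is a continuous local martingale whose supremum is controlled by Doob once more. Putting these together gives
\begin{equation*}
\mathbb{E}\sup_{0\leq t\leq T}\|Z_{t}\|_{X}^{p}\leq C_{p}\,\mathbb{E}\Bigl(\sup_{0\leq t\leq T}\|Z_{t}\|_{X}^{p-2}\int_{0}^{T}\|X(s)\|_{X}^{2}ds\Bigr).
\end{equation*}
H\"{o}lder's inequality with conjugate exponents $p/(p-2)$ and $p/2$, followed by Young's inequality $ab\leq\eta a^{p/(p-2)}+C(\eta)b^{p/2}$ with $\eta$ small, allows the $\sup$-factor on the right to be absorbed into the left-hand side, which yields the desired bound for $p>2$.

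For $0<p<2$, direct It\^{o} calculus is unavailable, because $x\mapsto\|x\|_{X}^{p}$ fails to be $C^{2}$; so I would rely on a stopping-time/good-$\lambda$ argument grafted onto the $p=2$ case. Introduce $\sigma_{a}=\inf\{t\geq 0:\int_{0}^{t}\|X(s)\|_{X}^{2}ds\geq a\}$. The stopped integrand then has ``quadratic variation'' at most $a$, hence Chebyshev combined with the case $p=2$ applied to the localized martingale gives
\begin{equation*}
\mathbb{P}\Bigl(\sup_{0\leq t\leq T}\|Z_{t}\|_{X}>\lambda,\;\int_{0}^{T}\|X(s)\|_{X}^{2}ds\leq a\Bigr)\leq\frac{4a}{\lambda^{2}}.
\end{equation*}
Writing $\mathbb{E}\sup_{t}\|Z_{t}\|_{X}^{p}=p\int_{0}^{\infty}\lambda^{p-1}\mathbb{P}(\sup_{t}\|Z_{t}\|_{X}>\lambda)d\lambda$, splitting the integral at a threshold chosen in terms of $\int_{0}^{T}\|X\|_{X}^{2}ds$, and optimizing in $a$ produces the BDG inequality for $p<2$.

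\textbf{Main obstacle.} The delicate case is $0<p<2$: the lack of smoothness of $\|\cdot\|_{X}^{p}$ forces one to abandon It\^{o}'s formula and replace it by the distributional/good-$\lambda$ comparison sketched above, where careful truncation of both the martingale and its quadratic variation is required to make the $p=2$ inequality propagate. Sharp constants, in particular the value $K_{1}=3$ asserted in the statement, require in addition the classical martingale-transform argument of Davis; since the theorem only needs the \emph{existence} of some $K_{p}$, I would quote that refinement rather than reprove it.
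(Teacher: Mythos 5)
The paper does not prove this statement at all: Theorem \ref{BDGIN} is quoted verbatim as a classical result, with the reader referred to \cite{Peszat} and \cite{revuz} for its proof. Your sketch therefore does strictly more work than the source, and it follows the standard textbook route (Doob's maximal inequality plus the It\^{o} isometry for $p=2$; It\^{o}'s formula applied to a smoothed power of the norm, then H\"{o}lder and an absorption argument, for $p>2$; a stopping-time/good-$\lambda$ comparison for $0<p<2$), which is essentially the proof found in the cited references. Two points would need tightening in a full write-up. First, the absorption step for $p>2$ presupposes $\mathbb{E}\sup_{0\leq t\leq T}\|Z_{t}\|_{X}^{p}<\infty$; you must first localize with $\tau_{n}=\inf\{t:\|Z_{t}\|_{X}\geq n\}$, prove the inequality for the stopped process (where all quantities are finite), and pass to the limit by Fatou or monotone convergence. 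Second, the paper's It\^{o} formula (Theorem \ref{t6.0'}) is stated only for the functional $\|u(t)\|_{X}^{2}$, not for a general $C^{2}$ map on $X$; rather than computing the Fr\'{e}chet Hessian of $\phi_{\delta}$ directly, it is cleaner, and uses only the tools stated in the paper, to apply the scalar It\^{o} formula of Theorem \ref{FOITO} to the real semimartingale $Y_{t}=\|Z_{t}\|_{X}^{2}$ composed with $y\mapsto (y+\delta)^{p/2}$. With those repairs the argument is sound, and you are right that the sharp constant $K_{1}=3$ requires Davis's decomposition, which it is reasonable to quote rather than reprove since the theorem only asserts existence of some $K_{p}$.
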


Now we turn our attention to the weak convergence topology in the space of
Borel probability measures on topological spaces (see e.g. \cite{KALLIANPUR}%
). For a topological space $\mathfrak{X}$ we denote by $\mathcal{P}(%
\mathfrak{X})$ the space of Borel probability measures on $(\mathfrak{X},%
\mathcal{B}(\mathfrak{X}))$, $\mathcal{B}(\mathfrak{X})$ being the Borel $%
\sigma $-field of $\mathfrak{X}.$

\begin{definition}
\label{d6.0}Let $\mathfrak{X}$ be a topological space.

\begin{enumerate}
\item[(i)] A family $\mathfrak{P}_{k}$ of probability measures on $(%
\mathfrak{X},\mathcal{B}(\mathfrak{X}))$ is relatively compact if every
sequence of elements of $\mathfrak{P}_{k}$ contains a subsequence $\mathfrak{%
P}_{k_{j}}$ which converges weakly to a probability measure $\mathfrak{P}$,
that is, for any bounded and continuous function $\phi $ on $\mathfrak{X}$, 
\begin{equation*}
\lim_{k_{j}\rightarrow \infty }\int_{\mathfrak{X}}\phi (x)d\mathfrak{P}%
_{k_{j}}(x)=\int_{\mathfrak{X}}\phi (x)d\mathfrak{P}(x).
\end{equation*}

\item[(ii)] The family $\mathfrak{P}_{k}$ is said to be tight if for any $%
\varepsilon >0$, there exists a compact set $K_{\varepsilon }\subset 
\mathfrak{X}$ such that $\mathbb{P}(K_{\varepsilon })\geq 1-\varepsilon $\
for every $\mathbb{P}\in \mathfrak{P}_{k}$.
\end{enumerate}
\end{definition}

For a Polish space $\mathfrak{X}$ (that is, a separable and complete metric
space), the following theorem due to Prokhorov gives a sufficient and
necessary condition for a sequence of probability measures on $\mathfrak{X}$
to be weakly (or relatively) compact. We refer to \cite{DaPrato} (see also 
\cite{KALLIANPUR}) for its proof.

\begin{theorem}[Prokhorov]
\label{prok}The family $\mathfrak{P}_{k}$ is relatively compact if and only
if it is tight.
\end{theorem}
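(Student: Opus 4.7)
The plan is to prove the two implications separately, treating sufficiency (tight $\Rightarrow$ relatively compact) as the substantial direction and necessity as a completeness-and-separability argument. For sufficiency, I would first embed the Polish space $\mathfrak{X}$ as a Borel subset of a compact metric space $\widetilde{\mathfrak{X}}$. The standard route is to pick a countable dense sequence $(x_n)$ and a bounded compatible metric and send $x\mapsto (d(x,x_n))_{n\ge 1}$ into the Hilbert cube $[0,1]^{\mathbb{N}}$, taking $\widetilde{\mathfrak{X}}$ to be the closure of the image. Each $\mathfrak{P}_k$ extends to a Borel probability measure on $\widetilde{\mathfrak{X}}$ by assigning mass $0$ to the complement of (the image of) $\mathfrak{X}$. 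By the Riesz representation theorem, $\mathcal{P}(\widetilde{\mathfrak{X}})$ sits inside the closed unit ball of $\mathcal{C}(\widetilde{\mathfrak{X}})^{\ast }$; since $\mathcal{C}(\widetilde{\mathfrak{X}})$ is separable, Banach--Alaoglu yields sequential weak-$\ast$ compactness, so I can extract a subsequence $\mathfrak{P}_{k_j}$ that converges weakly-$\ast$ to some $\mathfrak{P}\in\mathcal{P}(\widetilde{\mathfrak{X}})$.

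The next step invokes tightness to confine the limit to $\mathfrak{X}$. Given $\varepsilon >0$, pick $K_\varepsilon\subset \mathfrak{X}$ compact with $\mathfrak{P}_{k_j}(K_\varepsilon)\ge 1-\varepsilon$ for all $j$. Since $K_\varepsilon$ is compact, it is closed in $\widetilde{\mathfrak{X}}$, so the portmanteau theorem in $\widetilde{\mathfrak{X}}$ gives $\mathfrak{P}(K_\varepsilon)\ge \limsup_j \mathfrak{P}_{k_j}(K_\varepsilon)\ge 1-\varepsilon$. Letting $\varepsilon\downarrow 0$ through a countable sequence shows $\mathfrak{P}$ is supported on $\bigcup_n K_{1/n}\subset \mathfrak{X}$, hence defines a Borel probability measure on $\mathfrak{X}$. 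It then remains to upgrade the weak-$\ast$ convergence on $\widetilde{\mathfrak{X}}$ to weak convergence on $\mathfrak{X}$ against arbitrary bounded continuous $\phi:\mathfrak{X}\to\mathbb{R}$; for this I would fix $\eta>0$, use tightness to replace $\phi$ by $\phi\mathbf{1}_{K_{\eta}}$ up to an error $O(\eta\Vert\phi\Vert_\infty)$, and then use Tietze's extension theorem to extend $\phi|_{K_\eta}$ to $\widetilde{\phi}\in\mathcal{C}(\widetilde{\mathfrak{X}})$, passing to the limit in the resulting integrals.

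For the necessity direction, I would use the completeness and separability of $\mathfrak{X}$. Fix a countable dense sequence $(x_n)$ and, for each $n\in\mathbb{N}$ and $\varepsilon>0$, seek a finite union of balls of radius $1/n$ whose measure is uniformly at least $1-\varepsilon/2^n$ across the whole family. If no such finite union existed, one could build, ball by ball, a subsequence of the $\mathfrak{P}_{k}$ along which every finite union of $1/n$-balls has mass strictly less than $1-\varepsilon/2^n$; passing to a weak limit via relative compactness and applying the portmanteau theorem to the open set $\bigcup_{n\le N} B(x_n,1/n)$ would contradict the fact that this union is eventually all of $\mathfrak{X}$. Intersecting over $n$ the uniform finite unions yields a totally bounded set; its closure is then compact by completeness of $\mathfrak{X}$ and has measure at least $1-\varepsilon$ for every member of the family.

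The main obstacle I expect is the rigorous execution of the contradiction argument in the necessity direction, specifically arranging the bookkeeping so that the refusal of a single uniform finite subcover actually manufactures a subsequence of measures along which weak convergence fails. The sufficiency direction is largely a matter of correctly handling the compactification and invoking portmanteau; the only subtle point there is the Tietze-extension step that bridges bounded continuous functions on $\mathfrak{X}$ and on $\widetilde{\mathfrak{X}}$, which must be done without creating spurious mass near the topological boundary added by the compactification.
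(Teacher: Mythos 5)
The paper does not actually prove this statement: Theorem \ref{prok} is quoted as a classical result with the proof deferred to the references (Da Prato--Zabczyk and Kallianpur--Xiong), so there is no internal argument to compare against. Your proposal is the standard textbook proof (Billingsley/Parthasarathy style) and is essentially correct. In the sufficiency direction, the compactification into the Hilbert cube, the Banach--Alaoglu extraction, the portmanteau step confining the limit mass to $\bigcup_n K_{1/n}\subset\mathfrak{X}$, and the truncate-then-Tietze-extend device for passing from test functions in $\mathcal{C}(\widetilde{\mathfrak{X}})$ to arbitrary bounded continuous functions on $\mathfrak{X}$ all work as you describe; the error you introduce near the added boundary is controlled uniformly by tightness of the $\mathfrak{P}_{k_j}$ and by the bound $\mathfrak{P}(\widetilde{\mathfrak{X}}\setminus K_\eta)\le\eta$ already established. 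In the necessity direction, the bookkeeping you flag as the main obstacle resolves in the usual way: negating the existence of a uniform finite union of $1/n$-balls of mass $\ge 1-\varepsilon 2^{-n}$ gives, for each $N$, a measure $\mathfrak{P}_{k_N}$ with $\mathfrak{P}_{k_N}(G_N)<1-\varepsilon 2^{-n}$ where $G_N=\bigcup_{m\le N}B(x_m,1/n)$; since the $G_M$ increase, every fixed $G_M$ satisfies $\mathfrak{P}_{k_N}(G_M)<1-\varepsilon 2^{-n}$ for $N\ge M$, and the portmanteau inequality for open sets, $\mathfrak{Q}(G_M)\le\liminf_N\mathfrak{P}_{k_N}(G_M)$, applied to a weak limit $\mathfrak{Q}$ of $(\mathfrak{P}_{k_N})$ forces $\mathfrak{Q}(\mathfrak{X})\le 1-\varepsilon 2^{-n}<1$, a contradiction. (Two small points of hygiene: for fixed $n$ the balls should all have the same radius $1/n$ and range over the dense centers $x_m$, and the selected measures $\mathfrak{P}_{k_N}$ need not literally form a subsequence of the original enumeration --- relative compactness as defined in the paper still applies to them as a sequence of elements of the family.) The intersection of the resulting finite unions over $n$ is totally bounded, and completeness of the Polish space makes its closure compact, completing the argument.
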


Next we present the relationship between convergence in distribution and
convergence almost surely of random variables (see once again \cite{DaPrato}
or \cite{KALLIANPUR}).

\begin{theorem}[Skorokhod]
\label{sko}For any sequence of probability measures $\mathfrak{P}_{k}$ on $%
(\Omega ,\mathcal{F},\mathbb{P})$ which converges to a probability measure $%
\mathfrak{P}$, there exist a probability space $(\Omega ^{\prime },\mathcal{F%
}^{\prime },\mathbb{P}^{\prime })$ and random variables $X_{k}$, $X$ defined
on $\Omega ^{\prime }$ such that the probability law of $X_{k}$ (resp., $X$)
is $\mathfrak{P}_{k}$(resp., $\mathfrak{P}$) and $\lim_{k\rightarrow \infty
}X_{k}=X$\ $\mathbb{P}^{\prime }$-almost surely.
\end{theorem}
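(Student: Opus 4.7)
The plan is to prove this classical representation theorem of Skorokhod by an explicit construction on $\Omega' = [0,1]$ equipped with its Borel $\sigma$-algebra and Lebesgue measure $\mathbb{P}'$. Throughout, I assume (as is needed for the theorem to hold) that the underlying space $\mathfrak{X}$ on which the probability measures live is Polish, in agreement with the setting of Theorem \ref{prok} just above.

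First I would construct, for each integer $n \geq 1$, a countable Borel partition $\{A_i^n\}_{i \in \mathbb{N}}$ of $\mathfrak{X}$ with three properties: (a) each $A_i^n$ has diameter at most $1/n$; (b) each $A_i^n$ has a $\mathfrak{P}$-null topological boundary; (c) the partition at level $n+1$ refines the one at level $n$. Property (b) is obtained from the fact that for a fixed $x \in \mathfrak{X}$ only countably many radii $r > 0$ can make $\mathfrak{P}(\partial B(x,r)) > 0$, combined with the separability of $\mathfrak{X}$. For each pair $(n,i)$ I would then fix a base point $x_i^n \in A_i^n$.

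The portmanteau characterization of weak convergence now yields $\mathfrak{P}_k(A_i^n) \to \mathfrak{P}(A_i^n)$ as $k \to \infty$ for every $(n,i)$, because $\partial A_i^n$ is $\mathfrak{P}$-null. I would then coherently build, for each level $n$, a partition of $[0,1]$ into intervals $I_i^n$ of length $\mathfrak{P}(A_i^n)$, ordered consistently so the partitions nest as $n$ grows, together with analogous partitions $\{I_i^n(k)\}$ of lengths $\mathfrak{P}_k(A_i^n)$ arranged so that the endpoints of $I_i^n(k)$ converge to those of $I_i^n$ as $k \to \infty$. Define $X(\omega)$ to be the unique point of $\mathfrak{X}$ whose sequence of indices $(i_n(\omega))$ satisfies $\omega \in I_{i_n}^n$ for all $n$; property (a) together with completeness of $\mathfrak{X}$ ensures this limit exists. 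Set $X_k$ analogously from the $I_i^n(k)$.

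By construction $X$ has law $\mathfrak{P}$ and each $X_k$ has law $\mathfrak{P}_k$, so only $X_k \to X$ almost surely remains. For $\omega$ outside the countable set of partition endpoints (hence $\mathbb{P}'$-a.e.\ $\omega$), $\omega \in I_i^n$ forces $\omega \in I_i^n(k)$ for all large $k$, which gives $d(X_k(\omega), X(\omega)) \leq 2/n$ eventually; letting $n \to \infty$ yields the a.s.\ convergence. The main obstacle will be the bookkeeping in the third step: one must relabel the atoms of the pre-limit partitions so that $I_i^{n+1} \subset I_j^n$ whenever $A_i^{n+1} \subset A_j^n$, and simultaneously arrange that $I_i^n(k)$ converges to $I_i^n$ for each fixed $(n,i)$. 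Once this combinatorial coupling is in place, every other ingredient reduces to the portmanteau theorem and the fact that Lebesgue measure on $[0,1]$ can represent any Borel probability on a Polish space.
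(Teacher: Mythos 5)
The paper offers no proof of Theorem \ref{sko}: it is recalled as a classical fact with the argument delegated to \cite{DaPrato} and \cite{KALLIANPUR}, so there is nothing in-paper to compare your construction against. What you propose is the standard Skorokhod coupling on $([0,1],\mathcal{B}([0,1]),\mathrm{Leb})$ via nested partitions of $\mathfrak{X}$ into $\mathfrak{P}$-continuity sets of diameter at most $1/n$, matched with nested interval partitions of the corresponding $\mathfrak{P}$- and $\mathfrak{P}_{k}$-lengths, and the outline is sound; you are also right to reinstate the Polish hypothesis on the state space, which the statement's wording (``probability measures on $(\Omega,\mathcal{F},\mathbb{P})$'') obscures but which the paper's application to $\mathfrak{S}=L^{2}(0,T;H)\times\mathcal{C}(0,T;\mathbb{R}^{m})$ satisfies. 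The one step you should not dismiss as ``by construction'' is the identification of the law of $X$: since $X(\omega)$ is only known to lie in the closure $\overline{A^{n}_{i_{n}(\omega)}}$, the naive computation $\mathbb{P}'(X\in A^{n}_{i})=|I^{n}_{i}|$ is obstructed by the possibility that $X$ lands on a boundary $\partial A^{n}_{j}$, and you cannot declare $X^{-1}(\partial A^{n}_{j})$ null before knowing the law of $X$; summing the one-sided inequalities $\mathbb{P}'(X\in\overline{A^{n}_{i}})\geq\mathfrak{P}(A^{n}_{i})$ does not close the argument because the closures overlap. The standard repair is to introduce the discrete maps $X^{n}(\omega)=x^{n}_{i_{n}(\omega)}$, observe that $d(X^{n},X)\leq 1/n$ everywhere, that $X^{n}$ has law $\sum_{i}\mathfrak{P}(A^{n}_{i})\,\delta_{x^{n}_{i}}$, and that these discrete laws converge weakly to $\mathfrak{P}$ (test against bounded Lipschitz functions); almost sure convergence then identifies the law of $X$ as $\mathfrak{P}$, and the same device handles each $X_{k}$. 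With that supplement, and the nesting bookkeeping you already flag, the proof is complete and coincides with the textbook argument the paper implicitly invokes.
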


The following criteria for convergence in probability whose proof can be
found in \cite{GYONGY} need to be highlighted.

\begin{lemma}
\label{l6.0}Let $X$ be a Polish space. A sequence of a $X$-valued random
variables $\{x_{n}:n\geq 0\}$ converges in probability if and only if for
every subsequence of joint probability laws, $\{\nu _{n_{k},m_{k}}:k\geq 0\}$%
, there exists a further subsequence which converges weakly to a probability
measure $\nu $ such that 
\begin{equation*}
\nu \left( \{(x,y)\in X\times X:x=y\}\right) =1.
\end{equation*}
\end{lemma}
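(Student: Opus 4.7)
The plan is to prove the equivalence by handling the two directions separately, with the nontrivial content lying entirely in sufficiency.

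For necessity, suppose $x_n \to x$ in probability for some $X$-valued random variable $x$. Fix any subsequence of joint laws $\{\nu_{n_k,m_k}\}$. Since both $x_{n_k}\to x$ and $x_{m_k}\to x$ in probability, a standard diagonal extraction produces a further subsequence (indexed by $k_j$) along which $x_{n_{k_j}}\to x$ and $x_{m_{k_j}}\to x$ almost surely. Then $(x_{n_{k_j}}, x_{m_{k_j}}) \to (x,x)$ almost surely in $X\times X$, whence the joint laws converge weakly to the law of $(x,x)$, which is supported on the diagonal $\Delta=\{(x,y)\in X\times X : x=y\}$.

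For sufficiency, the strategy is a contradiction argument combined with the Portmanteau theorem. Assume $(x_n)$ does not converge in probability. Because $X$ is Polish, the space $L^0(\Omega;X)$ of $X$-valued random variables endowed with the Ky Fan metric $\rho(Y,Z)=\mathbb{E}[d(Y,Z)\wedge 1]$ is a complete metric space whose topology coincides with convergence in probability; hence $(x_n)$ cannot be Cauchy in probability. Therefore there exist $\varepsilon>0$, $\delta>0$, and subsequences $n_k,m_k\to\infty$ such that
\begin{equation*}
\mathbb{P}\bigl(d(x_{n_k},x_{m_k})\geq \varepsilon\bigr) \geq \delta \quad \text{for all } k.
\end{equation*}
Apply the hypothesis to the subsequence of joint laws $\{\nu_{n_k,m_k}\}$: there exists a further subsequence (still denoted with index $k$) which converges weakly to some $\nu\in\mathcal{P}(X\times X)$ with $\nu(\Delta)=1$. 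The set $D_{\varepsilon}=\{(x,y)\in X\times X : d(x,y)\geq \varepsilon\}$ is closed and disjoint from $\Delta$, so $\nu(D_{\varepsilon})=0$. The Portmanteau theorem applied to this closed set yields
\begin{equation*}
\limsup_{k\to\infty} \nu_{n_k,m_k}(D_{\varepsilon}) \leq \nu(D_{\varepsilon}) = 0,
\end{equation*}
contradicting $\nu_{n_k,m_k}(D_{\varepsilon}) \geq \delta>0$. Hence $(x_n)$ is Cauchy in probability, and by completeness of $L^0(\Omega;X)$ it converges in probability to some limit $x$.

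The main obstacle is the sufficiency step, and specifically the realization that one should reformulate nonconvergence as non-Cauchyness in order to produce a \emph{joint} subsequence $(n_k,m_k)$ to which the hypothesis can be applied; the rest reduces to one clean invocation of Portmanteau on the closed set $D_{\varepsilon}$. A subsidiary but essential point is the completeness of $L^{0}(\Omega;X)$, which depends on $X$ being Polish and is what allows the Cauchy-in-probability conclusion to be upgraded to genuine convergence in probability.
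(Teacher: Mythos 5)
Your proof is correct. The paper does not actually prove this lemma; it simply cites Gy\"{o}ngy and Krylov, and your argument (necessity via almost-sure convergence along a further subsequence, sufficiency via the Ky Fan metric, non-Cauchyness, and the Portmanteau theorem applied to the closed set $D_{\varepsilon}$ disjoint from the diagonal) is precisely the standard proof from that reference, with all the hypotheses on $X$ being Polish used where they are needed.
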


\subsection{Statement of the problem and a priori estimates}

Let $(\Omega ,\mathcal{F},\mathbb{P})$ be a probability space. On $(\Omega ,%
\mathcal{F},\mathbb{P})$ we define a prescribed $m$-dimensional standard
Wiener process $W$. We equip $(\Omega ,\mathcal{F},\mathbb{P})$ with the
natural filtration $(\mathcal{F}^{t})$ of $W$ as in the preceding
subsection. We therefore aim at studying the asymptotics of the following
stochastic generalized Navier-Stokes type equations 
\begin{equation}
\left\{ 
\begin{array}{l}
d\mathbf{u}_{\varepsilon }+\left( -\Div\left[ a\left( \frac{x}{\varepsilon },%
\frac{t}{\varepsilon }\right) D\mathbf{u}_{\varepsilon }+b\left( \frac{x}{%
\varepsilon },\frac{t}{\varepsilon }\right) \left\vert D\mathbf{u}%
_{\varepsilon }\right\vert ^{p-2}D\mathbf{u}_{\varepsilon }\right] +\mathbf{u%
}_{\varepsilon }\cdot D\mathbf{u}_{\varepsilon }+Dq_{\varepsilon }\right) dt
\\ 
\ \ \ \ \ \ =\mathbf{f}dt+g\left( \frac{x}{\varepsilon },\frac{t}{%
\varepsilon },\mathbf{u}_{\varepsilon }\right) dW\text{\ in }Q_{T} \\ 
\Div\mathbf{u}_{\varepsilon }=0\text{\ in }Q_{T} \\ 
\mathbf{u}_{\varepsilon }=0\text{\ on }\partial Q\times (0,T) \\ 
\mathbf{u}_{\varepsilon }(x,0)=\mathbf{u}^{0}(x)\text{ in }Q\text{.}%
\end{array}%
\right.   \label{6.1}
\end{equation}%
In order that (\ref{6.1}) becomes meaningful, we need to precise the data.
Let $Q$ be a smooth bounded open set in $\mathbb{R}_{x}^{N}$ ($N=2$ or $3$),
and let $T$ be a positive real number. In $Q_{T}=Q\times (0,T)$ we consider
the partial differential operator (where $D$ and $\Div$ denote respectively
the gradient operator and divergence operator in $Q$) 
\begin{equation*}
P^{\varepsilon }=-\Div\left[ a\left( \frac{x}{\varepsilon },\frac{t}{%
\varepsilon }\right) D\cdot \right] :=-\overset{N}{\underset{i,j=1}{\sum }}%
\frac{\partial }{\partial x_{i}}\left( a_{ij}\left( \frac{x}{\varepsilon },%
\frac{t}{\varepsilon }\right) \frac{\partial }{\partial x_{j}}\right) 
\end{equation*}%
where the function $a=(a_{ij})_{1\leq i,j\leq N}\in L^{\infty }(\mathbb{R}%
_{y}^{N}\times \mathbb{R}_{\tau })^{N\times N}$ satisfies the following
assumptions: 
\begin{equation}
a_{ij}=a_{ji}  \label{6.2}
\end{equation}%
and there exists a constant $\nu _{0}>0$ such that 
\begin{equation}
\overset{N}{\underset{i,j=1}{\sum }}a_{ij}(y,\tau )\lambda _{i}\lambda
_{j}\geq \nu _{0}\left\vert \lambda \right\vert ^{2}\text{ for all }\lambda
=(\lambda _{i})\in \mathbb{R}^{N}\text{ and a.e. }(y,\tau )\in \mathbb{R}%
_{y,\tau }^{N+1}\text{.}  \label{6.3}
\end{equation}%
The operator $P^{\varepsilon }$ above defined is assumed to act on vector
functions as follows: for $\mathbf{u}=(u^{i})_{1\leq i\leq N}\in
(W^{1,p}(Q))^{N}$ we have $P^{\varepsilon }\mathbf{u}=(P^{\varepsilon
}u^{i})_{1\leq i\leq N}$. The function $b\in L^{\infty }(\mathbb{R}_{y,\tau
}^{N+1})$ and verifies $c_{1}\leq b(y,\tau )\leq c_{1}^{-1}$ a.e. $(y,\tau
)\in \mathbb{R}^{N}\times \mathbb{R}$ where $c_{1}$ is a positive constant.
So, putting $b(y,\tau ,\lambda )=b(y,\tau )\left\vert \lambda \right\vert
^{p-2}\lambda $, the function $b:(y,\tau ,\lambda )\mapsto b(y,\tau ,\lambda
)$, from $\mathbb{R}^{N}\times \mathbb{R}\times \mathbb{R}^{N\times N}$ into 
$\mathbb{R}^{N\times N}$ satisfies: 
\begin{equation}
\text{For each fixed }\lambda \in \mathbb{R}^{N}\text{, }b(\cdot ,\cdot
,\lambda )\text{ is measurable;}  \label{6.4}
\end{equation}%
\begin{equation}
b(y,\tau ,0)=0\text{ a.e. }(y,\tau )\in \mathbb{R}^{N}\times \mathbb{R}\text{%
;}  \label{6.5}
\end{equation}%
\begin{equation}
\begin{array}{l}
\text{There are two positive constants }\nu _{1}\text{ and }\nu _{2}\text{
such that, for a.e. }(y,\tau )\in \mathbb{R}^{N}\times \mathbb{R}\text{,} \\ 
\text{(i) }\left( b(y,\tau ,\lambda )-b(y,\tau ,\mu )\right) \cdot (\lambda
-\mu )\geq \nu _{1}\left\vert \lambda -\mu \right\vert ^{p} \\ 
\text{(ii) }\left\vert b(y,\tau ,\lambda )-b(y,\tau ,\mu )\right\vert \leq
\nu _{2}\left( \left\vert \lambda \right\vert +\left\vert \mu \right\vert
\right) ^{p-2}\left\vert \lambda -\mu \right\vert  \\ 
\text{for all }\lambda ,\mu \in \mathbb{R}^{N\times N}\text{,}%
\end{array}
\label{6.6}
\end{equation}%
where $p\geq 3$ is a real number, the dot denotes the usual Euclidean inner
product in $\mathbb{R}^{N\times N}$, and $\left\vert \cdot \right\vert $ the
associated norm. Next, the mapping $(y,\tau ,u)\mapsto g(y,\tau ,u)$ from $%
\mathbb{R}^{N}\times \mathbb{R}\times \mathbb{R}^{N}$ into $\mathbb{R}^{m}$
(integer $m\geq 1)$ satisfies the assumption that there exist positive
constants $c_{0}$ and $c_{1}$ such that 
\begin{equation}
\begin{array}{l}
\text{(i) }g(\cdot ,\cdot ,u)\text{ is measurable for any }u\in \mathbb{R}%
^{N}; \\ 
\text{(ii) }\left\vert g(y,\tau ,u)\right\vert \leq c_{0}(1+\left\vert
u\right\vert ); \\ 
\text{(iii) }\left\vert g(y,\tau ,u_{1})-g(y,\tau ,u_{2})\right\vert \leq
c_{1}\left\vert u_{1}-u_{2}\right\vert  \\ 
\text{for all }u,u_{1},u_{2}\in \mathbb{R}^{N}\text{ and for a.e. }(y,\tau
)\in \mathbb{R}^{N}\times \mathbb{R}.\ \ \ \ \ \ \ \ \ \ \ \ \ \ \ \ \ \ \ \
\ \ \ \ \ \ \;\;\;%
\end{array}
\label{6.7}
\end{equation}%
The first issue to be discussed is related to the existence and uniqueness
of the solution of (\ref{6.1}). Prior to that, we introduce the following
spaces \cite{Lions, Temam} 
\begin{equation*}
\begin{array}{l}
\mathcal{V}=\{\mathbf{\varphi }\in \mathcal{C}_{0}^{\infty }(Q)^{N}:\Div%
\mathbf{\varphi }=0\}; \\ 
V=\text{ closure of }\mathcal{V}\text{ in }W^{1,p}(Q)^{N}=\{\mathbf{u}\in
W_{0}^{1,p}(Q)^{N}:\Div\mathbf{u}=0\}; \\ 
H=\text{ closure of }\mathcal{V}\text{ in }L^{2}(Q)^{N}\text{.}%
\end{array}%
\end{equation*}%
We endow $V$ with the $W_{0}^{1,p}(Q)^{N}$-norm (the gradient norm), which
gives a reflexive Banach space. The space $H$ is equipped with the $%
L^{2}(Q)^{N}$-norm which makes it a Hilbert space. For $\mathbf{u}\in
L^{p}(0,T;V)$ the question for the existence of the trace function $%
(x,t)\mapsto b(x/\varepsilon ,t/\varepsilon ,D\mathbf{u}(x,t))$ can be
discussed in the same way as in \cite{NgWou1}. Also the function $%
(x,t)\mapsto a(x/\varepsilon ,t/\varepsilon )$ is well defined. With this in
mind, let 
\begin{equation*}
a^{\varepsilon }\left( x,t\right) =a\left( \frac{x}{\varepsilon },\frac{t}{%
\varepsilon }\right) 
\end{equation*}%
and 
\begin{equation*}
b^{\varepsilon }(\cdot ,\cdot ,D\mathbf{u})(x,t)=b\left( \frac{x}{%
\varepsilon },\frac{t}{\varepsilon },D\mathbf{u}(x,t)\right) :=b\left( \frac{%
x}{\varepsilon },\frac{t}{\varepsilon }\right) \left\vert D\mathbf{u}%
(x,t)\right\vert ^{p-2}D\mathbf{u}(x,t)
\end{equation*}%
for $(x,t)\in Q_{T}$. We introduce the functionals 
\begin{equation*}
a_{I}(\mathbf{u},\mathbf{v})=\int_{Q}\left( a^{\varepsilon }D\mathbf{u}%
\right) \cdot D\mathbf{v}dx+\int_{Q}b^{\varepsilon }(\cdot ,\cdot ,D\mathbf{u%
})\cdot D\mathbf{v}dx\text{\ }(\mathbf{u},\mathbf{v}\in W_{0}^{1,p}(Q)^{N});
\end{equation*}%
\begin{equation*}
b_{I}(\mathbf{u},\mathbf{v},\mathbf{w})=\sum_{i,k=1}^{N}\int_{Q}u^{i}\frac{%
\partial v^{k}}{\partial x_{i}}w^{k}dx\ \ (\mathbf{u}=(u^{i}),\mathbf{v},%
\mathbf{w}\in W_{0}^{1,p}(Q)^{N}).
\end{equation*}%
Then the following estimates hold: 
\begin{equation}
\left\vert a_{I}(\mathbf{u},\mathbf{v})\right\vert \leq \left\Vert
a\right\Vert _{\infty }\left\Vert D\mathbf{u}\right\Vert
_{L^{2}(Q)}\left\Vert D\mathbf{v}\right\Vert _{L^{2}(Q)}+\nu _{2}\left\Vert D%
\mathbf{u}\right\Vert _{L^{p}(Q)}^{p-1}\left\Vert D\mathbf{v}\right\Vert
_{L^{p}(Q)};  \label{6.8}
\end{equation}%
\begin{equation}
a_{I}(\mathbf{v},\mathbf{v})\geq \nu _{0}\left\Vert D\mathbf{v}\right\Vert
_{L^{2}(Q)}^{2}+\nu _{1}\left\Vert D\mathbf{v}\right\Vert _{L^{p}(Q)}^{p}
\label{6.9}
\end{equation}%
for all $\mathbf{u},\mathbf{v}\in W_{0}^{1,p}(Q)^{N}$. From the above
estimate (\ref{6.8}) we infer by the Riesz representation theorem the
existence of an operator $\mathcal{A}^{\varepsilon }:V\rightarrow V^{\prime }
$ such that 
\begin{equation*}
a_{I}(\mathbf{u},\mathbf{v})=\left\langle P^{\varepsilon }\mathbf{u},\mathbf{%
v}\right\rangle +\left\langle \mathcal{A}^{\varepsilon }\mathbf{u},\mathbf{v}%
\right\rangle \text{\ for all }\mathbf{u},\mathbf{v}\in V\text{.}
\end{equation*}%
It is worth noting that since $p\geq 3$ (hence $p\geq 2$) we have $%
P^{\varepsilon }\mathbf{u}\in V^{\prime }$ for $\mathbf{u}\in V$. Moreover
the operator $\mathcal{A}^{\varepsilon }$ (for fixed $\varepsilon >0$) is
maximal monotone, surjective and hemicontinuous \cite[Chap. 2, Section 2]%
{Lions}. As far as the trilinear form $b_{I}$ is concerned, we have that 
\cite{Lions, Temam} 
\begin{equation*}
b_{I}(\mathbf{u},\mathbf{v},\mathbf{v})=0\text{\ for all }\mathbf{u}\in V%
\text{ and }\mathbf{v}\in W_{0}^{1,p}(Q)^{N};
\end{equation*}%
\begin{equation*}
b_{I}(\mathbf{u},\mathbf{u},\mathbf{v})=b_{I}(\mathbf{u},\mathbf{v},\mathbf{u%
})\text{\ for }\mathbf{u}\in V\text{ and }\mathbf{v}\in W_{0}^{1,p}(Q)^{N}.
\end{equation*}%
Furthermore, since $W^{1,p}(Q)\subset L^{r}(Q)$ \cite{Adams} for any $r>1$
(indeed for $N=2,3$ and $p\geq 3$ we have $\frac{1}{p}-\frac{1}{N}\leq 0$,
so that, by the Sobolev embedding, the above embedding holds true). So
choosing $r>1$ in such a way that $\frac{2}{r}+\frac{1}{p}=1$, we have by H%
\"{o}lder's inequality that 
\begin{equation}
\left\vert b_{I}(\mathbf{u},\mathbf{v},\mathbf{u})\right\vert \leq
c\left\Vert \mathbf{u}\right\Vert _{L^{r}(Q)}^{2}\left\Vert D\mathbf{v}%
\right\Vert _{L^{p}(Q)}\text{\ for all }\mathbf{u},\mathbf{v}\in
W_{0}^{1,p}(Q)^{N}.  \label{6.9'}
\end{equation}%
We therefore infer the existence of an element $B(\mathbf{u})\in V^{\prime }$
such that 
\begin{equation}
\left\langle B(\mathbf{u}),\mathbf{v}\right\rangle =b_{I}(\mathbf{u},\mathbf{%
u},\mathbf{v})\text{\ for all }\mathbf{u},\mathbf{v}\in V.  \label{6.10}
\end{equation}%
Equation (\ref{6.10}) defines a bounded operator $B:V\rightarrow V^{\prime }$
with the further property that if $\mathbf{u}\in L^{p}(0,T;V)$ then $B(%
\mathbf{u})\in L^{p^{\prime }}(0,T;V^{\prime })$. In fact, from (\ref{6.9'}%
)-(\ref{6.10}) we have by H\"{o}lder's inequality (for $\mathbf{u}\in
L^{p}(0,T;V)$), 
\begin{equation*}
\left\Vert B(\mathbf{u})\right\Vert _{L^{p^{\prime }}(0,T;V^{\prime })}\leq
\left( \int_{0}^{T}\left\Vert \mathbf{u}(t)\right\Vert
_{L^{r}(Q)}^{2p^{\prime }}dt\right) ^{1/p^{\prime }}.
\end{equation*}%
But $W^{1,p}(Q)\subset L^{r}(Q)$ (with continuous embedding), hence there is
a positive constant $c$ independent of $\mathbf{u}$, such that 
\begin{equation*}
\left\Vert B(\mathbf{u})\right\Vert _{L^{p^{\prime }}(0,T;V^{\prime })}\leq
c\left( \int_{0}^{T}\left\Vert \mathbf{u}(t)\right\Vert _{V}^{2p^{\prime
}}dt\right) ^{1/p^{\prime }}.
\end{equation*}%
Also, as $p\geq 3$, we have $2p^{\prime }\leq p$, so that using once again H%
\"{o}lder's inequality with exponent $p/2p^{\prime }\geq 1$, we get 
\begin{equation*}
\left( \int_{0}^{T}\left\Vert \mathbf{u}(t)\right\Vert _{V}^{2p^{\prime
}}dt\right) ^{1/p^{\prime }}\leq c\left( \int_{0}^{T}\left\Vert \mathbf{u}%
(t)\right\Vert _{V}^{p}dt\right) ^{2/p}.
\end{equation*}%
We therefore deduce 
\begin{equation}
\left\Vert B(\mathbf{u})\right\Vert _{L^{p^{\prime }}(0,T;V^{\prime })}\leq
c\left( \int_{0}^{T}\left\Vert \mathbf{u}(t)\right\Vert _{V}^{p}dt\right)
^{2/p}.  \label{6.9''}
\end{equation}%
The above inequality will be useful in the sequel. Finally, for the sake of
completeness, we choose\textbf{\ }$\mathbf{f}\in L^{p^{\prime
}}(0,T;V^{\prime })$ and $\mathbf{u}^{0}\in H$. We are now in a position to
state the existence and uniqueness result for (\ref{6.1}). Before we can do
that, however, we need to take the projection of (\ref{6.1}) on $V^{\prime }$%
; we get the following abstract form in $V^{\prime }$: 
\begin{equation}
\left\{ 
\begin{array}{l}
d\mathbf{u}_{\varepsilon }+(P^{\varepsilon }\mathbf{u}_{\varepsilon }+%
\mathcal{A}^{\varepsilon }\mathbf{u}_{\varepsilon }+B(\mathbf{u}%
_{\varepsilon }))dt=\mathbf{f}dt+g^{\varepsilon }(\mathbf{u}_{\varepsilon
})dW,\ 0<t<T \\ 
\mathbf{u}_{\varepsilon }(0)=\mathbf{u}^{0}.%
\end{array}%
\right.   \label{6.11}
\end{equation}%
With all the properties of the operator $\mathcal{A}^{\varepsilon }$ (among
which the strict monotonicity, the maximality and the hemicontinuity) the
existence and uniqueness of a martingale solution (and hence from the
uniqueness, the strong) solution to (\ref{6.11}) follows exactly the way of
proceeding as in \cite{Sritharan}, and we can formulate the following result
without proof.

\begin{theorem}
\label{t6.1}Let the hypotheses be as above. Let $\varepsilon >0$ be freely
fixed and let $1<r<\infty $. There exists an $\mathcal{F}^{t}$-progressively
measurable process $\mathbf{u}_{\varepsilon }\in L^{r}(\Omega ,\mathcal{F},%
\mathbb{P};L^{p}(0,T;V)\cap L^{\infty }(0,T;H))$ such that 
\begin{equation}
\begin{array}{l}
\left( \mathbf{u}_{\varepsilon }(t),\mathbf{v}\right) +\int_{0}^{t}\left(
P^{\varepsilon }\mathbf{u}_{\varepsilon }(s)+\mathcal{A}^{\varepsilon }%
\mathbf{u}_{\varepsilon }(s)+B(\mathbf{u}_{\varepsilon }(s)),\mathbf{v}%
\right) ds=\left( \mathbf{u}^{0},\mathbf{v}\right) \\ 
\ \ \ \ \ \ \ \ \ \ \ \ \ \ \ \ \ \ \ \ \ \ +\int_{0}^{t}\left( \mathbf{f}%
(s),\mathbf{v}\right) ds+\int_{0}^{t}\left( g^{\varepsilon }(\mathbf{u}%
_{\varepsilon }(s),\mathbf{v}\right) dW(s)%
\end{array}
\label{6.12}
\end{equation}%
for all $\mathbf{v}\in V$ and for almost all $(\omega ,t)\in \Omega \times
\lbrack 0,T]$. Moreover $\mathbf{u}_{\varepsilon }\in L^{r}(\Omega ;\mathcal{%
F},\mathbb{P};\mathcal{C}([0,T];H))$ and is unique in the sense that if $%
\mathbf{u}_{\varepsilon }$ and $\overline{\mathbf{u}}_{\varepsilon }$
satisfy \emph{(\ref{6.12})} then $\mathbb{P}(\omega :\mathbf{u}_{\varepsilon
}(t)=\overline{\mathbf{u}}_{\varepsilon }(t)$ in $V^{\prime }$ for all $t\in
\lbrack 0,T])=1$.
\end{theorem}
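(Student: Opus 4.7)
The plan is to follow the Galerkin method combined with monotonicity and compactness arguments as carried out in \cite{Sritharan} for stochastic Navier–Stokes type equations. Fix $\varepsilon>0$ throughout, and let $\{e_k\}_{k\geq 1}\subset V$ be a smooth orthonormal basis of $H$ (e.g.\ eigenfunctions of the Stokes operator). For each integer $n\geq 1$ denote by $V_n=\operatorname{span}(e_1,\dots,e_n)$ and let $\Pi_n$ be the $H$-orthogonal projection onto $V_n$. One seeks $\mathbf{u}_\varepsilon^n(t)=\sum_{k=1}^n c_k^n(t)e_k$ satisfying the finite-dimensional Itô system
\begin{equation*}
d\mathbf{u}_\varepsilon^n+\Pi_n\bigl(P^\varepsilon\mathbf{u}_\varepsilon^n+\mathcal{A}^\varepsilon\mathbf{u}_\varepsilon^n+B(\mathbf{u}_\varepsilon^n)\bigr)dt=\Pi_n\mathbf{f}\,dt+\Pi_n g^\varepsilon(\mathbf{u}_\varepsilon^n)\,dW,\qquad \mathbf{u}_\varepsilon^n(0)=\Pi_n\mathbf{u}^0.
\end{equation*}
Thanks to the local Lipschitz character of the nonlinearity on $V_n$ and the linear growth of $g^\varepsilon$ given by (\ref{6.7})(ii)–(iii), classical SDE theory (together with a truncation/stopping time argument to handle the polynomial growth of $\mathcal{A}^\varepsilon$ and $B$) yields an $\mathcal{F}^t$-adapted global solution $\mathbf{u}_\varepsilon^n$.

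Next I derive a priori estimates by applying the Itô formula of Theorem~\ref{t6.0'} to $\|\mathbf{u}_\varepsilon^n(t)\|_H^2$. Using the coercivity inequality (\ref{6.9}), the cancellation $b_I(\mathbf{u},\mathbf{v},\mathbf{v})=0$ (so the convection term disappears in the energy identity), the linear growth of $g^\varepsilon$, and Gronwall's lemma, one obtains uniformly in $n$ and for each $r\in(1,\infty)$,
\begin{equation*}
\mathbb{E}\sup_{0\leq t\leq T}\|\mathbf{u}_\varepsilon^n(t)\|_H^{r}+\mathbb{E}\int_0^T\bigl(\nu_0\|D\mathbf{u}_\varepsilon^n\|_{L^2(Q)}^2+\nu_1\|D\mathbf{u}_\varepsilon^n\|_{L^p(Q)}^p\bigr)dt\leq C,
\end{equation*}
the supremum being controlled via the Burkholder–Davis–Gundy inequality (Theorem~\ref{BDGIN}). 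Combined with the bound (\ref{6.9''}) on $B$ and the $L^{p'}$ bound on $\mathcal{A}^\varepsilon\mathbf{u}_\varepsilon^n$ deduced from (\ref{6.8}), these give uniform bounds of $(\mathbf{u}_\varepsilon^n)$ in $L^r(\Omega;L^p(0,T;V)\cap L^\infty(0,T;H))$ and of the drift and diffusion in appropriate spaces, which in turn yield fractional-in-time equicontinuity estimates on $(\mathbf{u}_\varepsilon^n)$ in $V'$.

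The hard step is the passage to the limit, since $B$ and $\mathcal{A}^\varepsilon$ are nonlinear while weak convergence is all that the estimates immediately supply. For $B$ one needs strong compactness in $t$, which I obtain by combining the $V$-bound with the fractional time-equicontinuity and the compact embedding $V\Subset H\subset V'$, giving tightness of the laws of $(\mathbf{u}_\varepsilon^n)$ on $L^p(0,T;H)\cap\mathcal{C}([0,T];V')$ (Prokhorov, Theorem~\ref{prok}). Skorokhod's representation theorem (Theorem~\ref{sko}) then yields a new probability space $(\widetilde\Omega,\widetilde{\mathcal F},\widetilde{\mathbb P})$, random variables $\widetilde{\mathbf{u}}_\varepsilon^n$, $\widetilde{\mathbf{u}}_\varepsilon$ and Wiener processes $\widetilde W^n$, $\widetilde W$ with the same laws as the originals and such that $\widetilde{\mathbf{u}}_\varepsilon^n\to\widetilde{\mathbf{u}}_\varepsilon$ almost surely in the Skorokhod topology. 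The a.s.\ strong convergence in $L^p(0,T;H)$ permits passage to the limit in the convective term $B$, whereas the monotonicity of $\mathcal{A}^\varepsilon$ together with the classical Minty–Browder trick (identifying the weak limit of $\mathcal{A}^\varepsilon\widetilde{\mathbf{u}}_\varepsilon^n$ by testing against $\mathcal{A}^\varepsilon\mathbf{v}$ for arbitrary $\mathbf{v}$) handles the monotone nonlinearity. One thereby produces a martingale solution satisfying (\ref{6.12}).

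For uniqueness I subtract two solutions $\mathbf{u}_\varepsilon$ and $\overline{\mathbf{u}}_\varepsilon$, apply the Itô formula to $\|\mathbf{u}_\varepsilon-\overline{\mathbf{u}}_\varepsilon\|_H^2$, exploit the strict monotonicity of $\mathcal{A}^\varepsilon$ (from (\ref{6.6})(i)) to absorb the difference of nonlinear diffusion terms into the dissipation, and use the estimate $|b_I(\mathbf{u},\mathbf{u},\mathbf{w})-b_I(\overline{\mathbf{u}},\overline{\mathbf{u}},\mathbf{w})|\leq c\|\mathbf{u}-\overline{\mathbf{u}}\|_H\|\mathbf{u}-\overline{\mathbf{u}}\|_V^{1-\theta}\|\mathbf{w}\|_V^{1+\theta}$ for a convenient $\theta$ (using the Gagliardo–Nirenberg inequality and $p\geq 3$) to control the convective difference. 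Combined with the Lipschitz bound on $g^\varepsilon$ from (\ref{6.7})(iii), Gronwall's lemma then yields pathwise uniqueness. By the Yamada–Watanabe principle, existence of a martingale solution plus pathwise uniqueness implies existence of a unique strong ($\mathcal{F}^t$-progressively measurable) solution on the original probability space, and the continuity in $H$ (hence membership in $L^r(\Omega;\mathcal{C}([0,T];H))$) follows from the Itô formula applied to $\|\mathbf{u}_\varepsilon\|_H^2$. The main obstacle is the simultaneous handling of the nonlinear monotone operator $\mathcal{A}^\varepsilon$ and the non-monotone convection $B$: monotonicity (for the former) and compactness via Skorokhod (for the latter) must be combined carefully.
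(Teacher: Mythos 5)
Your proposal is correct and follows essentially the same route the paper intends: the paper gives no proof of Theorem \ref{t6.1} but states that existence of a martingale solution and, via pathwise uniqueness, of a strong solution "follows exactly the way of proceeding as in" Sritharan's work, i.e.\ the Galerkin--monotonicity--compactness--Skorokhod scheme you describe, with uniqueness available because $p\geq 3\geq 1+\tfrac{N}{2}$ for $N=2,3$ (cf.\ Remark \ref{r6.1}). Your sketch supplies the details the paper omits and is consistent with the a priori estimates and tightness arguments the paper itself carries out later in Section 6.
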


\begin{remark}
\label{r6.1}\emph{(1) For the existence result in the above theorem, we only
need to have }$p\geq 1+\frac{2N}{N+2}$\emph{, and for the uniqueness, the
more restricted assumption }$p\geq 1+\frac{N}{2}$\emph{\ is required; see 
\cite{Lions}. We have taken }$p\geq 3$\emph{\ only for the sake of
simplicity. We might take }$p\geq 1+\frac{N}{2}$\emph{\ for both the
existence and uniqueness. (2) Since }$f\in L^{p^{\prime }}(0,T;V^{\prime })$ 
\emph{the existence of the pressure }$q_{\varepsilon }$\emph{\ is out of
reach; see e.g., \cite[Proposition 3]{Simon1} (see also \cite{Simon2}). That
is why, in the sequel we are mainly concern with the asymptotics of the
velocity field }$\mathbf{u}_{\varepsilon }$\emph{\ defined in Theorem \ref%
{t6.1}. Accordingly, throughout the remainder of this section, we will only
refer to problem (\ref{6.11})\ instead of (\ref{6.1}).}
\end{remark}

It is very important to note that very few results are available as regards
the homogenization of SPDEs. We may cite \cite{Bensoussan1, Ichihara1,
Ichihara2, Sango, Wang1, Wang2} in that framework. In the just mentioned
work, the homogenization of SPDEs is studied under the periodicity
assumption on the coefficients of the equations considered. In addition, the
convergence method used is either the G-convergence method \cite%
{Bensoussan1, Ichihara1, Ichihara2} or the two-scale convergence method \cite%
{Wang1, Wang2}. In view of the study of the qualitative properties of the
solutions of SPDEs, it is more convenient to use an appropriate method
taking into account both the random and deterministic behaviours of these
solutions; see Subsection 3.2. As for the homogenization of SPDEs in a
general ergodic environment (or beyond) is concerned, no results is
available so far. The present work is therefore the first one in which such
a problem is considered.

Before we can proceed with the a priori estimates, let us set a convention.
The letter $C$ will throughout denote a positive constant whose value may
change according to the occasion. The dependence of constants on the
parameters will be written explicitly only when necessary. With this in
mind, the following a priori estimates hold.

\begin{proposition}
\label{p6.1}For each fixed $\varepsilon >0$, let $\mathbf{u}_{\varepsilon }$
be the unique solution of \emph{(\ref{6.11})}. Then for any $1<r<\infty $ we
have 
\begin{equation}
\mathbb{E}\sup_{0\leq t\leq T}\left\Vert \mathbf{u}_{\varepsilon
}(t)\right\Vert _{L^{2}(Q)}^{r}\leq C;  \label{6.13}
\end{equation}%
\begin{equation}
\mathbb{E}\int_{0}^{T}\left\Vert \mathbf{u}_{\varepsilon }(t)\right\Vert
_{H_{0}^{1}(Q)^{N}}^{2}dt\leq C  \label{6.13'}
\end{equation}%
and 
\begin{equation}
\mathbb{E}\int_{0}^{T}\left\Vert \mathbf{u}_{\varepsilon }(t)\right\Vert
_{V}^{2}dt\leq C  \label{6.14}
\end{equation}%
for any $\varepsilon >0$, where $C$ is a positive constant independent of $%
\varepsilon $.
\end{proposition}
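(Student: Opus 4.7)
The plan is to derive the three estimates by applying It\^{o}'s formula (Theorem \ref{t6.0'}) to $\|\mathbf{u}_{\varepsilon}(t)\|_{L^{2}(Q)}^{2}$, exploiting the structural identities for the nonlinear terms, and then closing the argument with Gronwall and the Burkh\"{o}lder--Davis--Gundy inequality. Taking $X = H$, $\zeta = \mathbf{u}^{0}$, and reading off $b(s)$ and the diffusion coefficients from (\ref{6.11}), formula (\ref{6.0}) yields
\begin{equation*}
\|\mathbf{u}_{\varepsilon}(t)\|_{L^{2}}^{2} + 2\int_{0}^{t}\langle P^{\varepsilon}\mathbf{u}_{\varepsilon} + \mathcal{A}^{\varepsilon}\mathbf{u}_{\varepsilon} + B(\mathbf{u}_{\varepsilon}),\mathbf{u}_{\varepsilon}\rangle\, ds = \|\mathbf{u}^{0}\|_{L^{2}}^{2} + 2\int_{0}^{t}\langle\mathbf{f},\mathbf{u}_{\varepsilon}\rangle\, ds + 2\int_{0}^{t}(g^{\varepsilon}(\mathbf{u}_{\varepsilon}),\mathbf{u}_{\varepsilon})\, dW + \int_{0}^{t}\|g^{\varepsilon}(\mathbf{u}_{\varepsilon})\|_{L^{2}}^{2}\, ds.
\end{equation*}

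Next I would exploit the three structural facts already recorded in the paper: the antisymmetry $\langle B(\mathbf{u}_{\varepsilon}),\mathbf{u}_{\varepsilon}\rangle = b_{I}(\mathbf{u}_{\varepsilon},\mathbf{u}_{\varepsilon},\mathbf{u}_{\varepsilon}) = 0$, the coercivity (\ref{6.9}) giving $\langle P^{\varepsilon}\mathbf{u}_{\varepsilon} + \mathcal{A}^{\varepsilon}\mathbf{u}_{\varepsilon},\mathbf{u}_{\varepsilon}\rangle \geq \nu_{0}\|D\mathbf{u}_{\varepsilon}\|_{L^{2}}^{2} + \nu_{1}\|D\mathbf{u}_{\varepsilon}\|_{L^{p}}^{p}$, and the linear growth (\ref{6.7})(ii) yielding $\|g^{\varepsilon}(\mathbf{u}_{\varepsilon})\|_{L^{2}}^{2} \leq C(1 + \|\mathbf{u}_{\varepsilon}\|_{L^{2}}^{2})$. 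The duality pairing $2\langle\mathbf{f},\mathbf{u}_{\varepsilon}\rangle$ is split by Young's inequality with exponents $p',p$ into $C\|\mathbf{f}\|_{V'}^{p'} + \nu_{1}\|\mathbf{u}_{\varepsilon}\|_{V}^{p}$, the latter being absorbed into the left-hand side. Taking expectation kills the It\^{o} integral (since the integrand $s \mapsto (g^{\varepsilon}(\mathbf{u}_{\varepsilon}(s)),\mathbf{u}_{\varepsilon}(s))$ satisfies condition (b) of Theorem \ref{t6.0} thanks to the linear growth and the fact that $\mathbf{u}_{\varepsilon} \in L^{r}(\Omega;L^{\infty}(0,T;H))$ from Theorem \ref{t6.1}), and Gronwall's lemma applied to $\mathbb{E}\|\mathbf{u}_{\varepsilon}(t)\|_{L^{2}}^{2}$ delivers both (\ref{6.14}) (and hence (\ref{6.13'}), since the $\nu_{0}\|D\mathbf{u}_{\varepsilon}\|_{L^{2}}^{2}$ term already sits on the left) and (\ref{6.13}) in the case $r = 2$ without the supremum.

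To upgrade to the supremum bound (\ref{6.13}) for $r = 2$, I would take $\sup_{0 \leq \sigma \leq t}$ on both sides of the energy identity before taking expectation, and estimate the stochastic integral via the Burkh\"{o}lder--Davis--Gundy inequality (Theorem \ref{BDGIN}) with $p = 1$:
\begin{equation*}
\mathbb{E}\sup_{0 \leq \sigma \leq t}\Bigl|\int_{0}^{\sigma}(g^{\varepsilon}(\mathbf{u}_{\varepsilon}),\mathbf{u}_{\varepsilon})\, dW\Bigr| \leq 3\,\mathbb{E}\Bigl(\int_{0}^{t}\|g^{\varepsilon}(\mathbf{u}_{\varepsilon})\|_{L^{2}}^{2}\|\mathbf{u}_{\varepsilon}\|_{L^{2}}^{2}\, ds\Bigr)^{1/2} \leq \tfrac{1}{2}\mathbb{E}\sup_{0 \leq \sigma \leq t}\|\mathbf{u}_{\varepsilon}(\sigma)\|_{L^{2}}^{2} + C\,\mathbb{E}\int_{0}^{t}(1 + \|\mathbf{u}_{\varepsilon}\|_{L^{2}}^{2})\, ds,
\end{equation*}
using elementary inequality $ab \leq \tfrac{1}{2}a^{2} + \tfrac{1}{2}b^{2}$. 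Moving $\tfrac{1}{2}\mathbb{E}\sup_{\sigma}\|\mathbf{u}_{\varepsilon}\|_{L^{2}}^{2}$ to the left, then Gronwall, finishes the case $r = 2$. For an arbitrary exponent $r \in (1,\infty)$, I would apply It\^{o}'s formula once more, this time to $\phi(x) = x^{r/2}$ evaluated at $\|\mathbf{u}_{\varepsilon}\|_{L^{2}}^{2}$ (or equivalently invoke the chain rule for $\|\mathbf{u}_{\varepsilon}\|_{L^{2}}^{r}$), and repeat the same argument with the BDG inequality now applied with exponent $r$ rather than $1$; the extra factors $\|\mathbf{u}_{\varepsilon}\|_{L^{2}}^{r-2}$ that appear are handled by the same Young/Gronwall absorption scheme.

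The main obstacle is not any single estimate but rather the rigorous justification of It\^{o}'s formula in the variational triple $V \subset H \subset V'$, since $d\mathbf{u}_{\varepsilon}$ lies in $V'$ while we are computing the squared $H$-norm; this is the classical Lions/Pardoux/Krylov--Rozovski\u\i\ extension of Theorem \ref{t6.0'}, which applies thanks to $\mathbf{u}_{\varepsilon} \in L^{p}(0,T;V)$ almost surely together with the dual integrability of $P^{\varepsilon}\mathbf{u}_{\varepsilon} + \mathcal{A}^{\varepsilon}\mathbf{u}_{\varepsilon} + B(\mathbf{u}_{\varepsilon}) + \mathbf{f}$ in $L^{p'}(0,T;V')$; note that the bound (\ref{6.9''}) is precisely what guarantees $B(\mathbf{u}_{\varepsilon}) \in L^{p'}(0,T;V')$ with controlled norm. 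The secondary difficulty is the careful sup-absorption in the BDG step, which requires choosing the Young constant small enough to move the supremum to the right-hand side before Gronwall is applied.
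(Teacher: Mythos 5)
Your proposal is correct and follows essentially the same route as the paper: It\^{o}'s formula for $\left\Vert \mathbf{u}_{\varepsilon }(t)\right\Vert _{L^{2}(Q)}^{2}$, the cancellation $b_{I}(\mathbf{u}_{\varepsilon },\mathbf{u}_{\varepsilon },\mathbf{u}_{\varepsilon })=0$ together with the coercivity (\ref{6.9}) and the linear growth of $g$, Young's inequality to absorb the forcing term into $\nu _{1}\int \left\Vert \mathbf{u}_{\varepsilon }\right\Vert _{V}^{p}$, the Burkh\"{o}lder--Davis--Gundy inequality with the $\tfrac{1}{2}$-absorption of the supremum, Gronwall, and finally It\^{o}'s formula applied to the power $r/2$ for general $r$. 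The only (immaterial) differences are that you take expectation before the supremum to extract the integrated bounds first, whereas the paper does everything in one pass, and that you explicitly flag the justification of It\^{o}'s formula in the Gelfand triple, which the paper leaves implicit.
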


\begin{proof}
Applying It\^{o}'s formula to $\left\Vert \mathbf{u}_{\varepsilon
}(t)\right\Vert _{L^{2}(Q)}^{2}$ gives 
\begin{eqnarray*}
&&\left\Vert \mathbf{u}_{\varepsilon }(t)\right\Vert
_{L^{2}(Q)}^{2}+2\int_{0}^{t}\left\langle P^{\varepsilon }\mathbf{u}%
_{\varepsilon }(s)+A^{\varepsilon }\mathbf{u}_{\varepsilon }(s)+B(\mathbf{u}%
_{\varepsilon }(s)),\mathbf{u}_{\varepsilon }(s)\right\rangle ds \\
&=&\left\Vert \mathbf{u}^{0}\right\Vert
_{L^{2}(Q)}^{2}+2\int_{0}^{t}\left\langle \mathbf{f}(s),\mathbf{u}%
_{\varepsilon }(s)\right\rangle ds+\int_{0}^{t}\left\vert g^{\varepsilon }(%
\mathbf{u}_{\varepsilon }(s))\right\vert ^{2}ds \\
&&+2\int_{0}^{t}\left( g^{\varepsilon }(\mathbf{u}_{\varepsilon }(s)),%
\mathbf{u}_{\varepsilon }(s)\right) dW(s).
\end{eqnarray*}%
By using (\ref{6.3}), [part (i) of] (\ref{6.6}) and (\ref{6.7}) we get 
\begin{equation}
\begin{array}{l}
\left\Vert \mathbf{u}_{\varepsilon }(t)\right\Vert _{L^{2}(Q)}^{2}+2\nu
_{0}\int_{0}^{t}\left\Vert \mathbf{u}_{\varepsilon }(t)\right\Vert
_{H_{0}^{1}(Q)^{N}}^{2}ds+2\nu _{1}\int_{0}^{t}\left\Vert \mathbf{u}%
_{\varepsilon }(t)\right\Vert _{V}^{p}ds \\ 
\leq \left\Vert \mathbf{u}^{0}\right\Vert
_{L^{2}(Q)}^{2}+2\int_{0}^{t}\left\Vert \mathbf{f}(s)\right\Vert _{V^{\prime
}}\left\Vert \mathbf{u}_{\varepsilon }(s)\right\Vert
_{V}ds+C\int_{0}^{t}\left\Vert \mathbf{u}_{\varepsilon }(s)\right\Vert
_{L^{2}(Q)}^{2}ds \\ 
+C+2\int_{0}^{t}\left( g^{\varepsilon }(\mathbf{u}_{\varepsilon }(s)),%
\mathbf{u}_{\varepsilon }(s)\right) dW(s).%
\end{array}
\label{6.15}
\end{equation}%
By Young's inequality applied to the first integral on the right-hand side
of (\ref{6.15}), 
\begin{eqnarray*}
2\int_{0}^{t}\left\Vert \mathbf{f}(s)\right\Vert _{V^{\prime }}\left\Vert 
\mathbf{u}_{\varepsilon }(s)\right\Vert _{V}ds &\leq &C(\nu
_{1})\int_{0}^{t}\left\Vert \mathbf{f}(s)\right\Vert _{V^{\prime
}}^{p^{\prime }}ds+\nu _{1}\int_{0}^{t}\left\Vert \mathbf{u}_{\varepsilon
}(s)\right\Vert _{V}^{p}ds \\
&\leq &C+\nu _{1}\int_{0}^{t}\left\Vert \mathbf{u}_{\varepsilon
}(s)\right\Vert _{V}^{p}ds.
\end{eqnarray*}%
Taking into account the above inequality in (\ref{6.15}) we are led to 
\begin{eqnarray*}
&&\left\Vert \mathbf{u}_{\varepsilon }(t)\right\Vert _{L^{2}(Q)}^{2}+2\nu
_{0}\int_{0}^{t}\left\Vert \mathbf{u}_{\varepsilon }(t)\right\Vert
_{H_{0}^{1}(Q)^{N}}^{2}ds+\nu _{1}\int_{0}^{t}\left\Vert \mathbf{u}%
_{\varepsilon }(t)\right\Vert _{V}^{p}ds \\
&\leq &\left\Vert \mathbf{u}^{0}\right\Vert
_{L^{2}(Q)}^{2}+C+C\int_{0}^{t}\left\Vert \mathbf{u}_{\varepsilon
}(s)\right\Vert _{L^{2}(Q)}^{2}ds+2\int_{0}^{t}\left( g^{\varepsilon }(%
\mathbf{u}_{\varepsilon }(s)),\mathbf{u}_{\varepsilon }(s)\right) dW(s).
\end{eqnarray*}%
Taking first the supremum over $0\leq s\leq t$ (for all $0\leq t\leq T$) and
next the mathematical expectation in the above inequality, 
\begin{equation}
\begin{array}{l}
\mathbb{E}\underset{0\leq s\leq t}{\sup }\left\Vert \mathbf{u}_{\varepsilon
}(s)\right\Vert _{L^{2}(Q)}^{2}+2\nu _{0}\mathbb{E}\int_{0}^{t}\left\Vert 
\mathbf{u}_{\varepsilon }(t)\right\Vert _{H_{0}^{1}(Q)^{N}}^{2}ds+\nu _{1}%
\mathbb{E}\int_{0}^{t}\left\Vert \mathbf{u}_{\varepsilon }(t)\right\Vert
_{V}^{p}ds \\ 
\leq \left\Vert \mathbf{u}^{0}\right\Vert _{L^{2}(Q)}^{2}+C+C\mathbb{E}%
\int_{0}^{t}\left\Vert \mathbf{u}_{\varepsilon }(s)\right\Vert
_{L^{2}(Q)}^{2}ds \\ 
\ \ \ \ +2\mathbb{E}\underset{0\leq s\leq t}{\sup }\left\vert
\int_{0}^{s}\left( g^{\varepsilon }(\mathbf{u}_{\varepsilon }(\tau )),%
\mathbf{u}_{\varepsilon }(\tau )\right) dW(\tau )\right\vert .%
\end{array}
\label{6.16}
\end{equation}%
Making use of the Burkh\"{o}lder-Davis-Gundy's inequality applied to the
last term in the right-hand side of (\ref{6.16}), 
\begin{eqnarray*}
&&2\mathbb{E}\underset{0\leq s\leq t}{\sup }\left\vert \int_{0}^{s}\left(
g^{\varepsilon }(\mathbf{u}_{\varepsilon }(\tau )),\mathbf{u}_{\varepsilon
}(\tau )\right) dW(\tau )\right\vert \\
&\leq &C\mathbb{E}\left( \int_{0}^{t}\left\vert \left( g^{\varepsilon }(%
\mathbf{u}_{\varepsilon }(s)),\mathbf{u}_{\varepsilon }(s)\right)
\right\vert ^{2}ds\right) ^{1/2} \\
&\leq &C\mathbb{E}\left[ \sup_{0\leq s\leq t}\left\Vert \mathbf{u}%
_{\varepsilon }(s)\right\Vert _{L^{2}(Q)}^{2}\left( \int_{0}^{t}\left\Vert
g^{\varepsilon }(\mathbf{u}_{\varepsilon }(s))\right\Vert
_{L^{2}(Q)}^{2}ds\right) ^{1/2}\right] ,
\end{eqnarray*}%
and by Cauchy-Schwartz's inequality, 
\begin{equation*}
\begin{array}{l}
2\mathbb{E}\underset{0\leq s\leq t}{\sup }\left\vert \int_{0}^{s}\left(
g^{\varepsilon }(\mathbf{u}_{\varepsilon }(\tau )),\mathbf{u}_{\varepsilon
}(\tau )\right) dW(\tau )\right\vert \leq \frac{1}{2}\mathbb{E}\underset{%
0\leq s\leq t}{\sup }\left\Vert \mathbf{u}_{\varepsilon }(s)\right\Vert
_{L^{2}(Q)}^{2} \\ 
\ \ \ \ \ \ \ \ +C\mathbb{E}\int_{0}^{t}\left\Vert \mathbf{u}_{\varepsilon
}(s)\right\Vert _{L^{2}(Q)}^{2}ds.%
\end{array}%
\end{equation*}%
Putting this in (\ref{6.16}) we derive 
\begin{equation}
\begin{array}{l}
\frac{1}{2}\mathbb{E}\underset{0\leq s\leq t}{\sup }\left\Vert \mathbf{u}%
_{\varepsilon }(s)\right\Vert _{L^{2}(Q)}^{2}+2\nu _{0}\mathbb{E}%
\int_{0}^{t}\left\Vert \mathbf{u}_{\varepsilon }(s)\right\Vert
_{H_{0}^{1}(Q)^{N}}^{2}ds+\nu _{1}\mathbb{E}\int_{0}^{t}\left\Vert \mathbf{u}%
_{\varepsilon }(s)\right\Vert _{V}^{p}ds \\ 
\leq C+C\mathbb{E}\int_{0}^{t}\left\Vert \mathbf{u}_{\varepsilon
}(s)\right\Vert _{L^{2}(Q)}^{2}ds.%
\end{array}
\label{6.17}
\end{equation}%
It therefore follows from Gronwall's inequality that 
\begin{equation}
\mathbb{E}\underset{0\leq t\leq T}{\sup }\left\Vert \mathbf{u}_{\varepsilon
}(t)\right\Vert _{L^{2}(Q)}^{2}\leq C  \label{6.18}
\end{equation}%
where $C$ is independent of $\varepsilon $. It also follows from (\ref{6.17}%
) and (\ref{6.18}) that 
\begin{equation*}
\mathbb{E}\int_{0}^{T}\left\Vert \mathbf{u}_{\varepsilon }(t)\right\Vert
_{V}^{p}dt\leq C\text{ and }\mathbb{E}\int_{0}^{T}\left\Vert \mathbf{u}%
_{\varepsilon }(t)\right\Vert _{H_{0}^{1}(Q)^{N}}^{2}dt\leq C
\end{equation*}%
where $C$ is also independent of $\varepsilon $, hence (\ref{6.13'}) and (%
\ref{6.14}).

Now, as far as (\ref{6.13}) is concerned, we start again from the It\^{o}'s
formula which reads in this case as: if 
\begin{equation*}
X_{t}=X_{0}+\int_{0}^{t}\phi (s)ds+N_{t}
\end{equation*}%
where $X_{t}=\left\Vert \mathbf{u}_{\varepsilon }(t)\right\Vert
_{L^{2}(Q)}^{2}$, $X_{0}=\left\Vert \mathbf{u}^{0}\right\Vert
_{L^{2}(Q)}^{2} $, $N_{t}=2\int_{0}^{t}\left( g^{\varepsilon }(\mathbf{u}%
_{\varepsilon }(s),\mathbf{u}_{\varepsilon }(s)\right) dW(s)$ and $\phi
(s)=-2\left\langle P^{\varepsilon }\mathbf{u}_{\varepsilon }(s)+\mathcal{A}%
^{\varepsilon }\mathbf{u}_{\varepsilon }(s),\mathbf{u}_{\varepsilon
}(s)\right\rangle +2\left\langle \mathbf{f}(s),\mathbf{u}_{\varepsilon
}(s)\right\rangle +\left\Vert g^{\varepsilon }(\mathbf{u}_{\varepsilon
}(s))\right\Vert _{L^{2}(Q)}^{2}$, then 
\begin{equation*}
X_{t}^{l}=X_{0}^{l}+l\int_{0}^{t}X_{s}^{l-1}\phi
(s)ds+l\int_{0}^{t}X_{s}^{l-1}dN_{s}+\frac{1}{2}l(l-1)%
\int_{0}^{t}X_{s}^{l-2}d\left\langle N_{s}\right\rangle
\end{equation*}%
for any $1\leq l<\infty $. We apply the above formula with $l=r/2$ ($r>2$)
and we get 
\begin{eqnarray*}
&&\left\Vert \mathbf{u}_{\varepsilon }(t)\right\Vert
_{L^{2}(Q)}^{r}+r\int_{0}^{t}\left\Vert \mathbf{u}_{\varepsilon
}(s)\right\Vert _{L^{2}(Q)}^{r-2}[\nu _{0}\left\Vert \mathbf{u}_{\varepsilon
}(s)\right\Vert _{H_{0}^{1}(Q)^{N}}^{2}+\nu _{1}\left\Vert \mathbf{u}%
_{\varepsilon }(s)\right\Vert _{V}^{p}]ds \\
&\leq &\left\Vert \mathbf{u}^{0}\right\Vert _{L^{2}(Q)}^{r}+\frac{r}{2}%
\int_{0}^{t}\left\Vert \mathbf{u}_{\varepsilon }(s)\right\Vert
_{L^{2}(Q)}^{r-2}\left\Vert g^{\varepsilon }(\mathbf{u}_{\varepsilon
}(s))\right\Vert _{L^{2}(Q)}^{2}ds \\
&&+r\left( \frac{r}{2}-1\right) \int_{0}^{t}\left\Vert \mathbf{u}%
_{\varepsilon }(s)\right\Vert _{L^{2}(Q)}^{r-2}\left\Vert g^{\varepsilon }(%
\mathbf{u}_{\varepsilon }(s))\right\Vert _{L^{2}(Q)}^{2}ds \\
&&+\frac{r}{2}\int_{0}^{t}\left\Vert \mathbf{u}_{\varepsilon }(s)\right\Vert
_{L^{2}(Q)}^{r-2}\left( g^{\varepsilon }(\mathbf{u}_{\varepsilon }(s)),%
\mathbf{u}_{\varepsilon }(s)\right) dW(s).
\end{eqnarray*}%
We can therefore follow the same way of reasoning as before (see also \cite%
{Sritharan}) to get (\ref{6.13}). The proof is completed.
\end{proof}

The estimate (\ref{6.13}) (for $r>2$) is concerned with the higher
integrability of the sequence ($\mathbf{u}_{\varepsilon })_{\varepsilon }$
so that we can make use of it together with the Vitali's theorem. However
this will become precise in the next subsection. Now, in order to prove the
tightness property of the sequence of probability laws of $(\mathbf{u}%
_{\varepsilon })_{\varepsilon }$, we will also need the

\begin{proposition}
\label{p6.2}Assuming that the function $t\mapsto \mathbf{u}_{\varepsilon
}(t) $ is extended by zero outside the interval $[0,T]$, there exists a
positive constant $C$ such that 
\begin{equation*}
\mathbb{E}\sup_{\left\vert \theta \right\vert \leq \delta
}\int_{0}^{T}\left\Vert \mathbf{u}_{\varepsilon }(t+\theta )-\mathbf{u}%
_{\varepsilon }(t)\right\Vert _{V^{\prime }}^{p^{\prime }}dt\leq C\delta ^{%
\frac{1}{p-1}}
\end{equation*}%
for each $\varepsilon >0$ and $0<\delta <1$.
\end{proposition}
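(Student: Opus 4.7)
The plan is to use the stochastic equation \eqref{6.11} to write, for $t,\,t+\theta\in[0,T]$,
\begin{equation*}
\mathbf{u}_\varepsilon(t+\theta)-\mathbf{u}_\varepsilon(t)=J_1(t,\theta)+J_2(t,\theta),
\end{equation*}
where $J_1=\int_t^{t+\theta}\bigl(\mathbf{f}-P^\varepsilon\mathbf{u}_\varepsilon-\mathcal{A}^\varepsilon\mathbf{u}_\varepsilon-B(\mathbf{u}_\varepsilon)\bigr)ds$ is the drift integral and $J_2=\int_t^{t+\theta}g^\varepsilon(\mathbf{u}_\varepsilon)dW$ is the It\^o integral, then to bound each piece in $V'$ using the a priori estimates collected in Proposition~\ref{p6.1}, and to combine. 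Throughout I use that $p'=p/(p-1)\le 2$ because $p\ge 3$, as well as the operator bounds $\|P^\varepsilon\mathbf{u}\|_{V'}\le C\|\mathbf{u}\|_V$, $\|\mathcal{A}^\varepsilon\mathbf{u}\|_{V'}\le C\|\mathbf{u}\|_V^{p-1}$ and $\|B(\mathbf{u})\|_{V'}\le C\|\mathbf{u}\|_V^2$ coming from \eqref{6.8}--\eqref{6.9} and \eqref{6.9''}.

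For the drift, denote by $\Phi(s)$ the $V'$-norm of the integrand of $J_1$. Minkowski followed by H\"older on an interval of length $|\theta|\le\delta$ gives
\begin{equation*}
\|J_1(t,\theta)\|_{V'}^{p'}\le |\theta|^{p'-1}\int_{\min(t,t+\theta)}^{\max(t,t+\theta)}\Phi(s)^{p'}\,ds .
\end{equation*}
Integrating in $t\in[0,T]$, Fubini yields $\int_0^T\|J_1\|_{V'}^{p'}dt\le 2\delta^{p'}\int_{-\delta}^{T+\delta}\Phi^{p'}ds$. Taking $\sup_{|\theta|\le\delta}$ and expectation, the bound $\mathbb{E}\int_0^T\|\mathbf{u}_\varepsilon\|_V^p\,dt\le C$ derived in the proof of Proposition~\ref{p6.1}, together with $\mathbf{f}\in L^{p'}(0,T;V')$, controls the right-hand side by $C\delta^{p'}$.

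For the stochastic part, the continuous embedding $L^2(Q)^N\hookrightarrow V'$ combined with the Burkh\"older--Davis--Gundy inequality (Theorem~\ref{BDGIN}), applied with exponent $p'\le 2$ separately to the two one-sided martingales $\theta\mapsto\int_t^{t+\theta}g^\varepsilon dW$ ($\theta\ge 0$) and $\theta\mapsto -\int_{t+\theta}^tg^\varepsilon dW$ ($\theta\le 0$), yields for each fixed $t$
\begin{equation*}
\mathbb{E}\sup_{|\theta|\le\delta}\|J_2(t,\theta)\|_{V'}^{p'}\le C\,\mathbb{E}\Big(\int_{(t-\delta)\vee 0}^{(t+\delta)\wedge T}\|g^\varepsilon(\mathbf{u}_\varepsilon)\|_{L^2}^2ds\Big)^{p'/2}.
\end{equation*}
Since $p'/2\le 1$, Jensen together with the sublinear growth \eqref{6.7}(ii) and \eqref{6.13} bound this by $C\delta^{p'/2}$; integrating in $t$ and invoking Fubini give $\mathbb{E}\sup_{|\theta|\le\delta}\int_0^T\|J_2\|_{V'}^{p'}dt\le CT\delta^{p'/2}$. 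The boundary pieces coming from those $t$ for which $t+\theta$ leaves $[0,T]$, where the zero extension yields $\|\mathbf{u}_\varepsilon(t+\theta)-\mathbf{u}_\varepsilon(t)\|_{V'}=\|\mathbf{u}_\varepsilon(t)\|_{V'}$, reduce to integrals over a set of length $\le\delta$; using $\|\cdot\|_{V'}\le C\|\cdot\|_V$, H\"older in time with the conjugate exponents $p-1$ and $(p-1)/(p-2)$, and again the $L^p(0,T;V)$-bound from Proposition~\ref{p6.1}, one obtains the bound $C\delta^{(p-2)/(p-1)}$.

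Collecting the three rates $\delta^{p'}$, $\delta^{p'/2}$ and $\delta^{(p-2)/(p-1)}$, each exponent is $\ge 1/(p-1)$ whenever $p\ge 3$ (with equality occurring for the third one precisely when $p=3$); since $\delta\le 1$, the desired bound $C\delta^{1/(p-1)}$ follows. The main technical obstacle is the stochastic term: one must interchange $\sup_{|\theta|\le\delta}$ with the $dt$-integral, and then apply BDG to both a forward and a backward martingale; this works precisely because $p'\le 2$, that is, because of the standing assumption $p\ge 3$.
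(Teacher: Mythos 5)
Your proposal is correct and follows essentially the same route as the paper: write the increment via the equation, bound the drift part by H\"older in time together with the a priori estimates of Proposition \ref{p6.1} (using $p'\le 2$ and $2p'\le p$), and handle the noise by the Burkh\"older--Davis--Gundy inequality after interchanging the supremum with the $dt$-integral, the three resulting exponents all dominating $1/(p-1)$ when $p\ge 3$. Your treatment is in fact slightly more careful than the paper's on two points it glosses over — the boundary contribution from the zero extension near $t=T$ and the backward martingale for $\theta<0$ — but these are refinements of the same argument, not a different one.
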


\begin{proof}
Assume $\theta \geq 0$. The same way of reasoning will apply for $\theta <0$%
. We have 
\begin{eqnarray*}
\mathbf{u}_{\varepsilon }(t+\theta )-\mathbf{u}_{\varepsilon }(t)
&=&\int_{t}^{t+\theta }d\mathbf{u}_{\varepsilon }(s) \\
&=&\int_{t}^{t+\theta }[-P^{\varepsilon }\mathbf{u}_{\varepsilon }(s)-%
\mathcal{A}^{\varepsilon }\mathbf{u}_{\varepsilon }(s)-B(\mathbf{u}%
_{\varepsilon }(s))+\mathbf{f}(s)]ds \\
&&+\int_{t}^{t+\theta }g^{\varepsilon }(\mathbf{u}_{\varepsilon }(s))dW(s),
\end{eqnarray*}%
hence 
\begin{eqnarray*}
\left\Vert \mathbf{u}_{\varepsilon }(t+\theta )-\mathbf{u}_{\varepsilon
}(t)\right\Vert _{V^{\prime }}^{p^{\prime }} &\leq &C\left\Vert
\int_{t}^{t+\theta }P^{\varepsilon }\mathbf{u}_{\varepsilon
}(s)ds\right\Vert _{V^{\prime }}^{p^{\prime }}+C\left\Vert
\int_{t}^{t+\theta }\mathcal{A}^{\varepsilon }\mathbf{u}_{\varepsilon
}(s)ds\right\Vert _{V^{\prime }}^{p^{\prime }} \\
&&+C\left\Vert \int_{t}^{t+\theta }B(\mathbf{u}_{\varepsilon
}(s))ds\right\Vert _{V^{\prime }}^{p^{\prime }}+C\left\Vert
\int_{t}^{t+\theta }\mathbf{f}(s))ds\right\Vert _{V^{\prime }}^{p^{\prime }}
\\
&&+C\left\Vert \int_{t}^{t+\theta }\int_{t}^{t+\theta }g^{\varepsilon }(%
\mathbf{u}_{\varepsilon }(s))dW(s)\right\Vert _{V^{\prime }}^{p^{\prime }} \\
&=&I_{1}+I_{2}+I_{3}+I_{4}+I_{5}.
\end{eqnarray*}%
But 
\begin{equation*}
I_{1}\leq C\left( \int_{t}^{t+\theta }\left\Vert P^{\varepsilon }\mathbf{u}%
_{\varepsilon }(s)\right\Vert _{V^{\prime }}ds\right) ^{p^{\prime }}
\end{equation*}%
and 
\begin{eqnarray*}
\int_{t}^{t+\theta }\left\Vert P^{\varepsilon }\mathbf{u}_{\varepsilon
}(s)\right\Vert _{V^{\prime }}ds &\leq &C\int_{t}^{t+\theta }\left\Vert
P^{\varepsilon }\mathbf{u}_{\varepsilon }(s)\right\Vert _{H^{-1}(Q)^{N}}ds \\
&\leq &C\theta ^{\frac{1}{p}}\left( \int_{t}^{t+\theta }\left\Vert
P^{\varepsilon }\mathbf{u}_{\varepsilon }(s)\right\Vert
_{H^{-1}(Q)^{N}}^{p^{\prime }}ds\right) ^{\frac{1}{p^{\prime }}},
\end{eqnarray*}%
hence 
\begin{eqnarray*}
I_{1} &\leq &C\theta ^{\frac{p^{\prime }}{p}}\int_{t}^{t+\theta }\left\Vert
P^{\varepsilon }\mathbf{u}_{\varepsilon }(s)\right\Vert
_{H^{-1}(Q)^{N}}^{p^{\prime }}ds \\
&\leq &C\theta ^{\frac{p^{\prime }}{p}}\int_{t}^{t+\theta }\left(
1+\left\Vert P^{\varepsilon }\mathbf{u}_{\varepsilon }(s)\right\Vert
_{H^{-1}(Q)^{N}}^{2}\right) ds\text{\ since }p^{\prime }<2 \\
&\leq &C\theta ^{\frac{p^{\prime }}{p}}\int_{t}^{t+\theta }\left\Vert
P^{\varepsilon }\mathbf{u}_{\varepsilon }(s)\right\Vert
_{H^{-1}(Q)^{N}}^{2}ds \\
&\leq &C\delta ^{\frac{p^{\prime }}{p}}\int_{t}^{t+\theta }\left\Vert 
\mathbf{u}_{\varepsilon }(s)\right\Vert _{H_{0}^{1}(Q)^{N}}^{2}ds.
\end{eqnarray*}%
We infer from (\ref{6.13'}) that 
\begin{equation*}
\mathbb{E}\sup_{0\leq \theta \leq \delta }\int_{0}^{T}I_{1}dt\leq C\delta ^{%
\frac{p^{\prime }}{p}}\mathbb{E}\int_{0}^{T}\left( \int_{t}^{t+\delta
}\left\Vert \mathbf{u}_{\varepsilon }(s)\right\Vert
_{H_{0}^{1}(Q)^{N}}^{2}ds\right) dt\leq C\delta ^{\frac{p^{\prime }}{p}}.
\end{equation*}%
As for $I_{2}$ we have, as above, 
\begin{equation*}
I_{2}\leq C\theta ^{\frac{p^{\prime }}{p}}\int_{t}^{t+\theta }\left\Vert 
\mathcal{A}^{\varepsilon }\mathbf{u}_{\varepsilon }(s)\right\Vert
_{V^{\prime }}^{p^{\prime }}ds,
\end{equation*}%
hence 
\begin{eqnarray*}
\mathbb{E}\sup_{0\leq \theta \leq \delta }\int_{0}^{T}I_{2}dt &\leq &C\delta
^{\frac{p^{\prime }}{p}}\mathbb{E}\int_{0}^{T}\left( \int_{t}^{t+\delta
}\left\Vert D\mathbf{u}_{\varepsilon }(s)\right\Vert
_{L^{p}(Q)}^{p}ds\right) dt \\
&\leq &C\delta ^{\frac{p^{\prime }}{p}}\text{ by (\ref{6.14}).}
\end{eqnarray*}%
Now, dealing with $I_{3}$ we have 
\begin{equation*}
\int_{t}^{t+\theta }\left\Vert B(\mathbf{u}_{\varepsilon }(s))\right\Vert
_{V^{\prime }}ds\leq \theta ^{\frac{1}{p}}\int_{t}^{t+\theta }\left\Vert B(%
\mathbf{u}_{\varepsilon }(s))\right\Vert _{V^{\prime }}^{p^{\prime }}ds,
\end{equation*}%
and proceeding as above (taking into account (\ref{6.9''}) and (\ref{6.14}))
we get 
\begin{equation*}
\mathbb{E}\sup_{0\leq \theta \leq \delta }\int_{0}^{T}I_{3}dt\leq C\delta ^{%
\frac{p^{\prime }}{p}}.
\end{equation*}%
We also have $\mathbb{E}\sup_{0\leq \theta \leq \delta
}\int_{0}^{T}I_{4}dt\leq C\delta ^{\frac{p^{\prime }}{p}}$. For the last
integral, by using the Burkh\"{o}lder-Davis-Gundy's inequality we have 
\begin{eqnarray*}
\mathbb{E}\sup_{0\leq \theta \leq \delta }\int_{0}^{T}I_{5}dt &\leq
&C\int_{0}^{T}\mathbb{E}\left( \int_{t}^{t+\theta }\left\Vert g^{\varepsilon
}(\mathbf{u}_{\varepsilon }(s))\right\Vert _{L^{2}(Q)}^{2}ds\right) ^{\frac{%
p^{\prime }}{2}}dt \\
&\leq &C\int_{0}^{T}\mathbb{E}\left( \int_{t}^{t+\theta }\left\Vert \mathbf{u%
}_{\varepsilon }(s)\right\Vert _{L^{2}(Q)}^{2}ds\right) ^{\frac{p^{\prime }}{%
2}}dt\text{\ \ (by (\ref{6.7}))} \\
&\leq &C\delta ^{\frac{p^{\prime }}{2}}\int_{0}^{T}\mathbb{E}\sup_{0\leq
s\leq T}\left\Vert \mathbf{u}_{\varepsilon }(s)\right\Vert
_{L^{2}(Q)}^{p^{\prime }}dt \\
&\leq &C\delta ^{\frac{p^{\prime }}{2}}\text{\ by (\ref{6.13}).}
\end{eqnarray*}%
Combining all the above estimates leads at once at 
\begin{equation*}
E\sup_{0\leq \theta \leq \delta }\int_{0}^{T}\left\Vert \mathbf{u}%
_{\varepsilon }(t+\theta )-\mathbf{u}_{\varepsilon }(t)\right\Vert
_{V^{\prime }}^{p^{\prime }}dt\leq C\delta ^{\frac{p^{\prime }}{p}}
\end{equation*}%
since $\delta ^{\frac{p^{\prime }}{2}}\leq \delta ^{\frac{p^{\prime }}{p}}$
(recall that $p\geq 3$ and $0<\delta <1$). As the same inequality obviously
holds for $\theta <0$, the proof is completed.
\end{proof}

\subsection{Tightness property of the probability laws of $(\mathbf{u}_{%
\protect\varepsilon })_{\protect\varepsilon }$ \label{subsect6.3}}

We are now able to prove the tightness of the law of $(\mathbf{u}%
_{\varepsilon },W)$. We shall for this aim, follow the lead of Bensoussan 
\cite[Proposition 3.1]{bensoussan2} and Debussche et al. \cite{Glatt}.
Before we can proceed any further, we need the following important result.

\begin{lemma}[{\protect\cite[Proposition 3.1]{bensoussan2}}]
\label{l6.1}Let $(\mu _{n})_{n}$ and $(\nu _{n})_{n}$ be two ordinary
sequences of positive real numbers such that $\mu _{n},\nu _{n}\rightarrow 0$
as $n\rightarrow \infty $. For the three positive constants $K$, $L$ and $M$%
, the set 
\begin{equation*}
\begin{array}{l}
Z=\{\mathbf{u}:\int_{0}^{T}\left\Vert \mathbf{u}\right\Vert _{V}^{p}dt\leq L%
\text{,}\ \left\Vert \mathbf{u}(t)\right\Vert _{H}^{2}\leq K\text{ a.e. }t%
\text{,}\  \\ 
\ \ \ \sup_{\left\vert \theta \right\vert \leq \mu
_{n}}\int_{0}^{T}\left\Vert \mathbf{u}(t+\theta )-\mathbf{u}(t)\right\Vert
_{V^{\prime }}^{p^{\prime }}dt\leq \nu _{n}M\text{ for all }n\in \mathbb{N}\}%
\end{array}%
\end{equation*}%
is a compact subset of $L^{2}(0,T;H)$.
\end{lemma}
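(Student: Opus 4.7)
The strategy is to apply the compactness criterion of J. Simon (Simon, 1987, Corollary 4) for subsets of $L^{p}(0,T;B)$, combined with the Ehrling interpolation inequality issuing from the compact embedding $V \hookrightarrow H$. Recall that $V \subset H_{0}^{1}(Q)^{N}$ with $Q$ bounded, so $V \hookrightarrow H$ is compact by Rellich--Kondrachov and $H \hookrightarrow V^{\prime }$ is continuous; Ehrling's lemma therefore yields, for each $\eta >0$, a constant $C_{\eta}>0$ such that
\begin{equation*}
\|v\|_{H} \leq \eta \|v\|_{V} + C_{\eta} \|v\|_{V^{\prime }}\qquad \text{for all } v\in V.
\end{equation*}

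First I will prove the relative compactness of $Z$ in $L^{2}(0,T;H)$ by verifying Simon's two hypotheses with $B=H$ and exponent $2$. \textbf{Hypothesis (i)}: for every $0<t_{1}<t_{2}<T$, the family $\{\int_{t_{1}}^{t_{2}}\mathbf{u}(t)\,dt : \mathbf{u}\in Z\}$ is relatively compact in $H$; indeed by H\"{o}lder and the first defining bound of $Z$,
\begin{equation*}
\Bigl\|\int_{t_{1}}^{t_{2}}\mathbf{u}(t)\,dt\Bigr\|_{V} \leq (t_{2}-t_{1})^{1-1/p}\,L^{1/p},
\end{equation*}
so the family is bounded in $V$ and thus relatively compact in $H$. \textbf{Hypothesis (ii)}: $\sup_{\mathbf{u}\in Z}\|\tau _{h}\mathbf{u}-\mathbf{u}\|_{L^{2}(0,T-h;H)} \to 0$ as $h \to 0^{+}$. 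Squaring the Ehrling inequality applied to $\mathbf{u}(t+h)-\mathbf{u}(t)$ and integrating gives
\begin{equation*}
\int_{0}^{T-h}\|\mathbf{u}(t+h)-\mathbf{u}(t)\|_{H}^{2}\,dt \leq 2\eta^{2}\,I_{V}(h) + 2C_{\eta}^{2}\,I_{V^{\prime }}(h),
\end{equation*}
where $I_{V}(h)$ and $I_{V^{\prime }}(h)$ are the corresponding integrals for the $V$ and $V^{\prime}$ norms. The first is bounded uniformly in $\mathbf{u}\in Z$ and $h$: $I_{V}(h)\leq 4 T^{1-2/p} L^{2/p}$ by H\"{o}lder together with the $L^{p}(0,T;V)$ bound.

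The delicate step is to control $I_{V^{\prime }}(h)$. Here I exploit the pointwise bound $\|\mathbf{u}(t)\|_{V^{\prime }}\leq c\|\mathbf{u}(t)\|_{H}\leq c\sqrt{K}$ (with $c$ the embedding constant of $H\hookrightarrow V^{\prime}$), and interpolate: since $p\geq 3$ implies $p^{\prime }\leq 3/2<2$, the exponent $2-p^{\prime }$ is positive, so
\begin{equation*}
\|\mathbf{u}(t+h)-\mathbf{u}(t)\|_{V^{\prime }}^{2} \leq (2c\sqrt{K})^{2-p^{\prime }}\,\|\mathbf{u}(t+h)-\mathbf{u}(t)\|_{V^{\prime }}^{p^{\prime }}.
\end{equation*}
Integrating and invoking the third defining bound of $Z$, for $h\leq \mu_{n}$,
\begin{equation*}
I_{V^{\prime }}(h) \leq (2c\sqrt{K})^{2-p^{\prime }}\,\nu _{n}M.
\end{equation*}
Given $\varepsilon >0$, first choose $\eta $ small so that $8\eta^{2}T^{1-2/p}L^{2/p}<\varepsilon/2$; then with this $\eta $ fixed, choose $n$ large so that $2C_{\eta }^{2}(2c\sqrt{K})^{2-p^{\prime }}\nu _{n}M<\varepsilon/2$. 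For every $|h|\leq \mu_{n}$ this yields $I_{H}(h)<\varepsilon$ uniformly in $\mathbf{u}\in Z$, establishing hypothesis (ii) (the case $h<0$ is symmetric). Simon's theorem then delivers relative compactness of $Z$ in $L^{2}(0,T;H)$.

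Finally I close the argument by showing $Z$ itself is closed in $L^{2}(0,T;H)$. If $\mathbf{u}_{k}\to \mathbf{u}$ in $L^{2}(0,T;H)$ with $\mathbf{u}_{k}\in Z$, the $L^{p}(0,T;V)$ bound produces a subsequence converging weakly in $L^{p}(0,T;V)$ to a limit which must agree with $\mathbf{u}$; weak lower semicontinuity of $\mathbf{v}\mapsto \int_{0}^{T}\|\mathbf{v}\|_{V}^{p}dt$ gives the first constraint. The pointwise bound $\|\mathbf{u}(t)\|_{H}^{2}\leq K$ follows from a.e.\ convergence along a subsequence (or weak-$\ast $ compactness in $L^{\infty }(0,T;H)$). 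For each fixed $n$ and $|\theta|\leq \mu_{n}$, the functional $\mathbf{v}\mapsto \int_{0}^{T}\|\mathbf{v}(t+\theta)-\mathbf{v}(t)\|_{V^{\prime }}^{p^{\prime }}dt$ is also weakly l.s.c.\ on $L^{p}(0,T;V)$, so the last constraint passes to the limit, and $\mathbf{u}\in Z$. Hence $Z$ is compact in $L^{2}(0,T;H)$.

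The main obstacle is the verification of hypothesis (ii) of Simon's theorem: the hypothesis on $Z$ only provides an $L^{p^{\prime }}$-type modulus in $V^{\prime }$, whereas we need an $L^{2}$-type modulus in $H$. Bridging this gap requires both (a) the pointwise $H$-bound (hence $V^{\prime }$-bound) to upgrade $L^{p^{\prime }}$ to $L^{2}$ in $V^{\prime }$ through the interpolation identity above (valid exactly because $p\geq 3 > 2$), and (b) the Ehrling inequality, arising from the compact embedding $V \hookrightarrow H$, to convert the $V^{\prime }$-equicontinuity into $H$-equicontinuity while keeping the $V$-contribution uniformly small.
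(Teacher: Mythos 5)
Your proof is correct. Note that the paper itself offers no proof of this lemma --- it is imported verbatim from Bensoussan \cite[Proposition 3.1]{bensoussan2} --- so there is no in-paper argument to compare against; but your route (Simon's compactness criterion in $L^{2}(0,T;H)$ plus the Ehrling inequality from the compact embedding $V\hookrightarrow H$, with the $L^{p^{\prime}}$-in-$V^{\prime}$ equicontinuity upgraded to $L^{2}$-in-$V^{\prime}$ via the uniform $L^{\infty}(0,T;H)$ bound and $p^{\prime}<2$) is exactly the classical Aubin--Lions--Simon argument that underlies Bensoussan's proposition, and every step checks out, including the order of quantifiers (first $\eta$, then $n$) in verifying the uniform translation estimate and the closedness argument that upgrades relative compactness to compactness.
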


This being so, set $\mathfrak{S}=L^{2}(0,T;H)\times \mathcal{C}(0,T;\mathbb{R%
}^{m})$, a metric space equipped with its Borel $\sigma $-algebra $\mathcal{B%
}(\mathfrak{S})$. For $0<\varepsilon <1$, let $\Psi _{\varepsilon }$ be the
measurable $\mathfrak{S}$-valued mapping defined on $(\Omega ,\mathcal{F},%
\mathbb{P})$ as 
\begin{equation*}
\Psi _{\varepsilon }(\omega )=(\mathbf{u}_{\varepsilon }(\cdot ,\omega
),W(\cdot ,\omega ))\text{\ }(\omega \in \Omega )\text{.}
\end{equation*}%
We introduce the image of $\mathbb{P}$ under $\Psi _{\varepsilon }$ defined
by 
\begin{equation*}
\pi _{\varepsilon }(S)=\mathbb{P}(\Psi _{\varepsilon }^{-1}(S))\;\;(S\in 
\mathcal{B}(\mathfrak{S})),
\end{equation*}%
which defines a sequence of probability measures on $\mathfrak{S}$. The
following result holds.

\begin{theorem}
\label{t6.2}The sequence $(\pi _{\varepsilon })_{0<\varepsilon <1}$ is tight
in $(\mathfrak{S},\mathcal{B}(\mathfrak{S}))$.
\end{theorem}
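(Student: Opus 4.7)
The plan is to exploit the product structure $\mathfrak{S}=L^{2}(0,T;H)\times \mathcal{C}(0,T;\mathbb{R}^{m})$ and establish tightness of each marginal law separately; since a Cartesian product of compact sets is compact in the product topology, this will yield tightness of $(\pi_{\varepsilon})$. The second marginal is the same (non-random) law $\mathbb{P}\circ W^{-1}$ for every $\varepsilon$, which as a Borel probability measure on the Polish space $\mathcal{C}(0,T;\mathbb{R}^{m})$ is automatically Radon and hence tight. Thus all the real work is reserved for the first marginal, namely the laws of $\mathbf{u}_{\varepsilon}$ on $L^{2}(0,T;H)$.

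Fix $\eta>0$. The strategy is to invoke Lemma \ref{l6.1} with parameters $L$, $K$, $M$ and auxiliary sequences $(\mu_{n})$, $(\nu_{n})$ tuned so that the event $\{\mathbf{u}_{\varepsilon}\in Z\}$ has probability at least $1-3\eta/4$, uniformly in $\varepsilon\in(0,1)$. Using (\ref{6.14}) together with Markov's inequality, I would choose $L$ so large that $\mathbb{P}(\int_{0}^{T}\|\mathbf{u}_{\varepsilon}(t)\|_{V}^{p}\,dt>L)\leq C/L<\eta/4$. Next, using (\ref{6.13}) with $r=2$ and Markov once more, I would pick $K$ large enough that $\mathbb{P}(\sup_{0\leq t\leq T}\|\mathbf{u}_{\varepsilon}(t)\|_{H}^{2}>K)\leq C/K<\eta/4$. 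For the temporal regularity condition I would set $\mu_{n}=2^{-n}$ and $\nu_{n}=2^{-n/(2(p-1))}$; Proposition \ref{p6.2} combined with Markov then gives
\begin{equation*}
\mathbb{P}\!\left(\sup_{|\theta|\leq \mu_{n}}\int_{0}^{T}\|\mathbf{u}_{\varepsilon}(t+\theta)-\mathbf{u}_{\varepsilon}(t)\|_{V'}^{p'}\,dt>\nu_{n}M\right)\leq \frac{C\,\mu_{n}^{1/(p-1)}}{\nu_{n}M}=\frac{C\cdot 2^{-n/(2(p-1))}}{M},
\end{equation*}
which is the general term of a convergent geometric series (since $p\geq 3$); choosing $M$ large enough, the union of these events over $n\in\mathbb{N}$ has total probability less than $\eta/4$. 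A union bound over the three families of events then yields $\mathbb{P}(\mathbf{u}_{\varepsilon}\in Z)\geq 1-3\eta/4$ uniformly in $\varepsilon$.

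The main delicate point is the calibration of $\mu_{n}$ and $\nu_{n}$: both must tend to zero (so as to reach into the compact set $Z$ supplied by Lemma \ref{l6.1}), but $\nu_{n}$ cannot decay too quickly relative to $\mu_{n}^{1/(p-1)}$, since otherwise the Markov-type bound above would fail to be summable; the choice $\nu_{n}=\mu_{n}^{1/(2(p-1))}$ is the simplest calibration that works. Once this is in place, the compact set $Z\subset L^{2}(0,T;H)$ delivered by Lemma \ref{l6.1} together with a compact $K_{W}\subset\mathcal{C}(0,T;\mathbb{R}^{m})$ with $\mathbb{P}(W\in K_{W})\geq 1-\eta/4$ (from tightness of the fixed Wiener law) produces $K_{\eta}:=Z\times K_{W}$, compact in $\mathfrak{S}$, satisfying $\pi_{\varepsilon}(K_{\eta})\geq 1-\eta$ for every $\varepsilon\in(0,1)$, which is exactly the tightness claim.
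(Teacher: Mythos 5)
Your proposal is correct and follows essentially the same route as the paper: tightness of the $L^{2}(0,T;H)$ marginal via Lemma \ref{l6.1}, the a priori bounds of Propositions \ref{p6.1} and \ref{p6.2}, and Markov/Chebyshev with sequences $(\mu_{n}),(\nu_{n})$ calibrated so that $\sum_{n}\nu_{n}^{-1}\mu_{n}^{1/(p-1)}<\infty$, combined with the trivial tightness of the fixed Wiener law on the Polish space $\mathcal{C}(0,T;\mathbb{R}^{m})$. Your explicit choice $\mu_{n}=2^{-n}$, $\nu_{n}=2^{-n/(2(p-1))}$ is just a concrete instance of the summability condition the paper imposes abstractly.
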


\begin{proof}
Let $\delta >0$ and let $L_{\delta }$, $K_{\delta }$, $M_{\delta }$ be
positive constants depending only on $\delta $ (to be fixed later). We have
by Lemma \ref{l6.1} that 
\begin{equation*}
Z_{\delta }=\left\{ \mathbf{u}:\int_{0}^{T}\left\Vert \mathbf{u}\right\Vert
_{V}^{p}dt\leq L_{\delta }\text{,}\ \left\Vert \mathbf{u}(t)\right\Vert
_{H}^{2}\leq K_{\delta }\text{ a.e. }t\text{,}\ \sup_{\left\vert \theta
\right\vert \leq \mu _{n}}\int_{0}^{T}\left\Vert \mathbf{u}(t+\theta )-%
\mathbf{u}(t)\right\Vert _{V^{\prime }}^{p^{\prime }}\leq \nu _{n}M_{\delta
}\right\}
\end{equation*}%
is a compact subset of $L^{p}(0,T;H)$ for any $\delta >0$. Here we choose
the sequence $(\mu _{n})_{n}$ and $(\nu _{n})_{n}$ so that $\sum \frac{1}{%
\nu _{n}}(\mu _{n})^{\frac{1}{p-1}}<\infty $. Then we have 
\begin{equation*}
\begin{split}
\mathbb{P}\left( \mathbf{u}_{\varepsilon }\notin Z_{\delta }\right) & \leq 
\mathbb{P}\left( \int_{0}^{T}\left\Vert \mathbf{u}_{\varepsilon }\right\Vert
_{V}^{p}dt\geq L_{\delta }\right) +\mathbb{P}\left( \sup_{t\in \lbrack
0,T]}\left\Vert \mathbf{u}_{\varepsilon }(t)\right\Vert _{H}^{2}\geq
K_{\delta }\right) \\
& +\mathbb{P}\left( \sup_{\left\vert \theta \right\vert \leq \mu
_{n}}\int_{0}^{T}\left\Vert \mathbf{u}_{\varepsilon }(t+\theta )-\mathbf{u}%
_{\varepsilon }(t)\right\Vert _{V^{\prime }}^{p^{\prime }}dt\geq \nu
_{n}M_{\delta }\right) .
\end{split}%
\end{equation*}%
In view of Tchebychev's inequality we have 
\begin{equation*}
\begin{split}
\mathbb{P}(\mathbf{u}_{\varepsilon }& \notin Z_{\delta })\leq \frac{1}{%
L_{\delta }}\mathbb{E}\int_{0}^{T}\left\Vert \mathbf{u}_{\varepsilon
}(t)\right\Vert _{V}^{p}dt+\frac{1}{K_{\delta }}\mathbb{E}\sup_{t\in \lbrack
0,T]}\left\Vert \mathbf{u}_{\varepsilon }(t)\right\Vert _{H}^{2} \\
& +\sum \frac{1}{\nu _{n}M_{\delta }}\mathbb{E}\sup_{\left\vert \theta
\right\vert \leq \mu _{n}}\int_{0}^{T}\left\Vert \mathbf{u}_{\varepsilon
}(t+\theta )-\mathbf{u}_{\varepsilon }(t)\right\Vert _{V^{\prime
}}^{p^{\prime }}dt.
\end{split}%
\end{equation*}%
From Propositions \ref{p6.1} and \ref{p6.2} it follows that 
\begin{equation*}
\mathbb{P}(\mathbf{u}_{\varepsilon }\in Z_{\delta })\leq \frac{C}{L_{\delta }%
}+\frac{C}{K_{\delta }}+\frac{C}{M_{\delta }}\sum_{n}\frac{1}{\nu _{n}}(\mu
_{n})^{\frac{1}{p-1}}.
\end{equation*}%
So if we choose 
\begin{equation*}
K_{\delta }=L_{\delta }=\frac{6C}{\delta }\text{ and }M_{\delta }=\frac{%
6C\left( \sum_{n}\frac{1}{\nu _{n}}(\mu _{n})^{\frac{1}{p-1}}\right) }{%
\delta },
\end{equation*}%
then we have are led to 
\begin{equation}
\mathbb{P}\left( \mathbf{u}_{\varepsilon }\notin Z_{\delta }\right) \leq 
\frac{\delta }{2}.  \label{6.19}
\end{equation}%
Next, considering the sequence of probability measures $\pi
_{2}^{\varepsilon }(A):=\mathbb{P}(W\in A)$ ($A\in \mathcal{B}(\mathcal{C}%
(0,T;\mathbb{R}^{m}))$), it consists of only one element, hence it is weakly
compact. As $\mathcal{C}(0,T;\mathbb{R}^{m})$ is a Polish space, any weakly
compact sequence of probability measure is tight, so that, given $\delta >0$
there is a compact subset $C_{\delta }$ of $\mathcal{C}(0,T;\mathbb{R}^{m})$
such that $\mathbb{P}(W\in C_{\delta })\geq 1-\delta /2$. We infer from this
together with (\ref{6.18}) that 
\begin{equation*}
\mathbb{P}\left( (\mathbf{u}_{\varepsilon },W)\in Z_{\delta }\times
C_{\delta }\right) \geq 1-\delta \text{.}
\end{equation*}%
So we have just checked that for any $\delta >0$ there is a compact $%
Z_{\delta }\times C_{\delta }\subset \mathfrak{S}$ such that 
\begin{equation*}
\pi _{\varepsilon }(Z_{\delta }\times C_{\delta })\geq 1-\delta \text{,}
\end{equation*}%
by this proving the tightness of the family $\pi _{\varepsilon }$ in $%
\mathfrak{S}=L^{p}(0,T;H)\times \mathcal{C}(0,T;\mathbb{R}^{m})$.
\end{proof}

It follows from Theorem \ref{t6.2} and Prokhorov's theorem that there is a
subsequence $(\pi _{\varepsilon _{n}})_{n}$ of $(\pi _{\varepsilon
})_{0<\varepsilon <1}$ converging weakly (in the sense of measure) to a
probability measure $\Pi $. It emerges from Skorokhod's theorem that we can
find a new probability space $(\bar{\Omega},\bar{\mathcal{F}},\bar{\mathbb{P}%
})$ and random variables $(\mathbf{u}_{\varepsilon _{n}},W^{\varepsilon
_{n}})$, $(\mathbf{u}_{0},\bar{W})$ defined on this new probability space
and taking values in $\mathfrak{S}=L^{p}(0,T;H)\times \mathcal{C}(0,T;%
\mathbb{R}^{m})$ such that:

\begin{itemize}
\item[(i)] The probability law of $(\mathbf{u}_{\varepsilon
_{n}},W^{\varepsilon _{n}})$ is $\pi _{\varepsilon _{n}}$;

\item[(ii)] The probability law of $(\mathbf{u}_{0},\bar{W})$ is $\Pi $;

\item[(iii)] As $n\rightarrow \infty $, 
\begin{equation}
W^{\varepsilon _{n}}\rightarrow \bar{W}\text{ in }\mathcal{C}(0,T;\mathbb{R}%
^{m})\text{\ }\bar{\mathbb{P}}\text{-a.s.\ \ \ \ \ \ \ \ \ \ }  \label{6.20}
\end{equation}%
and

\item[(iv)] As $n\rightarrow \infty $, 
\begin{equation}
\mathbf{u}_{\varepsilon _{n}}\rightarrow \mathbf{u}_{0}\text{ in }%
L^{2}(0,T;H)\text{\ }\bar{\mathbb{P}}\text{-a.s.\ \ \ \ \ \ \ \ \ \ \ \ \ \
\ \ \ \ \ }  \label{6.21}
\end{equation}
\end{itemize}

We can see that $\left\{ W^{\varepsilon _{n}}\right\} $ is a sequence of $m$%
-dimensional standard Brownian motions. Let $\bar{\mathcal{F}}^{t}$ be the $%
\sigma $-algebra generated by $(\bar{W}(s),\mathbf{u}_{0}(s))$ ($0\leq s\leq
t$) and the null sets of $\bar{\mathcal{F}}$. Arguing as in \cite[Proof of
Theorem 1.1]{bensoussan2} we can show that $\bar{W}$ is an $\bar{\mathcal{F}}%
^{t}$-adapted standard $\mathbb{R}^{m}$-valued Wiener process. Also by the
same argument as in \cite[pp. 281-283]{bensoussan} we can show that, for all 
$\mathbf{v}\in V$ and for almost every $(\omega ,t)\in \bar{\Omega}\times
\lbrack 0,T]$ the following holds true 
\begin{equation}
\begin{array}{l}
\left( \mathbf{u}_{\varepsilon _{n}}(t),\mathbf{v}\right)
+\int_{0}^{t}\left( P^{\varepsilon }\mathbf{u}_{\varepsilon _{n}}(s)+%
\mathcal{A}^{\varepsilon }\mathbf{u}_{\varepsilon _{n}}(s)+B(\mathbf{u}%
_{\varepsilon _{n}}(s)),\mathbf{v}\right) ds=\left( \mathbf{u}^{0},\mathbf{v}%
\right)  \\ 
\ \ \ \ \ \ \ \ \ \ \ \ \ \ \ \ \ \ \ \ \ \ +\int_{0}^{t}\left( \mathbf{f}%
(s),\mathbf{v}\right) ds+\int_{0}^{t}\left( g^{\varepsilon }(\mathbf{u}%
_{\varepsilon _{n}}(s),\mathbf{v}\right) dW^{\varepsilon _{n}}(s)\text{.}%
\end{array}
\label{6.22}
\end{equation}

\subsection{Homogenization results}

\subsubsection{\textbf{Abstract formulation of the problem and preliminary
results}}

We begin this subsection by stating some important preliminary results
necessary to the homogenization process. The notations are those of the
preceding sections. It is worth noting that property (\ref{3.17}) in
Definition \ref{d3.4} still valid for $f\in B(\Omega ;\mathcal{C}(\overline{Q%
}_{T};B_{A}^{p^{\prime },\infty }))$ where $B_{A}^{p^{\prime },\infty
}=B_{A}^{p^{\prime }}\cap L^{\infty }(\mathbb{R}_{y,\tau }^{N+1})$ and as
usual, $p^{\prime }=p/(p-1)$.

Bearing this in mind, the question of homogenization of (\ref{6.11}) will
naturally arise from the following important assumption: 
\begin{equation}
\left\{ 
\begin{array}{l}
b\in B_{A}^{\infty }\text{ and }a_{ij},\;g_{k}(\cdot ,\cdot ,\mu )\in
B_{A}^{2},1\leq i,j\leq N,1\leq k\leq m \\ 
\text{for any }\mu \in \mathbb{R}^{N}%
\end{array}%
\right.  \label{6.25}
\end{equation}%
where $g=(g_{k})_{1\leq k\leq m}$.\medskip

The above hypothesis, which depends on the algebra wmv $A$, is crucial in
homogenization theory. It gives the structure of the coefficients of the
operator under consideration, and therefore allows one to pass to the limit.
Without such a hypothesis, one cannot perform the homogenization since the
convergence process relies heavily on the latter. The commonly assumption
used is the periodicity (obtained by taking the algebra to be the continuous
periodic functions). Hypothesis (\ref{6.25}) includes a variety of
behaviours, ranging from the periodicity to the weak almost periodicity (as
far as the ergodic algebras are concerned), and also encompassing all the
non ergodic algebras wmv. In this regard, this is a true advance in the
homogenization theory, regardless the applications either to PDEs or to
SPDEs.

Let $\Psi \in B(\Omega ;\mathcal{C}(\overline{Q}_{T};(A)^{N\times N}))$.
Suppose that (\ref{6.25}) is satisfied. It can be shown (as in \cite[%
Proposition 4.5]{NA}) that the function $(x,t,y,\tau ,\omega )\mapsto
b(y,\tau ,\Psi (x,t,y,\tau ,\omega ))$, denoted below by $b(\cdot ,\Psi )$,
belongs to $B(\Omega ;\mathcal{C}(\overline{Q}_{T};B_{A}^{p^{\prime },\infty
}))^{N\times N}$; assumption (\ref{6.25}) is crucially used in order to
obtain the above result. Likewise, the function $(x,t,y,\tau ,\omega
)\mapsto g_{k}(y,\tau ,\psi _{0}(x,t,\omega ))$ (for $\psi _{0}\in B(\Omega ;%
\mathcal{C}(\overline{Q}_{T};(A)^{N}))$) denoted by $g_{k}(\cdot ,\psi _{0})$%
, is an element of $B(\Omega ;\mathcal{C}(\overline{Q}_{T};B_{A}^{2,\infty
}))$. We may then define their traces 
\begin{equation*}
(x,t,\omega )\mapsto b\left( \frac{x}{\varepsilon },\frac{t}{\varepsilon }%
,\Psi \left( x,t,\frac{x}{\varepsilon },\frac{t}{\varepsilon },\omega
\right) \right) 
\end{equation*}%
and 
\begin{equation*}
(x,t,\omega )\mapsto g_{k}\left( \frac{x}{\varepsilon },\frac{t}{\varepsilon 
},\psi _{0}\left( x,t,\omega \right) \right) 
\end{equation*}%
from $Q_{T}\times \Omega $\ into $\mathbb{R}$, denoted respectively by $%
b^{\varepsilon }(\cdot ,\Psi ^{\varepsilon })$ and $g_{k}^{\varepsilon
}(\cdot ,\psi _{0})$, as elements of $L^{\infty }(Q_{T}\times \Omega )$. The
following result and its corollary can be proven exactly as its homologue in 
\cite{NgWou1} (see especially Proposition 3.1 therein).

\begin{proposition}
\label{p6.3}Let $3\leq p<\infty $. Suppose \emph{(\ref{6.25})} holds. For $%
\Psi \in B(\Omega ;\mathcal{C}(\overline{Q}_{T};(A)^{N\times N}))$ we have 
\begin{equation*}
b^{\varepsilon }(\cdot ,\Psi ^{\varepsilon })\rightarrow b(\cdot ,\Psi )%
\text{ in }L^{p^{\prime }}(Q_{T}\times \Omega )^{N\times N}\text{-weak }%
\Sigma \text{ as }\varepsilon \rightarrow 0.
\end{equation*}%
The mapping $\Psi \mapsto b(\cdot ,\Psi )$ of $B(\Omega ;\mathcal{C}(%
\overline{Q}_{T};(A)^{N\times N}))$ into $L^{p^{\prime }}(Q_{T}\times \Omega
;B_{A}^{p^{\prime }})^{N\times N}$ extends by continuity to a unique mapping
still denoted by $b$, of $L^{p}(Q_{T}\times \Omega ;(B_{A}^{p})^{N\times N})$
into $L^{p^{\prime }}(Q_{T}\times \Omega ;B_{A}^{p^{\prime }})^{N\times N}$
such that 
\begin{equation*}
(b(\cdot ,\mathbf{v})-b(\cdot ,\mathbf{w}))\cdot (\mathbf{v}-\mathbf{w})\geq
\nu _{1}\left\vert \mathbf{v}-\mathbf{w}\right\vert ^{p}\text{\ a.e. in }%
Q_{T}\times \Omega \times \mathbb{R}_{y}^{N}\times \mathbb{R}_{\tau }
\end{equation*}%
\begin{equation*}
\begin{array}{l}
\left\Vert b(\cdot ,\mathbf{v})-b(\cdot ,\mathbf{w})\right\Vert
_{L^{p^{\prime }}(Q_{T}\times \Omega ;B_{A}^{p^{\prime }})^{N\times N}} \\ 
\;\;\;\;\;\leq \nu _{2}\left\Vert \left\vert \mathbf{v}\right\vert
+\left\vert \mathbf{w}\right\vert \right\Vert _{L^{p}(Q_{T}\times \Omega
;B_{A}^{p})}^{p-2}\left\Vert \mathbf{v}-\mathbf{w}\right\Vert
_{L^{p}(Q_{T}\times \Omega ;(B_{A}^{p})^{N\times N})}%
\end{array}%
\end{equation*}%
\begin{equation*}
b(\cdot ,0)=0\;\;\text{a.e. in }\mathbb{R}_{y}^{N}\times \mathbb{R}_{\tau
}\;\;\;\;\;\;\;\;\;\;\;\;\;\;\;\;\;\;\;\;\;\;\;\;\;\;
\end{equation*}%
for all $\mathbf{v},\mathbf{w}\in L^{p}(Q_{T}\times \Omega
;(B_{A}^{p})^{N\times N})$.
\end{proposition}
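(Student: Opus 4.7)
The plan is to proceed in four steps, first establishing the weak $\Sigma$--convergence for smooth test matrices, then upgrading it by density using the pointwise monotonicity/growth in (\ref{6.6}).

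\textbf{Step 1: The composition lies in the right trace class.} Fix $\Psi\in B(\Omega;\mathcal{C}(\overline{Q}_T;(A)^{N\times N}))$. First I would verify that $(x,t,y,\tau,\omega)\mapsto b(y,\tau,\Psi(x,t,y,\tau,\omega))=b(y,\tau)|\Psi|^{p-2}\Psi$ is an element of $B(\Omega;\mathcal{C}(\overline{Q}_T;B_A^{p',\infty}))^{N\times N}$. The key observation is that for each fixed $(x,t,\omega)$ the matrix $\Psi(x,t,\cdot,\cdot,\omega)\in (A)^{N\times N}$ is bounded, its range therefore lies in a compact set $K\subset\mathbb{R}^{N\times N}$, and the nonlinearity $\lambda\mapsto|\lambda|^{p-2}\lambda$ is continuous on $K$. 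By Stone--Weierstrass one approximates it uniformly on $K$ by polynomials, whose images under composition with $\Psi$ lie in $A^{N\times N}$ (closed under products and limits), so $|\Psi|^{p-2}\Psi\in A^{N\times N}$. Multiplying by $b\in B_A^\infty$ lands in $B_A^{p',\infty}$, and the joint continuity in $(x,t,\omega)$ follows from the continuity properties of $\Psi$ together with the pointwise Lipschitz estimate (\ref{6.6})(ii).

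\textbf{Step 2: Weak $\Sigma$--convergence for smooth $\Psi$.} With $b(\cdot,\Psi)\in B(\Omega;\mathcal{C}(\overline{Q}_T;B_A^{p',\infty}))^{N\times N}$, this function is an admissible limit in the weak $\Sigma$ sense of Definition \ref{d3.4} (extended as noted in Remark \ref{r3.0}), so testing $b^{\varepsilon}(\cdot,\Psi^{\varepsilon})$ against an arbitrary $f\in L^{p}(\Omega;L^{p}(Q_T;A))^{N\times N}$ reduces to the elementary weak $\Sigma$ convergence of the trace $\varphi^{\varepsilon}$ of a function $\varphi\in B(\Omega;\mathcal{C}(\overline{Q}_T;B_A^{p',\infty}))^{N\times N}$. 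This yields the first claimed convergence.

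\textbf{Step 3: Extension to $L^p(Q_T\times\Omega;(B_A^p)^{N\times N})$.} Using (\ref{6.6})(ii) pointwise and applying H\"older's inequality with conjugate exponents $\tfrac{p-1}{p-2}$ and $p-1$ to
\[
\int |b(\cdot,\Psi)-b(\cdot,\Phi)|^{p'}\,d\beta\le \nu_2^{p'}\int(|\Psi|+|\Phi|)^{(p-2)p'}|\Psi-\Phi|^{p'}\,d\beta,
\]
I obtain, after integrating over $Q_T\times\Omega$,
\[
\|b(\cdot,\Psi)-b(\cdot,\Phi)\|_{L^{p'}(Q_T\times\Omega;B_A^{p'})^{N\times N}}\le \nu_2\,\bigl\||\Psi|+|\Phi|\bigr\|_{L^p(Q_T\times\Omega;B_A^p)}^{p-2}\,\|\Psi-\Phi\|_{L^p(Q_T\times\Omega;(B_A^p)^{N\times N})}.
\]
Since $B(\Omega;\mathcal{C}(\overline{Q}_T;(A)^{N\times N}))$ is a dense subspace of $L^p(Q_T\times\Omega;(B_A^p)^{N\times N})$ (by density of $A$ in $B_A^p$, of $\mathcal{C}(\overline{Q}_T)$--valued simple functions, and of $B(\Omega)\otimes X$ in $L^p(\Omega;X)$), the displayed estimate together with uniform continuity on bounded sets yields a unique continuous extension of $\Psi\mapsto b(\cdot,\Psi)$ to the whole space, with the same Lipschitz--type bound.

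\textbf{Step 4: Passage of the pointwise inequalities.} The monotonicity and $b(\cdot,0)=0$ properties are first verified on the dense subspace by applying (\ref{6.6})(i) and (\ref{6.5}) pointwise in $(y,\tau)$ and integrating. They then transfer to the extension by continuity: given approximating sequences $\Psi_n,\Phi_n$ in the dense subspace, the quadratic pairing $\iint(b(\cdot,\Psi_n)-b(\cdot,\Phi_n))\cdot(\Psi_n-\Phi_n)\,dxdtd\beta d\mathbb{P}\ge\nu_1\|\Psi_n-\Phi_n\|_{L^p}^p$ passes to the limit using the Step~3 continuity. The main obstacle is Step~1 — rigorously controlling the composition and placing it in the algebra trace class $B(\Omega;\mathcal{C}(\overline{Q}_T;B_A^{p',\infty}))^{N\times N}$ requires the full strength of hypothesis (\ref{6.25}) together with a Stone--Weierstrass argument, whereas Steps 2--4 are relatively standard once this structural fact is in hand.
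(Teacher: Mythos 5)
Your plan is correct and follows essentially the route the paper intends: the paper's own ``proof'' is a citation to \cite[Proposition 3.1]{NgWou1}, preceded by exactly your Step 1 (membership of $b(\cdot,\Psi)$ in $B(\Omega;\mathcal{C}(\overline{Q}_{T};B_{A}^{p^{\prime},\infty}))^{N\times N}$ via the Stone--Weierstrass/compact-range argument, which the paper also uses in the proof of Theorem \ref{t3.4}), with the convergence, the H\"{o}lder-based Lipschitz estimate on bounded sets, and the extension by density then carried out as in your Steps 2--4. The only cosmetic caveat is that in Step 4 you transfer the monotonicity in integrated form, whereas the statement is phrased pointwise a.e.; this is the version actually used later (cf.\ (\ref{6.29})--(\ref{6.30})) and can be upgraded by extracting an a.e.\ convergent subsequence after the Gelfand transform, so it is not a genuine gap.
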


\begin{corollary}
\label{c6.1}Let $\psi _{0}\in (B(\Omega )\otimes \mathcal{C}_{0}^{\infty
}(Q_{T}))^{N}$ and $\psi _{1}\in (B(\Omega )\otimes \mathcal{C}_{0}^{\infty
}(Q_{T})\otimes A^{\infty })^{N}$. For $\varepsilon >0$, let 
\begin{equation}
\Phi _{\varepsilon }=\psi _{0}+\varepsilon \psi _{1}^{\varepsilon
},\;\;\;\;\;\;\;\;\;\;\;\;\;\;  \label{6.26}
\end{equation}%
i.e., $\Phi _{\varepsilon }(x,t,\omega )=\psi _{0}(x,t,\omega )+\varepsilon
\psi _{1}(x,t,x/\varepsilon ,t/\varepsilon ,\omega )$\ for $(x,t,\omega )\in
Q_{T}\times \Omega $. Let $(v_{\varepsilon })_{\varepsilon \in E}$ is a
sequence in $L^{p}(Q_{T}\times \Omega )^{N\times N}$ such that $%
v_{\varepsilon }\rightarrow v_{0}$ in $L^{p}(Q_{T}\times \Omega )^{N\times
N} $-weak $\Sigma $ as $E\ni \varepsilon \rightarrow 0$ where $\mathbf{v}%
_{0}\in L^{p}(Q_{T}\times \Omega ;\mathcal{B}_{A}^{p})^{N\times N}$, then,
as $E\ni \varepsilon \rightarrow 0$, 
\begin{equation*}
\int_{Q_{T}\times \Omega }b^{\varepsilon }(\cdot ,D\Phi _{\varepsilon
})\cdot v_{\varepsilon }dxdtd\mathbb{P}\rightarrow \iint_{Q_{T}\times \Omega
\times \Delta (A)}\widehat{b}(\cdot ,D\psi _{0}+\partial \widehat{\psi }%
_{1})\cdot \widehat{v}_{0}dxdtd\mathbb{P}d\beta \text{.}
\end{equation*}
\end{corollary}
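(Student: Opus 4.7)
The strategy is to split $D\Phi_{\varepsilon}$ into a principal part of the form $\Psi^{\varepsilon}$ (with $\Psi$ an admissible test function for weak $\Sigma$-convergence) plus a remainder that vanishes uniformly, and then to invoke the definition of weak $\Sigma$-convergence of $(v_{\varepsilon})$ against the nonlinear test function $b^{\varepsilon}(\cdot,\Psi^{\varepsilon})$.

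First, differentiating \eqref{6.26} gives
\begin{equation*}
D\Phi_{\varepsilon}=D\psi_{0}+(D_{y}\psi_{1})^{\varepsilon}+\varepsilon(D\psi_{1})^{\varepsilon}.
\end{equation*}
Set $\Psi=D\psi_{0}+D_{y}\psi_{1}$, which belongs to $B(\Omega;\mathcal{C}(\overline{Q}_{T};(A^{\infty})^{N\times N}))$ since $\psi_{0}\in B(\Omega)\otimes\mathcal{C}_{0}^{\infty}(Q_{T})^{N}$ and $\psi_{1}\in B(\Omega)\otimes\mathcal{C}_{0}^{\infty}(Q_{T})\otimes(A^{\infty})^{N}$. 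Then $D\Phi_{\varepsilon}=\Psi^{\varepsilon}+\varepsilon(D\psi_{1})^{\varepsilon}$, and because $D\psi_{1}\in B(\Omega)\otimes\mathcal{C}_{0}^{\infty}(Q_{T})\otimes(A^{\infty})^{N\times N}$ is bounded on $Q_{T}\times\mathbb{R}^{N}\times\mathbb{R}\times\Omega$, the remainder satisfies $\|\varepsilon(D\psi_{1})^{\varepsilon}\|_{L^{\infty}(Q_{T}\times\Omega)}=O(\varepsilon)$. Moreover $\Psi^{\varepsilon}$ and $D\Phi_{\varepsilon}$ are uniformly bounded in $L^{\infty}(Q_{T}\times\Omega)^{N\times N}$.

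Second, I would absorb the remainder into a negligible term. Using the Lipschitz-type bound (6.6)(ii),
\begin{equation*}
|b^{\varepsilon}(\cdot,D\Phi_{\varepsilon})-b^{\varepsilon}(\cdot,\Psi^{\varepsilon})|\leq\nu_{2}\bigl(|D\Phi_{\varepsilon}|+|\Psi^{\varepsilon}|\bigr)^{p-2}\varepsilon|(D\psi_{1})^{\varepsilon}|,
\end{equation*}
and by the uniform bounds above this yields $\|b^{\varepsilon}(\cdot,D\Phi_{\varepsilon})-b^{\varepsilon}(\cdot,\Psi^{\varepsilon})\|_{L^{\infty}(Q_{T}\times\Omega)^{N\times N}}=O(\varepsilon)$. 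Since the weakly $\Sigma$-convergent sequence $(v_{\varepsilon})$ is in particular bounded in $L^{p}(Q_{T}\times\Omega)^{N\times N}$, Hölder's inequality implies
\begin{equation*}
\int_{Q_{T}\times\Omega}\bigl(b^{\varepsilon}(\cdot,D\Phi_{\varepsilon})-b^{\varepsilon}(\cdot,\Psi^{\varepsilon})\bigr)\cdot v_{\varepsilon}\,dxdtd\mathbb{P}\xrightarrow[E\ni\varepsilon\to 0]{}0.
\end{equation*}

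Third, by the discussion preceding Proposition \ref{p6.3}, the map $(x,t,y,\tau,\omega)\mapsto b(y,\tau,\Psi(x,t,y,\tau,\omega))$ lies in $B(\Omega;\mathcal{C}(\overline{Q}_{T};B_{A}^{p',\infty}))^{N\times N}$, which is an admissible class of test functions for the weak $\Sigma$-convergence in $L^{p}(Q_{T}\times\Omega)^{N\times N}$ (as noted at the start of the preliminary results subsection, and recorded in Remark \ref{r3.0}). Applying Definition \ref{d3.4} directly with this test function gives
\begin{equation*}
\int_{Q_{T}\times\Omega}v_{\varepsilon}\cdot b^{\varepsilon}(\cdot,\Psi^{\varepsilon})\,dxdtd\mathbb{P}\xrightarrow[E\ni\varepsilon\to 0]{}\iint_{Q_{T}\times\Omega\times\Delta(A)}\widehat{v}_{0}\cdot\widehat{b(\cdot,\Psi)}\,dxdtd\mathbb{P}d\beta.
\end{equation*}
Identifying the Gelfand transform as $\widehat{b(\cdot,\Psi)}=\widehat{b}(\cdot,\widehat{\Psi})$ and using Lemma \ref{l2.1} (and Remark \ref{r2.1}) to write $\widehat{D_{y}\psi_{1}}=\partial\widehat{\psi}_{1}$, one obtains $\widehat{\Psi}=D\psi_{0}+\partial\widehat{\psi}_{1}$, and combining with the second step yields the announced convergence. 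The main obstacle is the third step: checking that the composed function $b(\cdot,\Psi)$ genuinely belongs to $B(\Omega;\mathcal{C}(\overline{Q}_{T};B_{A}^{p',\infty}))^{N\times N}$ with the correct Gelfand representation, which relies on hypothesis \eqref{6.25} and the extension argument already carried out in Proposition \ref{p6.3}.
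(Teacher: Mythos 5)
Your proof is correct and follows essentially the argument the paper relies on: the paper itself omits the proof (deferring to Proposition 3.1 of \cite{NgWou1}), but your decomposition $D\Phi _{\varepsilon }=\Psi ^{\varepsilon }+\varepsilon (D\psi _{1})^{\varepsilon }$, the absorption of the $O(\varepsilon )$ remainder via the Lipschitz bound (ii) of (\ref{6.6}), and the use of $b(\cdot ,\Psi )\in B(\Omega ;\mathcal{C}(\overline{Q}_{T};B_{A}^{p^{\prime },\infty }))^{N\times N}$ as an admissible test function for the weak $\Sigma $-convergence of $(v_{\varepsilon })$ are exactly the devices the paper deploys in the proof of Theorem \ref{t3.4} and in the discussion preceding Proposition \ref{p6.3}. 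No gap.
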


Also we will need the following important result in order to pass to the
limit in the stochastic term.

\begin{lemma}
\label{l5.1}Let $(\mathbf{u}_{\varepsilon })_{\varepsilon }$ be a sequence
in $L^{2}(Q_{T}\times \Omega )^{N}$ such that $\mathbf{u}_{\varepsilon
}\rightarrow \mathbf{u}_{0}$ in $L^{2}(Q_{T}\times \Omega )^{N}$ as $%
\varepsilon \rightarrow 0$ where $\mathbf{u}_{0}\in L^{2}(Q_{T}\times \Omega
)^{N}$. Then for each $1\leq k\leq m$ we have, 
\begin{equation*}
g_{k}^{\varepsilon }(\cdot ,\mathbf{u}_{\varepsilon })\rightarrow
g_{k}(\cdot ,\mathbf{u}_{0})\text{ in }L^{2}(Q_{T}\times \Omega )\text{-weak 
}\Sigma \text{ as }\varepsilon \rightarrow 0\text{.}
\end{equation*}
\end{lemma}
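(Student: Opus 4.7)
\textbf{Proof plan for Lemma \ref{l5.1}.} The plan is to exploit the strong $L^{2}$-convergence of $\mathbf{u}_{\varepsilon}$ together with the Lipschitz continuity of $g_{k}$ in its third argument (hypothesis (\ref{6.7})(iii)) in order to reduce the problem to a weak $\Sigma$-convergence statement in which the argument of $g_{k}$ is frozen, and then to establish that reduced statement first for test-function arguments and conclude by a density argument.

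First I would write
\begin{equation*}
g_{k}^{\varepsilon}(\cdot,\mathbf{u}_{\varepsilon})
=\bigl[g_{k}^{\varepsilon}(\cdot,\mathbf{u}_{\varepsilon})-g_{k}^{\varepsilon}(\cdot,\mathbf{u}_{0})\bigr]
+g_{k}^{\varepsilon}(\cdot,\mathbf{u}_{0}).
\end{equation*}
By (\ref{6.7})(iii) the bracketed term is bounded in $L^{2}(Q_{T}\times\Omega)$-norm by $c_{1}\|\mathbf{u}_{\varepsilon}-\mathbf{u}_{0}\|_{L^{2}(Q_{T}\times\Omega)^{N}}$, which tends to $0$ by hypothesis, so it converges strongly (hence also weakly $\Sigma$) to $0$ in $L^{2}(Q_{T}\times\Omega)$. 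Therefore it suffices to prove the reduced statement
\begin{equation*}
g_{k}^{\varepsilon}(\cdot,\mathbf{u}_{0})\to g_{k}(\cdot,\mathbf{u}_{0})\ \text{in } L^{2}(Q_{T}\times\Omega)\text{-weak }\Sigma.\qquad(\ast)
\end{equation*}

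Next I would verify $(\ast)$ in the special case where $\mathbf{u}_{0}$ is replaced by some $\phi\in (B(\Omega)\otimes\mathcal{D}(Q_{T}))^{N}$. For such $\phi$, the growth and Lipschitz conditions in (\ref{6.7}) together with hypothesis (\ref{6.25}) permit one to show, by the same argument used to derive Proposition \ref{p6.3} (i.e.\ a Stone--Weierstrass-type approximation of $\lambda\mapsto g_{k}(\cdot,\lambda)$ on the compact range of $\phi$, combined with the fact that $g_{k}(\cdot,\cdot,\mu)\in B_{A}^{2}$ for each $\mu$), that the function $(x,t,y,\tau,\omega)\mapsto g_{k}(y,\tau,\phi(x,t,\omega))$ belongs to $B(\Omega;\mathcal{C}(\overline{Q}_{T};B_{A}^{2,\infty}))$. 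Since property (\ref{3.17}) of Definition \ref{d3.4} extends (as noted in Subsection 6.4.1) to test functions in that space, the sequence $g_{k}^{\varepsilon}(\cdot,\phi)$ weakly $\Sigma$-converges to $g_{k}(\cdot,\phi)$ in $L^{2}(Q_{T}\times\Omega)$, proving $(\ast)$ for $\phi$ in place of $\mathbf{u}_{0}$.

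Finally, to pass from test functions to a general $\mathbf{u}_{0}\in L^{2}(Q_{T}\times\Omega)^{N}$, I would use density: given $\delta>0$, choose $\phi\in (B(\Omega)\otimes\mathcal{D}(Q_{T}))^{N}$ with $\|\mathbf{u}_{0}-\phi\|_{L^{2}(Q_{T}\times\Omega)^{N}}<\delta$, and for arbitrary $f\in L^{2}(\Omega;L^{2}(Q_{T};A))$ decompose
\begin{equation*}
\int_{Q_{T}\times\Omega}g_{k}^{\varepsilon}(\cdot,\mathbf{u}_{0})f^{\varepsilon}\,dxdtd\mathbb{P}-\iint_{Q_{T}\times\Omega\times\Delta(A)}\widehat{g}_{k}(\cdot,\mathbf{u}_{0})\widehat{f}\,dxdtd\mathbb{P}d\beta
\end{equation*}
by inserting $\pm g_{k}^{\varepsilon}(\cdot,\phi)$ and $\pm\widehat{g}_{k}(\cdot,\phi)$. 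The middle piece tends to $0$ by the test-function case. The two outer pieces are controlled, uniformly in $\varepsilon$, by $c_{1}\delta\,\|f\|_{L^{2}(\Omega;L^{2}(Q_{T};A))}$, using the Lipschitz estimate (\ref{6.7})(iii) together with the Cauchy--Schwarz inequality and the isometry property of the Gelfand transform $\mathcal{G}_{1}$ on $\mathcal{B}_{A}^{2}$. Letting $\varepsilon\to 0$ and then $\delta\to 0$ yields $(\ast)$, completing the proof.

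The main technical obstacle I anticipate is the careful verification that the composition $(x,t,y,\tau,\omega)\mapsto g_{k}(y,\tau,\phi(x,t,\omega))$ lies in $B(\Omega;\mathcal{C}(\overline{Q}_{T};B_{A}^{2,\infty}))$ so that the extended definition of weak $\Sigma$-convergence applies; this is the same type of issue addressed in Proposition \ref{p6.3} for $b$, and it is what forces hypothesis (\ref{6.25}) into the argument.
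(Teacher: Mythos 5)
Your proposal is correct and follows essentially the same route as the paper's proof: split off $g_{k}^{\varepsilon}(\cdot,\mathbf{u}_{\varepsilon})-g_{k}^{\varepsilon}(\cdot,\mathbf{u}_{0})$ via the Lipschitz condition, establish the weak $\Sigma$-convergence of $g_{k}^{\varepsilon}(\cdot,\mathbf{u}_{0})$ first for nice test arguments (the paper uses $\mathbf{u}\in B(\Omega;\mathcal{C}(\overline{Q}_{T}))^{N}$), and conclude by density of that class in $L^{2}(Q_{T}\times\Omega)$. The only difference is that you spell out the density step that the paper dismisses as ``easily shown,'' which is a harmless elaboration.
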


\begin{proof}
First of all, let $\mathbf{u}\in B(\Omega ;\mathcal{C}(\overline{Q}%
_{T}))^{N} $; as seen above, the function $(x,t,y,\tau ,\omega )\mapsto
g_{k}(y,\tau ,\mathbf{u}(x,t,\omega ))$ lies in $B(\Omega ;\mathcal{C}(%
\overline{Q}_{T};B_{A}^{2,\infty }))$, so that we have $g_{k}^{\varepsilon
}(\cdot ,\mathbf{u})\rightarrow g_{k}(\cdot ,\mathbf{u})$ in $%
L^{2}(Q_{T}\times \Omega )$-weak $\Sigma $ as $\varepsilon \rightarrow 0$.
Next, since $B(\Omega ;\mathcal{C}(\overline{Q}_{T}))$ is dense in $%
L^{2}(Q_{T}\times \Omega )$, it can be easily shown that 
\begin{equation}
g_{k}^{\varepsilon }(\cdot ,\mathbf{u}_{0})\rightarrow g_{k}(\cdot ,\mathbf{u%
}_{0})\text{ in }L^{2}(Q_{T}\times \Omega )\text{-weak }\Sigma \text{ as }%
\varepsilon \rightarrow 0.  \label{Eq2}
\end{equation}%
Now, let $f\in L^{2}(\Omega ;L^{2}(Q_{T};A))$; then 
\begin{eqnarray*}
&&\int_{Q_{T}\times \Omega }g_{k}^{\varepsilon }(\cdot ,\mathbf{u}%
_{\varepsilon })f^{\varepsilon }dxdtd\mathbb{P}-\iint_{Q_{T}\times \Omega
\times \Delta (A)}\widehat{g}_{k}(\cdot ,\mathbf{u}_{0})\widehat{f}dxdtd%
\mathbb{P}d\beta \\
&=&\int_{Q_{T}\times \Omega }(g_{k}^{\varepsilon }(\cdot ,\mathbf{u}%
_{\varepsilon })-g_{k}^{\varepsilon }(\cdot ,\mathbf{u}_{0}))f^{\varepsilon
}dxdtd\mathbb{P}+\int_{Q_{T}\times \Omega }g_{k}^{\varepsilon }(\cdot ,%
\mathbf{u}_{0})f^{\varepsilon }dxdtd\mathbb{P} \\
&&-\iint_{Q_{T}\times \Omega \times \Delta (A)}\widehat{g}_{k}(\cdot ,%
\mathbf{u}_{0})\widehat{f}dxdtd\mathbb{P}d\beta .
\end{eqnarray*}%
Using the inequality 
\begin{equation*}
\left| \int_{Q_{T}\times \Omega }(g_{k}^{\varepsilon }(\cdot ,\mathbf{u}%
_{\varepsilon })-g_{k}^{\varepsilon }(\cdot ,\mathbf{u}_{0}))f^{\varepsilon
}dxdtd\mathbb{P}\right| \leq C\left\| \mathbf{u}_{\varepsilon }-\mathbf{u}%
_{0}\right\| _{L^{2}(Q_{T}\times \Omega )^{N}}\left\| f^{\varepsilon
}\right\| _{L^{2}(Q_{T}\times \Omega )}
\end{equation*}%
in conjunction with (\ref{Eq2}) leads at once to the result.
\end{proof}

\begin{remark}
\label{r6.2}\emph{In view of the Lipschitz property on the function }$g_{k}$%
\emph{\ we may get more information on the limit of the sequence }$%
g_{k}^{\varepsilon }(\cdot ,\mathbf{u}_{\varepsilon })$\emph{. Indeed, since 
}$\left\vert g_{k}^{\varepsilon }(\cdot ,\mathbf{u}_{\varepsilon
})-g_{k}^{\varepsilon }(\cdot ,\mathbf{u}_{0})\right\vert \leq C\left\vert 
\mathbf{u}_{\varepsilon }-\mathbf{u}_{0}\right\vert $\emph{, we deduce the
following convergence result: }%
\begin{equation*}
g_{k}^{\varepsilon }(\cdot ,\mathbf{u}_{\varepsilon })\rightarrow \widetilde{%
g}_{k}(\mathbf{u}_{0})\text{\ in }L^{2}(Q_{T}\times \Omega )\text{ as }%
\varepsilon \rightarrow 0
\end{equation*}%
\emph{where }$\widetilde{g}_{k}(\mathbf{u}_{0})(x,t,\omega )=\int_{\Delta
(A)}\widehat{g}_{k}(s,s_{0},\mathbf{u}_{0}(x,t,\omega ))d\beta $\emph{, so
that we can derive the existence of a subsequence of }$g_{k}^{\varepsilon
}(\cdot ,\mathbf{u}_{\varepsilon })$\emph{\ that converges a.e. in }$%
Q_{T}\times \Omega $\emph{\ to }$\widetilde{g}_{k}(\mathbf{u}_{0})$\emph{.
For the sequel we shall need the following function: }$\widetilde{g}(\mathbf{%
u}_{0})=(\widetilde{g}_{k}(\mathbf{u}_{0}))_{1\leq k\leq m}$\emph{.}
\end{remark}

We end this subsection by collecting here below some function spaces that we
will make use in the sequel. We begin by noting that the space $B(\bar{\Omega%
})\otimes \mathcal{C}_{0}^{\infty }(0,T)\otimes \mathcal{V}$ is dense in $%
L^{p}(\bar{\Omega};L^{p}(0,T;V))$. Next, let the space 
\begin{equation*}
\mathcal{B}_{\Div}^{1,p}=\{\mathbf{u}\in (\mathcal{B}_{\#A_{y}}^{1,p})^{N}:~%
\overline{\Div}_{y}\mathbf{u}=0\}
\end{equation*}%
where $\overline{\Div}_{y}\mathbf{u}=\sum_{i=1}^{N}\overline{\partial }%
u^{i}/\partial y_{i}$, and its smooth counterpart 
\begin{equation*}
A_{y,\Div}^{\infty }=\{\mathbf{u}\in (\mathcal{D}_{A_{y}}(\mathbb{R}%
^{N})/I_{A_{y}}^{p})^{N}:~\overline{\Div}_{y}\mathbf{u}=0\}.
\end{equation*}%
The following result holds.

\begin{lemma}
\label{l6.2}The space $\varrho _{y}^{N}(A_{y,\Div}^{\infty })$ is dense in $%
\mathcal{B}_{\Div}^{1,p}$ where, for $\mathbf{u}=(u^{i})_{1\leq i\leq N}\in
(A_{y}^{\infty })^{N}$ we have $\varrho _{y}^{N}(\mathbf{u})=(\varrho
_{y}(u^{i}))_{1\leq i\leq N}$, $\varrho _{y}$ being the canonical mapping of 
$B_{A_{y}}^{p}$ into its separated completion $\mathcal{B}_{A_{y}}^{p}$.
\end{lemma}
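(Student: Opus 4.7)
The proof will proceed by a Hahn-Banach duality argument exploiting the de Rham type decomposition of Proposition 2.3. Since $(\mathcal{B}_{\#A_y}^{1,p})^N$ is reflexive and $\mathcal{B}_{\Div}^{1,p}$ is a closed subspace of it (as the kernel of the continuous linear operator $\overline{\Div}_y:(\mathcal{B}_{\#A_y}^{1,p})^N\to\mathcal{B}_{A_y}^{p}$), it suffices to show that any continuous linear functional $L$ on $(\mathcal{B}_{\#A_y}^{1,p})^N$ which annihilates $\varrho_y^N(A_{y,\Div}^{\infty})$ must also annihilate $\mathcal{B}_{\Div}^{1,p}$. Using property $(\text{P}_1)$, which realizes $\overline{D}_y$ as an isometric embedding of $\mathcal{B}_{\#A_y}^{1,p}$ onto a closed subspace of $(\mathcal{B}_{A_y}^{p})^N$, together with the identification $(\mathcal{B}_{A_y}^{p})'\cong \mathcal{B}_{A_y}^{p'}$ inherited from $L^{p}(\Delta(A_y))'=L^{p'}(\Delta(A_y))$ and a Hahn-Banach extension to $(\mathcal{B}_{A_y}^{p})^N$, any such $L$ admits a representation
\[
L(\mathbf{v})=\sum_{i,j=1}^{N}\int_{\Delta(A_y)}\widehat{g}_{ij}\,\partial_{j,p}\widehat{v}^{\,i}\,d\beta,\qquad \mathbf{G}=(g_{ij})\in (\mathcal{B}_{A_y}^{p'})^{N\times N}.
\]

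The next step is to extract a structural identity on $\mathbf{G}$ from the annihilation condition. Testing $L$ against the divergence-free curl fields $\mathbf{w}$ with components $w^{i}=\partial\phi/\partial y_{k}$, $w^{k}=-\partial\phi/\partial y_{i}$ and zero elsewhere (for $\phi\in A_y^{\infty}$ and $i\ne k$, all of which lie in $\mathcal{V}_{\Div}\subset\varrho_y^N(A_{y,\Div}^{\infty})$), and interpreting the resulting equality via the weak-derivative calculus on $\mathcal{D}_{A_y}^{\prime}(\mathbb{R}^N)$ developed after Proposition 2.2, one obtains the closure relations
\[
\frac{\partial}{\partial y_{k}}(\Div \mathbf{g}_{i})=\frac{\partial}{\partial y_{i}}(\Div \mathbf{g}_{k})\quad \text{in }\mathcal{D}_{A_y}^{\prime}(\mathbb{R}^N),\qquad 1\le i,k\le N,
\]
where $\mathbf{g}_{i}=(g_{i1},\dots,g_{iN})$ denotes the $i$-th row of $\mathbf{G}$. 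A distributional Poincaré-type argument---which one can obtain either by smoothing $\mathbf{G}$ in $(\mathcal{B}_{A_y}^{p'})^{N\times N}$ by matrices with $\mathcal{D}_{A_y}(\mathbb{R}^N)$-entries and applying Proposition 2.3 (with $p$ replaced by $p'$) to the resulting curl-free vector of divergences, or directly at the distributional level---then produces a ``pressure" distribution $\mathsf{p}\in\mathcal{D}_{A_y}^{\prime}(\mathbb{R}^N)$ such that $\Div\mathbf{g}_{i}=\partial \mathsf{p}/\partial y_{i}$ for every $i$.

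To conclude, given $\mathbf{u}\in\mathcal{B}_{\Div}^{1,p}$, I would approximate it componentwise in $\mathcal{B}_{\#A_y}^{1,p}$ by $\mathbf{u}_{n}\in(\mathcal{D}_{A_y}(\mathbb{R}^N)/I_{A_y}^{p})^N$ (density property $(\text{P}_2)$) and simultaneously smooth $\mathbf{G}$ in $(\mathcal{B}_{A_y}^{p'})^{N\times N}$ by matrices $\mathbf{G}^{(n)}$ with $\mathcal{D}_{A_y}(\mathbb{R}^N)$-entries, producing corresponding smooth pressures $\mathsf{p}_{n}$. The integration-by-parts formula (iii) of Proposition 2.2, applied to the smoothed functionals $L_{n}$, then yields
\[
L_{n}(\mathbf{u}_{n})=-\sum_{i}\int_{\Delta(A_y)}\widehat{\Div \mathbf{g}_{i}^{(n)}}\,\widehat{u}_{n}^{\,i}\,d\beta=\bigl\langle \mathsf{p}_{n},\,\overline{\Div}_{y}\mathbf{u}_{n}\bigr\rangle,
\]
and since $\overline{\Div}_{y}\mathbf{u}_{n}\to\overline{\Div}_{y}\mathbf{u}=0$ in $\mathcal{B}_{A_y}^{p}$ by continuity of $\overline{\Div}_{y}$, passing to the limit gives $L(\mathbf{u})=0$ as required.

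The main obstacle is the construction and handling of the pressure distribution $\mathsf{p}$: from the curl-free relations one obtains $\mathsf{p}$ only in the distributional dual $\mathcal{D}_{A_y}^{\prime}(\mathbb{R}^N)$, yet we must eventually pair it against the $\mathcal{B}_{A_y}^{p}$-valued quantity $\overline{\Div}_{y}\mathbf{u}_{n}$. Because non-ergodicity of $A_y$ prevents a direct Helmholtz decomposition or Laplacian inversion, the rigorous version of Proposition 2.3 must be invoked on suitable smoothings and the resulting sequence of pressures $\mathsf{p}_{n}$ controlled $n$-uniformly---precisely the role played by the closed-range/adjoint structure $(\Div_{p'})^{*}=-\overline{D}_{p}$ underlying Proposition 2.3.
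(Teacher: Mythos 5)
Your Hahn--Banach reduction and the dual representation $L(\mathbf{v})=\sum_{i,j}\int_{\Delta (A_{y})}\widehat{g}_{ij}\,\partial _{j}\widehat{v}^{\,i}\,d\beta $ with $\mathbf{G}\in (\mathcal{B}_{A_{y}}^{p^{\prime }})^{N\times N}$ are sound, but the argument has a genuine gap at the pressure step, and it is not merely technical. First, you use only the curl-field part of the annihilation hypothesis, so all you obtain is that the distribution vector $(\Div \mathbf{g}_{i})_{i}$ is curl-free; but Proposition \ref{p2.3} is not a Poincar\'{e} lemma for curl-free distributions --- it requires the field to lie in $(\mathcal{B}_{A}^{p^{\prime }})^{N}$ \emph{and} to be orthogonal to the whole space $\mathcal{V}_{\Div }$ of divergence-free test fields, and orthogonality to elementary curls is a priori strictly weaker. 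Indeed, if your chain of implications were valid it would prove that the elementary curls alone are dense in $\mathcal{B}_{\Div }^{1,p}$, a statement strictly stronger than the lemma itself and exactly of the same nature; nothing in Section 2 delivers this for a general non-ergodic algebra wmv, so the step is circular at best. Second, even after smoothing $\mathbf{G}$ so that Proposition \ref{p2.3} applies, the pressure it returns lives in $\mathcal{B}_{\#A_{y}}^{1,p^{\prime }}$, an abstract completion whose elements are not functions in $\mathcal{B}_{A_{y}}^{p^{\prime }}$; the symbol $\widehat{\mathsf{p}}_{n}$ is undefined and the integration by parts needed for $L_{n}(\mathbf{u}_{n})=\langle \mathsf{p}_{n},\overline{\Div }_{y}\mathbf{u}_{n}\rangle $ is not licensed. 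Third, the concluding limit requires an $n$-uniform bound on $\mathsf{p}_{n}$ in a space dual to $\mathcal{B}_{A_{y}}^{p}$, since $\overline{\Div }_{y}\mathbf{u}_{n}\rightarrow 0$ only in $\mathcal{B}_{A_{y}}^{p}$ and not in $\mathcal{D}_{A_{y}}(\mathbb{R}^{N})$; this Ne\v{c}as-type estimate is the analytic heart of the matter, and you correctly flag it as ``the main obstacle'' but do not close it. The proof is therefore incomplete precisely where the difficulty lies.

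For context, the paper offers no argument of its own: it defers entirely to \cite[Lemma 2.3]{Wright1}, where (in the stochastic setting) density of smooth solenoidal fields is obtained constructively by mollifying along the $N$-parameter group of translations; since that regularization commutes with the generators $D_{i,p}$ and hence with $\overline{\Div }_{y}$, it maps divergence-free fields to smooth divergence-free fields while converging strongly. If you prefer a duality route, the viable version is to use the \emph{full} hypothesis that $L$ annihilates every element of $\varrho _{y}^{N}(A_{y,\Div }^{\infty })$ and to exploit the closed-range/adjoint identity $(\Div _{p^{\prime }})^{\ast }=-\overline{D}_{p}$ behind Proposition \ref{p2.3} directly on the image subspace $W_{0}=\{\overline{D}\mathbf{u}:\overline{\Div }_{y}\mathbf{u}=0\}$ of $(\mathcal{B}_{A_{y}}^{p})^{N\times N}$, rather than passing through a distributional pressure.
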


\begin{proof}
This follows exactly in a same way as the proof of \cite[Lemma 2.3]{Wright1}.
\end{proof}

Now, let 
\begin{equation*}
\mathbb{F}_{0}^{1,p}=L^{p}(\bar{\Omega}\times (0,T);V)\times
L^{p}(Q_{T}\times \bar{\Omega};\mathcal{B}_{A_{\tau }}^{p}(\mathbb{R}_{\tau
};\mathcal{B}_{\Div}^{1,p}))
\end{equation*}%
and 
\begin{equation*}
\mathcal{F}_{0}^{\infty }=[B(\bar{\Omega})\otimes \mathcal{C}_{0}^{\infty
}(0,T)\otimes \mathcal{V}]\times \left[ B(\bar{\Omega})\otimes \mathcal{C}%
_{0}^{\infty }(Q_{T})\otimes \left( \mathcal{D}_{A_{\tau }}(\mathbb{R}_{\tau
})\otimes \varrho _{y}^{N}(A_{y,\Div}^{\infty })\right) \right] .
\end{equation*}%
Thanks to Lemma \ref{l6.2} we have the density of $\mathcal{F}_{0}^{\infty }$
in $\mathbb{F}_{0}^{1,p}$.

\subsubsection{\textbf{Homogenized problem}}

Let $(\mathbf{u}_{\varepsilon _{n}})_{n}$ be the sequence determined in the
Subsection \ref{subsect6.3} and satisfying Eq. (\ref{6.22}). Because of (\ref%
{6.22}) the sequence $(\mathbf{u}_{\varepsilon _{n}})_{n}$ also satisfies
the a priori estimates (\ref{6.13}), (\ref{6.13'}) and (\ref{6.14}).
Therefore, owing to the estimate (\ref{6.13}) (which yields the uniform
integrability of the sequence $(\mathbf{u}_{\varepsilon _{n}})_{n}$ with
respect to $\omega $) and the Vitali's theorem, we deduce from (\ref{6.21})
that, as $n\rightarrow \infty $, 
\begin{equation*}
\mathbf{u}_{\varepsilon _{n}}\rightarrow \mathbf{u}_{0}\text{ in }L^{2}(\bar{%
\Omega};L^{2}(0,T;H))
\end{equation*}%
and hence 
\begin{equation}
\mathbf{u}_{\varepsilon _{n}}\rightarrow \mathbf{u}_{0}\text{ in }%
L^{2}(Q_{T}\times \bar{\Omega})^{N}\text{ as }n\rightarrow \infty \text{.}
\label{6.23}
\end{equation}%
In view of (\ref{6.14}) and by the diagonal process, one can find a
subsequence of $(\mathbf{u}_{\varepsilon _{n}})_{n}$ (not relabeled) which
weakly converges in $L^{p}(\bar{\Omega};L^{p}(0,T;V))$ to the function $%
\mathbf{u}_{0}$ (this means that $\mathbf{u}_{0}\in L^{p}(\bar{\Omega}%
;L^{p}(0,T;V))$). From Theorem \ref{t3.8}, we infer the existence of a
function $\mathbf{u}_{1}=(u_{1}^{k})_{1\leq k\leq N}\in L^{p}(Q_{T}\times 
\bar{\Omega};\mathcal{B}_{A_{\tau }}^{p}(\mathbb{R}_{\tau };\mathcal{B}%
_{\#A_{y}}^{1,p})^{N})$ such that the convergence result 
\begin{equation}
\frac{\partial \mathbf{u}_{\varepsilon _{n}}}{\partial x_{i}}\rightarrow 
\frac{\partial \mathbf{u}_{0}}{\partial x_{i}}+\frac{\overline{\partial }%
\mathbf{u}_{1}}{\partial y_{i}}\text{ in }L^{p}(Q_{T}\times \bar{\Omega})^{N}%
\text{-weak }\Sigma \text{ }(1\leq i\leq N)  \label{6.24}
\end{equation}%
holds when $\varepsilon _{n}\rightarrow 0$. We recall that $\frac{\partial 
\mathbf{u}_{0}}{\partial x_{i}}=\left( \frac{\partial u_{0}^{k}}{\partial
x_{i}}\right) _{1\leq k\leq N}$ ($\mathbf{u}_{0}=(u_{0}^{k})_{1\leq k\leq N}$%
) and $\frac{\overline{\partial }\mathbf{u}_{1}}{\partial y_{i}}=\left( 
\frac{\overline{\partial }u_{1}^{k}}{\partial y_{i}}\right) _{1\leq k\leq N}$%
. Now, let us consider the following functionals: 
\begin{eqnarray*}
\widehat{a}_{I}(\mathbf{u},\mathbf{v})
&=&\sum_{i,j,k=1}^{N}\iint_{Q_{T}\times \bar{\Omega}\times \Delta (A)}%
\widehat{a}_{ij}(s,s_{0})\mathbb{D}_{j}u^{k}\mathbb{D}_{i}v^{k}dxdtd\bar{%
\mathbb{P}}d\beta \\
&&\ \ \ +\iint_{Q_{T}\times \bar{\Omega}\times \Delta (A)}\widehat{b}%
(s,s_{0},\mathbb{D}\mathbf{u})\cdot \mathbb{D}\mathbf{v}dxdtd\bar{\mathbb{P}}%
d\beta
\end{eqnarray*}%
where $\mathbb{D}_{j}u^{k}=\frac{\partial u_{0}^{k}}{\partial x_{j}}%
+\partial _{j}\widehat{u}_{1}^{k}$ ($\partial _{j}\widehat{u}_{1}^{k}=%
\mathcal{G}_{1}\left( \frac{\overline{\partial }u_{1}^{k}}{\partial y_{j}}%
\right) $, and the same definition for $\mathbb{D}_{i}v^{k}$) and $\mathbb{D}%
\mathbf{u}=(\mathbb{D}_{j}\mathbf{u})_{1\leq j\leq N}$ with $\mathbb{D}_{j}%
\mathbf{u}=(\mathbb{D}_{j}u^{k})_{1\leq k\leq N}$; 
\begin{equation*}
\widehat{b}_{I}(\mathbf{u},\mathbf{v},\mathbf{w})=\sum_{i,k=1}^{N}%
\iint_{Q_{T}\times \bar{\Omega}}u_{0}^{i}\frac{\partial v_{0}^{k}}{\partial
x_{i}}w_{0}^{k}dxdtd\bar{\mathbb{P}}
\end{equation*}%
for $\mathbf{u}=(\mathbf{u}_{0},\mathbf{u}_{1}),\mathbf{v}=(\mathbf{v}_{0},%
\mathbf{v}_{1}),\mathbf{w}=(\mathbf{w}_{0},\mathbf{w}_{1})\in \mathbb{F}%
_{0}^{1,p}$. The functionals $\widehat{a}_{I}$ and $\widehat{b}_{I}$ are
well-defined. Next, associated to these functionals is the variational
problem 
\begin{equation}
\left\{ 
\begin{array}{l}
\mathbf{u}=(\mathbf{u}_{0},\mathbf{u}_{1})\in \mathbb{F}_{0}^{1,p}: \\ 
-\int_{Q_{T}\times \bar{\Omega}}\mathbf{u}_{0}\cdot \mathbf{\psi }%
_{0}^{\prime }dxdtd\bar{\mathbb{P}}+\widehat{a}_{I}(\mathbf{u},\mathbf{\Phi }%
)+\widehat{b}_{I}(\mathbf{u},\mathbf{u},\mathbf{\Phi }) \\ 
\ \ \ \ =\int_{\bar{\Omega}}\int_{0}^{T}\left( \mathbf{f}(t),\mathbf{\psi }%
_{0}(t,\omega )\right) dtd\bar{\mathbb{P}}+\int_{\bar{\Omega}%
}\int_{0}^{T}\left( \widetilde{g}(\mathbf{u}_{0}),\mathbf{\psi }_{0}\right) d%
\bar{W}d\bar{\mathbb{P}} \\ 
\text{for all }\Phi =(\mathbf{\psi }_{0},\mathbf{\psi }_{1})\in \mathcal{F}%
_{0}^{\infty }\text{.}%
\end{array}%
\right.  \label{6.27}
\end{equation}

The following \textit{global} homogenization result holds.

\begin{theorem}
\label{t6.3}The couple $(\mathbf{u}_{0},\mathbf{u}_{1})$ determined by \emph{%
(\ref{6.23})-(\ref{6.24})} solves problem \emph{(\ref{6.27})}.
\end{theorem}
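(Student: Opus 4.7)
The plan is to pass to the limit $\varepsilon_n \to 0$ in the microscopic weak formulation (6.22) tested against multi-scale competitors built from $\mathcal{F}_0^\infty$, using the two-scale convergences (6.23)--(6.24), the tools of Section~3, and the preliminary results established above. Fix $\Phi = (\psi_0, \psi_1) \in \mathcal{F}_0^\infty$ and form $\Phi_{\varepsilon_n}(x,t,\omega) = \psi_0(x,t,\omega) + \varepsilon_n \psi_1(x,t,x/\varepsilon_n, t/\varepsilon_n,\omega)$. Since $\psi_0(\cdot,t,\omega) \in \mathcal{V}$ and $\psi_1(x,t,\cdot,\tau,\omega) \in A_{y,\Div}^\infty$, the $x$-divergence of $\Phi_{\varepsilon_n}$ is $\varepsilon_n (\Div_x \psi_1)^{\varepsilon_n} = O(\varepsilon_n)$ uniformly, so a Leray correction makes it a legitimate $V$-valued test field up to an error vanishing in the limit. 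Inserting $\mathbf{v} = \Phi_{\varepsilon_n}(\cdot,t,\omega)$ in (6.22) and applying the It\^{o} product rule to $(\mathbf{u}_{\varepsilon_n}(t), \Phi_{\varepsilon_n}(\cdot,t,\omega))$, then integrating in $t$ and in $\omega$, produces for each $n$ a single variational identity that gathers the time-derivative, Stokes, nonlinear monotone, convective and stochastic terms, from which (6.27) will be extracted.

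In the limit, the time derivative splits: its $\partial_t \psi_0$-part converges to $-\int \mathbf{u}_0 \cdot \psi_0'\,dx\,dt\,d\bar{\mathbb{P}}$ by (6.23), the $\varepsilon_n (\partial_t \psi_1)^{\varepsilon_n}$-part is $O(\varepsilon_n)$, and the $(\partial_\tau \psi_1)^{\varepsilon_n}$-part vanishes because $\int_{\Delta(A_\tau)} \widehat{\partial_\tau \psi_1}\,d\beta_\tau = 0$ by Proposition~2.2(ii) applied in the $\tau$-variable. The linear Stokes term $\int a^{\varepsilon_n} D\mathbf{u}_{\varepsilon_n} : D\Phi_{\varepsilon_n}$ passes to $\iint \widehat{a}\,\mathbb{D}\mathbf{u} : \mathbb{D}\Phi\,d\beta$ by combining (6.24) with the strong $\Sigma$-convergence of $a^{\varepsilon_n} D\Phi_{\varepsilon_n}$ through Theorem~3.3. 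The convective term is treated by rewriting $b_I(\mathbf{u}_{\varepsilon_n}, \mathbf{u}_{\varepsilon_n}, \Phi_{\varepsilon_n}) = -b_I(\mathbf{u}_{\varepsilon_n}, \Phi_{\varepsilon_n}, \mathbf{u}_{\varepsilon_n})$ and invoking the strong $L^2(Q_T \times \bar\Omega)$-convergence (6.23), the uniform bound (6.14) and Sobolev embedding (valid since $p \geq 3$ and $N \in \{2,3\}$). Finally the stochastic integral converges to $\int (\widetilde{g}(\mathbf{u}_0), \psi_0)\,d\bar W$ thanks to Lemma~5.1 and Remark~6.2 (giving $g^{\varepsilon_n}(\mathbf{u}_{\varepsilon_n}) \to \widetilde{g}(\mathbf{u}_0)$ strongly in $L^2$), combined with the Skorokhod convergence (6.20) of $W^{\varepsilon_n} \to \bar W$, via the standard Bensoussan-type martingale argument.

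The essential obstacle is the nonlinear monotone term $\int b^{\varepsilon_n}(\cdot, D\mathbf{u}_{\varepsilon_n}) : D\Phi_{\varepsilon_n}$. From (6.14) and the $(p-1)$-growth (6.6)(ii) the sequence $b^{\varepsilon_n}(\cdot, D\mathbf{u}_{\varepsilon_n})$ is bounded in $L^{p'}(Q_T \times \bar\Omega)^{N \times N}$, so, up to a subsequence, it weakly $\Sigma$-converges to some $\zeta_0$, and the previous steps deliver an equation of the form (6.27) with $\widehat{b}(\cdot, \mathbb{D}\mathbf{u})$ replaced by $\widehat{\zeta}_0$. Identification $\widehat{\zeta}_0 = \widehat{b}(\cdot, \mathbb{D}\mathbf{u})$ is obtained by a Minty-type argument: apply It\^{o}'s formula to $\|\mathbf{u}_{\varepsilon_n}(t)\|_H^2$ to obtain the $\varepsilon_n$-level energy identity; couple it with the strict monotonicity (6.6)(i) against the competitor $\psi_0 + \varepsilon_n \psi_1^{\varepsilon_n}$; invoke Corollary~6.1 and Theorem~3.4 to pass to the limit in the nonlinear quantities; and use the strong convergence $g^{\varepsilon_n}(\mathbf{u}_{\varepsilon_n}) \to \widetilde{g}(\mathbf{u}_0)$ together with (6.13) to pass to the limit in the quadratic variation $\int \|g^{\varepsilon_n}(\mathbf{u}_{\varepsilon_n})\|_H^2\,ds$. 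One arrives at $\iint (\widehat{\zeta}_0 - \widehat{b}(\cdot, \mathbb{D}\Phi)):(\mathbb{D}\mathbf{u} - \mathbb{D}\Phi)\,d\beta \geq 0$ for every $\Phi = (\mathbf{v}_0, \mathbf{v}_1) \in \mathbb{F}_0^{1,p}$, whence the hemicontinuity of $b$ yields $\widehat{\zeta}_0 = \widehat{b}(\cdot, \mathbb{D}\mathbf{u})$ and hence (6.27). The most delicate point is matching, in this Minty step, the limit of the deterministic dissipative terms with the stochastic quadratic variation, for which the higher-moment estimate (6.13) together with Vitali's theorem is indispensable.
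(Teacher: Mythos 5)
Your proposal follows essentially the same route as the paper's proof: test (6.22) with the oscillating competitor $\psi_{0}+\varepsilon_{n}\psi_{1}^{\varepsilon_{n}}$, pass to the limit term by term using (6.23)--(6.24), Theorem \ref{t3.3}, Corollary \ref{c6.1} and Remark \ref{r6.2}, and identify the weak $\Sigma$-limit of $b^{\varepsilon_{n}}(\cdot,D\mathbf{u}_{\varepsilon_{n}})$ by the monotonicity inequality (6.29)--(6.30) together with a Minty/hemicontinuity argument. If anything, you are more explicit than the paper on two points it leaves implicit -- the divergence-free correction of $\Phi_{\varepsilon_{n}}$ and the It\^{o} energy identity needed to control $\limsup\int b^{\varepsilon_{n}}(\cdot,D\mathbf{u}_{\varepsilon_{n}})\cdot D\mathbf{u}_{\varepsilon_{n}}$ in the Minty step -- and both additions are correct and welcome.
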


\begin{proof}
In what follows, we drop the index $n$ from the sequence $\varepsilon _{n}$.
So we will merely write $\varepsilon $ for $\varepsilon _{n}$. Now, from the
equality $\Div\mathbf{u}_{\varepsilon }=0$ we easily obtain that $\Div%
\mathbf{u}_{0}=0$ and $\overline{\Div}_{y}\mathbf{u}_{1}=0$, hence $\mathbf{u%
}=(\mathbf{u}_{0},\mathbf{u}_{1})\in \mathbb{F}_{0}^{1,p}$. It remains to
show that $\mathbf{u}$ solves (\ref{6.27}). For that, let $\mathbf{\Phi }=(%
\mathbf{\psi }_{0},\varrho ^{N}(\mathbf{\psi }_{1}))\in \mathcal{F}%
_{0}^{\infty }$; define $\mathbf{\Phi }_{\varepsilon }$ as in Corollary \ref%
{c6.1} (see (\ref{6.26}) therein), that is, as follows: 
\begin{equation*}
\mathbf{\Phi }_{\varepsilon }(x,t,\omega )=\mathbf{\psi }_{0}(x,t,\omega
)+\varepsilon \mathbf{\psi }_{1}\left( x,t,\frac{x}{\varepsilon },\frac{t}{%
\varepsilon },\omega \right) \text{ for }(x,t,\omega )\in Q_{T}\times \bar{%
\Omega}\text{.}
\end{equation*}%
Then we have $\mathbf{\Phi }_{\varepsilon }\in (B(\bar{\Omega})\otimes 
\mathcal{C}_{0}^{\infty }(Q_{T}))^{N}$ and, using $\mathbf{\Phi }%
_{\varepsilon }$ as a test function in the variational formulation of (\ref%
{6.22}) we get 
\begin{eqnarray}
&&-\int_{Q_{T}\times \bar{\Omega}}\mathbf{u}_{\varepsilon }\cdot \frac{%
\partial \mathbf{\Phi }_{\varepsilon }}{\partial t}dxdtd\bar{\mathbb{P}}%
+\int_{Q_{T}\times \bar{\Omega}}a^{\varepsilon }D\mathbf{u}_{\varepsilon
}\cdot D\mathbf{\Phi }_{\varepsilon }dxdtd\bar{\mathbb{P}}  \label{6.28} \\
&&+\int_{Q_{T}\times \bar{\Omega}}b^{\varepsilon }(\cdot ,D\mathbf{u}%
_{\varepsilon })\cdot D\mathbf{\Phi }_{\varepsilon }dxdtd\bar{\mathbb{P}}%
+\int_{\bar{\Omega}}\int_{0}^{T}b_{I}(\mathbf{u}_{\varepsilon },\mathbf{u}%
_{\varepsilon },\mathbf{\Phi }_{\varepsilon })dtd\bar{\mathbb{P}}  \notag \\
&=&\int_{0}^{T}\int_{\bar{\Omega}}\left( \mathbf{f}(t),\mathbf{\Phi }%
_{\varepsilon }\right) dtd\bar{\mathbb{P}}+\int_{0}^{T}\int_{\bar{\Omega}%
}\left( g^{\varepsilon }(\cdot ,\mathbf{u}_{\varepsilon }),\mathbf{\Phi }%
_{\varepsilon }\right) dW^{\varepsilon }d\bar{\mathbb{P}}.  \notag
\end{eqnarray}%
We pass to the limit in (\ref{6.28}) by considering each term separately.
First we have 
\begin{eqnarray*}
\int_{Q_{T}\times \bar{\Omega}}\mathbf{u}_{\varepsilon }\cdot \frac{\partial 
\mathbf{\Phi }_{\varepsilon }}{\partial t}dxdtd\bar{\mathbb{P}}
&=&\int_{Q_{T}\times \bar{\Omega}}\mathbf{u}_{\varepsilon }\cdot \frac{%
\partial \mathbf{\psi }_{0}}{\partial t}dxdtd\bar{\mathbb{P}}+\varepsilon
\int_{Q_{T}\times \bar{\Omega}}\mathbf{u}_{\varepsilon }\cdot \left( \frac{%
\partial \mathbf{\psi }_{1}}{\partial t}\right) ^{\varepsilon }dxdtd\bar{%
\mathbb{P}} \\
&&+\int_{Q_{T}\times \bar{\Omega}}\mathbf{u}_{\varepsilon }\cdot \left( 
\frac{\partial \mathbf{\psi }_{1}}{\partial \tau }\right) ^{\varepsilon
}dxdtd\bar{\mathbb{P}}.
\end{eqnarray*}%
But in view of (\ref{6.23}) coupling with the convergence result $(\partial 
\mathbf{\psi }_{1}/\partial \tau )^{\varepsilon }\rightarrow M(\partial 
\mathbf{\psi }_{1}/\partial \tau )=0$ in $L^{2}(Q_{T}\times \bar{\Omega})^{N}
$-weak, we obtain 
\begin{equation*}
\int_{Q_{T}\times \bar{\Omega}}\mathbf{u}_{\varepsilon }\cdot \frac{\partial 
\mathbf{\Phi }_{\varepsilon }}{\partial t}dxdtd\bar{\mathbb{P}}\rightarrow
\int_{Q_{T}\times \bar{\Omega}}\mathbf{u}_{0}\cdot \frac{\partial \mathbf{%
\psi }_{0}}{\partial t}dxdtd\bar{\mathbb{P}}.
\end{equation*}

Next, it is an usual well known fact that, using the convergence result (\ref%
{6.23}) together with the weak $\Sigma $-convergence of the sequence $(D%
\mathbf{\Phi }_{\varepsilon })_{\varepsilon }$ to $\mathbb{D}\mathbf{\Phi }$%
, we get 
\begin{equation*}
\int_{Q_{T}\times \bar{\Omega}}a^{\varepsilon }D\mathbf{u}_{\varepsilon
}\cdot D\mathbf{\Phi }_{\varepsilon }dxdtd\bar{\mathbb{P}}\rightarrow 
\widehat{a}_{I}(\mathbf{u},\Phi ).
\end{equation*}%
Considering the next term, we use the monotonicity property to have 
\begin{equation}
\int_{Q_{T}\times \bar{\Omega}}(b^{\varepsilon }(\cdot ,D\mathbf{u}%
_{\varepsilon })-b^{\varepsilon }(\cdot ,D\mathbf{\Phi }_{\varepsilon
}))\cdot (D\mathbf{u}_{\varepsilon }-D\mathbf{\Phi }_{\varepsilon })dxdtd%
\bar{\mathbb{P}}\geq 0\text{.}  \label{6.29}
\end{equation}%
Owing to the estimate (\ref{6.14}) (denoting by $\overline{\mathbb{E}}$ the
mathematical expectation on $(\bar{\Omega},\bar{\mathcal{F}},\bar{\mathbb{P}}%
)$) we infer that 
\begin{equation*}
\sup_{\varepsilon >0}\overline{\mathbb{E}}\left\| b^{\varepsilon }(\cdot ,D%
\mathbf{u}_{\varepsilon })\right\| _{L^{p^{\prime }}(Q_{T})^{N\times
N}}^{p^{\prime }}<\infty ,
\end{equation*}%
so that, from Theorem \ref{t3.7}, there exist a function $\chi \in
L^{p^{\prime }}(Q_{T}\times \bar{\Omega};\mathcal{B}_{A}^{p^{\prime
}})^{N\times N}$ and a subsequence of $\varepsilon $ not relabeled, such
that $b^{\varepsilon }(\cdot ,D\mathbf{u}_{\varepsilon })\rightarrow \chi $
in $L^{p^{\prime }}(Q_{T}\times \bar{\Omega})^{N\times N}$-weak $\Sigma $ as 
$\varepsilon \rightarrow 0$. We therefore pass to the limit in (\ref{6.29})
(as $\varepsilon \rightarrow 0$) using Corollary \ref{c6.1} to get 
\begin{equation}
\iint_{Q_{T}\times \bar{\Omega}\times \Delta (A)}(\widehat{\chi }-\widehat{b}%
(\cdot ,\mathbb{D}\Phi ))\cdot (\mathbb{D}\mathbf{u}-\mathbb{D}\mathbf{\Phi }%
)dxdtd\bar{\mathbb{P}}d\beta \geq 0  \label{6.30}
\end{equation}%
for any $\Phi \in \mathcal{F}_{0}^{\infty }$ where, as above, $\mathbb{D}%
\mathbf{u}=D\mathbf{u}_{0}+\partial \widehat{\mathbf{u}}_{1}$ ($\mathbf{u}=(%
\mathbf{u}_{0},\mathbf{u}_{1})$) and $\mathbb{D}\mathbf{\Phi }=D\mathbf{\psi 
}_{0}+\partial \widehat{\mathbf{\psi }}_{1}$. By the density of $\mathcal{F}%
_{0}^{\infty }$ in $\mathbb{F}_{0}^{1,p}$ and by a continuity argument, (\ref%
{6.30}) still holds for $\mathbf{\Phi }\in \mathbb{F}_{0}^{1,p}$. Hence by
taking $\mathbf{\Phi }=\mathbf{u}+\lambda \mathbf{v}$ for $\mathbf{v}=(%
\mathbf{v}_{0},\mathbf{v}_{1})\in \mathbb{F}_{0}^{1,p}$ and $\lambda >0$
arbitrarily fixed, we get 
\begin{equation*}
\lambda \iint_{Q_{T}\times \bar{\Omega}\times \Delta (A)}(\widehat{\chi }-%
\widehat{b}(\cdot ,\mathbb{D}\mathbf{u}+\lambda \mathbb{D}\mathbf{v}))\cdot 
\mathbb{D}\mathbf{v}dxdtd\bar{\mathbb{P}}d\beta \geq 0\;\text{for all }%
\mathbf{v}\in \mathbb{F}_{0}^{1,p}.
\end{equation*}%
Therefore by a mere routine, we deduce that $\chi =b(\cdot ,D\mathbf{u}_{0}+%
\overline{D}_{y}\mathbf{u}_{1})$.

The next point to check is to compute the $\lim_{\varepsilon \rightarrow
0}\int_{\bar{\Omega}}\int_{0}^{T}b_{I}(\mathbf{u}_{\varepsilon },\mathbf{u}%
_{\varepsilon },\mathbf{\Phi }_{\varepsilon })dtd\bar{\mathbb{P}}$. We claim
that, as $\varepsilon \rightarrow 0$, 
\begin{equation*}
\int_{\bar{\Omega}}\int_{0}^{T}b_{I}(\mathbf{u}_{\varepsilon },\mathbf{u}%
_{\varepsilon },\mathbf{\Phi }_{\varepsilon })dtd\bar{\mathbb{P}}\rightarrow 
\widehat{b}_{I}(\mathbf{u},\mathbf{u},\mathbf{\Phi }).
\end{equation*}%
Indeed, by the strong convergence result (\ref{6.23}) in conjunction with
the Theorem \ref{t3.3}, our claim is justified.

We obviously have that 
\begin{equation*}
\int_{0}^{T}\int_{\bar{\Omega}}\left( \mathbf{f}(t),\mathbf{\Phi }%
_{\varepsilon }(t,\omega )\right) dtd\bar{\mathbb{P}}\rightarrow
\int_{0}^{T}\int_{\bar{\Omega}}\left( \mathbf{f}(t),\mathbf{\psi }%
_{0}(t,\omega )\right) dtd\bar{\mathbb{P}}.
\end{equation*}%
The last point is concerned with the stochastic part $\int_{0}^{T}\int_{\bar{%
\Omega}}\left( g^{\varepsilon }(\cdot ,\mathbf{u}_{\varepsilon }),\mathbf{%
\Phi }_{\varepsilon }\right) dW^{\varepsilon }d\bar{\mathbb{P}}$.

But thanks to Remark \ref{r6.2} we get at once 
\begin{equation*}
\int_{0}^{T}\int_{\bar{\Omega}}\left( g^{\varepsilon }(\cdot ,\mathbf{u}%
_{\varepsilon }),\mathbf{\Phi }_{\varepsilon }\right) dW^{\varepsilon }d\bar{%
\mathbb{P}}\rightarrow \int_{\bar{\Omega}}\int_{0}^{T}\left( \widetilde{g}(%
\mathbf{u}_{0}),\mathbf{\psi }_{0}\right) d\bar{W}d\bar{\mathbb{P}}.
\end{equation*}%
It emerges from the above study that $\mathbf{u}=(\mathbf{u}_{0},\mathbf{u}%
_{1})$ satisfies (\ref{6.27}).
\end{proof}

In order to derive the homogenized problem, we need to deal with an
equivalent expression of problem (\ref{6.27}). As we can see, this problem
is equivalent to the following system (\ref{6.36})-(\ref{6.37}) reading as 
\begin{equation}
\left\{ 
\begin{array}{l}
\iint_{Q_{T}\times \bar{\Omega}\times \Delta (A)}\widehat{a}\mathbb{D}%
\mathbf{u}\cdot \partial \widehat{\mathbf{\psi }}_{1}dxdtd\bar{\mathbb{P}}%
d\beta +\iint_{Q_{T}\times \bar{\Omega}\times \Delta (A)}\widehat{b}(\cdot ,%
\mathbb{D}\mathbf{u})\cdot \partial \widehat{\mathbf{\psi }}_{1}dxdtd\bar{%
\mathbb{P}}d\beta =0 \\ 
\text{for all }\mathbf{\psi }_{1}\in B(\bar{\Omega})\otimes \mathcal{C}%
_{0}^{\infty }(Q_{T})\otimes \lbrack \mathcal{D}_{A_{\tau }}(\mathbb{R}%
_{\tau })\otimes \varrho _{y}^{N}(A_{y,\text{div}}^{\infty })]\text{;}%
\end{array}%
\right.   \label{6.36}
\end{equation}%
\begin{equation}
\left\{ 
\begin{array}{l}
-\int_{Q_{T}\times \bar{\Omega}}\mathbf{u}_{0}\cdot \mathbf{\psi }%
_{0}^{\prime }dxdtd\bar{\mathbb{P}}+\widehat{a}_{I}(\mathbf{u},(\mathbf{\psi 
}_{0},0))+\widehat{b}_{I}(\mathbf{u},\mathbf{u},(\mathbf{\psi }_{0},0)) \\ 
\ \ \ \ =\int_{\bar{\Omega}}\int_{0}^{T}\left( \mathbf{f}(t),\mathbf{\psi }%
_{0}(t,\omega )\right) dtd\bar{\mathbb{P}}+\int_{\bar{\Omega}%
}\int_{0}^{T}\left( \widetilde{g}(\mathbf{u}_{0}),\mathbf{\psi }_{0}\right) d%
\bar{W}d\bar{\mathbb{P}} \\ 
\text{for all }\mathbf{\psi }_{0}\in B(\bar{\Omega})\otimes \mathcal{C}%
_{0}^{\infty }(0,T)\otimes \mathcal{V}\text{.}%
\end{array}%
\right.   \label{6.37}
\end{equation}%
It is an easy matter to deal with (\ref{6.36}). In fact, fix $\xi \in 
\mathbb{R}^{N\times N}$ and consider the following cell problem: 
\begin{equation}
\left\{ 
\begin{array}{l}
\pi (\xi )\in \mathcal{B}_{A_{\tau }}^{p}(\mathbb{R}_{\tau };\mathcal{B}_{%
\text{div}}^{1,p}): \\ 
\int_{\Delta (A)}\widehat{a}(\xi +\partial \widehat{\pi }(\xi ))\cdot
\partial \widehat{w}d\beta +\int_{\Delta (A)}\widehat{b}(\cdot ,\xi
+\partial \widehat{\pi }(\xi ))\cdot \partial \widehat{w}d\beta =0 \\ 
\text{for all }w\in \mathcal{B}_{A_{\tau }}^{p}(\mathbb{R}_{\tau };\mathcal{B%
}_{\text{div}}^{1,p})\text{.}%
\end{array}%
\right.   \label{6.38}
\end{equation}%
Due to the properties of the functions $a$ and $b$, Eq. (\ref{6.38}) admits
at least a solution (see e.g., \cite[Chap. 2]{Lions}). But if $\pi _{1}=\pi
_{1}(\xi )$ and $\pi _{2}=\pi _{2}(\xi )$ are two solutions of (\ref{6.38})
then, setting $\pi =\pi _{1}-\pi _{2}$, 
\begin{equation*}
\left\{ 
\begin{array}{l}
\int_{\Delta (A)}\widehat{a}\partial \widehat{\pi }\cdot \partial \widehat{w}%
d\beta +\int_{\Delta (A)}(\widehat{b}(\cdot ,\xi +\partial \widehat{\pi }%
_{1})-\widehat{b}(\cdot ,\xi +\partial \widehat{\pi }_{2}))\cdot \partial 
\widehat{w}d\beta =0 \\ 
\text{for all }w\in \mathcal{B}_{A_{\tau }}^{p}(\mathbb{R}_{\tau };\mathcal{B%
}_{\text{div}}^{1,p})\text{.}%
\end{array}%
\right. 
\end{equation*}%
Taking the particular test function $w=\pi $, we are led to 
\begin{equation*}
\int_{\Delta (A)}\widehat{a}\partial \widehat{\pi }\cdot \partial \widehat{%
\pi }d\beta +\int_{\Delta (A)}(\widehat{b}(\cdot ,\xi +\partial \widehat{\pi 
}_{1})-\widehat{b}(\cdot ,\xi +\partial \widehat{\pi }_{2}))\cdot \partial 
\widehat{\pi }d\beta =0,
\end{equation*}%
and using once again the properties of $a$ and $b$ (see in particular the
Proposition \ref{p6.3}), we get 
\begin{equation*}
\nu _{0}\int_{\Delta (A)}\left\vert \partial \widehat{\pi }\right\vert
^{2}d\beta +\nu _{1}\int_{\Delta (A)}\left\vert \partial \widehat{\pi }%
\right\vert ^{p}d\beta =0,
\end{equation*}%
which gives $\partial \widehat{\pi }=0$, or equivalently, $\overline{D}%
_{y}\pi =0$. It then follows that $\pi =0$ since it belong to $\mathcal{B}%
_{A_{\tau }}^{p}(\mathbb{R}_{\tau };(\mathcal{B}_{\#A_{y}}^{1,p})^{N})$.

Now, choosing $\mathbf{\psi }_{1}=\phi \otimes \varphi \otimes w$ in (\ref%
{6.36}) with $\phi \in B(\bar{\Omega})$, $\varphi \in \mathcal{C}%
_{0}^{\infty }(Q_{T})$ and $w\in \lbrack \mathcal{D}_{A_{\tau }}(\mathbb{R}%
_{\tau })\otimes \varrho _{y}^{N}(A_{y,\text{div}}^{\infty })]$, we obtain
by disintegration the following equation: 
\begin{equation}
\left\{ 
\begin{array}{l}
\int_{\Delta (A)}\widehat{a}\mathbb{D}\mathbf{u}\cdot \partial \widehat{w}%
d\beta +\int_{\Delta (A)}\widehat{b}(\cdot ,\mathbb{D}\mathbf{u})\cdot
\partial \widehat{w}d\beta =0 \\ 
\text{for all }w\in \mathcal{D}_{A_{\tau }}(\mathbb{R}_{\tau })\otimes
\varrho _{y}^{N}(A_{y,\text{div}}^{\infty })\text{.}%
\end{array}%
\right.  \label{6.39}
\end{equation}%
Coming back to (\ref{6.38}) we choose there $\xi =D\mathbf{u}_{0}(x,t,\omega
)$ (for arbitrarily fixed $(x,t,\omega )\in Q_{T}\times \bar{\Omega}$).
Comparing the resulting equation with (\ref{6.39}) and using the density of $%
\mathcal{D}_{A_{\tau }}(\mathbb{R}_{\tau })\otimes \varrho _{y}^{N}(A_{y,%
\text{div}}^{\infty })$ in $\mathcal{B}_{A_{\tau }}^{p}(\mathbb{R}_{\tau };%
\mathcal{B}_{\text{div}}^{1,p})$ we get by the uniqueness of the solution of
(\ref{6.38}) that $\mathbf{u}_{1}=\pi (D\mathbf{u}_{0})$, where $\pi (D%
\mathbf{u}_{0})$ stands for the function $(x,t,\omega )\mapsto \pi (D\mathbf{%
u}_{0}(x,t,\omega ))$ defined from $Q_{T}\times \bar{\Omega}$ into $\mathcal{%
B}_{A_{\tau }}^{p}(\mathbb{R}_{\tau };\mathcal{B}_{\text{div}}^{1,p})$. This
shows the uniqueness of the solution of (\ref{6.36}).

As for (\ref{6.37}), let, for $\xi \in \mathbb{R}^{N\times N}$, 
\begin{equation*}
M(\xi )=\int_{\Delta (A)}\widehat{b}(\cdot ,\xi +\partial \widehat{\pi }(\xi
))d\beta
\end{equation*}%
and 
\begin{equation*}
\mathsf{m}\xi =\int_{\Delta (A)}\widehat{a}(\xi +\partial \widehat{\pi }(\xi
))d\beta .
\end{equation*}%
Then substituting $\mathbf{u}_{1}=\pi (D\mathbf{u}_{0})$ in (\ref{6.37}) and
choosing there the special test function $\mathbf{\psi }_{0}(x,t,\omega
)=\phi (\omega )\chi (t)\varphi (x)$ with $\phi \in B(\bar{\Omega})$, $\chi
\in \mathcal{C}_{0}^{\infty }(0,T)$ and $\varphi \in \mathcal{V}$, we
quickly obtain by It\^{o}'s formula, the macroscopic homogenized problem
(which holds as an equality in $V^{\prime }$)%
\begin{equation}
\left\{ 
\begin{array}{l}
d\mathbf{u}_{0}+(-\Div(\mathsf{m}D\mathbf{u}_{0})-\Div M(D\mathbf{u}_{0})+B(%
\mathbf{u}_{0}))dt=\mathbf{f}dt+\widetilde{g}(\mathbf{u}_{0})d\bar{W} \\ 
\mathbf{u}_{0}(0)=\mathbf{u}^{0}.%
\end{array}%
\right.  \label{6.40}
\end{equation}%
Since the above problem is of the same type as (\ref{6.11}) the existence
and the uniqueness of its solution is ensured by the same arguments as (\ref%
{6.11}). We therefore have the following

\begin{theorem}
\label{t6.4}Assume that \emph{(\ref{6.2})-(\ref{6.7})} hold. Moreover
suppose that \emph{(\ref{6.25})} holds true. Let $3\leq p<\infty $. For each 
$\varepsilon >0$ let $\mathbf{u}_{\varepsilon }$ be the unique solution of 
\emph{(\ref{6.11})} on a given stochastic system $(\Omega ,\mathcal{F},%
\mathbb{P}),\mathcal{F}^{t},W$ defined as in Section \emph{4}. Then as $%
\varepsilon \rightarrow 0$, the whole sequence $\mathbf{u}_{\varepsilon }$
converges in probability to $\mathbf{u}_{0}$ in $L^{2}(Q_{T})^{N}$ (i.e., $||%
\mathbf{u}_{\varepsilon }-\mathbf{u}_{0}||_{L^{2}(Q_{T})^{N}}$ converges to
zero in probability) where $\mathbf{u}_{0}$ is the unique strong
probabilistic solution of \emph{(\ref{6.40})} with $\bar{W}$ replaced by $W$.
\end{theorem}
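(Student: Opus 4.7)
The plan is to upgrade the subsequential convergence on the auxiliary Skorokhod space $(\bar{\Omega},\bar{\mathcal{F}},\bar{\mathbb{P}})$ (which follows from Theorem~\ref{t6.3} together with the derivation of (\ref{6.40})) to convergence in probability of the original sequence $(\mathbf{u}_{\varepsilon})_{\varepsilon}$ on the original probability space $(\Omega,\mathcal{F},\mathbb{P})$. The tool for this is the Gy\"ongy--Krylov characterization recalled as Lemma~\ref{l6.0}: it suffices to prove that, given any two subsequences $(\mathbf{u}_{\varepsilon_n})$ and $(\mathbf{u}_{\varepsilon_m})$, the joint laws admit a further subsequence that converges weakly to a probability measure on $L^{2}(Q_T)^N\times L^{2}(Q_T)^N$ supported on the diagonal.

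To implement this, I would form the joint variables $(\mathbf{u}_{\varepsilon_n},\mathbf{u}_{\varepsilon_m},W)$ with values in $\mathfrak{S}\times L^{2}(0,T;H)$ and check that the corresponding laws are tight; the tightness of each marginal was established in Theorem~\ref{t6.2}, and tightness of the joint law follows because a product of tight families is tight. By Prokhorov's theorem one extracts a weakly convergent subsequence, and Skorokhod's representation (Theorem~\ref{sko}) provides a new probability space $(\widetilde{\Omega},\widetilde{\mathcal{F}},\widetilde{\mathbb{P}})$ supporting copies $(\widetilde{\mathbf{u}}_{\varepsilon_n},\widetilde{\mathbf{u}}_{\varepsilon_m},\widetilde{W}^{\varepsilon})$ of the original triple which converge almost surely to some $(\widetilde{\mathbf{u}}_0,\widetilde{\mathbf{v}}_0,\widetilde{W})$ in $\mathfrak{S}\times L^{2}(0,T;H)$.

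Next, I would apply the entire machinery of Subsections~6.3 and 6.4 independently to each of the two subsequences $\widetilde{\mathbf{u}}_{\varepsilon_n}$ and $\widetilde{\mathbf{u}}_{\varepsilon_m}$: the a priori estimates (\ref{6.13})--(\ref{6.14}) still hold because the laws are preserved, so Theorem~\ref{t3.8} yields correctors $\widetilde{\mathbf{u}}_1,\widetilde{\mathbf{v}}_1\in L^{p}(Q_T\times\widetilde{\Omega};\mathcal{B}_{A_{\tau}}^{p}(\mathbb{R}_{\tau};\mathcal{B}_{\#A_y}^{1,p})^N)$ such that the couples $(\widetilde{\mathbf{u}}_0,\widetilde{\mathbf{u}}_1)$ and $(\widetilde{\mathbf{v}}_0,\widetilde{\mathbf{v}}_1)$ both solve the global variational problem (\ref{6.27}) on $(\widetilde{\Omega},\widetilde{\mathcal{F}},\widetilde{\mathbb{P}})$ driven by the same Wiener process $\widetilde{W}$. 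Eliminating the correctors via the cell problem (\ref{6.38}) then shows that both $\widetilde{\mathbf{u}}_0$ and $\widetilde{\mathbf{v}}_0$ are strong probabilistic solutions of the macroscopic equation (\ref{6.40}) with the same initial datum $\mathbf{u}^{0}$ and the same driving noise $\widetilde{W}$.

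The crux of the argument is then the pathwise uniqueness of (\ref{6.40}): since $\mathsf{m}$ is uniformly elliptic, $M$ inherits the strict monotonicity and growth from Proposition~\ref{p6.3}, $B$ satisfies the standard Ladyzhenskaya-type estimates, and $\widetilde{g}$ is Lipschitz (because $g$ is, by (\ref{6.7})), the existence-and-uniqueness theory invoked for (\ref{6.11}) applies verbatim to (\ref{6.40}). Consequently $\widetilde{\mathbf{u}}_0=\widetilde{\mathbf{v}}_0$ $\widetilde{\mathbb{P}}$-a.s., which means the joint limit law is supported on the diagonal of $L^{2}(Q_T)^N\times L^{2}(Q_T)^N$. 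Lemma~\ref{l6.0} then yields convergence in probability of $(\mathbf{u}_{\varepsilon})_{\varepsilon}$ to some limit in $L^{2}(Q_T)^N$; identifying this limit as $\mathbf{u}_0$ (the unique strong solution of (\ref{6.40}) on the original space, which exists by the pathwise uniqueness/Yamada--Watanabe principle applied to the martingale solution constructed on $\bar{\Omega}$) gives convergence of the \emph{whole} sequence. The main technical obstacle I anticipate is verifying that the cell-problem elimination and the passage to the macroscopic equation (\ref{6.40}) remain valid on the new space $\widetilde{\Omega}$ for each of the two parallel subsequences simultaneously; this is essentially a bookkeeping exercise, but one must be careful that the noise driving both limit equations is indeed the \emph{common} Brownian motion $\widetilde{W}$, which is what makes the pathwise uniqueness applicable.
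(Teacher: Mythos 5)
Your proposal is correct and follows essentially the same route as the paper, which disposes of this theorem by citing the Gy\"ongy--Krylov criterion (Lemma \ref{l6.0}) and referring to the analogous argument in \cite{WoukengArxiv}: double-subsequence tightness, Prokhorov--Skorokhod, identification of both limits as solutions of (\ref{6.40}) driven by the common Wiener process, and pathwise uniqueness to force the joint law onto the diagonal. You have simply written out the details that the paper leaves to the reference.
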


\begin{proof}
The proof of this Theorem is copied on that of \cite[Theorem 8]{WoukengArxiv}%
. Note that it relies on Lemma \ref{l6.0}.
\end{proof}

\begin{remark}
\label{r6.3}\emph{As regard the deterministic setting (i.e. when }$g\equiv 0$%
\emph{), this is the first time that the homogenization of (\ref{6.11}) (and
hence (\ref{6.1})) is considered. So even in the deterministic framework,
our results are new, and we can formulated the deterministic counterpart of
Theorem \ref{t6.4} as follows.}
\end{remark}

\begin{theorem}
\label{t6.5}Assume that \emph{(\ref{6.2})-(\ref{6.7})} hold. Moreover
suppose that \emph{(\ref{6.25})} holds true. Let $3\leq p<\infty $. For each 
$\varepsilon >0$ let $\mathbf{u}_{\varepsilon }$ be the unique solution of 
\emph{(\ref{6.11})}. Then as $\varepsilon \rightarrow 0$, 
\begin{equation*}
\mathbf{u}_{\varepsilon }\rightarrow \mathbf{u}_{0}\text{\ in }%
L^{2}(Q_{T})^{N}
\end{equation*}%
where $\mathbf{u}_{0}$ is the unique solution of the following problem
(viewed as an equality of functions with values in $V^{\prime }$)%
\begin{equation*}
\left\{ 
\begin{array}{l}
\frac{\partial \mathbf{u}_{0}}{\partial t}-\Div(\mathsf{m}D\mathbf{u}_{0})-%
\Div M(D\mathbf{u}_{0})+B(\mathbf{u}_{0})=\mathbf{f} \\ 
\mathbf{u}_{0}(0)=\mathbf{u}^{0}.%
\end{array}%
\right.
\end{equation*}
\end{theorem}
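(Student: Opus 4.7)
The plan is to mimic the stochastic argument from Theorem~\ref{t6.4}, but without the need for tightness and the Skorokhod representation, since in the deterministic case compactness in $L^2(Q_T)^N$ is obtained directly from the a priori estimates via an Aubin--Lions type argument. First I would derive the deterministic analogues of Proposition~\ref{p6.1} and Proposition~\ref{p6.2} by applying the usual energy identity (no It\^{o} correction) and the same splitting of the integral over $[t,t+\theta]$; this yields that $(\mathbf{u}_\varepsilon)$ is bounded in $L^\infty(0,T;H)\cap L^p(0,T;V)$ and that the translates $\mathbf{u}_\varepsilon(\cdot+\theta)-\mathbf{u}_\varepsilon(\cdot)$ are small in $L^{p'}(0,T;V^\prime)$ uniformly in $\varepsilon$. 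Invoking Lemma~\ref{l6.1} (deterministic version) I extract a subsequence with $\mathbf{u}_\varepsilon\to\mathbf{u}_0$ strongly in $L^2(0,T;H)$ and weakly in $L^p(0,T;V)$.

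Next I would apply Theorem~\ref{t3.6} to the bounded sequence $(\mathbf{u}_\varepsilon)$ in $L^p(0,T;W^{1,p}(Q)^N)$ to produce $\mathbf{u}_1\in L^p(Q_T;\mathcal{B}_{A_\tau}^p(\mathbb{R}_\tau;(\mathcal{B}_{\#A_y}^{1,p})^N))$ such that $\partial\mathbf{u}_\varepsilon/\partial x_i\to \partial\mathbf{u}_0/\partial x_i+\overline{\partial}\mathbf{u}_1/\partial y_i$ in the weak $\Sigma$ sense; the divergence-free condition transfers to both $\mathbf{u}_0$ and $\mathbf{u}_1$. With this in hand, I test the deterministic weak formulation against $\Phi_\varepsilon=\psi_0+\varepsilon\psi_1^\varepsilon$ where $(\psi_0,\varrho^N(\psi_1))\in\mathcal{C}_0^\infty(0,T;\mathcal{V})\times[\mathcal{C}_0^\infty(Q_T)\otimes(\mathcal{D}_{A_\tau}(\mathbb{R}_\tau)\otimes\varrho_y^N(A_{y,\Div}^\infty))]$ and pass to the limit term by term: the time-derivative term uses strong $L^2$ convergence of $\mathbf{u}_\varepsilon$ together with $(\partial\psi_1/\partial\tau)^\varepsilon\rightharpoonup 0$; the linear elliptic term is standard weak $\Sigma$-convergence; the trilinear convective term is handled by Theorem~\ref{t3.3} (the strong $L^2$-convergence plus weak $L^p$-convergence of gradients); and for the nonlinear term I use the monotonicity trick of Minty, combined with Corollary~\ref{c6.1}, to identify the weak $\Sigma$-limit of $b^\varepsilon(\cdot,D\mathbf{u}_\varepsilon)$ as $b(\cdot,D\mathbf{u}_0+\overline{D}_y\mathbf{u}_1)$. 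The result is the deterministic analogue of the global homogenized system~\eqref{6.27}.

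The hard step, as in the stochastic case, is the identification of the limit of $b^\varepsilon(\cdot,D\mathbf{u}_\varepsilon)$. Concretely, using the monotonicity in \eqref{6.6}(i) I write
\begin{equation*}
\int_{Q_T}\bigl(b^\varepsilon(\cdot,D\mathbf{u}_\varepsilon)-b^\varepsilon(\cdot,D\Phi_\varepsilon)\bigr)\cdot\bigl(D\mathbf{u}_\varepsilon-D\Phi_\varepsilon\bigr)\,dx\,dt\ge 0,
\end{equation*}
pass to the limit using Corollary~\ref{c6.1}, and then perturb $\Phi$ by $\mathbf{u}+\lambda\mathbf{v}$ for arbitrary $\mathbf{v}\in\mathbb{F}_0^{1,p}$ (deterministic version) to conclude by the standard hemicontinuity argument that the weak-$\Sigma$ limit $\chi$ coincides with $b(\cdot,\mathbb{D}\mathbf{u})$. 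This requires the density of $\mathcal{F}_0^\infty$ in $\mathbb{F}_0^{1,p}$, which in turn rests on Lemma~\ref{l6.2}.

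Finally, to derive the macroscopic equation I split the global problem into the cell problem~\eqref{6.38} and the macroscopic equation as in the stochastic case. The cell problem is uniquely solvable (same monotonicity argument shown in the proof of Theorem~\ref{t6.4}), yielding $\mathbf{u}_1=\pi(D\mathbf{u}_0)$. Substituting this back and introducing the homogenized coefficients $\mathsf{m}\xi=\int_{\Delta(A)}\widehat{a}(\xi+\partial\widehat{\pi}(\xi))d\beta$ and $M(\xi)=\int_{\Delta(A)}\widehat{b}(\cdot,\xi+\partial\widehat{\pi}(\xi))d\beta$, I obtain the homogenized evolution equation in $V^\prime$. Since this limit problem admits a unique solution (by the same arguments guaranteeing existence/uniqueness for~\eqref{6.11}), the convergence holds along the whole sequence, not only a subsequence, and the convergence in $L^2(Q_T)^N$ is precisely the strong convergence already obtained in the compactness step.
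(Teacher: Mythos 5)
Your proposal is correct and follows exactly the route the paper indicates: the paper in fact omits the proof of Theorem \ref{t6.5}, remarking only that it is obtained by replacing the Prokhorov--Skorokhod machinery with an Aubin--Lions compactness argument, and your sketch fills this in consistently with the stochastic proof of Theorems \ref{t6.3} and \ref{t6.4} (a priori estimates, strong $L^{2}(0,T;H)$ compactness via the deterministic version of Lemma \ref{l6.1}, Theorem \ref{t3.6} for the gradients, Minty's trick with Corollary \ref{c6.1} for the monotone term, and the cell-problem decomposition yielding $\mathsf{m}$ and $M$). The final appeal to uniqueness of the homogenized problem to upgrade subsequential convergence to convergence of the whole sequence is also the argument the paper relies on.
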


Of course the conclusion of the above theorem can be achieved using less
technicalities than as in the present study. We do not prove it. For the
reader's convenience, its proof does not use the famous Prokhorov and
Skorohod theorems, but a rather less complicated Aubin-Lions' theorem.

\subsection{Some concrete applications of the results of the previous
subsection}

A look at the previous subsection reveals that the homogenization process
has been made possible because of the assumption (\ref{6.25}) which was
fundamental in the said subsection. This assumption is formulated in a
general fashion encompassing a variety of concrete behaviours as regard the
coefficients of the operator involved in (\ref{6.1}). We aim at providing in
this subsection some natural situations leading to the homogenization of (%
\ref{6.1}).

\begin{example}
\label{e6.1}\emph{The homogenization of (\ref{6.1}) can be achieved under
the periodicity assumption }

\begin{itemize}
\item[(\protect\ref{6.25})$_{1}$] \emph{The functions }$b_{i}(\cdot ,\cdot
,\lambda )$\emph{, }$a_{ij}$\emph{\ and }$g_{k}(\cdot ,\cdot ,\mu )$\emph{\
are both periodic of period }$1$\emph{\ in each scalar coordinate.}

\noindent \emph{This leads to (\ref{6.25}) with }$A=\mathcal{C}_{\text{\emph{%
per}}}(Y\times Z)=\mathcal{C}_{\text{\emph{per}}}(Y)\odot \mathcal{C}_{\text{%
\emph{per}}}(Z)$\emph{\ (the product algebra, with }$Y=(0,1)^{N}$\emph{\ and 
}$Z=(0,1)$\emph{), and hence }$B_{A}^{r}=L_{\text{\emph{per}}}^{r}(Y\times
Z) $\emph{\ for }$1\leq r\leq \infty $\emph{.}
\end{itemize}
\end{example}

\begin{example}
\label{e6.2}\emph{The above functions in (\ref{6.25})}$_{1}$\emph{\ are both
Besicovitch almost periodic in }$(y,\tau )$\emph{. This amounts to (\ref%
{6.25}) with }$A=AP(\mathbb{R}_{y,\tau }^{N+1})=AP(\mathbb{R}_{y}^{N})\odot
AP(\mathbb{R}_{\tau })$\emph{\ (}$AP(\mathbb{R}_{y}^{N})$\emph{\ the Bohr
almost periodic functions on }$R_{y}^{N}$\emph{).}
\end{example}

\begin{example}
\label{e6.3}\emph{The homogenization problem for (\ref{6.1}) can be may be
considered under the assumption }

\begin{itemize}
\item[(\protect\ref{6.25})$_{2}$] $b_{i}(\cdot ,\cdot ,\lambda )$\emph{\ is
weakly almost periodic while the functions }$a_{ij}$\emph{\ and }$%
g_{k}(\cdot ,\cdot ,\mu )$\emph{\ are almost periodic in the Besicovitch
sense. This yields (\ref{6.25}) with }$A=WAP(\mathbb{R}_{y}^{N})\odot WAP(%
\mathbb{R}_{\tau })$\emph{\ (}$WAP(\mathbb{R}_{y}^{N})$\emph{, the algebra
of continuous weakly almost periodic functions on }$\mathbb{R}_{y}^{N}$\emph{%
; see e.g., \cite{17}).}
\end{itemize}
\end{example}

\begin{example}
\label{e6.4}\emph{Let }$A_{\tau }$\emph{\ be the algebra of Example \ref{e2}
(see Subsection \ref{subsect5.3}). It is known that }$A_{\tau }$\emph{\ is
not ergodic \cite[p. 243]{20}. We may however study the homogenization
problem for (\ref{6.1}) under the assumption that }

\begin{itemize}
\item[(\protect\ref{6.25})$_{3}$] $b_{i}(y,\cdot ,\lambda )\in B_{A_{\tau
}}^{p^{\prime }}$\emph{\ and }$b_{i}(\cdot ,\tau ,\lambda )$\emph{\ is
weakly almost periodic; }$a_{ij}$\emph{\ is periodic and }$g_{k}(\cdot
,\cdot ,\mu )$\emph{\ is almost periodic.}
\end{itemize}

\emph{This assumption is more involved. In fact, let }$A_{1,\tau }$\emph{\
be the algebra generated by }$AP(\mathbb{R}_{\tau })\cup A_{\tau }$\emph{.
It is a fact that }$A_{1,\tau }$\emph{\ is an algebra wmv on }$\mathbb{R}%
_{\tau }$\emph{\ which is not ergodic. Next, let }$A=WAP(\mathbb{R}%
_{y}^{N})\odot A_{1,\tau }$\emph{. Then, also }$A$\emph{\ is not ergodic,
and it can be easily shown that (\ref{6.25}) is satisfied with the above }$A$%
\emph{.}
\end{example}

Many other examples can be considered. We may also consider an example
involving only non ergodic algebras by taking for example $A$ to be $N+1$
copies of the $A_{\tau }$'s above: $A=A_{\tau }\odot \ldots \odot A_{\tau }$%
, $N+1$ times, which gives a non ergodic algebra on $\mathbb{R}^{N+1}$.

\end{document}